\documentclass[a4paper,10pt]{article}

\usepackage{amssymb}
\usepackage{graphicx}
\usepackage{bbm}
\usepackage{mathrsfs}
\usepackage{amsmath}
\usepackage{amsthm}
\usepackage[center]{caption}
\usepackage{enumerate}
\usepackage{verbatim}
\usepackage{xcolor}
\usepackage{authblk}
\usepackage[hmargin=3cm, vmargin=3.5cm]{geometry}
\usepackage{hyperref}
\usepackage{enumerate}
\usepackage[cp1251]{inputenc}
\usepackage{tikz}
\usetikzlibrary{arrows}

\theoremstyle{plain}
\newtheorem{thm}{Theorem}[section]
\newtheorem{cor}[thm]{Corollary}
\newtheorem{lem}[thm]{Lemma}
\newtheorem{prop}[thm]{Proposition}

\theoremstyle{definition}

\newcommand{\Q}{\mathbb{Q}}
\renewcommand{\P}{\mathbb{P}}
\newcommand{\E}{\mathbb{E}}
\newcommand{\R}{\mathbb{R}}
\newcommand{\N}{\mathbb{N}}

\newcommand{\F}{\mathcal{F}}
\newcommand{\G}{\mathcal{G}}

\newcommand{\ind}{\mathbbm{1}}

\newcommand{\eps}{\varepsilon}
\newcommand{\bp}{\begin{proof}}
\newcommand{\ep}{\end{proof}}
\def\bal#1\eal{\begin{align*}#1\end{align*}}
\DeclareMathOperator{\PL}{PL}
\renewcommand{\d}{\,\textrm{d}}

\newcommand{\Nc}{\mathcal{N}}
\newcommand{\sfrac}[2]{{\textstyle{\frac{#1}{#2}}}}
\newcommand{\eb}{\mathbbm{e}}

\DeclareMathOperator{\gen}{gen}

\newcounter{enumeratecounter}

\author{Alice Callegaro and Matthew I.~Roberts}

\title{A spatially-dependent fragmentation process}

\begin{document}
\maketitle

\begin{abstract}
We define a spatially-dependent fragmentation process, which involves rectangles breaking up into progressively smaller pieces at rates that depend on their shape. Long, thin rectangles are more likely to break quickly, and are also more likely to split along their longest side. We are interested in how the system evolves over time: how many fragments are there of different shapes and sizes, and how did they reach that state? Our theorem gives an almost sure growth rate along paths, which does not match the growth rate in expectation---there are paths where the expected number of fragments of that shape and size is exponentially large, but in reality no such fragments exist at large times almost surely.
\end{abstract}

\section{Introduction}\label{intro_sec}

\subsection{Fragmenting rectangles}

A fragmentation process describes the breaking up of a structure into pieces, and occurs naturally in many situations. Mathematically, fragmentation processes have been a subject of active research in probability for at least 20 years, incorporating several varieties, including homogeneous fragmentations \cite{bertoin:homogeneous_frags}, self-similar fragmentations \cite{bertoin:self_sim_frags}, and growth fragmentations \cite{bertoin:growth-frags}. The textbook of Bertoin \cite{bertoin:fragmentation_coagulation} gives an excellent introduction to this rich mathematical theory. It begins by listing some real-world examples of phenomena that might be considered fragmentation processes, including ``stellar fragments in astrophysics, fractures and earthquakes in geophysics, breaking of crystals in crystallography, degradation of large polymer chains in chemistry, DNA fragmentation in biology, fission of atoms in nuclear physics, fragmentation of a hard drive in computer science,'' and particularly valid from a mathematical point of view, ``evolution of blocks of mineral in a crusher.''

However, the traditional mathematical definition of a fragmentation process insists that (again quoting Bertoin \cite{bertoin:fragmentation_coagulation}) ``each fragment can be characterized by a real number that should be thought of as its size. This stops us from considering the spatial position of a fragment or further geometrical properties like its shape; physicists call such models mean field.'' In \cite{bertoin:multitypefrag}, Bertoin does analyse a multitype model where fragments can take finitely many types, but in applications there is often a continuum of possible shapes.

We consider a spatially-dependent fragmentation process as follows. Begin with a square of side length $1$. After a random time, the square breaks into two rectangular pieces, uniformly at random. Each of these pieces then repeats this behaviour independently, except that long, thin rectangles break more quickly, and are more likely to break along their longest side. This model is designed to mimic a physical crushing process, where long, thin pieces of rock are likely to break more easily than more evenly-proportioned pieces. See Figures \ref{simplefig} and \ref{fig:comparison}.

\begin{figure}[h!] \centering \includegraphics[width=2.3cm]{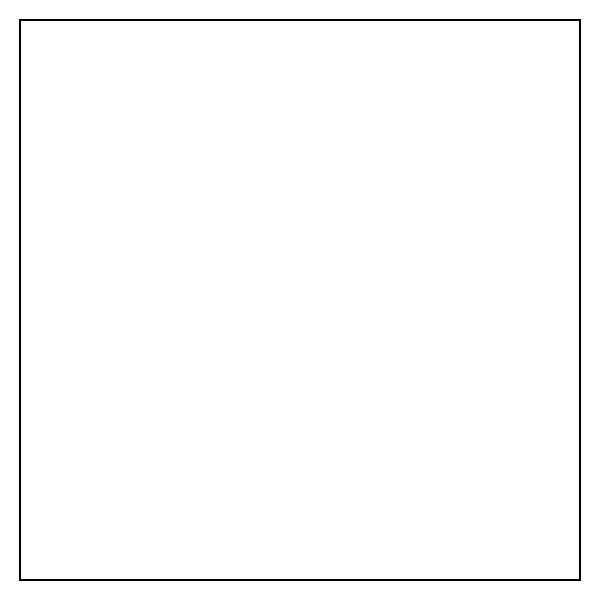} \includegraphics[width=2.3cm]{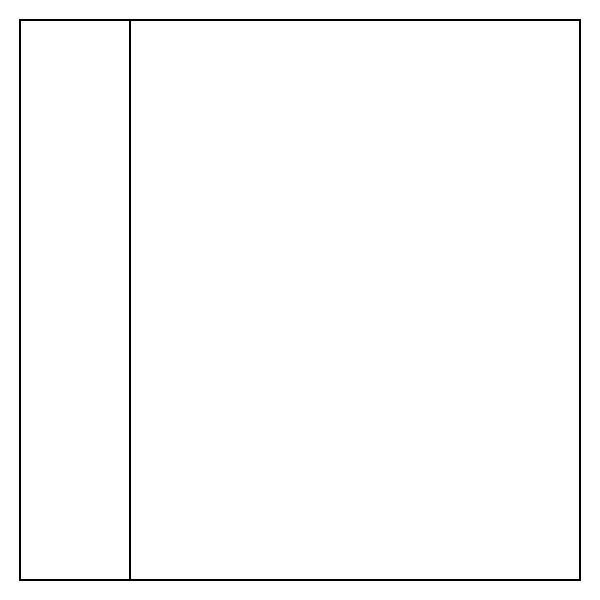} \includegraphics[width=2.3cm]{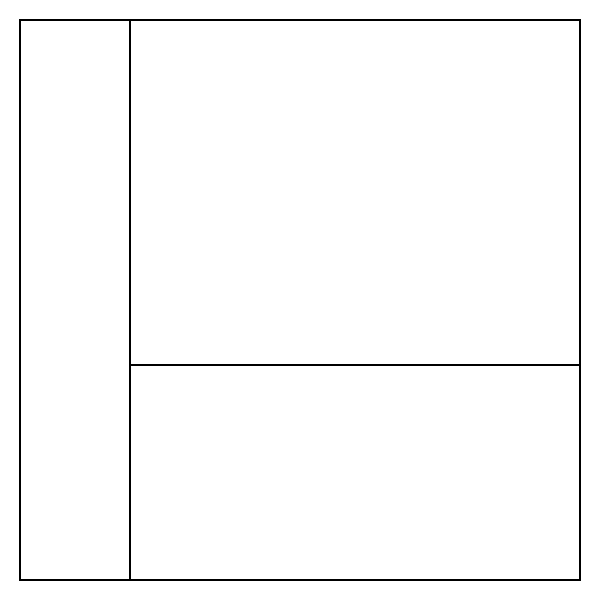} \includegraphics[width=2.3cm]{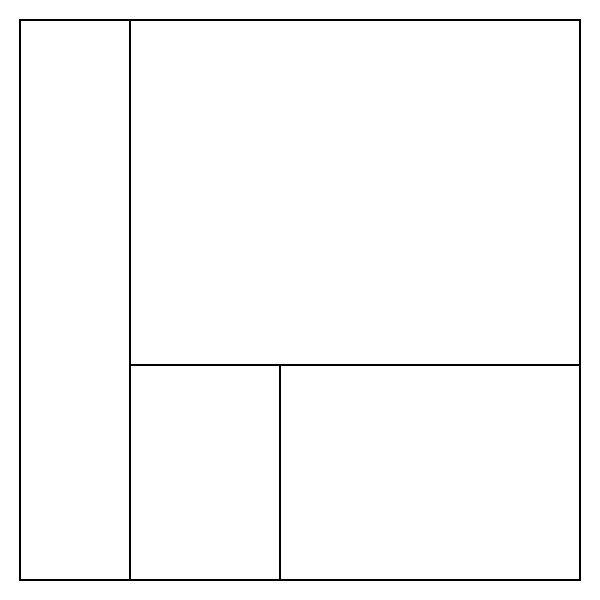} \includegraphics[width=2.3cm]{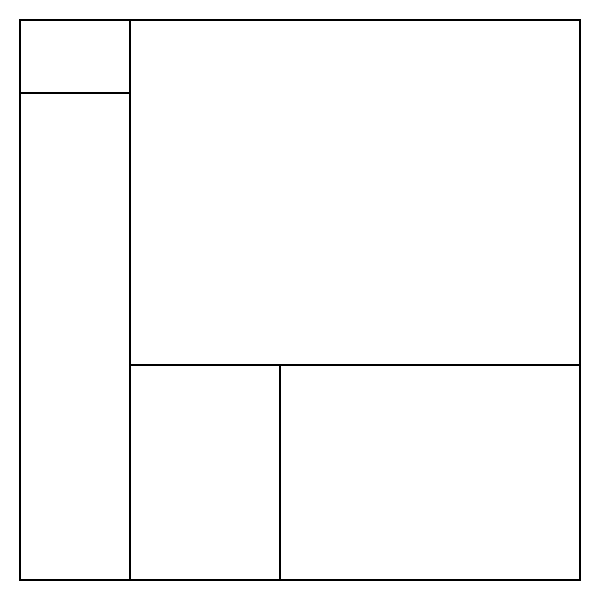} \includegraphics[width=2.3cm]{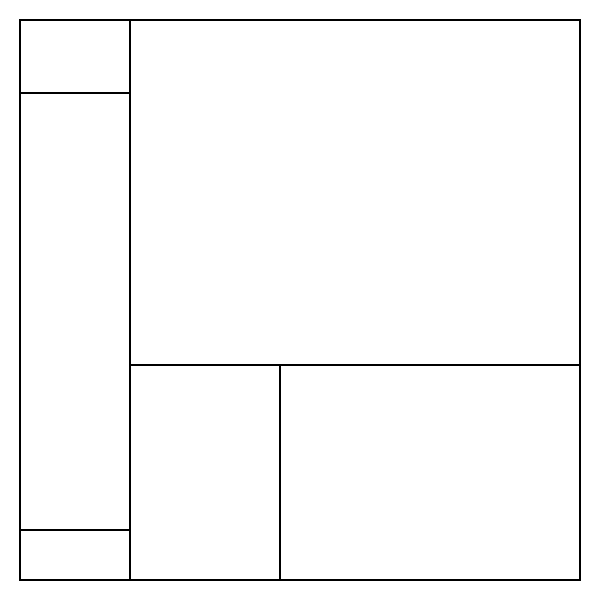} \caption{We begin with a square, which splits vertically into two rectangles. One of these then splits horizontally, and the process continues. Thinner rectangles are more likely to split first.} \label{simplefig} \end{figure}

We work in two dimensions to keep notation manageable, but our proofs should work in three or more dimensions with little additional work. For the same reason, we make a particular choice for the splitting rule---that is, the functions that decide how a fragment's shape affects its branching rate and the direction in which it breaks---but our methods should be adaptable to a variety of spatially-dependent fragmentation models.

Understanding spatially-dependent branching systems is an important problem in its own right, since almost any real-world application of branching tools---from nuclear reactors \cite{harris_et_al:stochastic_NTE_II, horton_et_al:stochastic_NTE_I} to the spread of disease \cite{durrett_foo_leder:spatial_moran, foo_leder_schweinsberg:mutation_timing_spatial_evolution}---involves spatial inhomogeneity. One purpose of this paper is to contribute new techniques to the rigorous mathematical investigation of spatially-dependent branching structures more generally.

A key observation in the study of (mathematical) fragmentation processes is that they satisfy the branching property, in that the future evolution of one fragment, given its current state, does not depend on the other fragments. This enables us to use branching tools in the analysis of fragmentation processes. For example, if we consider the negative logarithm of the sizes of the fragments of a homogeneous fragmentation, then we obtain a continuous-time branching random walk. Bertoin's multitype fragmentation, under the same transformation, becomes a multitype branching random walk. In the same way, our system of fragmenting rectangles can also be thought of as a multitype branching random walk, but one with uncountably many types.

\begin{figure}[htp!] 
\begin{center}
\includegraphics[width=0.46\textwidth]{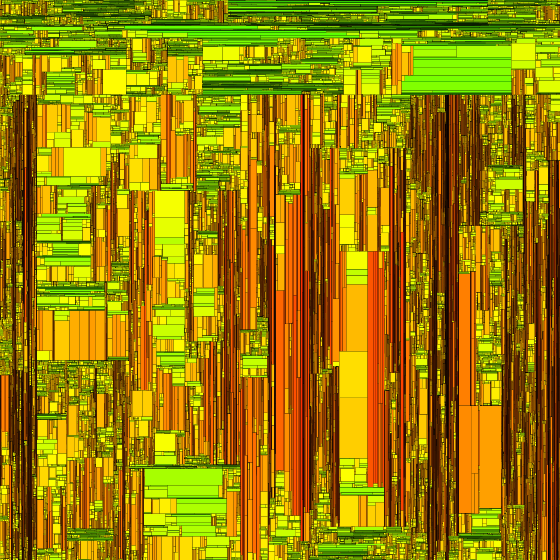} 
\includegraphics[width=0.46\textwidth]{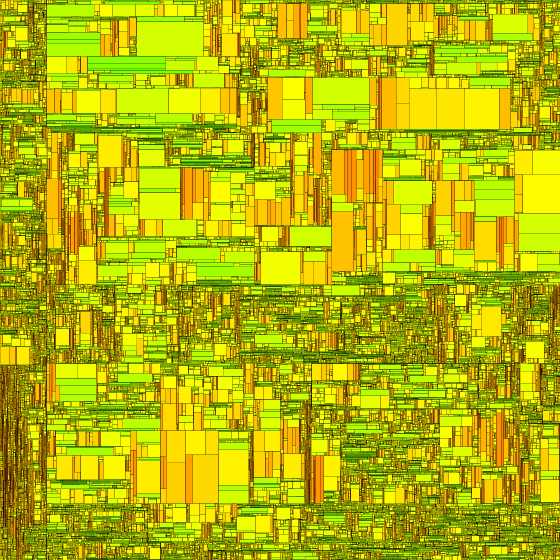}
\end{center}
\vspace{-0.3 cm}
\caption{On the left: a homogeneous model, where every rectangle splits at rate $1$ and splits horizontally or vertically with probability $1/2$ each. On the right: our model where long, thin rectangles split faster, and are more likely to split along their longest side. Tall rectangles are coloured red, fat rectangles are coloured green, and squares are coloured yellow.} \label{fig:comparison}
\end{figure}

Analysing branching systems with uncountably many types is notoriously difficult. Even multitype Galton-Watson processes with countably many types are beyond the scope of standard tools, hence the restriction to finitely many types in most papers on multitype branching systems, including \cite{bertoin:multitypefrag}. Our model includes not just a continuum of types, but a two-dimensional set of possibilities. To add to the difficulty, since we split our rectangles using uniform random variables and then take negative logarithms, and $-\log U$ is exponentially distributed, the jump distribution of our multitype branching random walk does not satisfy the strong Cram\'er condition, and even when rescaled the sample paths include macroscopic jumps. Our analysis is therefore highly technically challenging.

\subsection{The model}

As mentioned above, we work in continuous time, and begin with a square of side-length $1$. At any time, each rectangle of base $b$ and height $h$ independently splits at rate $r(b,h)$ into two smaller rectangles. The probability that it splits vertically is $p(b,h)$, and if so then it splits at a uniform point along its base; otherwise it splits horizontally at a uniform point along its height. The functions $r$ and $p$ are given by
\[r(b,h) = \Big(\frac{1-\log b}{1-\log h}\Big) \vee \Big(\frac{1-\log h}{1-\log b}\Big)\]
and
\[p(b,h) = \frac{1-\log h}{2(1-\log b)} \mathbbm{1}_{b \le h} +  \left( 1-\frac{1-\log b}{2(1-\log h)} \right) \mathbbm{1}_{b > h}.\]
It is easy to see that rectangles with either large base relative to their height, or large height relative to their base, split faster, and are more likely to split along their longer side. The appearance of $1-\log b$ and $1-\log h$, rather than $b$ and $h$, is because splitting events have a multiplicative effect: the distribution of the ratio of each rectangle's measurements to its parent's is invariant. Thus the logarithm of the measurements behaves additively, which ensures that the functions $r$ and $p$ remain non-trivial when we rescale space and time. On the other hand, our choices of $r$ and $p$ are not the only choices with this property, and our methods appear to be fairly robust: it should be possible to adapt them to other sensible splitting rules. We propose this model as a proof of concept that spatial fragmentations (with uncountably many types) can be analysed mathematically.

For a rectangle $v$, we denote its base by $B_v$ and its height by $H_v$. We let $X_v = -\log B_v$ and $Y_v = -\log H_v$. As suggested above, $X_v$ and $Y_v$ are more useful parameterisations of size than $B_v$ and $H_v$ from a mathematical point of view, simply because rectangles' sizes will decay exponentially with time. Under this transformation, our system has the following alternative description.

Begin with one particle at $(0,0)\in\R^2$. Each particle, when at position $(x,y)$ with $x,y \ge 0$, branches at rate
	\begin{equation}\label{eqn:rate2}
	R(x,y)= \frac{x+1}{y+1} \vee \frac{y+1}{x+1}. 
	\end{equation} 
At a branching event, the particle is replaced by two children: letting $\mathcal{U}$ be a uniform random variable on $(0,1)$, independent of everything else, then with probability 
	\begin{equation}\nonumber
	P(x,y) = \frac{y+1}{2(x+1)} \mathbbm{1}_{x \ge y} +  \left( 1-\frac{x+1}{2(y+1)} \right) \mathbbm{1}_{x < y}.
	\end{equation}
the two children have positions $(x - \log \mathcal{U}, y)$ and $(x-\log(1-\mathcal{U}), y)$, and with probability $1-P(x,y)$ they have positions $(x, y - \log \mathcal{U})$ and $(x, y-\log(1-\mathcal{U}))$.

We let $R_X(x,y)=R(x,y)P(x,y)$ and $R_Y(x,y)=R(x,y)(1-P(x,y))$. Then $R_X$ and $R_Y$ denote the rates at which a particle at position $(x,y)$ moves in the first spatial dimension, or the second, respectively.

From now on, we mostly use the second description, and refer to particles and their positions, rather than rectangles and their sizes. As seen above, the two descriptions are entirely equivalent.

\subsection{Main theorem}\label{thm_sec}

Let $E$ be the set of non-decreasing c\`adl\`ag functions $f:[0,1]\to\R$ with $f(0)=0$. Set $f'(s)= \infty$ if $f \in E$ is not differentiable at the point $s\in[0,1]$. By Lebesgue's decomposition theorem, for any function $f\in E$ we may write $f = \tilde f + \hat f$ where $\tilde f$ is absolutely continuous and $\hat f$ is singular.

Our main theorem aims to quantify how many particles have paths which, when rescaled appropriately, fall within a given subset of $E^2$. It is written in the style of a large deviations result, although it is not actually a large deviations result since it is concerned with the almost-sure behaviour of the system rather than events of small probability. 

Since we are interested in rescaled paths for large times, $R$ and $P$ are essentially governed by the ratios $x/y$ and $y/x$. We define the functions $R^*:[0,\infty)^2\to[0,\infty]$ and $P^*:[0,\infty)^2\to[0,1]$ by
\[R^*(x,y) := \begin{cases} \frac{x}{y} \vee \frac{y}{x} &\text{ if } x>0 \text{ or } y>0\\
							1 &\text{ if } x=y=0\end{cases}\]
and
\[P^*(x,y) := \begin{cases} \frac{y}{2 x}\ind_{x \ge y} +  \big(1-\frac{x}{2 y}\big) \ind_{x < y} &\text{ if } x>0 \text{ or } y>0 \\
							 1/2 &\text{ if } x=y=0.\end{cases}\]
Although our splitting rule is described by the functions $R$ and $P$ (or equivalently $r$ and $p$), which are continuous at $0$, at large times the constant terms in those functions become insignificant and the behaviour when the system is rescaled appropriately is captured instead by $R^*$ and $P^*$. We let
\[R^*_X(x,y) := \begin{cases}  R^*(x,y)P^*(x,y) &\text{ if } y>0 \\
							1/2 &\text{ if } y=0\end{cases}\]
							and
\[R^*_Y(x,y) := \begin{cases} R^*(x,y)(1-P^*(x,y)) &\text{ if } x>0 \\
							1/2 &\text{ if } x=0. \end{cases} \]
Suppose that $f=(f_X,f_Y) \in E^2$ and $0\le a\le b\le 1$. Define the functionals
\[I(f,a,b)= \int_a^b \big(2^{1/2} R^*_X(f(s))^{1/2} - f_X'(s)^{1/2} \big)^2 \d s  + \int_a^b \big(2^{1/2} R^*_Y(f(s))^{1/2} - f_Y'(s)^{1/2} \big)^2 \d s,\]
\[J(f,a,b) = I(f,a,b) + \hat f_X(b) - \hat f_X(a) + \hat f_Y(b) - \hat f_Y(a)\]
and
\[\tilde K(f,a,b) = \begin{cases} \int_a^b R^*(f(s)) ds - J(f,a,b) & \text{ if } J(f,a,b)<\infty;\\
								  -\infty & \text{ otherwise.}\end{cases}\]
Note that any $f\in E^2$ is necessarily continuous at $0$, and therefore if $\lim_{t\to 0}\tilde K(f,0,t)\neq -\infty$ then $\tilde K(f,0,t)$ is differentiable (in $t$) at $0$. If $\lim_{t\to 0}\tilde K(f,0,t) = -\infty$ then write $\frac{d}{dt} \tilde K(f,0,t)|_{t=0} = -\infty$. Define
\[K(f) = \begin{cases} \tilde K(f,0,1) & \text{ if }\, \frac{d}{dt} \tilde K(f,0,t)|_{t=0} > 0 \,\text{ and }\, \tilde K(f,0,s)> 0 \,\,\,\,\forall s\le 1;\\
							  -\infty			   & \text{ if $\exists s\le 1$ such that } \tilde K(f,0,s)<0;\\
							  0 & \text{ otherwise.}\end{cases}\]

The functional $\tilde K(f,0,1)$ will be our expected growth rate, in that the expected number of particles at time $T$ whose paths, when rescaled by a factor $T$, are ``near'' $f$ should look something like $e^{\tilde K(f,0,1)T}$. However, the actual number of particles behaving in this way will only look like $e^{\tilde K(f,0,1)T}$ if $\tilde K(f,0,\theta)> 0$ for all $\theta\in[0,1]$. If there exists $\theta\in[0,1]$ such that  $\tilde K(f,0,\theta)<0$ then (with high probability) there will be no particles whose $T$-rescaled paths look like $f$, essentially because this point on $f$ acts as a bottleneck; at this point, it is too difficult for particles to follow $f$, and the population near $f$ dies out.

In order to make this discussion precise we need to specify a topology on our space of functions $E^2$. Since the jumps in our process are exponentially distributed, it is possible for particles to make macroscopic jumps in the sense that their rescaled paths will not be continuous. In fact, it is possible for particles to make two (or more) macroscopic jumps in quick succession. This means that the usual Skorohod topology---the $J_1$ topology, generated by the Skorohod metric---is not suitable, as the rescaled set of paths that our particles take will not be compact in this topology. Instead we use the L\'evy metric \cite{levy1954theorie} on the set of increasing functions on $[0,1]$, which generates Skorohod's $M_2$ topology. We will recall the definition shortly, but first point out that the L\'evy metric has been extended to a metric on the set of c\`adl\`ag functions, known as the graph metric or Borovkov metric, which again generates the $M_2$ topology on this larger space. This metric was introduced in \cite{borovkov:convergence_distn}, and used for example in \cite{mogulskii:large_devs_CPP} to give a large deviations result for compound Poisson processes under only a weak moment Cram\'er condition, because of similar incompatibility with the $J_1$ topology.

The L\'evy metric on $E$ is defined by
\begin{equation}\label{metricdefn}
d(f,g) = \inf\big\{r>0 : f(x-r)-r < g(x) < f(x+r)+r \,\,\,\,\forall x\in[-r,1+r]\big\}
\end{equation}
where $f(x)$ is interpreted to equal $f(0)$ for $x<0$ and $f(1)$ for $x>1$, and similarly for $g$. The metric space $(E,d)$ is complete and separable. In an abuse of notation, we will also write $d$ to mean the product metric on $E^2$ defined by $d((f_X,f_Y),(g_X,g_Y))=\max\{d(f_X,g_X),d(f_Y,g_Y)\}$.

Take $T\ge0$ and let $\Nc_T$ be the set of particles that are alive at time $T$. For $u\in\Nc_T$ and $t\le T$, let $Z_u(t) = (X_u(t),Y_u(t))$ be the position of the unique ancestor of $u$ in $\Nc_t$. For $u\in\Nc_T$ and $s\in[0,1]$, write
\[Z_u^T(s) = Z_u(sT)/T;\]
we call $(Z_u^T(s),\, s\in[0,1])$ the $T$-rescaled path of $u$. For $F\subset E^2$, define
\[N_T(F) = \#\{u\in \Nc_{T} : Z_u^T \in F\},\]
the number of particles at time $T$ whose $T$-rescaled paths have remained within $F$.
Throughout the article we use the convention that $\inf \emptyset = + \infty$ and $\sup \emptyset = - \infty$.

\begin{thm}\label{mainthm}
If $F\subset E^2$ is closed and $\sup_{f\in F} K(f) \neq 0$, then
\[\limsup_{T\to\infty}\frac{1}{T}\log N_T(F) \le \sup_{f\in F} K(f) \,\,\,\, \text{almost surely,}\]
and if $F\subset E^2$ is open and $\sup_{f\in F} K(f) \neq 0$, then
\[\liminf_{T\to\infty}\frac{1}{T}\log N_T(F) \ge \sup_{f\in F} K(f)\,\,\,\,\text{almost surely.}\]
\end{thm}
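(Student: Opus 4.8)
\emph{Overall scheme.} The plan is to run the standard large-deviations programme --- a first-moment upper bound and a second-moment lower bound --- but with every estimate quantified to be exponentially small in $T$, so that Borel--Cantelli along the integer times $T_n=n$ turns the probabilistic bounds into almost sure statements, after which a deterministic monotonicity comparison between $N_T$ and $N_{\lceil T\rceil}$ fills the gaps. The two main tools are: (i) a \emph{many-to-one lemma}, which uses the branching property to write $\E[\sum_{u\in\Nc_T}g(Z_u(\cdot))]=\tilde\E[e^{\int_0^T R(\xi_s)\d s}g(\xi_\cdot)]$ for path functionals $g$, where the spine $\xi$ moves like a single particle but with its jump rates \emph{doubled} --- jumping in the first coordinate at rate $2R_X(\xi)$ and in the second at rate $2R_Y(\xi)$, each jump of size $-\log\mathcal U$ (this doubling is exactly what produces the factor $2^{1/2}$ in $I$); and (ii) a sample-path large deviations estimate for the rescaled spine $\xi^T$ in the $M_2$ (L\'evy/Borovkov-metric) topology. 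For the latter, since the increments of $\xi$ are compound Poisson with $\mathrm{Exp}(1)$ jumps, one is in the regime studied by Mogulskii, where the rate for the absolutely continuous part of an increment $\d f_X$ over macroscopic time $\d s$ is $(2^{1/2}R^*_X(f(s))^{1/2}-f_X'(s)^{1/2})^2\d s$ and the singular part of $f_X$ costs $\hat f_X(1)$, giving total spine rate $J(f,0,1)$; together with the observation that $R$ is homogeneous of degree zero at infinity, so that $R(\xi(sT))\to R^*(\xi^T(s))$ and the weight behaves like $e^{T\int_0^1 R^*(f)}$, this yields $\tfrac1T\log\E[N_T(\cdot)]\to$ the appropriate supremum of $\tilde K(\cdot,0,1)$. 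State-dependence of the rates (which are continuous off the axes) is handled by freezing rates on a fine time-mesh; the blow-up of $R^*$ near the axes is controlled by truncating $R^*$ at a large level $L$, the contribution of paths reaching $\{R^*>L\}$ being negligible because of the strong self-correcting drift towards the diagonal (when $x\gg y$ the particle splits in $y$ at the large rate $R_Y\approx(x+1)/(y+1)$); and exponential tightness follows because $\{f\in E:f(1)\le L\}$ is $M_2$-compact.

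\emph{Upper bound.} By exponential tightness we may intersect $F$ with an $M_2$-compact set and so assume $F$ compact. Cover $F$ by small tubes $B(f)$, $f\in F$, chosen so that on $\overline{B(f)}$ the functional $\tilde K(\cdot,0,1)$ is at most $K(f)+\delta$ when $K(f)>0$, or so that some truncated functional $\tilde K(\cdot,0,\theta)$ is at most $-\eta<0$ when $K(f)=-\infty$; this step uses an upper-semicontinuity property of $g\mapsto\tilde K(g,0,\theta)$, which I would derive from lower semicontinuity of the rate functional $J(\cdot,0,\theta)$ and of $g\mapsto\int_0^\theta R^*(g)$ combined with the truncation above. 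In the first case Markov's inequality and the first-moment estimate give $\P(N_T(B(f))\ge e^{(K(f)+2\delta)T})\le e^{-\delta T}$ for large $T$; in the second case the expected number of particles whose rescaled path follows $B(f)$ up to the bottleneck time $\theta$ is at most $e^{-\eta T/2}$, hence $\P(N_T(B(f))\ge 1)\le e^{-\eta T/2}$, since a rescaled path lying in $B(f)$ on $[0,1]$ restricts to one near $f$ on $[0,\theta]$ (this restriction is the $T$-rescaled path of the ancestor in $\Nc_{\theta T}$). A finite subcover, a union bound over $T_n=n$, Borel--Cantelli and the monotonicity comparison complete this direction.

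\emph{Lower bound.} Suppose $c:=\sup_{f\in F}K(f)>0$; fix $f\in F$ with $K(f)=\tilde K(f,0,1)>c-\delta$, so that $\tilde K(f,0,s)>0$ for all $s\le1$ and $\tfrac{d}{dt}\tilde K(f,0,t)|_{t=0}>0$, and (since $F$ is open) a thin tube around $f$ lies in $F$. Fix a fine mesh $0=t_0<\dots<t_n=1$ and call a particle \emph{good at stage $k$} if its rescaled path up to time $t_k$ lies in a suitably shrinking tube around $f$. The engine is an inductive concentration argument. A truncated second-moment (many-to-two) estimate shows that a single good particle at stage $k$, located near $Tf(t_k)$, produces a number $\Theta$ of good descendants at stage $k+1$ with $\tilde\E[\Theta]=e^{(\tilde K(f,0,t_{k+1})-\tilde K(f,0,t_k)+o(1))T}$ (using additivity of $\tilde K$ in the interval) and $\tilde\E[\Theta^2]\le C(\tilde\E[\Theta]+\tilde\E[\Theta]^2)$; since good particles spawn independent subtrees, if there are $M_k$ of them then $\sum_{i\le M_k}\Theta_i$ concentrates around $M_k\tilde\E[\Theta]$ with relative error $O((M_k\tilde\E[\Theta])^{-1/2})=e^{-\Omega(T)}$, the point being that $\tilde K(f,0,t_j)>0$ at every stage guarantees $M_{j+1}=M_j\tilde\E[\Theta_j]=e^{\Omega(T)}\to\infty$ even across pieces on which the growth increment is negative --- this is precisely the no-bottleneck hypothesis. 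The single initial particle makes the base case degenerate, yielding only a constant success probability $p_0>0$ over a first short macroscopic interval; this is repaired by running the process for a slowly growing constant time $t^\ast(n)=O(\log n)$, at which point there are at least $C\log n$ particles near the origin (a Yule-type estimate), each of which independently achieves the first-stage event with probability $\ge p_0$, so that the probability of total first-stage failure is at most $(1-p_0)^{C\log n}$, which is summable. Combining the $n$ inductive steps gives $N_{T_n}(F)\ge e^{(\tilde K(f,0,1)-\delta)T_n}$ off a summable-probability event; Borel--Cantelli and the monotonicity comparison finish the proof.

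\emph{Main obstacle.} I expect the hardest part to be the lower-bound engine: proving the truncated second-moment (many-to-two) bound $\tilde\E[\Theta^2]\le C(\tilde\E[\Theta]+\tilde\E[\Theta]^2)$ uniformly over the relevant starting configurations when $R^*$ is unbounded and the jump law has no exponential moments beyond its threshold, so that rare pairs of particles that simultaneously drift into the high-rate regions near the axes do not inflate the second moment. Closely related, and also delicate, are the sharp $M_2$-topology sample-path large deviations bounds with the unbounded Feynman--Kac weight $e^{\int R}$, and the semicontinuity properties of $\tilde K$ needed to pass between the variational formula and the probabilistic estimates.
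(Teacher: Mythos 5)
Your upper-bound scheme is essentially the paper's: compactify via the ``$(M,T)$-good'' event (Lemma \ref{AlwaysGMT}), many-to-one plus Markov's inequality, a compound-Poisson coupling to estimate the spine probability (Proposition \ref{Qprobcor}), a finite cover by tubes around piecewise-linear functions (Lemma \ref{coverGMT}), upper semicontinuity of $\tilde K$ (Proposition \ref{uppersemicts}), the separate treatment of bottleneck paths with $\tilde K(f,0,\theta)<0$ (Lemma \ref{supKnull}), Borel--Cantelli along lattice times and the rescaling lemmas to fill in the gaps. That part of your outline is sound.

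The lower bound has a genuine gap in the handling of early times. You propose to run the process for $t^\ast(n)=O(\log n)$, obtain polynomially many particles ``near the origin'' (a Yule-type estimate), and argue each independently achieves the first stage with probability at least a universal constant $p_0>0$. This is exactly the ``blank canvas'' reduction that the paper identifies as unavailable here, and the obstruction is the discontinuity of $R^*$ at $0$: the limiting rate $R^*$ is a function only of the \emph{ratio} $y/x$, so two particles both at distance $O(\log T)$ from the origin, which look identical in the rescaled picture, can have branching rates differing by a factor of order $\log T$ depending on their shapes. The particles created during $[0,t^\ast(n)]$ have random shapes, and no uniform lower bound $p_0$ on the first-stage probability follows without explicit control of these ratios; conversely, the probability that a particle sitting at the origin has $e^{\Omega(T)}$ descendants tracking a fixed path $f$ with $f'(0)=(\lambda,\mu)$ is governed by the mismatch between $R\approx 1$ near the origin and $R^*(f(0+))=\mu/\lambda\vee\lambda/\mu$, and the paper's estimates (Lemmas \ref{detRbd}, \ref{finaldetRbd}, Proposition \ref{Qlbprop}) all carry an explicit cutoff $k\ge\lceil\sqrt n\rceil$ precisely because they break down near $s=0$.

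There is a second, quantitative problem: your per-particle success probability $p_0$ is in fact not a constant. The two-sided spine estimates in the paper's Corollary \ref{PZcor} have errors of size $\exp(-O(M^4T/n^{1/4}+M^3nT^{2/3}))$, so a single well-placed ancestor at time $kT/n$ succeeds only with this exponentially small probability; to beat it you need $\exp(\Omega(T/n^{1/8}))$ ancestors, not $n^c$ or $C\log n$ of them. The paper supplies both needs at once via Proposition \ref{ctssquareprop}: a \emph{discrete-time} moment computation shows that there are $2^{T/n^{1/8}-2T/n^2}$ particles near the diagonal $(s/2,s/2)$ at time $\lceil n^{7/8}\rceil T/n$ (the diagonal being the shape that a particle at the origin naturally maintains, since $P(x,y)$ always favours the longer side). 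Proposition \ref{hfmnprop} then manufactures a piecewise-linear path $h_{f,n}$ which starts along that diagonal, gradually bends towards $f'(0)$ while staying in the region where $\kappa(\lambda,\mu)>0$ (Lemma \ref{kappaconcave}), and satisfies $\tilde K(h_{f,n},\lceil n^{7/8}\rceil/n,1)\ge\tilde K(f,0,1)-\eps$. Your proposal constructs no such detour: it asks the early population to follow $f$ itself, whose initial gradient $(\lambda,\mu)$ may have $\mu/\lambda$ far from $1$, and this is exactly what the model will not cooperate with at $s\approx 0$. To repair the argument you would need both the diagonal-localization estimate (you cannot avoid proving something like Proposition \ref{ctssquareprop}) and the geometric lemma showing you can transition in the rate-function landscape from the diagonal gradient $(1/2,1/2)$ to $(\lambda,\mu)$ without crossing a bottleneck.
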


The case when $\sup_{f\in F} K(f)=0$ is extremely delicate. If, for some function $f$, we have $K(f)=0$ then at some point along the path $f$, the population remaining near $f$ is ``critical'', in the sense of a critical branching process. When $\sup_{f\in F} K(f)=0$, this means that amongst all paths in $F$, the easiest path for particles to follow is a critical one. In general, critical branching processes are significantly more challenging to analyse than non-critical processes, and our situation is made more complex by the inhomogeneity of our branching system. Indeed, we do not even know if there are open sets $F\subset E^2$ that satisfy $\sup_{f\in F} K(f) = 0$. If not, then the condition that $\sup_{f\in F} K(f)\neq 0$ could essentially be removed, subject to a slight alteration to the definition of $K(f)$.

\subsection{Heuristics}

At a basic level, our theorem says that the number of particles whose $T$-rescaled paths remain close to a function $f$ is roughly $\exp(K(f)T)$. The growth rate $K(f)$ consists of two parts: the growth of the population along the path, which is simply $\int_0^1 R^*(f(s)) ds$, and the cost of a typical particle following the path, which is $J(f,0,1)$. However, if the cumulative cost is ever larger than the cumulative growth at any point along the path---that is, if $\tilde K(f,0,s)$ is ever negative---then particles are unable to follow $f$ and therefore $K(f)=-\infty$.

The main strategy for the proof is to break time up into small intervals. On each small interval, we know roughly the location and gradient of $f$ and the rate $R(f(s))$, so we can control both the growth and the cost of following $f$. We bound the largest and smallest values that $R(z)$ can take when $z$ is within a small ball around $f(s)$, and use a coupling to trap a typical particle in our process between two compound Poisson processes that have jump rates corresponding to these maximum and minimum values of $R(z)$. First and second moment bounds then allow us to translate the behaviour of this typical particle into estimates for the whole branching system.

As mentioned in the introduction, this simple explanation disguises a highly technically demanding proof. One of the difficulties that does not usually appear in work on branching structures is the behaviour at early times. A standard approach would be to let the system evolve freely for some time so that there are a large number of particles alive, and then treat each of these particles as a blank canvas, essentially starting its own copy of the original process, using the independence of these copies to improve the accuracy of initial bounds on a single population. We cannot do this, since our rate function $R$ is not only spatially dependent, but once scaled by $T$, it becomes discontinuous at $0$ (reflected in the discontinuity of $R^*$ at $0$). Instead we are forced to use a discrete-time moment bound to show that there are many particles near one particular path---a straight line corresponding to rectangles that are roughly square---at small times, and then show that this collection of particles can ``feed'' a population at future times that is easier to control.

Another non-standard element in our proof is the appearance of the L\'evy metric. As mentioned in Section \ref{thm_sec}, since our particles take jumps whose sizes are exponentially distributed, there are (many) particles whose $T$-rescaled paths are not continuous. Indeed, every particle branches at rate at least $1$, so at time $tT$ there are at least of order $e^{tT}$ particles, and the probability that one particle performs a jump larger than $aT$---which corresponds to size $a$ in the rescaled picture---is $e^{-aT}$. Thus we expect to see many such jumps when $t>a$. (And since particles can branch faster than rate $1$, we will in fact see such jumps significantly earlier.) In order to bound the total number of particles from above, we therefore need to control particles whose paths are discontinuous; hence the appearance of the L\'evy metric.

\subsection{Growth rate in expectation}

A relatively minor modification of our proof of Theorem \ref{mainthm} would yield the growth rate in expectation mentioned after the definition of $K(f)$, namely that if $F\subset E^2$ is closed then
\begin{equation}\label{expectgrowth1}
\limsup_{T\to\infty}\frac{1}{T}\log \E[N_T(F)] \le \sup_{f\in F} \tilde K(f,0,1)
\end{equation}
and if $F\subset E^2$ is open then
\begin{equation}\label{expectgrowth2}
\liminf_{T\to\infty}\frac{1}{T}\log \E[N_T(F)] \ge \sup_{f\in F} \tilde K(f,0,1).
\end{equation}
In particular one may note that there are many sets $F$ such that the expected number of particles whose rescaled paths fall within $F$ is exponentially large, since $\sup_{f\in F^\circ}\tilde K(f,0,1)>0$, and yet almost surely no particles have rescaled paths that fall within $F$, since $\sup_{f\in \bar F} K(f) = -\infty$.

In order to keep this article to a manageable length, we do not include full proofs of \eqref{expectgrowth1} and \eqref{expectgrowth2} here, although they are significantly simpler than the proofs of the upper and lower bounds in Theorem \ref{mainthm}. We will sketch the main points of the arguments in Sections \ref{mainUBproofsec} and \ref{lower_bd_sec}, shortly after the respective proofs of the upper and lower bounds in Theorem \ref{mainthm}.

\subsection{Related work}

A similar model to ours has been considered by Cesana and Hambly \cite{cesana_hambly:probabilistic_martensitic}, and Ball, Cesana and Hambly \cite{ball_et_al:martensitic_avalanches}, motivated by applications to a martensitic phase transition observed in a class of elastic crystals. They consider different splitting rule variants, and work in both two and three dimensions; but rectangles always split at rates that depend only on their area, with a constant probability $p$ (or $1-p$) of splitting horizontally (or vertically), ensuring that their models, suitably transformed, fit into the framework of generalised branching random walks. They give almost sure growth rates for the number of fragments of different shapes, and, motivated by predictions from the physics literature, they study the lengths of the horizontal ``interfaces'' between fragments, obtaining that in certain cases the total number of interfaces larger than $x$ behaves like a random variable multiplied by an explicit power of $x$.

The methods seen in this paper are related to those of Berestycki \emph{et al.}~\cite{berestycki_et_al:growth_rates} on a branching Brownian motion (BBM) with inhomogeneous breeding potential. In that paper, the authors considered a BBM in $\R$ where a particle at position $z$ branched at infinitesimal rate $|z|^p$, for $p\in[0,2)$. Their main result was roughly analogous to ours, giving almost sure growth rates along paths, and they also gave growth rates in expectation analogous to \eqref{expectgrowth1} and \eqref{expectgrowth2}. They analysed their growth rates in some detail, giving implicit equations for the optimal paths and the location of the bulk of the population (which became explicit in the cases $p=0$ and $p=1$). This was a difficult analytic task even for the relatively simple, monotone growth rate seen in \cite{berestycki_et_al:growth_rates}. Our growth rate $K$ is much more complex and it would take a substantial amount of further work to analyse the optimal paths; in order to keep this paper to a manageable length we do not attempt this here.

There are three main difficulties in our model relative to that in \cite{berestycki_et_al:growth_rates}. Firstly, in the BBM, all particles move as standard Brownian motions, independent of their location and their branching rate, whereas in our model particles jump and branch simultaneously. Indeed, it is worth noting that if the branching Brownian motion in \cite{berestycki_et_al:growth_rates} were replaced by an analogous branching random walk, then if we started with one particle at $0$, the initial particle would never branch or move; whereas if we started with a particle at any other site, then even with bounded jump sizes, the collection of particles would colonise space dramatically faster than the BBM (subject to the initial population not returning to $0$ quickly), since a particle branching at rate $|z|^p$ would also be moving at rate $|z|^p$. This highlights the challenge of controlling the dependencies between particles' positions and the growth of the population in our model.

On top of this initial difference, our branching rate $R(z)$ is much more difficult to control than the smooth, symmetric, monotone (on each half-space) function $|z|^p$. And thirdly, our particles are able to make large jumps, meaning that standard large deviations apparatus is more difficult to apply, and we must use a non-standard topology.

Roberts and Schweinsberg \cite{roberts_schweinsberg:gaussian_BBM_inhom} also consider branching Brownian motion in an inhomogeneous potential, this time with a biological application in mind, where the position of a particle represents its fitness and fitter individuals branch more quickly. They used the tools from \cite{berestycki_et_al:growth_rates} to give a heuristic explanation of some of their results, but used a more precise truncation argument for their proofs, based on techniques from \cite{berestycki_et_al:genealogy} and \cite{berestycki_et_al:critical_bbm_absorp_sp}.

For homogeneous spatial branching processes, obtaining a full picture of the spread of the population has been a subject of interest for more than 45 years. To give just a few highlights, the position of the extremal particle in BBM was studied by McKean \cite{mckean:application_bbm_kpp} and Bramson \cite{bramson:maximal_displacement_BBM, bramson:convergence_Kol_eqn_trav_waves}, with more detailed recent studies on the behaviour near the extremal particle by A{\"\i}d{\'e}kon \emph{et.~al.}~\cite{aidekon_et_al:BBM_tip} and Arguin, Bovier and Kistler \cite{arguin_et_al:extremal_BBM}. For non-lattice branching random walks, A{\"\i}d{\'e}kon \cite{aidekon:convergence_law_min_brw} proved convergence in law for the re-centered position of the extremal particle under fairly weak conditions. Bramson, Ding and Zeitouni \cite{bramson_ding_zeitouni:convergence_nonlattice} gave a shorter proof using a second moment method and indicated that it should be possible to adapt their proof to branching random walks that take values on a lattice.

\subsection{Layout of the article}

We begin, in Sections \ref{outlinesec} and \ref{lower_bd_sec}, with outlines of the proofs of the upper and lower bounds in Theorem \ref{mainthm} respectively. In these sections we state several results that are needed for the proof of the main theorem without proving them. The proofs of these intermediate results are then given in later sections.

In Section \ref{GMTsec}, we give a full construction of our system in terms of a marked binary tree. This discrete setting is useful for decoupling some of the dependency structure between the jump times and jump sizes, and allows us to show that particles remain within some compact set with high probability, which will be an important ingredient, especially for the upper bound in Theorem \ref{mainthm}.

In Section \ref{small_times_sec} we aim to control the system at small times, which is a difficult task partly due to the discontinuity of $R^*$ at $0$. We again use the discrete setup described in Section \ref{GMTsec}, and use moment estimates that take advantage of the fact that our particles prefer to split along their longest edge. This work is used for the proof of the lower bound in Theorem \ref{mainthm}.

One of the main tools in our proof is a coupling between compound Poisson processes, which we describe in Section \ref{coupling_sec} and then apply to give upper and lower bounds on the probability that a typical particle remains near a given function.

In Section \ref{finaldetailssec} we put many of the previous results together, move from lattice times to continuous time, and complete the final details of the proof of the upper bound in Theorem \ref{mainthm}.

In Appendix \ref{det_bds_rate} we give deterministic bounds that relate the maximum and minimum of $R$ on small balls to the value of $R^*$ at the centre of the ball, and therefore allow us to link the probabilistic estimates obtained in Section \ref{coupling_sec} to our growth rate $\tilde K$. 

An elementary but somewhat intricate bound on compound Poisson processes is required in Section \ref{coupling_sec}, and we prove this in Appendix \ref{CPP_append}.

Finally, in Appendix \ref{techsec} we carry out some technical work, ensuring that our state space and our growth rate behave sensibly.



\section{Proof outline for the upper bound in Theorem \ref{mainthm}}\label{outlinesec}

Since the proof of Theorem \ref{mainthm} is rather long, we break it into upper and lower bounds. In this section we state a series of results that together enable us to complete the upper bound. We will then prove those results in later sections.

\subsection{Three probabilistic ingredients}\label{probabilistic_ingredients}

The first step in our proof of the upper bound in Theorem \ref{mainthm} is to rule out certain paths that it is difficult for particles to follow, thereby reducing the paths of interest to a compact set. We define, for $M>1$,
\[G_M = \left\{f \in E : s/M \le f(s) \le Ms \,\,\forall s\in[0,1]\right\} \subset E.\]
If $f\in G_M^2$ then we say that $f$ is ``$M$-good''. We note that if $f$ is $M$-good then $R^*_X(f(s))\le M^2$ for all $s\in[0,1]$ and similarly for $R^*_Y$.

We would like to say that the rescaled paths of all particles fall within $G_M^2$ for sufficiently large $M$ , but there is a complication near $s=0$ in that particles will not jump immediately and therefore their paths will fall, however briefly, outside $G_M^2$. Expanding $G_M^2$ by any fixed distance $\eps>0$ would not allow us to control the jump rate sufficiently well, and we instead define, for $M>0$ and $T>1$,
\[G_{M,T} := \left\{f \in E : s/M - 2T^{-2/3} \le f(s) \le M(s+2T^{-2/3}) \,\,\forall s\in[0,1]\right\}.\]
If $f\in G_{M,T}^2$ then we say that $f$ is ``$(M,T)$-good''. We can then show that for large $M$ all particles are $(M,T)$-good with high probability as $T\to\infty$. We note here that the choice of $-2/3$ is not essential; we could choose any power of $T$ in $(-1,-1/2)$.

\begin{lem}\label{AlwaysGMT}
There exist $M_0>1$ and $\delta_0>0$ such that for any sufficiently large $T$,
\[\P\big(\exists v\in\Nc_T : Z_v^T \not\in G_{M_0,T}^2\big) \le e^{-\delta_0 T^{1/3}}.\]
\end{lem}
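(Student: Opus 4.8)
The plan is to reduce the statement to a first-moment estimate for a well-chosen family of ``first exit'' events, to rule out the two ways a rescaled path can leave $G_{M,T}^2$ using two elementary monotonicity properties of the rates, and to do the delicate part of the work at small times via the marked-tree construction of Section~\ref{GMTsec}.

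First I would record that leaving $G_{M,T}^2$ propagates down the tree: if a particle $w$ alive at some time $t_0\le T$ has $Z_w(t_0)\notin\mathrm{Box}(t_0):=\big([\,t_0/M-2T^{1/3},\,Mt_0+2MT^{1/3}\,]\cap[0,\infty)\big)^2$, then every descendant $v\in\Nc_T$ of $w$ has its ancestral path pass through $Z_w(t_0)$, so $Z_v^T\notin G_{M,T}^2$. Hence the event of the lemma is $\{\,$some ancestral line exits $\mathrm{Box}(\cdot)$ before time $T\,\}$, and, since each line exits at a well-defined first time, its probability is bounded by the expected number of ``first exit'' events on $[0,T]$, which the many-to-one lemma turns into an integral over $t\le T$ of a spine expectation. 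Here the spine $\zeta=(X,Y)$ waits an exponential time of rate $2R(\zeta)$ and then jumps by an independent $\mathrm{Exp}(1)$ amount in the first coordinate with probability $P(\zeta)$ and in the second with probability $1-P(\zeta)$ (so the $X$-coordinate jumps at rate $2R_X(\zeta)$ and the $Y$-coordinate at rate $2R_Y(\zeta)$), weighted by $e^{\int_0^t R(\zeta_s)\d s}$. By the $x\leftrightarrow y$ symmetry it suffices to treat failures of the $X$-coordinate --- an \emph{overshoot}, $X_\zeta(t)>Mt+2MT^{1/3}$, or an \emph{undershoot}, $X_\zeta(t)<t/M-2T^{1/3}$ --- the latter being possible only for $t\ge2MT^{1/3}$.

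Two elementary computations then do most of the work. First, $R_X(x,y)\ge\tfrac12$ and $R_Y(x,y)\ge\tfrac12$ for all $x,y\ge0$: indeed $R_X=\tfrac12$ when $x\ge y$, and $R_X=\tfrac{y+1}{x+1}-\tfrac12\ge\tfrac12$ when $x<y$; in particular a coordinate that lags far behind the other sits where its jump rate is \emph{large}, which is why a genuinely undershooting rescaled path is so costly. Second, the \emph{larger} coordinate of $\zeta$ increases at rate at most $1+2e^{-1/2}<3$: when $x>y$, the long side $x$ jumps at rate $2R_X=1$, while the short side $y$ jumps at rate $2R_Y$ but only \emph{overtakes} $x$ when its jump exceeds $d:=x-y$, which --- using $\tfrac{x+1}{y+1}\le1+d$ --- happens at rate $\le2(d+\tfrac12)e^{-d}\le2e^{-1/2}$, and then raises $\max(X,Y)$ by a further $\mathrm{Exp}(1)$ amount by memorylessness. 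Hence $\max(X_\zeta(t),Y_\zeta(t))$ is dominated by a compound Poisson process of rate $3$ with $\mathrm{Exp}(1)$ jumps, call it $\mathrm{CPP}_3$; with a standard compound-Poisson large deviation estimate, for $M$ large the probability that $\zeta$ ever overshoots after time $t_0$ is at most $e^{-c(M)t_0}$, with $c(M)\to\infty$ as $M\to\infty$. Combined with the remark about lagging coordinates, this shows that a rescaled path which leaves $G_M^2$ by a macroscopic amount over a macroscopic part of $[0,1]$ has $\tilde K$ below a negative constant that grows with $M$, so any badness ``visible at rescaled resolution'' contributes at most $e^{-\delta' T}$.

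The remaining and dominant case is an overshoot confined to small times $t\lesssim T^{1/3}$, where the rescaled picture is useless because the un-rescaled rate $R$ is still $O(1)$ near the origin. By monotonicity of the coordinates an overshoot at some $t\le T^{1/3}$ forces $\max(X_v(T^{1/3}),Y_v(T^{1/3}))>2MT^{1/3}$, so --- since the population at time $T^{1/3}$ is only of size $e^{O(T^{1/3})}$ --- one needs that a single spine is very unlikely to drag a coordinate up to $2MT^{1/3}$ by time $T^{1/3}$, and this must survive multiplication by the branching weight $e^{\int_0^{T^{1/3}}R(\zeta_s)\d s}$. The latter is the genuine obstacle: because $R=\tfrac{\max+1}{\min+1}$ blows up whenever $\zeta$ lingers with one coordinate near an axis, the crude bound $R(\zeta_s)\le\mathrm{CPP}_3(s)+1$ is worthless (here the relevant exponent is of order $T^{1/3}$, while $\E\,e^{c\,\mathrm{CPP}_3(s)}$ is already infinite for $c\ge1$). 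One has to show $\zeta$ cannot linger near an axis: holding the short side within distance $\delta$ of the axis while the rate is $\approx R$ forces $\approx Rt/\delta$ consecutive jumps each landing in an interval of length $\approx\delta$, costing a factor $\approx\delta^{Rt/\delta}$, and $\delta^{Rt/\delta}e^{Rt/\delta}=(e\delta)^{Rt/\delta}$ is super-exponentially small for $\delta<1/e$, so the weight can be absorbed. Making this rigorous is precisely the purpose of the marked binary-tree construction of Section~\ref{GMTsec}, which exhibits the jump sizes as an i.i.d.\ $\mathrm{Exp}(1)$ sequence independent of the jump clocks and lets the estimate be run generation by generation while tracking the short coordinate. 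Finally, carrying the small-time argument through for a window of length $T^{\beta}$ instead of $T^{1/3}$ shows the bound is tightest at $\beta=1/3$ --- where the additive slack $2MT^{1/3}$ rather than the multiplicative term $Mt$ is the binding constraint on the rescaled path --- which is the origin of the exponent $T^{1/3}$ and of the stated freedom to replace the power $-2/3$ by any power in $(-1,-1/2)$.
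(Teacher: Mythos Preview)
Your plan and the paper's proof take genuinely different routes, and your route has a gap that the paper's route sidesteps entirely.

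\textbf{What the paper actually does.} The paper does \emph{not} use the many-to-one lemma for this result. Instead it works generation by generation on the marked binary tree. The key observation is that at generation $n$ the positions $(X_v,Y_v)$ depend only on the uniforms $\mathcal U^{\mathrm{split}}_\cdot,\mathcal U^{\mathrm{dir}}_\cdot$, while the birth times $T_v$ depend additionally on the clocks $\mathbbm e_\cdot$; the two families are independent. Lemma~\ref{discretedifficult} first shows, for any $\kappa$, that with probability $1-e^{-\kappa n}$ every $v\in\mathbb T_n$ has $X_v,Y_v\in[n/M,Mn]$ and $T_v\in[n/M,Mn]$, by direct Chernoff bounds plus a simple union bound over the $2^n$ vertices. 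The only delicate step is the lower bound on $X_v\wedge Y_v$: the paper bootstraps, first getting $X_{u'}+Y_{u'}\gtrsim n$ at the half-generation $u'\in\mathbb T_{\lfloor n/2\rfloor}$ (an exact sum of i.i.d.\ exponentials), and then observing that on this event the ratio $(X\vee Y+1)/(X\wedge Y+1)$ is large along the remaining path, so the short side receives a jump with probability bounded away from zero at each step. Corollary~\ref{calGcor} sums these generation bounds into a single event $\mathcal G_M(T)$, and the final step simply reads off that on $\mathcal G_M(T)$ any $u\in\Nc_t$ has generation $n(u)\in[(t-T^{1/3})/M,M(t+T^{1/3})]$ and hence $X_u,Y_u\in[t/M^2-O(T^{1/3}),M^2 t+O(T^{1/3})]$.

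\textbf{The gap in your plan.} Your approach via many-to-one must control the branching weight $e^{\int_0^t R(\zeta_s)\d s}$, and you have not done so except heuristically. Your CPP$_3$ bound on $\max(X_\zeta,Y_\zeta)$ is correct, but it bounds only the $\Q$-probability of an overshoot; to beat the first moment you still need $\int_0^t R(\zeta_s)\d s$ to be $O(t)$ along the relevant paths, which requires the \emph{smaller} coordinate to grow linearly --- precisely the content you later say is ``the genuine obstacle'' and defer. Your appeal to $\tilde K$ being negative is circular: the machinery that makes $\tilde K$ a first-moment growth rate (Proposition~\ref{Qprobcor}, Lemma~\ref{finaldetRbd}) is proved downstream of Lemma~\ref{AlwaysGMT}. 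And your statement that ``making this rigorous is precisely the purpose of the marked binary-tree construction of Section~\ref{GMTsec}'' mischaracterises that section: the construction is used there not to justify a spine argument but to \emph{replace} it, via the decoupling of positions from clocks and a plain union bound --- no weight $e^{\int R}$ ever appears. In short, your heuristics point in a workable direction, but the paper's discrete-time route is both simpler and complete.
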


\noindent
We will prove this lemma in Section \ref{GMTsec}.

Next we give a version of the many-to-one formula, which translates expectations over all particles in our system into calculations involving just one particle. For $z_0\in[0,\infty)^2$, write $\Q_{z_0}$ for a probability measure under which $\xi_t$ is a Markov process living in $\R^2$, such that
	\begin{itemize}
	\item $\xi_0=z_0$;
	\item when the process is in state $z$, jumps occur at rate $2R(z)$;
	\item when a jump occurs from state $z$, it is of the form $(\mathbbm{e},0)$ with probability $P(z)$ and $(0,\mathbbm{e})$ with probability $1-P(z)$, where $\mathbbm{e}$ is an independent exponentially-distributed random variable with parameter $1$.
	\end{itemize}
In other words, the process under $\Q_{z_0}$ behaves like a single particle under $\P_{z_0}$ except that it jumps at twice the rate. We write $\Q_{z_0}$ both for the measure and for its corresponding expectation operator. We will often take $z_0=0$, and in this case we sometimes write $\Q$ rather than $\Q_0$.

The measure $\Q_{z_0}$ described above is precisely the measure $\Q_{z_0}^1$ that appears in \cite{harris_roberts:many_to_few}. The following result is \cite[Lemma 1]{harris_roberts:many_to_few} in the case of our model when $k=1$.

\begin{lem}[Many-to-one, Lemma 1 of \cite{harris_roberts:many_to_few} with $k=1$] \label{MtO}
	Suppose that $z\in\R^2$ and $t\ge0$. For any measurable function $f:\R^2\to\R$,
	\begin{equation}\nonumber
	\E_z \left[ \sum_{u \in \mathcal N_t} f(Z_u(t)) \right] = \Q_z \left[ f(\xi_t) e^{\int_0^t R(\xi_s) ds} \right].
	\end{equation}
\end{lem}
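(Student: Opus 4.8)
My first observation is that our system is a branching Markov process in which each branching event replaces a particle by exactly two offspring, so it is a special case of the framework of \cite{harris_roberts:many_to_few}: the measure $\Q_z$ defined above is exactly their $\Q_z^1$, and the claimed identity is then \cite[Lemma~1]{harris_roberts:many_to_few} specialised to $k=1$. For completeness I would also record the argument behind it, which is especially clean here because particles do not move between branching events. By writing $f=f^+-f^-$ at the end it suffices to treat $f\ge0$, allowing the value $+\infty$; so set $g(z,t)=\E_z[\sum_{u\in\Nc_t}f(Z_u(t))]$ and $h(z,t)=\Q_z[f(\xi_t)e^{\int_0^t R(\xi_s)\d s}]$, and the goal is to show $g=h$.

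The key point is that conditioning on the first event in each description produces the \emph{same} renewal equation. Under $\P_z$ the initial particle sits at $z$ until its first branching time $\tau$, which is therefore $\mathrm{Exp}(R(z))$-distributed, and it is then replaced by particles at $z+V_1$ and $z+V_2$, where $(V_1,V_2)$ is exchangeable (immediate from the $\mathcal U\leftrightarrow 1-\mathcal U$ symmetry) and each $V_i$ has the law ``$(\eb,0)$ with probability $P(z)$, $(0,\eb)$ with probability $1-P(z)$'' of a $\Q_z$-jump (since $-\log\mathcal U$ is $\mathrm{Exp}(1)$). Conditioning on $\tau$, applying the branching property, and using exchangeability to turn $g(z+V_1,\cdot)+g(z+V_2,\cdot)$ into $2\,\E_V[g(z+V,\cdot)]$, I get
\[g(z,t)=e^{-R(z)t}f(z)+2\int_0^t R(z)e^{-R(z)s}\,\E_V\big[g(z+V,t-s)\big]\d s,\]
with $V$ distributed as a $\Q_z$-jump. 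Under $\Q_z$ the process $\xi$ sits at $z$ until its first jump at an $\mathrm{Exp}(2R(z))$ time $\sigma$, so $\int_0^{t\wedge\sigma}R(\xi_s)\d s=R(z)(t\wedge\sigma)$; conditioning on $\sigma$ and using the Markov property gives
\[h(z,t)=e^{-2R(z)t}e^{R(z)t}f(z)+\int_0^t 2R(z)e^{-2R(z)s}e^{R(z)s}\,\E_V\big[h(z+V,t-s)\big]\d s,\]
and the exponential factors combine to give exactly the equation satisfied by $g$. (Equivalently, one can phrase this via the spine change of measure associated with the unit-mean martingale $\sum_{u\in\Nc_t}e^{-\int_0^t R(Z_u(s))\d s}$.)

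It then remains to show this renewal equation has at most one solution in the class containing $g$ and $h$, and this is the only genuine obstacle, caused by the unboundedness of $R$ on $[0,\infty)^2$. I would first establish the identity for the process with branching rate capped at $R\wedge N$: the renewal kernel is then bounded, so Gr\"onwall's inequality gives uniqueness of the bounded solution, hence $g^{(N)}=h^{(N)}$, and in particular $\E_z[\#\Nc_t^{(N)}]\le e^{2Nt}<\infty$. One then passes to the limit $N\to\infty$ by a standard localization argument, using that the uncapped process is non-explosive --- which holds in our model because a single lineage is pushed towards the diagonal $\{x=y\}$, on which $R\le1$, keeping the rates actually encountered under control. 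Granted non-explosion, iterating the renewal equation shows that $\mathcal{K}^n(g-h)\to0$, where $\mathcal{K}$ is the positive integral operator on the right-hand side and $\mathcal{K}^n(g-h)$ is dominated by a constant times the expected number of particles at time $t$ whose ancestral line has undergone at least $n$ branchings; hence $g=h$ for bounded $f\ge0$, and monotone convergence extends this to all $f\ge0$ and then $f=f^+-f^-$ to all measurable $f$, the identity holding whenever either side is finite. As is typical for many-to-one formulas, essentially all the work lies in this non-explosion / uniqueness step --- which is exactly why invoking the general result of \cite{harris_roberts:many_to_few} is convenient.
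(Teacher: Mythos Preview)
The paper does not actually prove this lemma: it is stated as a direct citation of \cite[Lemma~1]{harris_roberts:many_to_few} with $k=1$, and no argument is given. Your proposal correctly identifies this, and your first sentence alone already matches the paper's treatment.

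Your additional self-contained sketch is a genuine bonus and is essentially correct. The renewal-equation derivation is clean and exploits nicely the feature of this model that particles are static between branching events, so conditioning on the first event is straightforward on both the $\P_z$ and $\Q_z$ sides; the use of exchangeability of $(V_1,V_2)$ to reduce to a single marginal $V$ is fine since only first moments are involved. Two minor remarks: (i) on the diagonal one has $R=1$ (and $R\ge1$ everywhere), not $R\le1$, though your intended point that the drift towards the diagonal keeps the rates along a lineage under control is the right heuristic; (ii) the non-explosion step you flag as ``the only genuine obstacle'' is indeed the substantive part, and in this paper it is handled separately via the discrete-tree construction and the estimates behind Lemma~\ref{AlwaysGMT} (Section~\ref{GMTsec}), rather than being folded into the many-to-one proof. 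So your localisation-by-capping plan is reasonable, but in the context of this paper one could equally just point to those results for non-explosion. Either way, invoking \cite{harris_roberts:many_to_few} directly, as both you and the paper do, is the cleanest route.
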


This, combined with Markov's inequality, allows us to give upper bounds on the number of particles whose paths fall within a particular set $F$ simply by bounding $R(f)$ over all $f\in F$ and then estimating the probability that $\xi$ falls within $F$. Estimating this probability will be our next task, but our estimates will not be exactly in terms of the quantities $R_X^*$ and $R_Y^*$ seen in Theorem \ref{mainthm}. Instead they will involve taking the worst and best possible values of $R_X$ and $R_Y$ over small balls about appropriately chosen functions, during a small time interval. We will need several definitions. The reader may like to think of $F=B(f,\eps)$ for some suitably nice function $f$ and small $\eps>0$.

For a non-empty interval $I\subset [0,1]$, $F\subset E^2$ and $T\ge1$, define
\begin{equation*}
R_X^-(I,F,T) = \inf\big\{R_X(Tg(s)) : s\in I,\, g\in F\big\}
\end{equation*}
and
\begin{equation*}
R_X^+(I,F,T) = \sup\big\{R_X(Tg(s)) : s\in I,\, g\in F\big\},
\end{equation*}
and similarly for $R_Y^-(I,F,T)$ and $R_Y^+(I,F,T)$. These correspond to the maximal and minimal possible jump rates over the interval $I$ for particles whose $T$-rescaled paths fall within $F$. For $s\in[0,1]$, we also let
\[x^-(s,F) = \inf\{g_X(s) : g\in F\}, \hspace{4mm} x^+(s,F) = \sup\{g_X(s) : g\in F\},\]
and similarly for $y^-(s,F)$ and $y^+(s,F)$.

Writing $|I|$ for the length of $I$ and $I^-$ and $I^+$ for the infimum and supremum of $I$ respectively, say that we are in the ``$X-$ case'' if $2R^-_X(I,F,T)|I| > x^+(I^+,F) - x^-(I^-,F)$; and in the ``$X+$ case'' if $x^-(I^+,F) - x^+(I^-,F) > 2R^+_X(I,F,T)|I|$. Note that these two cases are mutually exclusive, and roughly correspond to whether the drift of the process on the interval $I$ multiplied by the length of the interval is larger or smaller than the distance we would like it to travel. Note also that it is possible to be in neither case. Define
\[\mathcal E^+_X(I,F,T) = \begin{cases} \Big(\sqrt{2R^-_X(I,F,T)|I|}-\sqrt{x^+(I^+,F) - x^-(I^-,F)}\Big)^2 & \text{in the $X-$ case;}\\
									  \Big(\sqrt{2R^+_X(I,F,T)|I|}-\sqrt{x^-(I^+,F) - x^+(I^-,F)}\Big)^2 & \text{in the $X+$ case;}\\
									  0 & \text{otherwise.}\end{cases}\]
Similarly define $\mathcal E^+_Y(I,F,T)$. We note that for one function $f\in E^2$, the quantity $\mathcal E^+_X([a,b],\{f\},T) + \mathcal E^+_Y([a,b],\{f\},T)$ should be an approximation to---but a little bit bigger than---the functional $I(f,a,b)$ seen in Section \ref{intro_sec}. We similarly define a quantity that should be an approximation to $I(f,a,b)$ from below, namely
\[\mathcal E^-_X(I,F,T) = \begin{cases} \Big(\sqrt{2R^+_X(I,F,T)|I|}-\sqrt{x^-(I^+,F) - x^+(I^-,F)}\Big)^2 & \text{in the $X-$ case;}\\
									  \Big(\sqrt{2R^-_X(I,F,T)|I|}-\sqrt{x^+(I^+,F) - x^-(I^-,F)}\Big)^2 & \text{in the $X+$ case;}\\
									  0 & \text{otherwise.}\end{cases}\]

Write $\|z_1-z_2\| = \max\{|x_1-x_2|,|y_1-y_2|\}$ when $z_i = (x_i,y_i)\in \R^2$ for $i=1,2$. To help us to break the time interval $[0,1]$ into smaller chunks, for $n\in\N$ we define a new metric $\Delta_n$ on $E^2$ by
\[\Delta_{n}(f,g) := \max \left\{  \|f(i/n)- g(i/n)\| : i=0,\dots, n\right\}.\]
For $T>1$, $n\in\N$, $M>1$ and $f\in E^2$, we let
\[\Gamma_{M,T}(f,n) = B_{\Delta_n}(f,1/n^2)\cap B_d(f,1/n)\cap G_{M,T}^2\]
and for $j\in\{0,1,\ldots,n-1\}$, let $I_j=[j/n,(j+1)/n]$.

We also need to extend our rescaling notation to $\xi$ in the natural way. Write $\xi^T$ for the rescaled process $(\xi(sT)/T,\,s\in[0,1])$, and for $I\subset[0,1]$ write $\xi^T|_{I}$ for the restriction to $I$, $(\xi(sT)/T,\,s\in I)$. If $F\subset E^2$ and a function $f$ is defined on a subinterval $I$ of $[0,1]$---for example $\xi^T \lvert_{[0,\theta]}$ with $I=[0,\theta]$---then say that $f\in F|_I$ if there exists $g\in F$ such that $f(s)=g(s)$ for all $s\in I$.

\begin{prop}\label{Qprobcor}
Suppose that $f\in E^2$, $n\in\N$, $T>1$ and $M>1$. Then for any $\theta\in(0,1]$, $i\in\{0,1,\ldots,\lfloor\theta n\rfloor -1\}$, and $z$ such that $\|z-f(i/n)\|<1/n^2$,
\begin{multline*}
\Q\big(\xi^T|_{[i/n,\theta]}\in\Gamma_{M,T}(f,n)\big|_{[i/n,\theta]}\,\big|\,\xi^T_{i/n}=z\big)\\
\le \exp\bigg(-T\sum_{j=i}^{\lfloor \theta n\rfloor-1} \big(\mathcal E^+_X(I_j,\Gamma_{M,T}(f,n),T) + \mathcal E^+_Y(I_j,\Gamma_{M,T}(f,n),T)\big)\bigg).
\end{multline*}
\end{prop}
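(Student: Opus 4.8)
The plan is to slice $[i/n,\theta]$ into the subintervals $I_j$, bound the probability that the rescaled path stays in $\Gamma:=\Gamma_{M,T}(f,n)$ over each $I_j$, and multiply the bounds. First I would invoke the Markov property of $\xi$ at the deterministic times $jT/n$. Staying in $\Gamma$ over $[i/n,\theta]$ forces staying in $\Gamma$ over $I_j$ for every $j\in\{i,\dots,\lfloor\theta n\rfloor-1\}$ (as $[i/n,\theta]$ contains each such $I_j$), and on that event $\xi^T|_{I_j}$ coincides with some $g\in\Gamma\subseteq B_{\Delta_n}(f,1/n^2)$, so, since the endpoints of $I_j$ are grid points of $\Delta_n$, the values $\xi^T_{j/n}$ and $\xi^T_{(j+1)/n}$ lie within $1/n^2$ of $f(j/n)$ and $f((j+1)/n)$ in $\|\cdot\|$. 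Conditioning successively on $\xi^T_{(i+1)/n},\xi^T_{(i+2)/n},\dots$ would then produce
\[\Q\big(\xi^T|_{[i/n,\theta]}\in\Gamma|_{[i/n,\theta]}\,\big|\,\xi^T_{i/n}=z\big)\le\prod_{j=i}^{\lfloor\theta n\rfloor-1}\ \sup_{\|z'-f(j/n)\|<1/n^2}\Q\big(\xi^T|_{I_j}\in\Gamma|_{I_j}\,\big|\,\xi^T_{j/n}=z'\big),\]
reducing the task to bounding a single factor by $\exp\!\big(-T\,\mathcal E^+_X(I_j,\Gamma,T)-T\,\mathcal E^+_Y(I_j,\Gamma,T)\big)$.

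On one interval, the crux is to trap each coordinate of $\xi$ between two compound Poisson processes. I would construct $\xi$ on the actual-time window $[jT/n,(j+1)T/n]$ (of length $T|I_j|=T/n$) from two independent Poisson random measures $\Pi_X,\Pi_Y$ on $\R_+\times\R_+$ carrying independent $\mathrm{Exp}(1)$ marks, an $X$-jump of the marked size being performed whenever $\Pi_X$ has a point at level $\le 2R_X(\xi_{u-})$, and similarly for $Y$ --- this is exactly the coupling developed in Section \ref{coupling_sec}. On the event $\{\xi^T|_{I_j}\in\Gamma|_{I_j}\}$, each $\xi^T(s)$ with $s\in I_j$ equals $g(s)$ for some $g\in\Gamma$, so by definition $R_X(\xi_u)$ lies in $[R_X^-(I_j,\Gamma,T),R_X^+(I_j,\Gamma,T)]$ throughout the window; hence the $X$-increment of $\xi$ over the window is squeezed between $C_X^-$ and $C_X^+$, where $C_X^\pm$ is the total mass of the marks of the points of $\Pi_X$ with time in the window and level below $2R_X^\pm(I_j,\Gamma,T)$, a compound Poisson process of rate $2R_X^\pm(I_j,\Gamma,T)$ with $\mathrm{Exp}(1)$ jumps evaluated at time $T/n$. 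The same holds in the $Y$-coordinate using $\Pi_Y$, and --- the point of separating the randomness --- $(C_X^-,C_X^+)$ is measurable with respect to $\Pi_X$ alone while $(C_Y^-,C_Y^+)$ is measurable with respect to $\Pi_Y$ alone, so the two coordinates are independent.

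It then remains to run the large-deviation estimate in each coordinate. On $\{\xi^T|_{I_j}\in\Gamma|_{I_j}\}$, the rescaled $X$-increment lies in $[\,x^-(I_j^+,\Gamma)-x^+(I_j^-,\Gamma),\ x^+(I_j^+,\Gamma)-x^-(I_j^-,\Gamma)\,]$ by definition of the $x^\pm$'s; thus in the $X-$ case the event forces $C_X^-\le T(x^+(I_j^+,\Gamma)-x^-(I_j^-,\Gamma))$, in the $X+$ case it forces $C_X^+\ge T(x^-(I_j^+,\Gamma)-x^+(I_j^-,\Gamma))$, and otherwise I would use only the trivial bound $e^0=1$. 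A Chernoff bound for a compound Poisson process with $\mathrm{Exp}(1)$ jumps, whose rate function at level $a$ relative to intensity $\lambda$ is $(\sqrt a-\sqrt\lambda)^2$ (applicable precisely because the defining inequality of the $X-$, resp.\ $X+$, case puts $a$ below, resp.\ above, $\lambda$), then delivers the factor $e^{-T\mathcal E^+_X(I_j,\Gamma,T)}$: with $t=T/n=T|I_j|$, $\lambda=2R_X^\pm(I_j,\Gamma,T)$ and $at$ equal to $T$ times the relevant displacement bound, one has $t(\sqrt a-\sqrt\lambda)^2=T\mathcal E^+_X(I_j,\Gamma,T)$. The same computation in the $Y$-coordinate gives $e^{-T\mathcal E^+_Y(I_j,\Gamma,T)}$, and multiplying is legitimate by the independence above; substituting into the product over $j$ finishes the proof. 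For the (elementary but slightly intricate) compound-Poisson tail bound itself I would refer to Appendix \ref{CPP_append}.

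The step I expect to be the main obstacle is the sandwiching construction of the previous paragraph: one must arrange that, on the event of interest, each coordinate of $\xi$ is genuinely dominated on both sides by honest compound Poisson processes that are measurable with respect to independent sources of randomness, even though conditioning on the path remaining in $\Gamma$ entangles the two coordinates through the position-dependent rates $R_X$ and $R_Y$. This is exactly what the coupling of Section \ref{coupling_sec} is designed to provide; granting it, the Markov decomposition is routine and the only further input is the compound-Poisson estimate of Appendix \ref{CPP_append}.
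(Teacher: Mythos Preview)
Your proposal is correct and matches the paper's approach essentially step for step: Markov decomposition into the intervals $I_j$, the sandwich coupling of Section~\ref{coupling_sec} to trap each coordinate between compound Poisson processes with extremal rates, independence of the $X$- and $Y$-bounds, and a Chernoff estimate giving the rate $(\sqrt a-\sqrt\lambda)^2$ in each coordinate. The only quibble is your pointer to Appendix~\ref{CPP_append}: the paper carries out the Chernoff bound directly inside the proof of Lemma~\ref{QtoEub}, while Appendix~\ref{CPP_append} is devoted to the more delicate \emph{lower} bound on compound Poisson tube probabilities used for the first-moment lower bound.
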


The proof of Proposition \ref{Qprobcor} will be the most interesting part of this paper, and involves coupling the process $\xi$ with two other processes, which---as long as $\xi$ remains within $\Gamma_{M,T}(f,n)$---will stay above and below $\xi$ respectively. We carry out this part of the argument in Section \ref{coupling_sec}.

\subsection{Deterministic bounds}\label{det_bds_upper_statement}

The three results Lemma \ref{AlwaysGMT}, Lemma \ref{MtO} and Proposition \ref{Qprobcor} form the main part of our argument, and contain all of the probability required for the upper bound in Theorem \ref{mainthm}. 

Our next task is to translate the quantities $\mathcal E^+_X$ and $\mathcal E^+_Y$ into the more palatable rate functions seen in our main theorem. The deterministic arguments required are not particularly interesting. It will sometimes be useful to note 
that if $\int_a^b R^*(f(s)) ds<\infty$, then $\tilde K(f,a,b)$ has the following alternative representation:
\begin{multline}\label{Kalt}
\tilde K(f,a,b) = -\int_a^b R^*(f(s)) ds + 2\sqrt2\int_a^b \sqrt{R^*_X(f(s))f'_X(s)} ds + 2\sqrt2\int_a^b \sqrt{R^*_Y(f(s))f'_Y(s)} ds\\
- f_X(b) + f_X(a) - f_Y(b) + f_Y(a).
\end{multline}
This can be seen by expanding out the quadratic terms in the definition of $I(f,a,b)$ and simplifying.

Let $\PL_n$ be the subset of functions in $E$ that are linear on each interval $[i/n,(i+1)/n]$ for all $i=0,\ldots,n-1$ and continuous on $[0,1]$.

\begin{prop}\label{detboundratefn}
Suppose that $\theta\in(0,1]$, $M>1$, $n\ge 2M$ and $f\in\PL_n^2\cap G_M^2$. Then for any $k\in\{\lceil\sqrt n\rceil,\ldots,\lfloor \theta n\rfloor-1\}$,
\[\sum_{j=k}^{\lfloor \theta n\rfloor -1} \mathcal E^+_X(I_j,\Gamma_{M,T}(f,n),T) \ge \int_{k/n}^{\lfloor \theta n\rfloor/n} \Big(\sqrt{2R_X^*(f(s))} - \sqrt{f'_X(s)}\Big)^2 ds - O\Big(\frac{M^4}{n^{1/4}}+\frac{M^3n}{T^{1/2}}\Big).\]
\end{prop}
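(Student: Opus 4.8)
The plan is to analyse a single interval $I_j$ at a time, compare $\mathcal E^+_X(I_j,\Gamma_{M,T}(f,n),T)$ with the integral $\int_{I_j}(\sqrt{2R_X^*(f(s))}-\sqrt{f_X'(s)})^2\,ds$, and then sum over $j\in\{k,\ldots,\lfloor\theta n\rfloor-1\}$, keeping track of the accumulated errors. The first step is to pin down the geometry of $\Gamma_{M,T}(f,n)$ on $I_j$: since $f\in\PL_n^2$, on $I_j$ the function $f$ is linear with slope $(f_X'(s),f_Y'(s))$ constant in $s$, and every $g\in\Gamma_{M,T}(f,n)$ satisfies $\|g(i/n)-f(i/n)\|<1/n^2$ at the endpoints $i=j,j+1$ as well as $d(g,f)<1/n$ and $M$-goodness (after noting $G_{M,T}^2\cap\PL_n^2$-neighbourhoods sit inside a slightly enlarged $G_M^2$ for $n\ge 2M$, $T$ large). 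Hence $x^+(I_j^+,F)-x^-(I_j^-,F)$ and $x^-(I_j^+,F)-x^+(I_j^-,F)$ both equal $f_X((j+1)/n)-f_X(j/n)+O(1/n^2) = \frac1n f_X'(s)+O(1/n^2)$ for $s\in I_j$, and $|I_j|=1/n$.

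The second step is to control $R_X^{\pm}(I_j,F,T)$. Here I would invoke the deterministic bounds promised in Appendix~\ref{det_bds_rate}: on the ball of radius $O(1/n)$ around $f(s)$ (rescaled by $T$, so actual radius $O(T/n)$ around $Tf(s)$, with $Tf(s)$ of size $\asymp Ts/n\cdot$const bounded below once $j\ge\lceil\sqrt n\rceil$ so that $s\ge 1/\sqrt n$), the quantities $R_X(Tg(s))/T$ differ from $R_X^*(f(s))$ by at most $O(M^4/n)+O(M^3/T^{1/2})$-type errors — the $1/n$ from the radius of the ball relative to the distance of $f(s)$ from the origin, and the $T^{-1/2}$ (or the additive constant $1$ in $R$ versus $R^*$) from the discrepancy between $R$ and $R^*$. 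The restriction $k\ge\lceil\sqrt n\rceil$ is exactly what keeps $f(s)$ away from the diagonal-singularity region near $0$ where $R_X^*$ is discontinuous, so that these relative errors are summable. Plugging into the definition of $\mathcal E^+_X$: in whichever of the $X-$/$X+$ cases we are in, $\mathcal E^+_X(I_j,F,T)$ is $(\sqrt{2R_X^{\mp}(I_j,F,T)|I_j|}-\sqrt{\cdot})^2$, and by the geometry above this is $(\sqrt{(2R_X^*(f(s))+\text{err})/n}-\sqrt{f_X'(s)/n+O(1/n^2)})^2$; using $|(\sqrt a-\sqrt b)^2-(\sqrt{a'}-\sqrt{b'})^2|\le |a-a'|+|b-b'|+2|\sqrt{ab}-\sqrt{a'b'}|$ and $R_X^*(f(s))\le M^2$, $f_X'(s)\le M$ (from $M$-goodness and $f\in\PL_n$ so slopes $\le M$), one gets that $\mathcal E^+_X(I_j,F,T)$ is at least $\frac1n(\sqrt{2R_X^*(f(s))}-\sqrt{f_X'(s)})^2$ minus an error of order $\frac1n(M^4/n^{?}+M^3/T^{1/2}+M^{5/2}/n^{1/2})$ — the worst single-interval error being $O(M^4/n^{5/4}+M^3/(nT^{1/2}))$ after optimising; summing $n$ intervals gives the stated $O(M^4/n^{1/4}+M^3n/T^{1/2})$. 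In the "otherwise" case $\mathcal E^+_X=0$, but then the drift and displacement are comparable up to $O(1/n^2)$, which forces $(\sqrt{2R_X^*}-\sqrt{f_X'})^2 = O(\text{err})$ on that interval too, so the bound still holds.

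The third step is bookkeeping: $\sum_{j=k}^{\lfloor\theta n\rfloor-1}\frac1n(\sqrt{2R_X^*(f(s_j))}-\sqrt{f_X'(s_j)})^2$ for any choice of $s_j\in I_j$ is a Riemann sum for $\int_{k/n}^{\lfloor\theta n\rfloor/n}(\sqrt{2R_X^*(f(s))}-\sqrt{f_X'(s)})^2\,ds$; since $f\in\PL_n^2$ the integrand is constant on each $I_j$, so this is an exact equality, not an approximation — no further Riemann-sum error. Assembling the per-interval lower bounds and absorbing all errors into a single $O(M^4/n^{1/4}+M^3n/T^{1/2})$ term completes the proof. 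I expect the main obstacle to be the second step: carefully extracting from Appendix~\ref{det_bds_rate} the correct form of the error when passing from $R_X$ on a $T$-ball to $R_X^*$ at the centre, making sure the relative error really is $O(M^4/n)$ per interval (so $O(M^4)$ total, needing the extra $n^{-1/4}$ slack somewhere — presumably from handling the few intervals near $s\approx 1/\sqrt n$ where the ball radius $1/n$ is largest relative to $f(s)\asymp 1/\sqrt n$, giving a $O(1/\sqrt n)$ relative error on $O(\sqrt n)$ intervals, i.e. the $n^{-1/4}$ after a further split), and that the constant-in-$s$ structure of $f'$ on $\PL_n$ really does kill the Riemann-sum discretisation error so that only the rate-function and $R$-vs-$R^*$ errors remain.
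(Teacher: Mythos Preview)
Your overall architecture is right and matches the paper's: expand the quadratic, control $R_X^{\pm}$ via the deterministic bound $|R_X^{\pm}(I_j)-R_X^*(f(s))|\le\delta_{M,T}(j,n)$ from Lemma~\ref{detRbd}, control the displacements $x^{\pm}$ via the $1/n^2$-closeness of $\Gamma_{M,T}(f,n)$ at gridpoints, and sum. Your handling of the ``otherwise'' case is fine, and your observation that $f\in\PL_n^2$ makes the integrand constant on each $I_j$ (so no Riemann-sum error) is exactly what the paper uses.

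The genuine gap is your claim that $f_X'(s)\le M$. This is false: $f\in G_M^2$ only says $s/M\le f_X(s)\le Ms$, which bounds $f_X(s)/s$, not $f_X'(s)$. A piecewise linear $M$-good function can have slope of order $Mj$ on the $j$th interval. Once this bound is removed, your uniform per-interval error $O(M^4/n^{5/4})$ collapses: the cross-term error on $I_j$ is essentially $\sqrt{(\delta_{M,T}(j,n)/n)\cdot(f_X(\tfrac{j+1}{n})-f_X(\tfrac jn))}$, and since $\delta_{M,T}(j,n)$ itself contains a term $\sim M^3 n^{1/2}(f_X(\tfrac{j+1}{n})-f_X(\tfrac jn))$, a single interval with large increment can make this term large. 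The correct move---and this is where the $n^{-1/4}$ actually comes from, not from a boundary layer near $s\approx 1/\sqrt n$ as you guess---is to sum first and then apply Cauchy--Schwarz:
\[
\sum_j\sqrt{\tfrac{\delta_{M,T}(j,n)}{n}\,\big(f_X(\tfrac{j+1}{n})-f_X(\tfrac jn)\big)}\;\le\;\Big(\sum_j\tfrac{\delta_{M,T}(j,n)}{n}\Big)^{1/2}\Big(\sum_j\big(f_X(\tfrac{j+1}{n})-f_X(\tfrac jn)\big)\Big)^{1/2},
\]
and then \eqref{sumdeltaeq} gives the first factor as $O(M^2/n^{1/4}+M^{3/2}n^{1/2}/T^{1/2})$ while the second telescopes to $f_X(1)^{1/2}\le M^{1/2}$. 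This is the content of the paper's Lemma~\ref{detcrosstermbound}, and it is the only place the $n^{1/4}$ appears.
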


We do not aim to give best possible bounds on the error term. Similarly for the sum on the left-hand side, small values of $j$ give rise to larger errors, so there should be some cut-off, but the choice of $\lceil\sqrt n\rceil$ is convenient rather than optimal. We will prove Proposition \ref{detboundratefn} in Appendix \ref{dbrf_sec}.

We will also need the following bound to control the $\exp(\int_0^t R(\xi_s) ds)$ term seen in Lemma \ref{MtO}.

\begin{lem}\label{finaldetRbd}
Suppose that $\theta\in(0,1]$, $M>1$, $n\ge 2M$, $T^{2/3}\ge 3Mn^{1/2}$, $f\in G_M^2$ and $g\in\Gamma_{M,T}(f,n)$. Then
\[\int_0^\theta R(Tg(s)) ds \le \int_0^{\lfloor\theta n\rfloor/n} R^*(f(s))ds + \eta(M,n,T)\]
and for any $k\in \{\lceil\sqrt n\rceil, \lceil\sqrt n\rceil +1, \ldots, \lfloor\theta n\rfloor\}$,
\[\int_{k/n}^{\lfloor\theta n\rfloor/n} R^*(f(s))ds - \eta(M,n,T)\le \int_{k/n}^\theta R(Tg(s)) ds \le \int_{k/n}^{\lfloor\theta n\rfloor/n} R^*(f(s))ds + \eta(M,n,T)\]
where
\[\eta(M,n,T) = O\Big(\frac{M^4}{n^{1/2}} + \frac{M^3n}{T^{1/3}}\Big).\]
\end{lem}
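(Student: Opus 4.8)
The plan is to estimate $\int R(Tg(s))\,ds$ by comparing $R(Tg(s))$, the rate of the true (unrescaled) process at the rescaled point $g(s)$, against $R^*(f(s))$, the asymptotic rate at the reference function $f$. There are two sources of error to control: (i) replacing $R$ by $R^*$, which costs us the additive $+1$ terms inside $R(x,y) = \frac{x+1}{y+1}\vee\frac{y+1}{x+1}$; and (ii) replacing the argument $g(s)$ by $f(s)$, which is justified because $g\in\Gamma_{M,T}(f,n) = B_{\Delta_n}(f,1/n^2)\cap B_d(f,1/n)\cap G_{M,T}^2$. The first source is handled by the elementary observation that for $x,y\ge 0$,
\[
\Big|\frac{x+1}{y+1}\vee\frac{y+1}{x+1} - \frac{x}{y}\vee\frac{y}{x}\Big|
\]
is small when $x,y$ are both large (of order $T$), and more precisely that $R(Tz) \le R^*(z) + C/(T\min\{z_X,z_Y\})$-type bounds hold once $z$ is bounded away from the axes. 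Since $g\in G_{M,T}^2$, we have $g_X(s),g_Y(s)\ge s/M - 2T^{-2/3}$, so for $s$ bounded below (say $s\ge k/n$ with $k\ge\lceil\sqrt n\rceil$) we get $Tg_X(s), Tg_Y(s)\gtrsim T\sqrt n/(Mn) = T/(M\sqrt n)$, which by the hypothesis $T^{2/3}\ge 3Mn^{1/2}$ is large; for $s$ near $0$ we cannot bound $R$ pointwise but we can bound its integral, since $R(Tg(s))\le T g_X(s)/(Tg_Y(s)+1)\vee(\cdots)$ and the region $s\in[0,k/n]$ has length $O(\sqrt n/n) = O(n^{-1/2})$, contributing an error absorbed into the $M^4/n^{1/2}$ term (here one uses that on $G_{M,T}^2$, $R(Tg(s))$ is at most something like $M^2(s+2T^{-2/3})/(s/M - 2T^{-2/3})$, integrable near $0$ after a short initial interval, or crudely bounded using the upper envelope).

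For source (ii), I would compare $\int_{k/n}^{\lfloor\theta n\rfloor/n} R^*(g(s))\,ds$ with $\int_{k/n}^{\lfloor\theta n\rfloor/n} R^*(f(s))\,ds$. On each interval $I_j=[j/n,(j+1)/n]$ the function $f$ varies by at most $M/n$ (as $f\in G_M^2$ is $M$-Lipschitz in the relevant sense — actually $f\in\PL_n^2\cap G_M^2$ in Proposition \ref{detboundratefn}, but here we only need $f\in G_M^2$, so I should instead use monotonicity of $f$ together with $f(s)\le Ms$), and $g$ is within $1/n^2$ of $f$ at the gridpoints $i/n$ by the $\Delta_n$-constraint, hence within roughly $1/n^2 + (\text{oscillation of }f\text{ and }g\text{ on }I_j)$ of $f$ throughout $I_j$; the $d$-ball constraint $d(f,g)<1/n$ controls this oscillation. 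Then I invoke a Lipschitz-type estimate for $R^*$ away from the axes: on the set where both coordinates are $\ge c>0$, $R^*$ is Lipschitz with constant $O(1/c^2)$, and since both $f(s)$ and $g(s)$ have coordinates $\gtrsim s/M \ge \frac{\sqrt n}{Mn} = \frac{1}{M\sqrt n}$ for $s\ge k/n$, we get $|R^*(g(s)) - R^*(f(s))| = O(M^2\sqrt n \cdot (\text{distance}))$; integrating and collecting the distance bounds (of order $1/n$ from the $d$-constraint plus $T^{-2/3}$-type terms from the $G_{M,T}$ versus $G_M$ discrepancy) yields the stated error $O(M^4/n^{1/2} + M^3 n/T^{1/3})$. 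The upper bound over all of $[0,\theta]$ (first displayed inequality) is easier since we only need an upper bound on $R$, so near $s=0$ we can be generous. I expect the main obstacle to be bookkeeping the behaviour near $s=0$ cleanly — the region $[0,\lceil\sqrt n\rceil/n]$ where $R$ and $R^*$ are both potentially large and where the $+1$ in $R$ actually helps control things, so the arguments for the ``full'' integral $[0,\theta]$ and the ``tail'' integral $[k/n,\theta]$ must be handled slightly differently, with the full-integral bound only claimed as a one-sided (upper) estimate precisely because we cannot control $R^*(f(s))$ near $0$ from below. The identities linking $R^*$, $R^*_X$, $R^*_Y$ and the precise optimisation of constants are routine and I would not grind through them.
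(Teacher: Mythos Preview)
Your proposal is essentially correct and follows the same strategy as the paper. Both proofs split the integral into a short initial segment near $s=0$ (where $R$ can be large but the interval is short) and a main segment $[k/n,\theta]$ with $k\ge\lceil\sqrt n\rceil$ (where $g$ is bounded away from the axes and $R(Tg)\approx R^*(f)$), and both recognise that the first displayed inequality is one-sided precisely because $R^*(f(s))$ cannot be controlled from below near $0$.

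The organisational difference is that the paper does not go through your intermediate quantity $R^*(g(s))$. Instead it writes $R=R_X+R_Y$, bounds $R_X(Tg(s))$ on each $I_j$ by the extremal values $R_X^\pm(I_j,\Gamma_{M,T}(f,n),T)$, and then invokes Lemma~\ref{detRbd}, which already packages both of your error sources (i) and (ii) into the single bound $|R_X^\pm(j)-R_X^*(f(s))|\le\delta_{M,T}(j,n)$. The sum $\sum_j \delta_{M,T}(j,n)/n$ is controlled by \eqref{sumdeltaeq}, and the near-zero piece is handled by an explicit two-stage split at $3MT^{-2/3}$ and at $\lceil\sqrt n\rceil/n$ (the first piece contributing $O(M^3/T^{1/3})$, the second $O(M^2/n^{1/2})$).

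One point in your write-up deserves care: your Lipschitz comparison $|R^*(g(s))-R^*(f(s))|$ needs a pointwise bound on $\|g(s)-f(s)\|$, but the constraints defining $\Gamma_{M,T}(f,n)$ only pin $g$ to $f$ at the gridpoints $j/n$ (to within $1/n^2$) and in the L\'evy metric (to within $1/n$). Inside a single $I_j$, the increment $f_X(\tfrac{j+1}{n})-f_X(\tfrac{j}{n})$ can be large for a few $j$, since $f\in G_M^2$ gives no Lipschitz control. The paper handles this by letting the error $\delta_{M,T}(j,n)$ depend linearly on that increment, so that summing over $j$ telescopes to $O(M^4/n^{1/2})$. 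Your approach would need the same mechanism; a uniform Lipschitz constant times a uniform distance bound would not suffice.
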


\noindent
This result will be proved in Appendix \ref{fdrsec}. Again we make little effort to make $\eta(M,n,T)$ the best possible bound.

\subsection{Completing the proof of the upper bound in Theorem \ref{mainthm}}\label{mainUBproofsec}

Recall that if $F\subset E^2$, and $g:[0,\theta]\to\R^2$, we say that $g\in F|_{[0,\theta]}$ if there exists a function $h\in F$ such that $h(u)=g(u)$ for all $u\in [0,\theta]$. We also generalise our rescaling notation slightly: for $t\in[0,T]$, $v\in\Nc_{t}$ and $s\in[0,t/T]$, write
\[Z_v^T(s) = Z_v(sT)/T;\]
again we call $(Z_v^T(s), s\in[0,t/T])$ the $T$-rescaled path of $v$ (previously this was only defined when $t=T$). We can then define
\[N_T(F,\theta) = \#\{v\in\Nc_{\theta T} : Z_v^T\in F|_{[0,\theta]}\},\]
the number of particles at time $\theta T$ whose $T$-rescaled paths have remained within $F$ up to time $\theta$.

\begin{prop}\label{fixedgprop}
Suppose that $M>1$, $\theta\in(0,1]$, $n\ge 2M$ and $T\ge 6M^{3/2}n^{3/4}$. Then for any $g\in G_M^2\cap \PL_n^2$ and $\kappa>0$,
\[\P\big(N_T(\Gamma_{M,T}(g,n),\theta)\ge \kappa\big) \le \frac{1}{\kappa}\exp\bigg(T\tilde K\Big(g,0,\frac{\lfloor\theta n\rfloor}{n}\Big) + O\Big(\frac{M^4T}{n^{1/4}} + M^3 n T^{2/3}\Big)\bigg).\]
\end{prop}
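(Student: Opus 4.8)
The plan is to combine the three probabilistic ingredients (Lemmas~\ref{AlwaysGMT}, \ref{MtO} and Proposition~\ref{Qprobcor}) with the deterministic bounds (Proposition~\ref{detboundratefn} and Lemma~\ref{finaldetRbd}) via a first-moment/Markov argument. First I would bound the expected value $\E[N_T(\Gamma_{M,T}(g,n),\theta)]$ using the many-to-one formula: by Lemma~\ref{MtO},
\[
\E\big[N_T(\Gamma_{M,T}(g,n),\theta)\big] = \Q\Big[\mathbbm{1}\{\xi^T|_{[0,\theta]}\in\Gamma_{M,T}(g,n)\big|_{[0,\theta]}\}\, e^{\int_0^{\theta T} R(\xi_s)\,ds}\Big].
\]
After the change of variables $\int_0^{\theta T}R(\xi_s)\,ds = T\int_0^\theta R(T\xi^T(s))\,ds$, on the event that $\xi^T|_{[0,\theta]}\in\Gamma_{M,T}(g,n)$ we may apply the first inequality of Lemma~\ref{finaldetRbd} (noting the hypothesis $T\ge 6M^{3/2}n^{3/4}$ implies $T^{2/3}\ge 3Mn^{1/2}$ and $n\ge 2M$) to bound the exponent by $T\int_0^{\lfloor\theta n\rfloor/n}R^*(g(s))\,ds + T\eta(M,n,T)$. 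Pulling this deterministic factor out, it remains to estimate $\Q(\xi^T|_{[0,\theta]}\in\Gamma_{M,T}(g,n)\big|_{[0,\theta]})$.

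Next I would split this probability at a small time. The issue is that Proposition~\ref{Qprobcor} only controls the process from time $i/n$ onwards for $i\ge 1$ (and the deterministic Proposition~\ref{detboundratefn} only handles $j\ge\lceil\sqrt n\rceil$), because of the discontinuity of $R^*$ at $0$. So I would write, conditioning on $\xi^T_{k/n}=z$ for $k=\lceil\sqrt n\rceil$,
\[
\Q\big(\xi^T|_{[0,\theta]}\in\Gamma_{M,T}(g,n)\big|_{[0,\theta]}\big) \le \sup_{\|z-g(k/n)\|<1/n^2} \Q\big(\xi^T|_{[k/n,\theta]}\in\Gamma_{M,T}(g,n)\big|_{[k/n,\theta]}\,\big|\,\xi^T_{k/n}=z\big),
\]
using that membership of $\Gamma_{M,T}(g,n)$ forces $\xi^T_{k/n}$ into the ball $\|z - g(k/n)\| < 1/n^2$ (from the $B_{\Delta_n}$ constraint). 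Here one trivially bounds the contribution of the initial segment $[0,k/n]$ by $1$; the cost of discarding it is cheap because $k/n \le 2/\sqrt n$ and on $G_M$ the rate integral over $[0,k/n]$ is at most $M^2 \cdot 2/\sqrt n$, which is absorbed into the error term (and the growth that is lost is also negligible — this is the reason for the $O(M^4 T/n^{1/4})$-type error, not $O(M^4 T/n^{1/2})$, since one must also account for the part of $\tilde K$ lost on $[0,k/n]$, bounded crudely using $g\in G_M^2\cap\PL_n^2$). Then Proposition~\ref{Qprobcor} gives
\[
\Q\big(\xi^T|_{[k/n,\theta]}\in\Gamma_{M,T}(g,n)\big|_{[k/n,\theta]}\,\big|\,\xi^T_{k/n}=z\big) \le \exp\Big(-T\sum_{j=k}^{\lfloor\theta n\rfloor-1}\big(\mathcal E^+_X(I_j,\Gamma_{M,T}(g,n),T)+\mathcal E^+_Y(I_j,\Gamma_{M,T}(g,n),T)\big)\Big),
\]
and Proposition~\ref{detboundratefn} (applied to both $X$ and $Y$ coordinates, using $g\in\PL_n^2\cap G_M^2$, $n\ge 2M$) lower-bounds this sum by $\int_{k/n}^{\lfloor\theta n\rfloor/n}\big((\sqrt{2R_X^*(g)}-\sqrt{g_X'})^2 + (\sqrt{2R_Y^*(g)}-\sqrt{g_Y'})^2\big)\,ds - O(M^4/n^{1/4} + M^3 n/T^{1/2})$, i.e.\ essentially $I(g,k/n,\lfloor\theta n\rfloor/n)$ up to error.

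Assembling the pieces: the expectation is at most
\[
\exp\Big(T\int_0^{\lfloor\theta n\rfloor/n}R^*(g(s))\,ds - T\,I\big(g,k/n,\tfrac{\lfloor\theta n\rfloor}{n}\big) + O\big(\tfrac{M^4 T}{n^{1/4}} + M^3 n T^{2/3} + T\eta\big)\Big).
\]
Since $g\in\PL_n^2$ is absolutely continuous, $\hat g_X = \hat g_Y = 0$, so $J(g,0,\cdot)=I(g,0,\cdot)$, and $I(g,k/n,\lfloor\theta n\rfloor/n) = I(g,0,\lfloor\theta n\rfloor/n) - I(g,0,k/n)$ with the last term $O(M^2/\sqrt n)$ again absorbed into the error; likewise replacing $\int_0^{\lfloor\theta n\rfloor/n}R^*(g)$ by the same thing costs nothing. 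Using the alternative representation~\eqref{Kalt} (or just the definition), $\int_0^{\lfloor\theta n\rfloor/n}R^*(g) - I(g,0,\lfloor\theta n\rfloor/n) = \tilde K(g,0,\lfloor\theta n\rfloor/n)$. Then $T\eta(M,n,T) = O(M^4 T/n^{1/2} + M^3 n T^{2/3})$ is dominated by the stated error $O(M^4 T/n^{1/4} + M^3 n T^{2/3})$. Finally, Markov's inequality gives $\P(N_T(\Gamma_{M,T}(g,n),\theta)\ge\kappa)\le \kappa^{-1}\E[N_T(\Gamma_{M,T}(g,n),\theta)]$, which is exactly the claimed bound. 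I expect the main obstacle to be the careful bookkeeping of the error terms near $s=0$ — in particular verifying that discarding the segment $[0,\lceil\sqrt n\rceil/n]$, and the attendant loss in both the $R^*$-growth integral and in $\tilde K$, genuinely costs only $O(M^4 T/n^{1/4})$ rather than something worse, and checking that the hypotheses $n\ge 2M$, $T\ge 6M^{3/2}n^{3/4}$ are exactly what is needed to legally invoke Propositions~\ref{Qprobcor}, \ref{detboundratefn} and Lemma~\ref{finaldetRbd} simultaneously.
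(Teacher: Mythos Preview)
Your proposal is correct and follows essentially the same route as the paper: Markov's inequality, many-to-one (Lemma~\ref{MtO}), Lemma~\ref{finaldetRbd} to control the $R$-integral, Proposition~\ref{Qprobcor} for the $\Q$-probability, and Proposition~\ref{detboundratefn} to convert $\sum_j\mathcal E^+$ into $I(g,\cdot,\cdot)$, with the small-$s$ contribution on $[0,\lceil\sqrt n\rceil/n]$ bounded crudely by $O(M^2/\sqrt n)$. The only cosmetic difference is that the paper applies Proposition~\ref{Qprobcor} directly with $i=0$ (which is legitimate since $\xi^T_0=0=g(0)$) and then simply drops the non-negative terms $\mathcal E^+_X,\mathcal E^+_Y$ for $j<\lceil\sqrt n\rceil$, whereas you condition at time $\lceil\sqrt n\rceil/n$ and bound the initial-segment probability by $1$; these two manoeuvres are equivalent and yield the same estimate.
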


\noindent
We will prove Proposition \ref{fixedgprop}, which forms the heart of the argument to prove the upper bound in Theorem \ref{mainthm}, in Section \ref{fixedTsec}.

Our next result applies Proposition \ref{fixedgprop} to show that for $F\subset E^2$, at any large time $T$, the number of particles whose $T$-rescaled paths fall within $F$ is unlikely to be much larger than $\tilde K(f,0,1)$. Recall the definition of $M_0$ and $\delta_0$ from Lemma \ref{AlwaysGMT}.

\begin{prop}\label{fixedTprop}
Suppose that $F\subset E^2$ is closed and $M\ge 4M_0$. Then for any $\eps>0$,
\[\lim_{T\to\infty} \frac{1}{T^{1/3}}\log \P\bigg( N_T(F) \ge \exp\Big(T\sup_{f\in F\cap G_M^2} \tilde K(f,0,1) + T\eps\Big)\bigg) \le -\delta_0.\]
\end{prop}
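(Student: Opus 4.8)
The plan is to reduce the statement about the closed set $F$ to a finite union of statements about the tube-sets $\Gamma_{M,T}(g,n)$ controlled by Proposition \ref{fixedgprop}, and then apply a union bound. First I would fix $\eps>0$ and use Lemma \ref{AlwaysGMT}: on the event that all particles are $(M_0,T)$-good (whose complement has probability $\le e^{-\delta_0 T^{1/3}}$), every particle with $Z_v^T\in F$ in fact has $Z_v^T \in F\cap G_{M_0,T}^2$. Since $M\ge 4M_0$, for large $T$ we have $G_{M_0,T}^2\subset G_M^2$ (the additive $T^{-2/3}$ slack is absorbed by the factor $4$), so it suffices to bound the number of particles whose rescaled paths lie in $F\cap G_M^2$, a set on which $\tilde K$ and $R^*$ are uniformly controlled.

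Next I would choose $n=n(\eps,M)$ large, and then $T$ large (so that the constraints $n\ge 2M$ and $T\ge 6M^{3/2}n^{3/4}$ of Proposition \ref{fixedgprop} hold, and so that the error term $O(M^4 T/n^{1/4}+M^3 n T^{2/3})$ is at most $\tfrac{\eps}{2}T$ — note $n$ is fixed before $T\to\infty$, so $M^3 n T^{2/3}=o(T)$ and $M^4T/n^{1/4}\le \tfrac{\eps}{4}T$ for $n$ large). Now cover $F\cap G_M^2$ by finitely many sets of the form $\Gamma_{M,T}(g_k,n)$ with $g_k\in G_M^2\cap\PL_n^2$: this is where the technical work of Appendix \ref{techsec} enters — one needs that $G_M^2$ (or at least its image under the $\Delta_n$-quantisation) is covered by finitely many balls $B_{\Delta_n}(g_k,1/n^2)\cap B_d(g_k,1/n)$ with piecewise-linear centres, and that for each $g_k$ meeting $F\cap G_M^2$ one has $\tilde K(g_k,0,\lfloor\theta n\rfloor/n)\le \sup_{f\in F\cap G_M^2}\tilde K(f,0,1)+\tfrac{\eps}{4}T\big/T$, i.e. $\tilde K$ is continuous/upper semicontinuous enough on $G_M^2$ and $\lfloor n\rfloor/n=1$ here since $\theta=1$. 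Each particle counted in $N_T(F)$ on the good event lies in at least one $\Gamma_{M,T}(g_k,n)$, so $N_T(F)\le \sum_k N_T(\Gamma_{M,T}(g_k,n),1)$.

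Finally, apply Proposition \ref{fixedgprop} with $\theta=1$ to each of the (finitely many, say $L$) sets $\Gamma_{M,T}(g_k,n)$, with $\kappa = \tfrac{1}{L}\exp(T\sup_{f\in F\cap G_M^2}\tilde K(f,0,1)+T\eps)$: by a union bound,
\[\P\Big(N_T(F)\ge \exp\big(T\textstyle\sup_{f\in F\cap G_M^2}\tilde K(f,0,1)+T\eps\big)\Big)\le e^{-\delta_0 T^{1/3}} + \sum_{k=1}^{L}\P\big(N_T(\Gamma_{M,T}(g_k,n),1)\ge \kappa\big).\]
Each summand is at most $\tfrac{1}{\kappa}\exp(T\tilde K(g_k,0,1)+O(M^4T/n^{1/4}+M^3nT^{2/3}))\le \tfrac{L}{\exp(T\sup\tilde K+T\eps)}\exp(T\sup\tilde K + \tfrac{\eps}{2}T) = L\,e^{-\eps T/2}$, which decays faster than $e^{-\delta_0 T^{1/3}}$. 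Hence the whole probability is $O(e^{-\delta_0 T^{1/3}})$, and taking $\tfrac{1}{T^{1/3}}\log$ and letting $T\to\infty$ gives the claimed bound by $-\delta_0$. The main obstacle I anticipate is the covering/semicontinuity step: making precise that $F\cap G_M^2$ is covered by \emph{finitely many} $\Gamma_{M,T}(g_k,n)$ with piecewise-linear centres whose $\tilde K$-values approximate $\sup_{F\cap G_M^2}\tilde K(\cdot,0,1)$ from above requires compactness of $G_M^2$ in the Lévy metric together with suitable regularity of $\tilde K$ (lower semicontinuity of $J$, hence upper semicontinuity of $\tilde K$) — precisely the sort of statement relegated to Appendix \ref{techsec} — and one must check that the finite cover can be chosen uniformly in $T$ (only the $G_{M,T}^2$-component of $\Gamma_{M,T}$ depends on $T$, and it only shrinks as $T$ grows, so this is harmless).
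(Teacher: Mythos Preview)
Your overall strategy matches the paper's proof: restrict to good paths via Lemma~\ref{AlwaysGMT}, cover by finitely many tubes $\Gamma_{M,T}(g_k,n)$ around piecewise-linear centres (Lemma~\ref{coverGMT}), control $\tilde K(g_k,0,1)$ using upper semicontinuity (Corollary~\ref{uppersemictscor}), apply Proposition~\ref{fixedgprop} to each tube, and take a union bound.

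There is one technical slip. The inclusion $G_{M_0,T}^2\subset G_M^2$ that you invoke is \emph{false}: for $s$ near $0$ the lower bound $s/M_0 - 2T^{-2/3}$ in the definition of $G_{M_0,T}$ is negative, so a path in $G_{M_0,T}$ can satisfy $f(s)<s/M$ for small $s$ (indeed any actual particle path has $f(0)=0$ and stays at $0$ for a positive time, so never lies in $G_M$). The ``factor $4$'' does not absorb the additive $T^{-2/3}$ slack uniformly in $s$. The paper avoids this by \emph{not} passing to $G_M^2$: it keeps the $T$-dependent set $F\cap G_{M,T}^2$ and uses Lemma~\ref{coverGMT} to cover it by balls whose centres $g_i$ lie in the strictly larger class $G_{4M}^2\cap\PL_n^2$. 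Proposition~\ref{fixedgprop} is then applied with $4M$ in place of $M$, and the semicontinuity bound from Corollary~\ref{uppersemictscor} gives $\max_i\tilde K(g_i,0,1)\le \sup_{f\in F\cap G_{4M}^2}\tilde K(f,0,1)+\eps/3$. This is why the final statement requires $M\ge 4M_0$ rather than $M\ge M_0$: the ``$4$'' comes from the covering lemma, not from any inclusion of good-path sets. Once you make this adjustment your argument goes through exactly as the paper's does.
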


\noindent
The proof of this result will use Lemma \ref{AlwaysGMT} together with some technical lemmas to ensure that we can cover $F$ with finitely many small balls around piecewise linear functions, and then apply Proposition \ref{fixedgprop}. The proof is also in Section \ref{fixedTsec}.

There are many paths $f$ that satisfy $K(f)=-\infty$ but $\tilde K(f,0,1)>0$. These are paths where there exists $\theta\in(0,1)$ such that $\tilde K(f,0,\theta)<0$, and therefore the population of particles whose rescaled paths are near $f$ becomes extinct around time $\theta T$. Since a population cannot recover once it becomes extinct, no particles follow such paths up to time $T$ even though the expected growth by the end of the path, $\tilde K(f,0,1)$, can be positive. For sets $F$ that only contain such paths, Proposition \ref{fixedTprop} does not provide a useful bound, and we therefore need a slightly different approach.

\begin{lem}\label{supKnull}
If $F\subset E^2$ is closed and $\sup_{f\in F} K(f) = -\infty$, then 
\[\lim_{T\to\infty} \frac{1}{T^{1/3}}\log \P\big( N_T(F) \ge 1\big) \le -\delta_0.\]
\end{lem}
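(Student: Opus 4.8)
The plan is to reduce the statement to an application of Proposition \ref{fixedTprop} after first excising the paths that cause trouble, namely those $f$ with $\tilde K(f,0,1)>0$ but $K(f)=-\infty$. Since $F$ is closed and $\sup_{f\in F}K(f)=-\infty$, every $f\in F$ either has $\tilde K(f,0,\theta)<0$ for some $\theta\le 1$, or has $\frac{d}{dt}\tilde K(f,0,t)|_{t=0}\le 0$. The first step is to reduce to a compact set: by Lemma \ref{AlwaysGMT}, there exist $M_0,\delta_0$ such that $\P(\exists v\in\Nc_T : Z_v^T\notin G_{M_0,T}^2)\le e^{-\delta_0 T^{1/3}}$, so up to this negligible probability we may intersect $F$ with $G_{M_0,T}^2$, and hence (since $G_{M_0,T}^2$ converges to $G_{M_0}^2$ as $T\to\infty$, and after a small inflation argument as in the proof of Proposition \ref{fixedTprop}) we may work with $F\cap G_{M}^2$ for a fixed large $M\ge 4M_0$. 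Note $F\cap G_M^2$ is a closed subset of the compact set $G_M^2$, hence compact.

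The key idea for the remaining set is that each $f\in F\cap G_M^2$ has a ``bottleneck time'' $\theta_f\in[0,1]$ at which $\tilde K(f,0,\theta_f)\le 0$: either $\tilde K(f,0,\theta)<0$ for some $\theta$, and we take that $\theta$ (so $\theta_f<1$), or else $\frac{d}{dt}\tilde K(f,0,t)|_{t=0}\le 0$ and we may take $\theta_f$ to be a small time at which $\tilde K(f,0,\theta_f)\le 0$ (using continuity/differentiability of $t\mapsto\tilde K(f,0,t)$ at $0$, guaranteed by the remark before the definition of $K$). By the technical continuity properties of $\tilde K$ established in Appendix \ref{techsec}, the map $f\mapsto\inf_{\theta\le 1}\tilde K(f,0,\theta)$ is upper semicontinuous on $G_M^2$, so for each $f$ there is a neighbourhood on which $\inf_{\theta}\tilde K(g,0,\theta)<\eps'$ for a suitable small $\eps'>0$; equivalently, for each $f$ there is a ball $\Gamma_{M,T}(h_f,n_f)$ around a nearby piecewise-linear $h_f\in\PL_{n_f}^2\cap G_M^2$ and a time $\theta_f$ such that $\tilde K(h_f,0,\lfloor\theta_f n_f\rfloor/n_f)$ is at most, say, $-\eps'/2$. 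By compactness, finitely many such balls $\Gamma_{M,T}(h_1,n_1),\dots,\Gamma_{M,T}(h_m,n_m)$ cover $F\cap G_M^2$, with associated times $\theta_1,\dots,\theta_m$.

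Now apply Proposition \ref{fixedgprop} to each $h_i$ with $\theta=\theta_i$ and $\kappa=1$: for $T$ large enough (so that $T\ge 6M^{3/2}n_i^{3/4}$ for all $i$),
\[\P\big(N_T(\Gamma_{M,T}(h_i,n_i),\theta_i)\ge 1\big)\le \exp\Big(T\tilde K\big(h_i,0,\tfrac{\lfloor\theta_i n_i\rfloor}{n_i}\big) + O\big(\tfrac{M^4T}{n_i^{1/4}}+M^3 n_i T^{2/3}\big)\Big)\le \exp\big(-\tfrac{\eps'}{4}T\big)\]
once the $n_i$ are chosen large and $T$ is large enough that the $O(M^3 n_i T^{2/3})$ term is dominated. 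If $N_T(F)\ge 1$ then some particle's rescaled path lies in $F\cap G_{M_0,T}^2$ (off the event of Lemma \ref{AlwaysGMT}), hence in some $\Gamma_{M,T}(h_i,n_i)$, and in particular its restriction to $[0,\theta_i]$ witnesses $N_T(\Gamma_{M,T}(h_i,n_i),\theta_i)\ge 1$. A union bound over $i=1,\dots,m$ and over the two events gives $\P(N_T(F)\ge 1)\le e^{-\delta_0 T^{1/3}}+m\,e^{-\eps' T/4}$, and taking $\frac1{T^{1/3}}\log$ and letting $T\to\infty$ yields the claimed bound $-\delta_0$ (the $e^{-\eps' T/4}$ term being super-exponentially smaller than $e^{-\delta_0 T^{1/3}}$).

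The main obstacle I expect is the extraction of a \emph{uniform} bottleneck: turning the pointwise statement ``each $f$ has some $\theta_f$ with $\tilde K(f,0,\theta_f)\le 0$'' into a finite cover on which a \emph{single} negative value $-\eps'/2$ is attained at a \emph{fixed} rational time $\lfloor\theta_i n_i\rfloor/n_i$ for the piecewise-linear centre. This requires the semicontinuity of $(f,\theta)\mapsto\tilde K(f,0,\theta)$ from Appendix \ref{techsec}, care that approximating $f$ by a piecewise-linear $h_i$ inside $\Gamma_{M,T}(h_i,n_i)$ does not destroy the strict negativity (again controlled by the continuity estimates there), and handling the boundary case $\theta_f=0$ via the differentiability of $\tilde K(f,0,\cdot)$ at $0$. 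Everything else is a routine compactness-plus-union-bound argument built on Proposition \ref{fixedgprop} and Lemma \ref{AlwaysGMT}.
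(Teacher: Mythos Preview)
Your proposal is correct and follows essentially the same route as the paper's proof. The paper packages your ``main obstacle'' --- extracting a uniform bottleneck on a neighbourhood of $F$ --- into a separate Lemma~\ref{supKneg}, which shows directly that $\sup_{f\in B(F,\eps)\cap G_{M,1}^2}\inf_{\theta}\tilde K(f,0,\theta)<0$ for some $\eps>0$, and then uses a single $n$ together with Lemma~\ref{coverGMT} to produce the finite cover by balls $\Gamma_{M,T}(g_i,n)$; from there the argument (find $\hat\theta_i$, apply Proposition~\ref{fixedgprop} with $\kappa=1$, union bound with Lemma~\ref{AlwaysGMT}) is exactly as you describe.

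One small simplification: your case split is unnecessary. By the definition of $K$, the condition $K(f)=-\infty$ holds \emph{only} when $\tilde K(f,0,\theta)<0$ for some $\theta\le 1$; the case ``$\frac{d}{dt}\tilde K(f,0,t)|_{t=0}\le 0$ with $\tilde K(f,0,\cdot)\ge 0$ everywhere'' gives $K(f)=0$, not $-\infty$, so it cannot occur under the hypothesis $\sup_{f\in F}K(f)=-\infty$. Thus every $f\in F$ already has a strict bottleneck $\tilde K(f,0,\theta_f)<0$, and you can drop the boundary case entirely.
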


\noindent
The proof of Lemma \ref{supKnull} is in Section \ref{sKn_sec}. We can then upgrade Proposition \ref{fixedTprop} and Lemma \ref{supKnull}, which are both statements about a particular large time $T$, to get the same result at \emph{all} large times simultaneously.

\begin{prop}\label{thetaUBprop}
Suppose that $F\subset E^2$ is closed and $M\ge 4M_0$. Then
\[\limsup_{T\to\infty}\frac{1}{T}\log N_T(F)\le \sup_{f\in F\cap G_M^2} \tilde K(f,0,1)\]
almost surely. If moreover $\sup_{f\in F} K(f) = -\infty$, then $\limsup_{T\to\infty} N_T(F) = 0$ almost surely.
\end{prop}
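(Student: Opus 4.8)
The plan is to establish both statements first at integer times via Borel--Cantelli, and then to pass to all real $T$ by replacing a particle $u$ alive at time $T\in[k,k+1)$ by its ancestor $v$ alive at time $k=\lfloor T\rfloor$. The reason this works is that a direct computation gives $Z_v^k(s)=\tfrac Tk\,Z_u^T(\tfrac kT s)$, so on the almost sure event $\mathcal G$ produced by Lemma~\ref{AlwaysGMT} and Borel--Cantelli --- on which every particle at every large integer time is $(M_0,\cdot)$-good, and hence (by a one-step comparison using the descendant of $u$ in $\Nc_{k+1}$) every particle at every large real time has rescaled path bounded by a fixed multiple of $M_0$ --- the map $g\mapsto\tfrac Tk\,g(\tfrac kT\,\cdot)$ moves $Z_u^T$ only by a L\'evy distance $\eps_k=O(1/k)$. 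In particular, if $Z_u^T\in F$ then $Z_v^k\in F^{\eps_k}$, the closed $\eps_k$-neighbourhood of $F$.

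At integer times I would use Proposition~\ref{fixedTprop}. Fixing $\eps>0$ and writing $S_G:=\sup_{f\in G\cap G_M^2}\tilde K(f,0,1)$ for a closed set $G$, the proposition gives $\P(N_k(G)\ge e^{kS_G+k\eps})\le e^{-\delta_0 k^{1/3}/2}$ for all large $k$, which is summable, so Borel--Cantelli yields $\tfrac1k\log N_k(G)\le S_G+\eps$ eventually, almost surely. I would apply this to each set in a fixed sequence of fattenings $F^{\eps^{(m)}}$, $\eps^{(m)}\downarrow0$ (each of which is closed). The interpolation is then: on $\mathcal G$, for $T\in[k,k+1)$,
\[N_T(F)\ \le\ \sum_{v\in\Nc_k\,:\,Z_v^k\in F^{\eps_k}}\hat D_v,\]
where $\hat D_v$ counts the descendants of $v$ alive at time $k+1$ whose lineage from $v$ stays in $\{z:R(z)\le 3M_0^2\}$ (on $\mathcal G$ this region is never left, so nothing is lost). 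Conditionally on $\F_k$, each $\hat D_v$ is stochastically dominated by the time-$1$ population of a Yule process of rate $3M_0^2$, so $\E[\hat D_v\mid\F_k]\le e^{3M_0^2}$ uniformly in $v$; Markov's inequality gives $\P\big(\sum_v\hat D_v\ge e^{k\eps}(N_k(F^{\eps_k})+1)\mid\F_k\big)\le e^{3M_0^2}e^{-k\eps}$, and a further Borel--Cantelli then yields, almost surely, $N_T(F)\le e^{k\eps}(N_k(F^{\eps_k})+1)$ for all large $T$, with $k=\lfloor T\rfloor$.

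To conclude, fix $m$; for $k$ large $\eps_k\le\eps^{(m)}$, so $N_k(F^{\eps_k})\le N_k(F^{\eps^{(m)}})$, and the integer-time bound gives $\limsup_k\tfrac1k\log N_k(F^{\eps_k})\le S_{F^{\eps^{(m)}}}+\eps$. Since $G_M^2$ is compact, $f\mapsto\tilde K(f,0,1)$ is upper semicontinuous on it, and $F$ is closed, one has $\inf_m S_{F^{\eps^{(m)}}}=S_F$; these are exactly the kind of facts proved in Appendix~\ref{techsec}. Combining with the interpolation bound and letting $\eps\downarrow0$ along a sequence gives $\limsup_{T\to\infty}\tfrac1T\log N_T(F)\le S_F$ when $S_F\ge0$; when $S_F<0$ the same estimates (taking $\eps<-S_{F^{\eps^{(m)}}}$ for $m$ large) instead show $N_k(F^{\eps_k})=0$ eventually, whence the ancestor relation forces $N_T(F)=0$ for all large $T$, which also covers $S_F=-\infty$. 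For the second statement I would run the identical argument with Lemma~\ref{supKnull} in place of Proposition~\ref{fixedTprop}, using the fact (again from Appendix~\ref{techsec}) that $\sup_{f\in F}K(f)=-\infty$ for closed $F$ persists under a small fattening $F^{\eps^\star}$, so that $\P(N_k(F^{\eps^\star})\ge1)$ is summable and hence, by the ancestor relation, $N_T(F)=0$ for all large $T$ almost surely.

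I expect the main obstacle to be this lattice-to-continuum passage. Because the jump sizes have only exponential tails, the free branching process has no finite exponential moment over a unit time interval, so the count $\hat D_v$ can only be controlled after confining particles to a cone of bounded branching rate --- which is why Lemma~\ref{AlwaysGMT} must be invoked at this point --- and this confinement in turn forces the fattening of $F$, and hence the need to know that $\tilde K(\cdot,0,1)$ and $K$ behave semicontinuously on (a neighbourhood of) the compact set $G_M^2$.
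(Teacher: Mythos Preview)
Your proof is correct, but the lattice-to-continuum interpolation goes in the opposite direction from the paper, and this creates extra work that the paper avoids.

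You map each $u\in\Nc_T$ with $Z_u^T\in F$ to its \emph{ancestor} $v\in\Nc_k$ at $k=\lfloor T\rfloor$. Because the ancestor map is many-to-one, you then need a separate bound on the number of descendants of each such $v$ over $[k,k+1]$, which you obtain by Yule-process domination after truncating to the cone $\{R\le 3M_0^2\}$. The truncation is what forces you to invoke Lemma~\ref{AlwaysGMT} at this stage and is what you (correctly) flag as the main obstacle.

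The paper instead (Lemma~\ref{rescalingcts}) maps each $u\in\Nc_s$ with $Z_u^s\in F\cap G_{M,s}^2$, $s\in[T-1,T]$, to a \emph{descendant} $w\in\Nc_T$ at the next integer time. This map is injective, so it yields $N_s(F\cap G_{M,s}^2)\le N_T(B(F,3M/T))$ directly---no descendant-counting, no Yule bound. The residual term $N_s((G_{M,s}^2)^c)$ is dispatched by Lemma~\ref{rescalingbadpaths} (which plays the role of your one-step comparison) together with Lemma~\ref{AlwaysGMT}. Both routes need the same structural ingredients---Proposition~\ref{fixedTprop}, Lemma~\ref{supKnull}, Lemma~\ref{supKneg}, Corollary~\ref{uppersemictscor}---but by going forward rather than backward the paper sidesteps your Yule step entirely. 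In particular, the obstacle you identify is specific to your direction of interpolation; it disappears if you interpolate forward.
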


\noindent
The proof of this result will appear in Section \ref{tUBsec}. We can now complete the proof of the upper bound in our main theorem.

\begin{proof}[Proof of Theorem \ref{mainthm}: upper bound]
Since $K(f)\in\{-\infty\}\cup[0,\infty)$, if $\sup_{f\in F} K(f)<0$ then we must have $\sup_{f\in F} K(f)=-\infty$. In this case the second part of Proposition \ref{thetaUBprop} tells us that almost surely, $N_T(F)=0$ for all large $T$, and therefore $\lim_{T\to\infty}\log N_T(F) = -\infty$. On the other hand if $\sup_{f\in F} K(f) > 0$ then we have
\[\sup_{f\in F} K(f) = \sup_{f\in F} \tilde K(f,0,1),\]
and then applying the first part of Proposition \ref{thetaUBprop} tells us that
\[\limsup_{T\to\infty}\frac{1}{T}\log N_T(F)\le \sup_{f\in F\cap G_M^2} \tilde K(f,0,1) \le \sup_{f\in F} \tilde K(f,0,1) = \sup_{f\in F} K(f)\]
almost surely, and the proof is complete.
\end{proof}

\begin{proof}[Sketch proof of \eqref{expectgrowth1}]
The upper bound in expectation \eqref{expectgrowth1} follows more or less directly from estimates derived above. In particular, much of the proof of Proposition \ref{fixedgprop} involves bounding $\E[N_T(F)]$ from above when $F$ is a small ball around a suitably nice function. From there it is a relatively simple task, similarly to the proof of Proposition \ref{fixedTprop}, to apply Lemma \ref{AlwaysGMT} to reduce $F$ to a compact set, Lemma \ref{coverGMT} to cover this set with finitely many balls around suitably nice functions, and Corollary \ref{uppersemictscor} to check that the resulting bound does not significantly overshoot \eqref{expectgrowth1}.
\end{proof}

\section{Proof outline for the lower bound in Theorem \ref{mainthm}}\label{lower_bd_sec}

Let $\rho$ be the metric defined by
\[\rho(f,g) = \sup_{s\in[0,1]} \|f(s)-g(s)\| = \sup_{s\in[0,1]}\big\{|f_X(s)-g_X(s)|\vee |f_Y(s)-g_Y(s)|\big\}.\]
Rather than the set $\Gamma_{M,T}(f,n)$ seen in the proof of the upper bound, for the lower bound we will instead often use the set
\[\Lambda_{M,T}(f,n) = B_\rho(f,1/n^2)\cap G_{M,T}^2.\]
For $F\subset E^2$ and $T>0$, recall that
\[N_T(F) = \#\{u\in \Nc_{T} : Z_u^T \in F\},\]
and for $t\in[0,1]$ and $u\in\Nc_{tT}$, define
\begin{equation}\label{NtTunotation}
N_{t,T}^u(F) = \#\{v\in \Nc_{T} : u\le v,\, Z_v^T|_{[t,1]} \in F|_{[t,1]}\}.
\end{equation}
Also let $(\F_t, t\ge0)$ be the natural filtration for the process.

The main part of our proof relies on a standard second moment argument, and Propositions \ref{1mlbprop} and \ref{2mubprop} give the first and second moment bounds necessary to carry out that argument. However, this strategy on its own cannot give strong enough estimates to be able to prove an almost sure statement, as required for Theorem \ref{mainthm}. We therefore give bounds conditionally given $\F_{kT/n}$ for $\sqrt n < k \ll n$, with the aim of using the branching structure at time $kT/n$ to increase the accuracy of our estimates.

\begin{prop}\label{1mlbprop}
Suppose that $M>1$, $n\ge 6M$, $\sqrt n\le k\le n$, $T \ge 27M^{3/2}n^{9/2}$ and $f\in \PL_n^2\cap G_M^2$. Suppose also that $u\in\Nc_{kT/n}$ satisfies $\|Z_u^T(k/n) - f(k/n)\|\le \frac{1}{2n^2}$. Then
\[\E\big[N_{k/n,T}^u(\Lambda_{3M,T}(f,n))\,\big|\,\F_{kT/n}\big] \ge \exp\Big(T\tilde K(f,k/n,1) - O\Big(\frac{M^4T}{n^{1/4}} + M^3 nT^{2/3}\Big)\Big).\]
\end{prop}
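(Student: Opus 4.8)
The plan is to apply a many-to-one formula, but started from the particle $u$ at time $kT/n$ rather than from time $0$, and then to reduce the resulting single-particle computation to the estimates already assembled for the upper bound. First I would invoke the branching property: conditionally on $\F_{kT/n}$, the subtree rooted at $u$ is an independent copy of the whole process started from a single particle at position $Z_u(kT/n)$. Hence
\[
\E\big[N_{k/n,T}^u(\Lambda_{3M,T}(f,n))\,\big|\,\F_{kT/n}\big]
= \E_{z}\big[N_{T-kT/n}\big(\Lambda_{3M,T}(f,n)|_{[k/n,1]}\hbox{, shifted}\big)\big]\Big|_{z=Z_u(kT/n)},
\]
where the shift accounts for the fact that the subtree runs for time $(1-k/n)T$ and we are asking its $T$-rescaled path on $[k/n,1]$ to lie in the prescribed tube. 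Applying Lemma~\ref{MtO} (many-to-one) to this expectation turns it into
\[
\Q_{z}\Big[\ind_{\{\xi^T|_{[k/n,1]}\,\in\,\Lambda_{3M,T}(f,n)|_{[k/n,1]}\}}\, e^{\int_{0}^{(1-k/n)T} R(\xi_s)\,ds}\Big],
\]
with $\xi$ started at $z=Z_u(k/n)T$, which by hypothesis is within $\frac{1}{2n^2}$ of $f(k/n)$ after rescaling.

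Next I would bound the exponential weight from below on the event in question. Since we are restricting to $\xi^T\in\Lambda_{3M,T}(f,n)$, the lower bound in Lemma~\ref{finaldetRbd} (applied with $M$ replaced by $3M$, and with the starting index $k\ge\sqrt n$, which is exactly the range of validity there) gives
\[
\int_{k/n}^{\lfloor\theta n\rfloor/n} R(T g(s))\,ds \ \ge\ \int_{k/n}^{\lfloor\theta n\rfloor/n} R^*(f(s))\,ds - \eta(3M,n,T)
\]
for every $g\in\Gamma$, hence for every $g$ in the smaller set $\Lambda_{3M,T}(f,n)$; with $\theta=1$ this controls the full exponent up to the additive error $\eta(3M,n,T)=O(M^4 n^{-1/2}+M^3 n T^{-1/3})$, which is absorbed into the stated $O(\,\cdot\,)$ term. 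It therefore remains to bound from below the $\Q$-probability that $\xi^T$, started near $f(k/n)$, stays in the $\rho$-tube $B_\rho(f,1/n^2)\cap G_{3M,T}^2$ over $[k/n,1]$. For this I would use the coupling machinery: a companion lower-bound estimate to Proposition~\ref{Qprobcor}, trapping $\xi$ between two compound Poisson processes over each subinterval $I_j$, $j=k,\dots,n-1$, to show this probability is at least $\exp\!\big(-T\sum_{j\ge k}(\mathcal E^-_X(I_j,\cdot,T)+\mathcal E^-_Y(I_j,\cdot,T)) - \hbox{(jump cost)}\big)$. The singular part $\hat f_X+\hat f_Y$ of $f$ enters here exactly as in the definition of $J$: forcing $\xi$ to make the requisite jumps on each subinterval costs a probability factor whose exponential rate is the total jump size, which is why $\tilde K$, not $I$, appears in the conclusion. (Since $f\in\PL_n^2$ the singular part is in fact zero on $[k/n,1]$, so this piece is mild; I keep the general bookkeeping to match the statement.) Converting the $\mathcal E^-$ sums to the integral $\int_{k/n}^{1}(\sqrt{2R^*_X(f(s))}-\sqrt{f'_X(s)})^2\,ds + (\hbox{$Y$ term})$ is the deterministic counterpart of Proposition~\ref{detboundratefn} (the lower-bound analogue, with $3M$ in place of $M$), again at the cost of an error $O(M^4 n^{-1/4}+M^3 n T^{-1/2})$.

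Putting the two pieces together, the many-to-one expectation is at least
\[
\exp\!\Big(T\int_{k/n}^{1} R^*(f(s))\,ds \;-\; T J(f,k/n,1)\;-\; O\big(M^4 T n^{-1/4} + M^3 n T^{2/3}\big)\Big)
= \exp\!\Big(T\tilde K(f,k/n,1) - O\big(M^4 T n^{-1/4}+M^3 n T^{2/3}\big)\Big),
\]
using \eqref{Kalt} to recognise the bracket as $\tilde K(f,k/n,1)$ (note $\int_{k/n}^1 R^*(f(s))\,ds<\infty$ since $f$ is $M$-good). The hypotheses $n\ge 6M$ and $T\ge 27M^{3/2}n^{9/2}$ are exactly what is needed to apply Lemma~\ref{finaldetRbd} and the coupling estimate with $3M$ in place of $M$ and to keep the rounding of $1$ to $\lfloor n\rfloor/n=1$ harmless, while $\|Z_u^T(k/n)-f(k/n)\|\le\frac{1}{2n^2}$ guarantees that the started point lies strictly inside the tube $B_\rho(f,1/n^2)$, leaving room for the coupling argument on the first subinterval $I_k$. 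The main obstacle is the lower-bound half of the coupling argument—controlling from below the probability that a compound-Poisson-like path threads a narrow $\rho$-tube while also realising a prescribed amount of drift on each subinterval, including the contribution of any large (singular) jumps—which is the genuinely new probabilistic input; everything else is bookkeeping around results already stated. A secondary point to handle carefully is that $\Lambda$ uses the uniform metric $\rho$ rather than the pair $(\Delta_n,d)$ used for the upper bound, so the deterministic rate-function estimates must be re-run for $\rho$-balls; this is routine since a $\rho$-ball of radius $1/n^2$ is contained in the corresponding $\Delta_n$- and $d$-balls used in Proposition~\ref{detboundratefn}.
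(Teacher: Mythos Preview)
Your proposal is correct and follows essentially the same route as the paper: Markov property plus many-to-one to reduce to a single-spine $\Q$-expectation, Lemma~\ref{finaldetRbd} (with $3M$) to bound the $R$-integral from below on the event, and the coupling with compound Poisson processes on each $I_j$ to bound the $\Q$-probability of threading the $\rho$-tube from below. The only implementation difference is that the paper does not route the probability lower bound through the quantities $\mathcal E^-_X,\mathcal E^-_Y$; instead it decomposes the tube-event on each $I_j$ into the event that the slower process $X_-$ stays in the tube and the independent event $X_+-X_-=0$ (this is Lemma~\ref{BigQtolittleqprod}, with the two factors bounded by Lemmas~\ref{qlblem} and~\ref{qhatlb}), arriving directly at $\exp(-TI(f,k/n,1)-\hbox{error})$ as in Proposition~\ref{Qlbprop}. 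Your observation that the singular part vanishes because $f\in\PL_n^2$ is exactly why $I$ rather than $J$ appears there, and your remark that $\Lambda_{3M,T}(f,n)\subset\Gamma_{3M,T}(f,n)$ handles the metric mismatch is precisely how the paper transfers the deterministic rate bounds.
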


We will prove Proposition \ref{1mlbprop} in Section \ref{1mlbsec}.

\begin{prop}\label{2mubprop}
Suppose that $M>1$, $n\ge 6M$, $\sqrt n\le k\le n$, $T\ge 27 M^{3/2} n^{9/2}$ and $f\in PL_n^2\cap G_M^2$. Suppose also that $u\in\Nc_{kT/n}$ satisfies $\|Z_u^T(k/n) - f(k/n)\| < \frac{1}{n^2}$. Then
\begin{multline*}
\E\big[N_{k/n,T}^u(\Lambda_{3M,T}(f,n))^2\,\big|\,\F_{kT/n}\big]\\
\le \int_{k T/n}^T \exp\big(- T\tilde K(f,k/n,t/T)\big) dt \cdot 12M^2n\exp\bigg(2T\tilde K(f,k/n,1) + O\Big(\frac{M^4 T}{n^{1/4}}+ M^3 n T^{2/3}\Big)\bigg)\\
+ \exp\bigg(T\tilde K(f,k/n,1) + O\Big(\frac{M^4 T}{n^{1/4}}+ M^3 n T^{2/3}\Big)\bigg).
\end{multline*}
\end{prop}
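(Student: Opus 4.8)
The plan is to run the standard "many-to-two" second moment decomposition, adapted to the conditional setting given $\F_{kT/n}$, and then feed each of the two resulting terms into the estimates we have already assembled for the first moment. Recall that for a branching process, if we count pairs $(v,w)$ of descendants of $u$ with $v,w\in\Nc_T$, the pair either splits at some time $t\in[kT/n,T]$ at a common ancestor $z\in\Nc_t$ below $u$, or we have $v=w$. Writing $N := N_{k/n,T}^u(\Lambda_{3M,T}(f,n))$, this gives
\[
\E[N^2\,|\,\F_{kT/n}] = \E[N\,|\,\F_{kT/n}] + \E\Big[\sum_{t : z\in\Nc_t,\, u\le z} \!\!\!\! \big(\text{contribution of the two subtrees rooted at the children of }z\big)\,\Big|\,\F_{kT/n}\Big],
\]
and by the branching property the inner contribution, conditioned on the position $Z_z(t)$ of the common ancestor, is a product of two independent conditional first moments of the form $\E[N^{w}_{t/T,T}(\Lambda_{3M,T}(f,n))\,|\,\F_t]$, each started from a particle at $Z_z(t)$. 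The first term on the right is exactly $\E[N\,|\,\F_{kT/n}]$, which by the first-moment \emph{upper} bound implicit in the proof of Proposition \ref{1mlbprop} (or, more cleanly, the upper-bound machinery of Proposition \ref{fixedgprop}) is at most $\exp(T\tilde K(f,k/n,1) + O(M^4T/n^{1/4} + M^3nT^{2/3}))$; this produces the last line of the claimed bound.

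For the "branching" term, the strategy is: (i) use a many-to-one / many-to-two formula to rewrite the sum over common-ancestor times $t$ as an integral $\int_{kT/n}^T \d t$ against the rate at which a typical particle $\xi$ (under $\Q$) branches, weighted by the usual $\exp(\int R)$ factor up to time $t$ and then by the square of a conditional first moment from time $t$ onwards; (ii) on the event that $\xi$'s rescaled path has stayed inside $\Lambda_{3M,T}(f,n)$ up to time $t$ — which is forced, since otherwise the pair cannot contribute — the branching rate $2R(\xi_s)$ is bounded by $O(M^2)$ uniformly (because $f\in G_M^2$ and we are on a $1/n^2$-ball, so $R^* \le (3M)^2$), which is where the factor $12M^2 n$ comes from (the extra $n$ absorbs the discretisation slack and the error in passing between $R$ and $R^*$ via Lemma \ref{finaldetRbd}); (iii) the weight $\exp(\int_0^{t} R(\xi_s)\d s)$ up to the common-ancestor time contributes $\exp(T\int_0^{t/T} R^*(f(s))\d s + O(\eta))$, and each of the two first-moment factors from time $t$ onwards contributes $\exp(T\tilde K(f,t/T,1) + O(\cdot))$ — here we re-use Proposition \ref{1mlbprop}-style estimates started from a particle within $\frac1{n^2}$ of $f(t/T)$, noting that a particle whose path stayed in $\Lambda_{3M,T}(f,n)$ up to time $t$ automatically satisfies this; (iv) finally, combine the exponents: $\int_0^{t/T} R^*(f) + 2\tilde K(f,t/T,1)$ should be rearranged, using the additivity $\tilde K(f,0,1) = \tilde K(f,0,t/T)+\tilde K(f,t/T,1)$ and the definition $\tilde K(f,a,b) = \int_a^b R^* - J(f,a,b)$, into the form $2\tilde K(f,k/n,1) - \tilde K(f,k/n,t/T) + (\text{something} \le 0)$, which is exactly the exponent $2T\tilde K(f,k/n,1)$ multiplying the $\int \exp(-T\tilde K(f,k/n,t/T))\d t$ factor in the statement. (One must be slightly careful that the common-ancestor time must be at least $kT/n$, which is why the integral runs from $kT/n$; the stretch before $kT/n$ is already accounted for in the conditioning.)

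The main obstacle, as usual in these arguments, is step (iii)–(iv): getting the exponents to line up \emph{exactly} as $2\tilde K(f,k/n,1) - \tilde K(f,k/n,t/T)$ rather than something merely comparable. This requires carefully tracking the $\exp(\int R)$ weight accrued strictly before the branch time, splitting it as $\int_{k/n}^{t/T}$ plus a constant, and checking that the cross terms $J(f,\cdot,\cdot)$ and the singular parts add up correctly across the concatenation point $t/T$ — in particular that $\hat f_X, \hat f_Y$ behave additively, which they do since they are genuine (monotone) increments. A secondary nuisance is that the first-moment factors we want to plug in at step (iii) are conditional \emph{upper} bounds, whereas Proposition \ref{1mlbprop} as stated is a lower bound; we will need the corresponding upper bound, which is not stated separately but is a byproduct of Proposition \ref{fixedgprop} (or a minor variant of the proof of Proposition \ref{1mlbprop}), applied on the interval $[t/T,1]$ rather than $[0,\theta]$. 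All of the error terms are of the advertised order $O(M^4T/n^{1/4} + M^3nT^{2/3})$ by Proposition \ref{detboundratefn} and Lemma \ref{finaldetRbd}, and the polynomial prefactor $12M^2n$ is generous enough to swallow the finitely many discretisation and rate-comparison losses incurred along the way.
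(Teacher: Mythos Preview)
Your overall architecture matches the paper's: a many-to-two decomposition into a diagonal term (giving the first-moment upper bound) and an integral over the split time $t$ of the common ancestor, with two independent first-moment factors after the split. The paper implements this via the two-spine measure $\Q^2$ of Lemma~\ref{mt2}, but that is equivalent to what you describe.

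There is, however, a genuine bookkeeping gap in your steps (iii)--(iv). In step (iii) you record only the growth weight $\exp\big(T\int_{k/n}^{t/T} R^*(f)\big)$ for the stretch up to the common ancestor, but you omit the \emph{probability cost} that the ancestor's rescaled path stays inside $\Lambda_{3M,T}(f,n)$ on $[k/n,t/T]$, which is at most $\exp(-TI(f,k/n,t/T)+O(\cdot))$ by Proposition~\ref{Qprobcor2} and Lemma~\ref{Esum2mlb}. You correctly note in step (ii) that this event is forced, but its cost never appears in your exponent. Consequently your step (iv) is false as stated: a direct computation gives
\[
\int_{k/n}^{t/T} R^*(f) + 2\tilde K(f,t/T,1) \;=\; 2\tilde K(f,k/n,1) - \tilde K(f,k/n,t/T) \;+\; I(f,k/n,t/T),
\]
and the leftover $I(f,k/n,t/T)$ is $\ge 0$, not $\le 0$. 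Once you include the missing factor $\exp(-TI(f,k/n,t/T))$, the extra $+I$ is cancelled exactly and the exponent becomes $2\tilde K(f,k/n,1)-\tilde K(f,k/n,t/T)$ as required; this is precisely what the paper does by bounding $\Q(\mathcal B(k/n,t/T))$ alongside the $\exp(\int R)$ weight. Your explanation of the prefactor $12M^2n$ is also off: the $n$ does not come from ``discretisation slack'' but from a crude bound on $2R(Th(t/T))$ for $h\in G_{3M,T}^2$ when $t/T$ can be as small as $k/n$; the paper simply estimates the numerator by $O(MT)$ and the denominator by $Tk/(2Mn)\ge T/(2Mn)$.
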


We will prove Proposition \ref{2mubprop} in Section \ref{2mubsec}. We now use a standard second moment method to turn Propositions \ref{1mlbprop} and \ref{2mubprop} into a lower bound on the probability that the number of particles whose rescaled paths remain near $f$ is roughly $\tilde K(f,k/n,1)T$, again conditionally on $\F_{kT/n}$.

\begin{cor}\label{PZcor}
Suppose that $M>1$, $n\ge 6M$, $k\ge\sqrt n$, $T \ge 27M^{3/2}n^{9/2}$ and $f\in PL_n^2\cap G_M^2$. Suppose also that $u\in\Nc_{kT/n}$ satisfies $\|Z_u^T(k/n) - f(k/n)\|\le \frac{1}{2n^2}$, and that $\tilde K(f,k/n,t)\ge 0$ for all $t\ge k/n$. Then
\[\P\big(N_{k/n,T}^u(\Lambda_{3M,T}(f,n)) \ge e^{T\tilde K(f,k/n,1) - O(M^4T/n^{1/4} + M^3 n T^{2/3})}\,\big|\,\F_{kT/n}\big) \ge e^{-O(\frac{M^4 T}{n^{1/4}}+ M^3 n T^{2/3})}.\]
\end{cor}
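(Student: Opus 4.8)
The plan is a routine conditional Paley--Zygmund (second moment) argument: essentially all of the work has already been done in Propositions \ref{1mlbprop} and \ref{2mubprop}, and what remains is to combine them and tidy the error terms. Write $N := N_{k/n,T}^u(\Lambda_{3M,T}(f,n))$ and $\mu := \tilde K(f,k/n,1)$ for brevity, and note that the standing hypothesis $\tilde K(f,k/n,t)\ge 0$ for all $t\ge k/n$ gives in particular $\mu\ge 0$.

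First I would tidy the second moment bound. Since $\tilde K(f,k/n,t/T)\ge 0$ for every $t\in[kT/n,T]$, we have $\exp(-T\tilde K(f,k/n,t/T))\le 1$ on that range, and hence
\[\int_{kT/n}^{T}\exp\!\big(-T\tilde K(f,k/n,t/T)\big)\,dt\ \le\ T.\]
Substituting this into Proposition \ref{2mubprop} and absorbing the polynomial prefactors $12M^2n$ and $T$ into the exponential error (legitimate because $\log(12M^2nT)=O(M^3nT^{2/3})$ under the standing assumption $T\ge 27M^{3/2}n^{9/2}$), we obtain
\[\E\big[N^2\,\big|\,\F_{kT/n}\big]\ \le\ \exp\!\big(2T\mu+O(M^4T/n^{1/4}+M^3nT^{2/3})\big)+\exp\!\big(T\mu+O(M^4T/n^{1/4}+M^3nT^{2/3})\big).\]
Because $\mu\ge 0$, the second term is dominated by the first, so $\E[N^2\mid\F_{kT/n}]\le\exp(2T\mu+O(M^4T/n^{1/4}+M^3nT^{2/3}))$. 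On the other hand, the hypotheses of Proposition \ref{1mlbprop} are implied by ours (in particular $\|Z_u^T(k/n)-f(k/n)\|\le\tfrac1{2n^2}<\tfrac1{n^2}$), so it gives $\E[N\mid\F_{kT/n}]\ge\exp(T\mu-O(M^4T/n^{1/4}+M^3nT^{2/3}))$, which is in particular strictly positive.

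Now I would apply the conditional Paley--Zygmund inequality: for any $\lambda\in(0,1)$, conditional Cauchy--Schwarz applied to $\E[N\mid\F_{kT/n}]=\E[N\ind_{N\le\lambda\E[N\mid\F_{kT/n}]}\mid\F_{kT/n}]+\E[N\ind_{N>\lambda\E[N\mid\F_{kT/n}]}\mid\F_{kT/n}]$ yields, almost surely,
\[\P\big(N\ge\lambda\,\E[N\mid\F_{kT/n}]\ \big|\ \F_{kT/n}\big)\ \ge\ (1-\lambda)^2\,\frac{\E[N\mid\F_{kT/n}]^2}{\E[N^2\mid\F_{kT/n}]}.\]
Taking $\lambda=1/2$ and inserting the two moment bounds, the right-hand side is at least $\tfrac14\exp(2T\mu-O(M^4T/n^{1/4}+M^3nT^{2/3}))/\exp(2T\mu+O(M^4T/n^{1/4}+M^3nT^{2/3}))=\exp(-O(M^4T/n^{1/4}+M^3nT^{2/3}))$. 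Finally, on the event in that probability, $N\ge\tfrac12\E[N\mid\F_{kT/n}]\ge\tfrac12\exp(T\mu-O(M^4T/n^{1/4}+M^3nT^{2/3}))$, which is at least $\exp(T\tilde K(f,k/n,1)-O(M^4T/n^{1/4}+M^3nT^{2/3}))$ after enlarging the constant in the $O$-term, giving the claimed bound.

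I do not expect a genuine obstacle. The only points needing a little care are: verifying that the conditional form of Paley--Zygmund is valid (it follows verbatim from the unconditional proof, using that $\E[N\mid\F_{kT/n}]$ is a strictly positive $\F_{kT/n}$-measurable random variable so that the division is harmless); the bound $\int_{kT/n}^T\exp(-T\tilde K(f,k/n,t/T))\,dt\le T$, which is exactly where the non-negativity hypothesis on $\tilde K$ is used; and the bookkeeping of which polynomial factors in $M,n,T$ can be swallowed into $O(M^4T/n^{1/4}+M^3nT^{2/3})$, which is routine given $T\ge 27M^{3/2}n^{9/2}$.
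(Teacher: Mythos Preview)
Your proposal is correct and follows essentially the same route as the paper: both apply conditional Paley--Zygmund with $\lambda=1/2$, feed in Propositions \ref{1mlbprop} and \ref{2mubprop}, use $\tilde K(f,k/n,t)\ge 0$ to bound the integral by $T$, and absorb the polynomial prefactor $12M^2nT$ into the $O$-term. The only cosmetic difference is that you explicitly isolate and discard the second (smaller) summand from Proposition \ref{2mubprop} using $\mu\ge 0$, whereas the paper folds it in tacitly.
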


\begin{proof}
The Paley-Zygmund inequality says that, for any non-negative random variable $X$ and $\theta\in[0,1]$,
\[P\big(X\ge \theta E[X]\big) \ge (1-\theta)^2\frac{E[X]^2}{E[X^2]}.\]
Taking $P$ to be the conditional probability measure $\P(\,\cdot\, |\,\F_{kT/n})$ with $X = N_{k/n,T}^u(\Lambda_{3M,T}(f,n))$ and $\theta=1/2$, we have
\begin{multline}\label{PZconsequence}
\P\big(N_{k/n,T}^u(\Lambda_{3M,T}(f,n)) \ge (1/2)\E\big[N_{k/n,T}^u(\Lambda_{3M,T}(f,n))\,\big|\,\F_{kT/n}\big]\,\big|\,\F_{kT/n}\big)\\
\ge \frac{\E\big[N_{k/n,T}^u(\Lambda_{3M,T}(f,n))\,\big|\,\F_{kT/n}\big]^2}{4\E\big[N_{k/n,T}^u(\Lambda_{3M,T}(f,n))^2\,\big|\,\F_{kT/n}\big]}.
\end{multline}
Proposition \ref{1mlbprop} tells us that
\[\E\big[N_{k/n,T}^u(\Lambda_{3M,T}(f,n))\,\big|\,\F_{kT/n}\big] \ge \exp\Big(T\tilde K(f,k/n,1) - O\Big(\frac{M^4T}{n^{1/4}} + M^3 nT^{2/3}\Big)\Big)\]
and Proposition \ref{2mubprop} gives
\begin{multline*}
\E\big[N_{k/n,T}^u(\Lambda_{3M,T}(f,n))^2\,\big|\,\F_{kT/n}\big]\\
\le \int_{k T/n}^T \exp\big(- T\tilde K(f,k/n,t/T)\big) dt \cdot 12M^2n\exp\bigg(2T\tilde K(f,k/n,1) + O\Big(\frac{M^4 T}{n^{1/4}}+ M^3 n T^{2/3}\Big)\bigg)\\
+ \exp\bigg(T\tilde K(f,k/n,1) + O\Big(\frac{M^4 T}{n^{1/4}}+ M^3 n T^{2/3}\Big)\bigg).
\end{multline*}
and since $\tilde K(f,k/n,t)\ge 0$ for all $t\ge k/n$, this reduces to
\begin{align*}
\E\big[N_{k/n,T}^u(\Lambda_{3M,T}(f,n))^2\,\big|\,\F_{kT/n}\big] &\le 12M^2 n T \exp\bigg(2T\tilde K(f,k/n,1) + O\Big(\frac{M^4 T}{n^{1/4}}+ M^3 n T^{2/3}\Big)\bigg)\\
&= \exp\bigg(2T\tilde K(f,k/n,1) + O\Big(\frac{M^4 T}{n^{1/4}}+ M^3 n T^{2/3}\Big)\bigg).
\end{align*}
Substituting these estimates into \eqref{PZconsequence} gives the result.
\end{proof}

By Corollary \ref{PZcor}, each particle near $f(kT/n)$ at time $kT/n$ has a not-too-small probability of having roughly $\exp (\tilde K(f,k/n,1)T)$ descendants whose rescaled paths remain near $f$ up to time $1$. If we can ensure that there is a reasonably large number of particles near $f(kT/n)$ at time $kT/n$, then subject to some technicalities (for example Corollary \ref{PZcor} assumes that $f$ is piecewise linear, whereas there is no such condition in Theorem \ref{mainthm}) we will be able to prove the lower bound in Theorem \ref{mainthm}.

The discontinuity of $R^*$ at $0$ makes controlling the growth of the system at small times difficult. The first few particles in the system can have wildly different values of $R$ in different realisations of the process, and it is not \emph{a priori} clear that this cannot have a large effect on the long-term evolution of the system. Our method for showing that particles do in fact spread out in a predictable way is the following. First we show that there are many particles near the line $(s/2,s/2)$ at time $s$, for suitable values of $s$. The idea is that our jump distribution prefers to create ``almost square'' rectangles (since rectangles are more likely to break along their longest side) and therefore we should see many particles near $(s/2,s/2)$. However, since particles away from this line branch and jump more quickly, we use a discrete-time argument to keep control of the dependence between the jump locations and the jump times. A rough estimate using moments in discrete time can then be translated back into continuous time, giving the following result.

\begin{prop}\label{ctssquareprop}
Define
\[V'_{n,T} = \{u\in\mathcal N_{\lceil n^{7/8}\rceil T/n} : \|Z_u(s) - (s/2,s/2)\|\le \sfrac{T}{2n^2} \,\,\forall s\le \lceil n^{7/8}\rceil T/n\}.\]
There exists a finite constant $C$ such that for any $T\ge Cn^{48}$,
\[\P\big(|V'_{n,T}| < 2^{T/n^{1/8} - 2T/n^2}\big) \le 1/T^{3/2}.\]
\end{prop}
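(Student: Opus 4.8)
The goal is to show that the set $V'_{n,T}$ of particles staying close to the diagonal $(s/2,s/2)$ up to time $\lceil n^{7/8}\rceil T/n$ is large with high probability.

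\textbf{Setup in discrete time.} First I would pass to a discrete-time skeleton, as announced in the text: rather than tracking the continuous-time process, I would fix a small time-step $\delta = T/n^{\text{(some power)}}$ (compatible with the scales $T/n^2$ appearing in the statement) and consider the embedded chain observed at times $0,\delta,2\delta,\ldots$. The point of this reduction, stressed in the heuristics, is that in discrete time one can decouple the jump \emph{times} from the jump \emph{sizes}: on each step, a particle at $(x,y)$ with (say) $x,y$ both of size $O(T/n)$ has $R(x,y)$ close to $1$ and $P(x,y)$ close to $1/2$, so it produces new particles along $X$ roughly as often as along $Y$. I would keep only particles that never stray more than $\frac{T}{2n^2}$ from the diagonal — equivalently $|X_u(s) - Y_u(s)|$ small — since for those particles the rates $R$ and $P$ are uniformly pinned near $1$ and $1/2$, so the relevant quantities (branching rate, split-direction probability, jump-size law) are essentially spatially homogeneous on the whole relevant window $s\le \lceil n^{7/8}\rceil T/n$.

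\textbf{First and second moment estimate for the restricted population.} On this favourable event, the process restricted to ``diagonal-hugging'' particles dominates a simpler branching process: every particle branches at rate $\ge 1$ (the $\vee$ in $R$ is always $\ge 1$), and to remain near the diagonal it must split in the under-represented direction at each relevant epoch, which happens with probability bounded below (roughly $1/2$, up to corrections of order $\frac{1}{n}$ coming from $P^*$-vs-$P$ and the $\frac{1}{n^2}$-window). Over a time window of length $\lceil n^{7/8}\rceil T/n \approx T/n^{1/8}$, the expected number of such descendants is therefore of order $2^{T/n^{1/8}}$ up to the multiplicative error $2^{-2T/n^2}$ built into the target bound — this is a first-moment lower bound, via Lemma \ref{MtO} (many-to-one) applied to the diagonal-restricted functional, or directly by the branching structure. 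To upgrade the first moment to a genuine lower bound on $|V'_{n,T}|$ with failure probability $\le 1/T^{3/2}$, I would run a second-moment / Paley–Zygmund argument exactly in the spirit of Corollary \ref{PZcor}: bound $\E[|V'_{n,T}|^2]$ by the usual sum over pairs splitting at their most recent common ancestor, which gives $\E[|V'_{n,T}|^2] \le C(\text{const})\,\E[|V'_{n,T}|]^2$ because the underlying process is (essentially) supercritical with growth rate bounded away from $1$, so the integral $\int \exp(-\text{growth}\cdot t)\,dt$ converges. Paley–Zygmund then gives that $|V'_{n,T}| \ge \tfrac12\E[|V'_{n,T}|]$ with probability bounded below by a constant; to reach $1 - 1/T^{3/2}$ I would iterate/boost this — e.g. split the time window into $\Theta(\log T)$ sub-windows and use independence of distinct subfamilies, or apply the second-moment bound on a slightly longer window so that $\E[|V'|^2]/\E[|V'|]^2 \le 1 + 1/T^{2}$ forces concentration directly via Chebyshev.

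\textbf{Return to continuous time and bookkeeping.} Finally I would translate the discrete-time statement back to continuous time: a discrete-time particle that hugs the diagonal at the sampling epochs may in principle wander between epochs, but since between consecutive epochs each particle performs only $O(1)$ jumps in expectation and each jump is $\mathrm{Exp}(1)/T$-scaled hence $O(\log T/T) \ll T/n^2$ with overwhelming probability, a union bound over the $\le 2^{T/n^{1/8}}$ relevant particles and $n^{\text{poly}}$ epochs (absorbed by the hypothesis $T \ge C n^{48}$, which makes all polynomial-in-$n$ error probabilities negligible against $1/T^{3/2}$) shows the continuous path also stays within $\frac{T}{2n^2}$ of the diagonal. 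The role of the very strong assumption $T\ge Cn^{48}$ is exactly to give enough room to absorb every such polynomial overhead.

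\textbf{Main obstacle.} The delicate point is \emph{not} the moment computations but the decoupling argument that justifies treating the diagonal-restricted population as a near-homogeneous supercritical branching process: one must show that conditioning on a particle staying in the $\frac{T}{2n^2}$-tube does not significantly distort its offspring law or branching rate at \emph{any} of the $\Theta(n^{7/8})$ epochs simultaneously, uniformly over the exponentially many particles, and that the rare ``large jump'' events (macroscopic jumps of size $\Theta(T)$, which exist because $-\log U$ is only exponentially — not sub-Gaussian — distributed) can be excluded on the good event without killing too much of the population. Handling this uniform control, and in particular getting the error terms to be $o(T/n^2)$ in the exponent rather than merely $o(T/n^{1/8})$, is where the real work lies; everything else is a routine supercritical second-moment estimate.
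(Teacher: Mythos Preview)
Your overall plan --- pass to discrete time, use moments and Paley--Zygmund, then transfer back --- is the right shape, but the execution misses the key idea that makes the paper's argument work, and your bootstrapping step has a genuine gap.

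The paper does \emph{not} discretize by a time-step $\delta$. It discretizes by \emph{generation}: it works on the binary tree $\mathbb T$, where each vertex $v\in\mathbb T_j$ carries a position $(X_v,Y_v)$ obtained after exactly $j$ splits. This is crucial because in generation-time the recursion for $\Delta_j=X_v(j)-Y_v(j)$ is clean: $\Delta_j=\Delta_{j-1}+(2D_{j-1}-1)E_{j-1}$, and the splitting rule forces $\Delta_{j-1}\,\E[2D_{j-1}-1\mid\G_{j-1}]\le 0$ (particles prefer to split along their long side). Iterating this negative drift gives $\E[\Delta_j^6]\le Cj^3$ and similarly $\E[(X_v(j)+Y_v(j)-j)^6]\le Cj^3$. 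The actual branching times are handled separately via an auxiliary quantity $\tilde T_v(k)=\sum_{i<k}\eb_{v}(i)$ built from rate-$1$ exponentials, which decouples time from position; afterwards one checks that for particles near the diagonal $R\approx 1$, so $T_v(k)\approx\tilde T_v(k)$.

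Here is the point you miss. By Markov's inequality on these sixth moments, a \emph{single fixed} vertex $v\in\mathbb T_{K(n,T)}$ satisfies all the required constraints (position near $(k/2,k/2)$ and $|\tilde T_v(k)-k|$ small, for every $k\le K(n,T)$) with probability at least $1-C''n^{24}/T^2$. Hence $\E[|U_{n,T}|]\ge 2^{K(n,T)}(1-C''n^{24}/T^2)$, while trivially $\E[|U_{n,T}|^2]\le 2^{2K(n,T)}$. Paley--Zygmund then gives success probability at least $(1-C''n^{24}/T^2)^2(1-1/T^2)^2$, and the hypothesis $T\ge Cn^{48}$ makes this $\ge 1-1/T^{3/2}$ directly --- no bootstrapping needed. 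This also explains where the exponent $48$ comes from: it is exactly $2\times 24$, forced by the sixth-moment rate $n^{24}/T^2$.

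Your pair-splitting second moment, by contrast, would only yield $\E[N^2]/\E[N]^2=O(1)$, hence Paley--Zygmund gives a probability bounded away from $1$, not $1-1/T^{3/2}$. Neither of your proposed fixes closes the gap: splitting into $\Theta(\log T)$ sub-windows gives dependent events (and needing many particles at the start of each sub-window is circular), and ``a slightly longer window'' does not drive the second-to-first-moment ratio toward $1$ in a pair-splitting bound. The paper's route --- make the \emph{single-particle} success probability $1-o(1)$ via a high moment, then use the trivial second moment --- is what you are missing.
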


We will prove this result in Section \ref{ctssquaresec}. The choice of $\lceil n^{7/8}\rceil$ is somewhat arbitrary, but ensures that there are enough particles at time $\lceil n^{7/8}\rceil T/n$ to outweigh the error arising from Corollary \ref{PZcor}. The bound of $1/T^{3/2}$ is not the best possible, but is enough to use a Borel-Cantelli argument at the end of the proof of Theorem \ref{mainthm}. The requirement that $T\ge Cn^{48}$ is also certainly not optimal, but since we will take $T\to\infty$, it is sufficient for our needs.

Once we have shown that there are particles near $(s/2,s/2)$ at small times $s$, then we need to show that these particles ``feed'' other directions $(\lambda s',\mu s')$ for suitable $\lambda$ and $\mu$ and $s'>s$. Given $f\in G_M^2$, we will construct a function $h$ that begins by moving along the line $(s/2,s/2)$, so that we can guarantee large numbers of particles near $h$ at small times using Proposition \ref{ctssquareprop}, but which then gradually changes its gradient to be closer and closer to our given function $f$. At the same time we will ensure that $h$ is piecewise linear, so that we can then use Corollary \ref{PZcor} to ensure appropriate growth of particles along the whole path $h$. We then show that for $k=\lceil n^{7/8}\rceil < nt$ we have $\tilde K(h,k/n,t) \approx \tilde K(f,k/n,t)$. This is part of Proposition \ref{hfmnprop} below, which will be proved in Section \ref{hfmnsec}.

\begin{prop}\label{hfmnprop}
Suppose that $f\in G_M^2$ satisfies $\frac{d}{dt} \tilde K(f,0,t)|_{t=0} > 0$ and $\tilde K(f,0,t)>0$ for all $t\in(0,1]$. Then for any $\eps>0$ and $n\in\N$, there exists $h_{f,n}\in E^2$ such that 
\begin{equation}\label{hfmnprop0}
h_{f,n}(s)=(s/2,s/2) \,\,\,\,\text{ for all } s \le \lceil n^{7/8}\rceil/n
\end{equation}
and if $n$ is sufficiently large,
\begin{equation}\label{hfmnprop1}
h_{f,n}\in\PL_n^2\cap G_M^2\cap B(f,\eps),
\end{equation}
\begin{equation}\label{hfmnprop2}
\tilde K(h_{f,n},\lceil n^{7/8}\rceil/n,s)>0 \,\,\,\,\text{ for all } s\in(\lceil n^{7/8}\rceil/n,1]
\end{equation}
and
\begin{equation}\label{hfmnprop3}
\tilde K(h_{f,n},\lceil n^{7/8}\rceil/n,1)\ge \tilde K(f,0,1)-\eps.
\end{equation}
\end{prop}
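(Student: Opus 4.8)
The plan is to build $h_{f,n}$ in three stages. It is forced by \eqref{hfmnprop0} to run along the diagonal $(s/2,s/2)$ on $[0,\tau_n]$, where throughout I write $\tau_n=\lceil n^{7/8}\rceil/n$, a multiple of $1/n$ tending to $0$; it then stays on the diagonal for a short buffer and re-orients towards the initial direction of a piecewise-linear version of $f$; and finally it tracks that piecewise-linear version for the rest of $[0,1]$. As a preliminary step I would invoke the technical results of Appendix~\ref{techsec} to replace $f$ by $\bar f\in\PL_n^2\cap G_M^2$, with finitely many slopes bounded by a constant depending only on $M$, such that $d(\bar f,f)<\eps/2$, $\tilde K(\bar f,0,1)\ge\tilde K(f,0,1)-\eps/2$, $\tilde K(\bar f,0,t)>0$ for all $t\in(0,1]$, and $\frac{d}{dt}\tilde K(\bar f,0,t)|_{t=0}>0$. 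The last two properties, with continuity of $\tilde K(\bar f,0,\cdot)$ on $(0,1]$ and compactness, produce a fixed continuous $\phi:(0,1]\to(0,\infty)$ of the form $\phi(t)=\min\{ct,c'\}$ (with $c,c'>0$) satisfying $\tilde K(\bar f,0,t)\ge\phi(t)$ for all $t$; this quantitative ``no bottleneck'' bound is what will absorb error terms for $s$ away from $\tau_n$.

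For the re-orientation, let $v_0$ be the slope of $\bar f$ on its first cell: since $\bar f$ is radial there, the rate $\frac{d}{ds}\tilde K(\bar f,0,s)$ is constantly $r(v_0)$ on $(0,1/n]$, where $r(v):=R^*(v)-\bigl(\sqrt{2R^*_X(v)}-\sqrt{v_X}\bigr)^2-\bigl(\sqrt{2R^*_Y(v)}-\sqrt{v_Y}\bigr)^2$ (computed via \eqref{Kalt}) is the rate of the ray in direction $v$ traversed at speed $v$, so $r(v_0)=n\tilde K(\bar f,0,1/n)>0$. I would use the fact --- readily checked from the explicit forms of $R^*,R^*_X,R^*_Y$, or absorbed into Appendix~\ref{techsec} --- that the open set $\{v\in(0,\infty)^2:r(v)>0\}$ is path-connected and contains $(\tfrac12,\tfrac12)$, fix a path inside it from $(\tfrac12,\tfrac12)$ to $v_0$, and discretise it at scale $1/n$ into slopes defining a piecewise-linear continuation of the diagonal over a ``bend'' interval of length $\gamma_n$, a multiple of $1/n$ that I will let tend to $0$ but far more slowly than $\tau_n$. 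Prepend to the bend an extra stretch of diagonal of length $\sigma_n\asymp\gamma_n$ (the ``buffer''), and after the bend let $h_{f,n}$ equal $\bar f$, shifted in time by a multiple of $1/n$ and in space by a vanishing constant so that the pieces match. Then \eqref{hfmnprop0}--\eqref{hfmnprop1} are straightforward: all breakpoints are multiples of $1/n$, so $h_{f,n}\in\PL_n^2$; membership in $G_M^2$ needs only $M\ge2$ and elementary inequalities, since for large $n$ the diagonal and the shifted copy of $\bar f$ lie comfortably inside the cone and the shifts are negligible; and $d(h_{f,n},f)\le d(h_{f,n},\bar f)+d(\bar f,f)<\eps$ because the time and space shifts vanish and the L\'evy metric is insensitive to shrinking shifts. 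For \eqref{hfmnprop3}, additivity of $\tilde K$ gives $\tilde K(h_{f,n},\tau_n,1)=(\text{increment over buffer and bend})+\tilde K(\text{shifted }\bar f,\cdot,1)$; the first term is $\ge0$ once the buffer is long enough (below), while the second equals $\tilde K(\bar f,0,1)$ up to $o(1)$ coming from the vanishing space shift, the first few cells, and discarding a vanishing terminal piece of $\bar f$ --- which can only raise $\tilde K$, removing endpoint and singular penalties at the cost of only bounded growth --- so $\tilde K(h_{f,n},\tau_n,1)\ge\tilde K(\bar f,0,1)-o(1)\ge\tilde K(f,0,1)-\eps$ for $n$ large.

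The main obstacle is \eqref{hfmnprop2}: strict positivity of $\tilde K(h_{f,n},\tau_n,s)$ for every $s\in(\tau_n,1]$, which I would control by three complementary bounds that must be checked to overlap. (i) On the diagonal buffer the rate is exactly $r(\tfrac12,\tfrac12)=2(\sqrt2-1)>0$, so $\tilde K(h_{f,n},\tau_n,\cdot)$ grows linearly there; along the bend the rate stays bounded below by some $-C(M)$ (all slopes and all of $R^*$ being bounded), so choosing $\sigma_n$ a large enough multiple of $\gamma_n$ keeps $\tilde K(h_{f,n},\tau_n,s)$ positive throughout the buffer and bend and leaves a positive surplus at the end of the bend. (Using path-connectedness to keep the bend's rate close to $r(\cdot)>0$ rather than merely $\ge-C(M)$ would be cleaner, but requires handling the lag between the current direction of $h_{f,n}$ and its current slope; I would instead use the crude bound and a correspondingly longer buffer.) (ii) For $s$ bounded away from $\tau_n$, write $\tilde K(h_{f,n},\tau_n,s)=(\text{surplus}\ge0)+\tilde K(h_{f,n},\cdot,s)$; the last term differs from $\tilde K(\bar f,0,\cdot)$ by an $o(1)$ shift error and is thus $\ge\phi(\cdot)-o(1)>0$. (iii) The delicate range is $s$ just past the bend, where $\bar f$'s own accumulated growth is still small and can be smaller than the shift error, which is comparable to the shift up to a logarithmic factor rather than $o(\text{shift})$: here one must keep $h_{f,n}$ genuinely close to $\bar f$ (no time shift, or a fast-decaying one) and bound $\tilde K(h_{f,n},\tau_n,s)$ below by the surplus at the end of the bend minus the bounded-rate cost of $\bar f$'s first few cells. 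Making all these error bounds uniform in $s$ and checking that the resulting chain of inequalities among $\sigma_n,\gamma_n,\tau_n$ and the error terms is simultaneously satisfiable --- which relies on the hypotheses genuinely constraining how fast and how non-square $f$ can be near $0$, so that $v_0$, the cost of the re-orientation, and $\bar f$'s early behaviour are all controlled --- is the technical heart of the argument.
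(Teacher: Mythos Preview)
Your three-stage architecture (diagonal, bend, track a piecewise-linear $\bar f$) matches the paper's, and you correctly identify the ``lag'' between the current position-direction of $h$ and its current slope as the central obstacle. But there is a genuine gap precisely in your regime (iii), and it is not just a matter of tidying constants.

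Your plan is to build surplus $\approx r_0\sigma_n$ (with $r_0=2(\sqrt2-1)$) on a diagonal buffer of length $\sigma_n$, then absorb whatever happens during the bend and the subsequent realignment. The problem is that at the end of the buffer-plus-bend the position $h(T)$ is still essentially on the diagonal, at magnitude $A\asymp\sigma_n$. When you then switch to slope $v_0=(\lambda,\mu)$, the ratio $h_Y/h_X$ only reaches $\mu/\lambda$ after a further time of order $A\asymp\sigma_n$; throughout that window the instantaneous rate can be negative, and the integrated deficit scales like $C'(\lambda,\mu)\,A\asymp C'(\lambda,\mu)\,\sigma_n$ for a constant $C'(\lambda,\mu)$ depending only on $(\lambda,\mu)$. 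So the comparison you must win is $r_0>C'(\lambda,\mu)/2$, and \emph{lengthening the buffer does not help}: both the surplus and the deficit scale linearly in $\sigma_n$. You have not established this inequality uniformly over the region $\{\kappa>0\}$ (indeed for, say, $(\lambda,\mu)=(3,5)$ the rate right after the bend is already about $-1$, and one has to integrate carefully to see whether the accumulated deficit stays below $1.66\,A$). Your sentence ``I would instead use the crude bound and a correspondingly longer buffer'' is exactly where the argument breaks: a longer buffer buys no improvement in the ratio.

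The paper sidesteps this entirely by a device you considered and set aside: it makes the bend out of \emph{geometrically increasing} segments, $[\tau_{j-1},\tau_j]$ with $\tau_j=m\,\tau_{j-1}$. Because each segment dwarfs all earlier ones, Lemma~\ref{hnearlinearlem} shows that the position at any time in $[\tau_{j-1},\tau_j]$ is already close to the ray through the current slope $(\lambda_j,\mu_j)$, so the instantaneous rate is $\kappa(\lambda_j,\mu_j)-O(1/m)\ge\kappa_0/2>0$ throughout (Corollary~\ref{Ktildekappacor}). No buffer, no constant comparison: $\tilde K(h,\tau_0,s)$ is strictly increasing all along the bend and arrives at the tracking phase already in the correct direction. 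That geometric scaling is the idea you are missing; with it, your regimes (i)--(iii) collapse into a single uniform positivity estimate, and the ``technical heart'' disappears.
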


We will prove this in Section \ref{hfmnsec}. We are now able to finish the proof of our main result.

\begin{proof}[Proof of Theorem \ref{mainthm}: lower bound]
Fix $\eps>0$. Recall $M_0$ from Lemma \ref{AlwaysGMT}. Since $K(f)\in\{-\infty\}\cup[0,\infty)$, if $\sup_{f\in F} K(f)\le 0$ then there is nothing to prove. We therefore assume that there exists $f \in F$ with $K(f)>0$. In this case, since $F$ is open and all functions $f$ with $K(f)>0$ are in $G_M^2$ for some $M$, we can choose $M\ge M_0$, $\eps'>0$ and $f\in G_M^2$ such that $B(f,2\eps')\subset F$ and
\[K(f) \ge \max\Big\{ \sup_{g\in F} K(g)-\eps, \, \frac{1}{2}\sup_{g\in F} K(g) \Big\} > 0.\]
Since $K(f)>0$, we have $\frac{d}{dt} \tilde K(f,0,t)|_{t=0} > 0$ and $\tilde K(f,0,t)>0$ for all $t\in(0,1]$. Therefore by Proposition \ref{hfmnprop}, for all sufficiently large $n\in\N$ the function $h_{f,n}$ satisfies \eqref{hfmnprop1}, \eqref{hfmnprop2} and \eqref{hfmnprop3} with $\min\{\eps/2,\eps'/2\}$ in place of $\eps$.

Take $n\in\N$ and write $k=\lceil n^{7/8}\rceil$. From Proposition \ref{ctssquareprop}, if we define
\[V'_{n,T} = \big\{u\in\mathcal N_{kT/n} : \|Z_u(s) - (s/2,s/2)\|\le \sfrac{T}{2n^2} \,\,\,\forall s\le kT/n\big\},\]
then for $T\ge Cn^{48}$ and $C$ large, we have $\P(|V'_{n,T}| \ge 2^{T/n^{1/8}-2T/n^2})\ge 1- 1/T^{3/2}$. 

Since $h_{f,n}$ satisfies \eqref{hfmnprop1}, \eqref{hfmnprop2} and \eqref{hfmnprop3} with $\min\{\eps/2,\eps'/2\}$ in place of $\eps$,
\begin{align*}
\P\big(N_T(B(f,\eps'))<e^{(\tilde K(f,0,1)-\eps)T}\big) &\le \P\big(N_T(B(h_{f,n},\eps'/2))<e^{(\tilde K(h_{f,n},k/n,1)-\eps/2)T}\big)\\
&\le \E\Big[\P\Big(N_T(B(h_{f,n},\eps'/2))<e^{(\tilde K(h_{f,n},k/n,1)-\eps/2)T}\,\Big|\,\F_{kT/n}\Big)\Big].
\end{align*}
Recalling the notation \eqref{NtTunotation}, note that if $u\in V'_{n,T}$ and $N^u_{k/n,T}(\Lambda_{3M,T}(h_{f,n},n)) \ge r$, and $n$ is sufficiently large, then $N_T(B(h_{f,n},\eps'/2))\ge r$, for any $r\ge 0$. Indeed if $u \in V'_{n,T}$ and $u \le v$ is such that $Z_v^T\lvert_{[k/n,1]} \in \Lambda_{3M,T}(h_{f,n},n)\lvert_{[k/n,1]}$ then using \eqref{hfmnprop0}
\[ \sup_{s \in [0,1]} \big\| Z_v^T(s)-h_{f,n}(s) \big\| \le \sup_{s \in [0,k/n]} \big\| Z_u^T(s)-(s/2,s/2) \big\| + \sup_{s \in [k/n,1]} \big\| Z_v^T(s)-h_{f,n}(s) \big\| \le \frac{1}{2n^2} + \frac{1}{n^2}, \]
so $Z_v^T \in B(h_{f,n}, \eps'/2)$ when $n$ is large. Thus
\begin{multline}\label{produVprime}
\P\big(N_T(B(f,\eps'))<e^{(\tilde K(f,0,1)-\eps)T}\big)\\
\le \E\Bigg[\prod_{u\in V'_{n,T}}\P\Big(N^u_{k/n,T}(\Lambda_{3M,T}(h_{f,n},n))<e^{(\tilde K(h_{f,n},k/n,1)-\eps/2)T}\,\Big|\,\F_{kT/n}\Big)\Bigg].
\end{multline}
For $n$ and $T$ sufficiently large, we check that we may apply Corollary \ref{PZcor}: indeed, by \eqref{hfmnprop2}, we have $\tilde K(h_{f,n},k/n,t)\ge 0$ for all $t\ge k/n$, and for $u\in V'_{n,T}$ we have
\[\|Z_u^T(k/n) - h_{f,n}(k/n)\| = \frac{1}{T}\big\|Z_u(kT/n) - (\sfrac{kT}{2n},\sfrac{kT}{2n})\big\| \le \frac{1}{2n^2}.\]
Thus, applying Corollary \ref{PZcor} to bound the conditional probability in \eqref{produVprime} from above, we obtain that
\[\P\big(N_T(B(f,\eps'))<e^{(\tilde K(f,0,1)-\eps)T}\big) \le \E\Bigg[\prod_{u\in V'_{n,T}}\Big(1-e^{-O(M^4 T/n^{1/4} + M^3 n T^{2/3})}\Big)\Bigg].\]
Recalling that $|V'_{n,T}|\ge 2^{T/n^{1/8}-2T/n^2}$ with probability at least $1-1/T^{3/2}$, we get
\[\P\big(N_T(B(f,\eps'))<e^{(\tilde K(f,0,1)-\eps)T}\big) \le \big(1-e^{-O(M^4 T/n^{1/4} + M^3 n T^{2/3})}\big)^{2^{T/n^{1/8}-2T/n^2}} + 1/T^{3/2}\]
and using that $1-x\le e^{-x}$ for all $x$,
\begin{equation}\label{mainlbprobT}
\P\big(N_T(B(f,\eps'))<e^{(\tilde K(f,0,1)-\eps)T}\big) \le \exp\big(-2^{T/n^{1/8}-2T/n^2}e^{-O(M^4 T/n^{1/4} + M^3 n T^{2/3})}\big) + 1/T^{3/2}.
\end{equation}

By Lemma \ref{rescalingcts} with $s=T$, for $T\ge 3M$, whenever $t-1\le T\le t$ we have
\[N_T(B(f,\eps')\cap G_{M,T}^2) \le N_t(B(f,\eps'+6M/t))\]
and therefore if $T\ge 6M/\eps'$, then we have
\[N_T(B(f,\eps')\cap G_{M,T}^2) \le \inf_{t\in[T,T+1]} N_t(B(f,2\eps')).\]
Thus
\begin{align*}
&\P\Big(\inf_{t\in[T,T+1]} N_t(B(f,2\eps'))e^{-(\tilde K(f,0,1)-\eps)t}<1\Big)\\
&\hspace{40mm}\le \P\big(N_T(B(f,\eps')\cap G_{M,T}^2)<e^{(\tilde K(f,0,1)-\eps)T}\big)\\
&\hspace{40mm}\le \P\big(N_T(B(f,\eps'))<e^{(\tilde K(f,0,1)-\eps)T}\big) + \P\big(N_T((G_{M,T}^2)^c) \ge 1\big).
\end{align*}
By Lemma \ref{AlwaysGMT}, since $M\ge M_0$, the last term is at most $e^{-\delta_0 T^{1/3}}$, and then applying \eqref{mainlbprobT}, we obtain
\begin{multline*}
\P\Big(\inf_{t\in[T,T+1]} N_t(B(f,2\eps'))e^{-(\tilde K(f,0,1)-\eps)t} < 1\Big)\\
\le \exp(-2^{T/n^{1/8}}e^{-O(M^4 T/n^{1/4} + M^3 n T^{2/3})}) + \frac{1}{T^{3/2}} + e^{-\delta_0 T^{1/3}}.
\end{multline*}
Taking $n$ large enough that the $2^{T/n^{1/8}}$ term dominates the exponent when $T$ is large, we see that this is summable in $T$, and therefore by the Borel-Cantelli lemma,
\[\P\Big(\liminf_{t\to\infty} N_t(B(f,2\eps')) e^{-(\tilde K(f,0,1)-\eps)t} < 1 \Big) = 0.\]
Since $B(f,2\eps')\subset F$ and $\tilde K(f,0,1) = K(f) \ge \sup_{g\in F}K(g) - \eps$, the statement of the theorem follows.
\end{proof}

\begin{proof}[Sketch proof of \eqref{expectgrowth2}]
Proving the lower bound in expectation \eqref{expectgrowth2} involves slightly more work than the upper bound \eqref{expectgrowth1}. Proposition \ref{hfmnprop} creates a function that approximates a given $f$ for much of its path, but begins by following the lead diagonal $(s/2,s/2)$ for a short period. Unfortunately it is designed to work for functions $f$ that satisfy $\frac{d}{dt} \tilde K(f,0,t)|_{t=0} > 0$ and $\tilde K(f,0,s)>0$ for all $s\in(0,1]$. To prove \eqref{expectgrowth2} we cannot make these assumptions on $f$, but can instead take a simpler approach than Proposition \ref{hfmnprop}. We define a function $\hat h_{f,n}$ that follows the lead diagonal $(s/2,s/2)$ until time $\lceil \sqrt n\rceil/n$, then satisfies
\[\hat h_{f,n}(j/n) = \Big(\frac{\lceil \sqrt n\rceil}{2n},\frac{\lceil \sqrt n\rceil}{2n}\Big) + f(j/n) - f(\lceil \sqrt n\rceil/n)\]
for every $j\in\{\lceil\sqrt n\rceil, \ldots, n\}$, and interpolates linearly between these values. Following a similar proof to that of Proposition \ref{lowersemicts}, one can show that
\[\liminf_{n\to\infty} \tilde K(\hat h_{f,n},0,1) \ge \tilde K(f,0,1),\]
and then combining Propositions \ref{1mlbprop} and \ref{ctssquareprop} yields \eqref{expectgrowth2}.
\end{proof}

In the proofs of the results above, it will be useful several times to note that since, for any $f$, $n$, $M$ and $T$,
\begin{equation}\label{LambdaGamma}
\Lambda_{M,T}(f,n) \subset \Gamma_{M,T}(f,n),
\end{equation}
we have
\begin{equation}\label{Rtrap}
R_X^-(I_j,\Gamma_{M,T}(f,n),T) \le R_X^-(I_j,\Lambda_{M,T}(f,n),T) \le R_X^+(I_j,\Lambda_{M,T}(f,n),T) \le R_X^+(I_j,\Gamma_{M,T}(f,n),T)
\end{equation}
and therefore by \eqref{detRbdeq}, if $M,T>1$, $n\ge 2M$, $f\in G_M^2$, $j\ge n^{1/2}$ and $s\in I_j$,
\begin{equation}\label{Rtrap2}
R_X^+(I_j,\Lambda_{M,T}(f,n),T) - \delta_{M,T}(j,n) \le R_X^*(f(s)) \le R_X^-(I_j,\Lambda_{M,T}(f,n),T) + \delta_{M,T}(j,n).
\end{equation}

\subsection{Lower bound on the first moment: proof of Proposition \ref{1mlbprop}}\label{1mlbsec}

Our aim in this section is to outline a proof of Proposition \ref{1mlbprop}. Fix $f$ as in the statement of the proposition. Let $\mathcal Z_0 = \{(0,0)\}$ and, for $j\in\{1,\ldots,n-1\}$, define
\[\mathcal Z_j = \{z\in[0,\infty)^2 : \|z-f(j/n)\|\le \sfrac{1}{2n^2}\}.\]
Lemma \ref{MtO} combined with Lemma \ref{finaldetRbd} will reduce the problem to bounding 
\[\Q\Big(\xi^T|_{[k/n,1]}\in \Lambda_{M,T}(f,n)|_{[k/n,1]}\,\Big|\, \xi^T(k/n) = w\Big)\]
for $w\in\mathcal Z_k$, so we concentrate on estimating this quantity.

Fix $n\in\N$ and $M,T>1$, and consider $f\in\PL_n^2\cap G_M^2$ and $j\in\{0,\ldots,n-1\}$. We will use the coupling mentioned in Section \ref{probabilistic_ingredients}, with details given in Section \ref{coupling_sec}. We will apply this coupling with $I=I_j$ and $F=\Lambda_{M,T}(f,n)$. Define
\begin{align*}
&q^X_{n,M,T}(z,j,f)\\
&= Q_z^{I_j,\Lambda_{M,T}(f,n),T}\Big(\big|X_-(s)- f_X(s)\big| \le \sfrac{1}{n^2} \,\,\forall s\in I_j,\, \big|X_-(\sfrac{j+1}{n})-f_X(\sfrac{j+1}{n})\big| \le \sfrac{1}{2n^2}, \, X_-|_{I_j}\in G_{M,T}|_{I_j}\hspace{-0.5mm}\Big)
\end{align*}
and
\[\hat q^X_{n,M,T}(z,j,f) = Q_z^{I_j,\Lambda_{M,T}(f,n),T}\Big( X_+(\sfrac{j+1}{n})-X_-(\sfrac{j+1}{n}) =0\Big)\]
and similarly for $q^Y_{n,M,T}(z,j,f)$ and $\hat q^Y_{n,M,T}(z,j,f)$.

\begin{lem}\label{BigQtolittleqprod}
Suppose that $n\ge 3$, $f\in\PL_n^2$ and $T>1$. Then for any $k\in\{0,\ldots,n-1\}$ and $w\in\mathcal Z_k$,
\begin{multline*}
\Q\Big(\xi^T|_{[k/n,1]}\in \Lambda_{M,T}(f,n)|_{[k/n,1]}\,\Big|\, \xi^T(k/n) = w\Big)\\
\ge \prod_{j=k}^{n-1} \inf_{z\in\mathcal Z_j} q^X_{n,M,T}(z,j,f)\, \hat q^X_{n,M,T}(z,j,f)\, q^Y_{n,M,T}(z,j,f)\, \hat q^Y_{n,M,T}(z,j,f).
\end{multline*}
\end{lem}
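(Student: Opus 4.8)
The plan is to prove the bound by repeatedly conditioning on the value of the rescaled process $\xi^T$ at the lattice times $j/n$ and using the Markov property, reducing the full-path event on $[k/n,1]$ to a product of single-interval events, each of which is then bounded below by the coupling quantities $q^X, \hat q^X, q^Y, \hat q^Y$.

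First I would set up the reduction to single intervals. Write the event $\{\xi^T|_{[k/n,1]}\in\Lambda_{M,T}(f,n)|_{[k/n,1]}\}$ as a subset of the intersection over $j\in\{k,\dots,n-1\}$ of the ``interval events'' that $\xi^T$ restricted to $I_j$ stays within $\rho$-distance $1/n^2$ of $f$ and within $G_{M,T}$, and that at the right endpoint $(j+1)/n$ the process is within $\|\cdot\|$-distance $1/(2n^2)$ of $f((j+1)/n)$, i.e. lands in $\mathcal Z_{j+1}$. The point is that on $\Lambda_{M,T}(f,n)=B_\rho(f,1/n^2)\cap G_{M,T}^2$ the $\rho$-ball of radius $1/n^2$ is already what we need on each $I_j$, so if every interval event holds then the global event holds; the extra tightening to $\mathcal Z_{j+1}$ (radius $1/(2n^2)$) at lattice points is what lets us feed into the next interval's infimum over starting points $z\in\mathcal Z_{j+1}$. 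Since $f\in\PL_n^2$, $f$ is linear on each $I_j$, which is what makes ``$|X_-(s)-f_X(s)|\le 1/n^2$ for all $s\in I_j$ together with $|X_-((j+1)/n)-f_X((j+1)/n)|\le 1/(2n^2)$ and $X_-|_{I_j}\in G_{M,T}|_{I_j}$'' the correct per-coordinate translation of the per-interval event; note $w\in\mathcal Z_k$ gives the base case of the induction.

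Next I would run the conditioning. By the Markov property of $\xi$ under $\Q$ at times $(n-1)/n, (n-2)/n,\dots,k/n$, the probability factorises as a product over $j$ of conditional probabilities of the $j$-th interval event given $\xi^T_{j/n}$, each of which is bounded below by the infimum over $z\in\mathcal Z_j$ of the corresponding single-interval probability started from $z$. Now I invoke the coupling of Section \ref{coupling_sec}: under the coupled measure $Q_z^{I_j,\Lambda_{M,T}(f,n),T}$, the process $\xi^T$ lies between the lower process $(X_-,Y_-)$ and the upper process $(X_+,Y_+)$ for as long as it stays within $\Lambda_{M,T}(f,n)$ on $I_j$. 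Hence the single-interval event for $\xi^T$ contains the event on which (i) $X_-$ stays within $1/n^2$ of $f_X$ on $I_j$, lands within $1/(2n^2)$ of $f_X((j+1)/n)$, and $X_-|_{I_j}\in G_{M,T}|_{I_j}$; (ii) the analogous statement for $Y_-$; and (iii) $X_+((j+1)/n)=X_-((j+1)/n)$ and $Y_+((j+1)/n)=Y_-((j+1)/n)$ — because on (iii) the sandwiched $\xi^T$ is pinned to the common value, and the sandwich forces the whole-interval confinement. These three events, under the product structure that the coupling provides between the $X$- and $Y$-coordinates and between the upper/lower processes, have probability at least $q^X_{n,M,T}(z,j,f)\,\hat q^X_{n,M,T}(z,j,f)\,q^Y_{n,M,T}(z,j,f)\,\hat q^Y_{n,M,T}(z,j,f)$; taking infima over $z\in\mathcal Z_j$ and multiplying over $j=k,\dots,n-1$ gives the claim.

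The main obstacle I expect is getting the logical containment in the last step exactly right — specifically, checking that the coupling genuinely delivers the three events as a product (that the $X$-marginal confinement of $X_-$, the $Y$-marginal confinement of $Y_-$, and the ``upper meets lower at the endpoint'' event are jointly arranged so their probabilities multiply), and that ``$X_+((j+1)/n)-X_-((j+1)/n)=0$'' together with the lower-process confinement really does imply $\xi^T$ lies in $\Lambda_{M,T}(f,n)|_{I_j}$ and lands in $\mathcal Z_{j+1}$, so that the coupling's validity hypothesis (which is only ``as long as $\xi^T$ stays in $\Lambda_{M,T}$'') is not circular. This is handled by the standard sandwiching bootstrap: let $\tau$ be the first exit time of $\xi^T$ from $\Lambda_{M,T}(f,n)$ on $I_j$; up to $\tau$ the sandwich holds, so if $X_-$ and $Y_-$ stay confined and $X_+,Y_+$ meet them at the right endpoint then $\xi^T$ cannot have exited before $(j+1)/n$, whence $\tau\ge(j+1)/n$ and the confinement propagates. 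Everything else is bookkeeping with the Markov property and the definition of $\Lambda_{M,T}$, $G_{M,T}$ and the metrics $\rho$, $\|\cdot\|$.
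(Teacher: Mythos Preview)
Your proposal is correct and follows essentially the same route as the paper: apply the Markov property at the lattice times $j/n$ to factorise into a product of infima over $z\in\mathcal Z_j$, then on each interval invoke the coupling properties (ii), (i), (iv), (iii) in that order to pass from $\xi^T$ to $Z$, sandwich $Z$ between $Z_\pm$, split off the event $\{Z_+-Z_-=0\}$ by independence, and separate the $X$ and $Y$ coordinates. Your explicit bootstrap via the first exit time $\tau$ to dissolve the apparent circularity (the sandwich in property~(i) is only asserted on $\mathcal A$) is in fact more careful than the paper, which simply writes ``by property (i)'' at that step and implicitly relies on the same jump-by-jump induction you sketch.
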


We carry out the proof of Lemma \ref{BigQtolittleqprod}, which consists of applying the properties of the coupling defined in Section \ref{CPP_sec}. We then need to bound the terms on the right-hand side. Bounding the $\hat q$ terms is fairly straightforward.

\begin{lem}\label{qhatlb}
Suppose that $M>1$, $n\ge 2M$, $T>1$ and $f\in \PL_n^2 \cap G_M^2$. Then for any $k\ge\lceil n^{1/2}\rceil$,
\[\prod_{j=k}^{n-1} \inf_{z\in\mathcal Z_j} \hat q^X_{n,M,T}(z,j,f) \hat q^Y_{n,M,T}(z,j,f) \ge \exp\Big(-O\Big(\frac{M^4T}{n^{1/2}} + M^3 n\Big)\Big).\]
\end{lem}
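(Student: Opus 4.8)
The plan is to use the coupling constructed in Section~\ref{coupling_sec} to read off $\hat q^X_{n,M,T}(z,j,f)$ and $\hat q^Y_{n,M,T}(z,j,f)$ as no-jump probabilities of compound Poisson processes, to bound the resulting exponents with the deterministic estimate \eqref{Rtrap2}, and then to sum over $j$. Recall that under $Q_z^{I_j,\Lambda_{M,T}(f,n),T}$ the processes $X_+$ and $X_-$ agree at the left end-point $j/n$ and are coupled so that every jump of $X_-$ is also a jump of $X_+$ of the same size; thus $X_+-X_-$ is non-decreasing and increases only when $X_+$ makes an extra jump, which in unrescaled time occurs at rate at most $2\big(R_X^+(I_j,\Lambda_{M,T}(f,n),T)-R_X^-(I_j,\Lambda_{M,T}(f,n),T)\big)$. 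Since $I_j$ has rescaled length $1/n$, hence unrescaled length $T/n$,
\[\hat q^X_{n,M,T}(z,j,f)\ \ge\ \exp\!\Big(-\tfrac{2T}{n}\big(R_X^+(I_j,\Lambda_{M,T}(f,n),T)-R_X^-(I_j,\Lambda_{M,T}(f,n),T)\big)\Big)\]
uniformly in $z\in\mathcal Z_j$, and likewise for $\hat q^Y_{n,M,T}(z,j,f)$ with $R_Y^\pm$ in place of $R_X^\pm$. For $j\ge k\ge\lceil n^{1/2}\rceil$ and $f\in G_M^2$, \eqref{Rtrap2} (and its analogue for $R_Y$) applied at any $s\in I_j$ gives $R_X^+(I_j,\Lambda_{M,T}(f,n),T)-R_X^-(I_j,\Lambda_{M,T}(f,n),T)\le 2\delta_{M,T}(j,n)$, and the same for $R_Y$. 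Multiplying these bounds over $j=k,\dots,n-1$ (the infima over $z\in\mathcal Z_j$ being harmless, as the bounds are uniform in $z$),
\[\prod_{j=k}^{n-1}\inf_{z\in\mathcal Z_j}\hat q^X_{n,M,T}(z,j,f)\,\hat q^Y_{n,M,T}(z,j,f)\ \ge\ \exp\!\Big(-\tfrac{8T}{n}\sum_{j=k}^{n-1}\delta_{M,T}(j,n)\Big),\]
so it suffices to check that $\tfrac{T}{n}\sum_{j=k}^{n-1}\delta_{M,T}(j,n)=O\big(M^4 T/n^{1/2}+M^3 n\big)$.

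For this we use the bound on $\delta_{M,T}(j,n)$ established in Appendix~\ref{det_bds_rate}. The condition $j\ge n^{1/2}$ keeps us away from the singularity of $R^*$ at the origin: every $g\in\Lambda_{M,T}(f,n)$ has $g_X(s),g_Y(s)\ge (j/n)/M-1/n^2\ge 1/(2Mn^{1/2})$ on $I_j$, so $Tg(s)$ sits a distance of order $Tn^{-1/2}/M$ from $0$, and the constant terms separating $R$ from $R^*$ (and the $2T^{-2/3}$ slack in $G_{M,T}$) therefore contribute to $\delta_{M,T}(j,n)$ only a lower-order part that, summed over $j$ and multiplied by $T/n$, is at most $O(M^3 n)$. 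The leading part of $\delta_{M,T}(j,n)$ comes from the oscillation of $R^*_X(f(\cdot))$ and $R^*_Y(f(\cdot))$ over $I_j$; since $f\in\PL_n^2\cap G_M^2$ has $\min(f_X,f_Y)\ge j/(Mn)$ on $I_j$ and $R^*_X,R^*_Y\le M^2$ there, this oscillation is at most a constant times $M^3 n\big(\Delta_j f_X+\Delta_j f_Y\big)/j$, where $\Delta_j f_X:=f_X(\tfrac{j+1}{n})-f_X(\tfrac{j}{n})\ge 0$ and similarly $\Delta_j f_Y$. Because $\sum_j\Delta_j f_X=f_X(1)\le M$ and $\sum_j\Delta_j f_Y\le M$, and $n/j\le n^{1/2}$ for $j\ge n^{1/2}$,
\[\tfrac{T}{n}\sum_{j=k}^{n-1}\frac{C M^3 n\big(\Delta_j f_X+\Delta_j f_Y\big)}{j}\ \le\ C M^3 T\,n^{-1/2}\sum_{j=k}^{n-1}\big(\Delta_j f_X+\Delta_j f_Y\big)\ \le\ 2C M^4 T\,n^{-1/2}.\]
Combining this with the lower-order contribution gives $\tfrac{T}{n}\sum_{j=k}^{n-1}\delta_{M,T}(j,n)=O(M^4 T/n^{1/2}+M^3 n)$, as required.

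The main obstacle is this last summation: the naive pointwise bound $\delta_{M,T}(j,n)=O(M^2)$ only yields $\tfrac{T}{n}\sum_j\delta_{M,T}(j,n)=O(M^2 T)$, which is far too weak, so one must use the $f$-dependent bound above and exploit that $f_X$ and $f_Y$, being monotone with total increment at most $M$, can increase substantially on only a few of the intervals $I_j$. Making this telescoping argument precise, and verifying that the constant terms in $R$ and the $G_{M,T}$-slack contribute only at the lower order claimed, is the content of Appendix~\ref{det_bds_rate}.
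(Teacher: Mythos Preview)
Your proof is correct and follows essentially the same route as the paper: identify $\hat q^X_{n,M,T}$ as a no-jump probability for the thinned process $X_+-X_-$, bound the rate difference $R_X^+-R_X^-$ by $2\delta_{M,T}(j,n)$ via \eqref{Rtrap2}, and sum over $j$. The only cosmetic difference is that the paper simply cites the summation bound \eqref{sumdeltaeq} for $\sum_j \delta_{M,T}(j,n)/n$, whereas you sketch its proof inline by splitting $\delta_{M,T}(j,n)$ into the telescoping increments of $f$ and the lower-order constant terms.
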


Again we will prove Lemma \ref{qhatlb} in Section \ref{CPP_sec}. Bounding the $q$ terms is much more delicate. In the following lemma, the precise form of $\Delta(j)$ is not important; we consider it a small term.

\begin{lem}\label{qlblem}
Suppose that $M>1$, $n\ge 2M$, $T>8n^{9/2}M^{3/2}$ and $f\in \PL_n^2 \cap G_M^2$. Then for any $j\in\{\lceil\sqrt n\rceil,\ldots,n-1\}$ and $z=(x,y)\in\mathcal Z_j$,
\[q^X_{n,3M,T}(z,j,f) \ge \exp\Big(-T\int_{j/n}^{(j+1)/n} \Big(\sqrt{2R^*_X(f(s))}-\sqrt{f'_X(s)}\Big)^2 ds - T\Delta(j)\Big)\]
where
\[\Delta(j) = \frac{2(M+1)}{n^{3/2}} + \frac{2\delta_{M,T}(j,n)}{n} + \frac{1}{\sqrt{n}}\sqrt{2\delta_{M,T}(j,n)\big(f_X(\sfrac{j+1}{n})-f_X(\sfrac{j}{n})\big)}\]
and $\delta_{M,T}(j,n)$ is defined in Lemma \ref{detRbd}.
\end{lem}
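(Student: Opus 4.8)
The plan is to reduce the lower bound on $q^X_{n,3M,T}(z,j,f)$ to a one‑dimensional estimate for a single compound Poisson process and then invoke the tube‑following bound from Appendix~\ref{CPP_append}. Recall from Section~\ref{coupling_sec} that under $Q_z^{I_j,\Lambda_{3M,T}(f,n),T}$ the lower process $X_-$ is, marginally on $I_j$, a rescaled compound Poisson process started from the $X$‑coordinate $x$ of $z$, whose jumps have size $\mathbbm{e}/T$ ($\mathbbm{e}$ a unit exponential) and arrive at constant rate $2R_X^-(I_j,\Lambda_{3M,T}(f,n),T)$ in the original timescale (equivalently rate $2TR_X^-(I_j,\Lambda_{3M,T}(f,n),T)$ in the rescaled time $s$). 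Thus $q^X_{n,3M,T}(z,j,f)$ is exactly the probability that such a process stays within $1/n^2$ of $f_X$ throughout $I_j$, finishes within $1/(2n^2)$ of $f_X((j+1)/n)$, and remains inside $G_{3M,T}$ on $I_j$.

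First I would dispose of the $G_{3M,T}\lvert_{I_j}$ membership requirement, which is implied by the other two events: since $f\in G_M^2$, $j\ge\sqrt n$, $n\ge 2M$ and $X_-$ is non‑decreasing with $X_-(j/n)\ge f_X(j/n)-1/(2n^2)$, any path within $1/n^2$ of $f_X$ on $I_j$ automatically satisfies $s/(3M)-2T^{-2/3}\le X_-(s)\le 3M(s+2T^{-2/3})$ there, the factor $3$ in $\Lambda_{3M,T}$ versus $f\in G_M^2$ providing the slack (one uses $s\ge j/n\ge n^{-1/2}$ and the constraint $T>8n^{9/2}M^{3/2}$). What remains is a genuine compound Poisson problem: since $f\in\PL_n^2$, $f_X$ is affine on $I_j$ with slope $m_j:=n\big(f_X(\tfrac{j+1}{n})-f_X(\tfrac{j}{n})\big)\ge 0$, and I need a lower bound on the probability that a compound Poisson process of rate $2R_X^-(I_j,\Lambda_{3M,T}(f,n),T)$, started within $\tfrac{1}{2n^2}$ (rescaled) of $f_X(j/n)$, tracks the line of slope $m_j$ through $I_j$ inside the tube of half‑width $1/n^2$ and lands within $\tfrac{1}{2n^2}$ of the right endpoint. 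This is precisely the situation handled by the ``elementary but intricate'' compound Poisson estimate in Appendix~\ref{CPP_append}, whose large‑deviation rate for travelling at speed $c$ under jump rate $a$ with unit‑exponential jumps is $(\sqrt{a}-\sqrt{c})^2$; applied with $a=2R_X^-(I_j,\Lambda_{3M,T}(f,n),T)$ and $c=m_j$ over a rescaled interval of length $1/n$ it should give
\[
q^X_{n,3M,T}(z,j,f)\ \ge\ \exp\!\Big(-\tfrac{T}{n}\big(\sqrt{2R_X^-(I_j,\Lambda_{3M,T}(f,n),T)}-\sqrt{m_j}\big)^2\ -\ O\big(\tfrac{(M+1)T}{n^{3/2}}\big)\Big),
\]
the error term absorbing the finite tube half‑width $1/n^2$ and the tightened endpoint tolerance $1/(2n^2)$, and using $R_X^-(I_j,\Lambda_{3M,T}(f,n),T)=O(M^2)$ (from $M$‑goodness, via \eqref{Rtrap2}) to bound constants; this produces the first term of $\Delta(j)$.

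The final step is to replace $2R_X^-(I_j,\Lambda_{3M,T}(f,n),T)$ by $\int_{I_j}2R_X^*(f(s))\,ds$. Applying \eqref{Rtrap2} (equivalently Lemma~\ref{detRbd}/Appendix~\ref{det_bds_rate}) with $3M$ in place of $M$---valid because $j\ge n^{1/2}$ and $f\in G_M^2\subset G_{3M}^2$---gives $\big|R_X^-(I_j,\Lambda_{3M,T}(f,n),T)-R_X^*(f(s))\big|\le \delta_{M,T}(j,n)$ for all $s\in I_j$ (absorbing the harmless change $\delta_{3M,T}\mapsto\delta_{M,T}$ into the implied constant if needed). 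Expanding the square, using $f'_X(s)=m_j$ on $I_j$, the bound $|R_X^-\!-R_X^*(f(s))|\le\delta_{M,T}(j,n)$ on the ``$R$'' term and $|\sqrt a-\sqrt b|\le\sqrt{|a-b|}$ on the cross term, one gets
\[
\tfrac1n\big(\sqrt{2R_X^-}-\sqrt{m_j}\big)^2\ \le\ \int_{j/n}^{(j+1)/n}\!\big(\sqrt{2R_X^*(f(s))}-\sqrt{f'_X(s)}\big)^2\,ds\ +\ \tfrac{2\delta_{M,T}(j,n)}{n}\ +\ \tfrac{1}{\sqrt n}\sqrt{2\delta_{M,T}(j,n)\big(f_X(\tfrac{j+1}{n})-f_X(\tfrac{j}{n})\big)} .
\]
Multiplying by $T$ and combining with the previous display yields exactly the claimed bound with $\Delta(j)$ as stated. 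I expect the only genuinely substantive point to be the compound Poisson tube estimate invoked in the second step, which is exactly why it is isolated in Appendix~\ref{CPP_append}; within the proof of Lemma~\ref{qlblem} itself the work is the routine (if fiddly) bookkeeping needed to align its parameters with $\Delta(j)$, to check that the $G_{3M,T}$ constraint is redundant, and to carry out the elementary $\sqrt{\,\cdot\,}$‑algebra above.
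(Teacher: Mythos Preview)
Your outline matches the paper's proof in structure: first discard the $G_{3M,T}$ constraint (the paper does exactly this), then apply the compound Poisson tube estimate from Appendix~\ref{CPP_append}, and finally swap $R_X^-(I_j,\Lambda_{3M,T}(f,n),T)$ for $R_X^*(f(s))$ using Lemma~\ref{detRbd}. The algebra you sketch for the last step is also what the paper does.

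There is, however, a genuine gap in the middle step. Lemma~\ref{CPPfollowtubecor} has hypotheses you do not check: it requires $A>0$ and $a<tA/2$, where here $A=m_j=n\big(f_X(\tfrac{j+1}{n})-f_X(\tfrac{j}{n})\big)$, $t=1/n$, and $a=x-f_X(j/n)$. Since $z\in\mathcal Z_j$ only guarantees $|a|\le 1/(2n^2)$, the condition $a<tA/2=\tfrac{1}{2}\big(f_X(\tfrac{j+1}{n})-f_X(\tfrac{j}{n})\big)$ can fail when $m_j$ is small (and certainly fails if $m_j=0$ and $a>0$). Moreover the lower bound on $T$ in Lemma~\ref{CPPfollowtubecor} contains $(A-a/t)\wedge1$ in the denominator, which blows up as $A-a/t\downarrow0$. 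So you cannot invoke the tube estimate uniformly across all $j$ and $z$.

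The paper handles this by splitting into two cases. In the ``small increment'' case $f_X(\tfrac{j+1}{n})\le x+\tfrac{1}{2n^2}$, it bypasses the tube lemma entirely and uses the trivial lower bound $Q_z(X_-\text{ does not jump on }I_j)=\exp\big(-2\hat R_X^-(j)T/n\big)$; one then checks separately that this exponent dominates the claimed integral up to the stated error, using $f_X(\tfrac{j+1}{n})-f_X(\tfrac{j}{n})\le 1/n^2$ and $R_X^*(f(s))\le M$. Only in the complementary case does the paper apply Lemma~\ref{CPPfollowtubecor}, and it is precisely the case hypothesis that verifies $a<tA/2$. Your sketch needs this case distinction (or an alternative argument covering small $m_j$) to be complete.
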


We again delay the proof of Lemma \ref{qlblem} to Section \ref{CPP_sec}. Putting the above ingredients together and bounding $\sum_{j=\lceil\sqrt n\rceil/n}^{n-1}\Delta(j)$ gives us our main bound, which we now state.

\begin{prop}\label{Qlbprop}
Suppose that $M>1$, $n\ge 2M$, $T>8n^{9/2}M^{3/2}$ and $f\in\PL_n^2\cap G_M^2$. Then for any $k\ge \lceil\sqrt n\rceil$ and $w\in\mathcal Z_{k}$,
\begin{multline*}
\Q\Big(\xi^T|_{[k/n,1]}\in \Lambda_{3M,T}(f,n)|_{[k/n,1]}\,\Big|\, \xi^T(k/n) = w\Big)\\
\ge \exp\bigg( -TI(f,k/n,1) - O\Big(\frac{M^4T}{n^{1/4}} + M^3 n T^{1/2}\Big)\bigg).
\end{multline*}
\end{prop}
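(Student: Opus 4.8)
The plan is to assemble Lemma \ref{BigQtolittleqprod}, Lemma \ref{qhatlb} and Lemma \ref{qlblem}, and then tidy up the error terms. First I would apply Lemma \ref{BigQtolittleqprod} with $3M$ in place of $M$ --- the lemma holds for a generic constant, and its hypotheses $n\ge 3$, $f\in\PL_n^2$, $T>1$ all follow from the hypotheses here --- to obtain, for $w\in\mathcal Z_k$,
\[\Q\Big(\xi^T|_{[k/n,1]}\in \Lambda_{3M,T}(f,n)|_{[k/n,1]}\,\Big|\, \xi^T(k/n) = w\Big) \ge \prod_{j=k}^{n-1} \inf_{z\in\mathcal Z_j} q^X_{n,3M,T}(z,j,f)\, \hat q^X_{n,3M,T}(z,j,f)\, q^Y_{n,3M,T}(z,j,f)\, \hat q^Y_{n,3M,T}(z,j,f).\]
I would then split this product into the $\hat q$ factors and the $q$ factors and bound each group separately.

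For the $\hat q$ factors, Lemma \ref{qhatlb} (applied with $3M$, which only affects the implied constant, and using $k\ge\lceil\sqrt n\rceil$) gives a lower bound $\exp(-O(M^4T/n^{1/2}+M^3n))$, and this error is dominated by the one claimed in the proposition. For the $q$ factors, I would apply Lemma \ref{qlblem} to each $j\in\{k,\ldots,n-1\}$ --- legitimate since $k\ge\lceil\sqrt n\rceil$ and $T>8n^{9/2}M^{3/2}$ --- and multiply over $j$. The integrals telescope,
\[\sum_{j=k}^{n-1}\int_{j/n}^{(j+1)/n}\Big(\sqrt{2R^*_X(f(s))}-\sqrt{f'_X(s)}\Big)^2 ds = \int_{k/n}^{1}\Big(\sqrt{2R^*_X(f(s))}-\sqrt{f'_X(s)}\Big)^2 ds,\]
and likewise for $Y$; adding the $X$- and $Y$-contributions reproduces exactly $-TI(f,k/n,1)$ in the exponent, leaving the extra term $-T\sum_{j=k}^{n-1}(\Delta_X(j)+\Delta_Y(j))$, where $\Delta_X,\Delta_Y$ are the quantities $\Delta(j)$ of Lemma \ref{qlblem} in the two coordinates.

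It then remains to show that $T\sum_{j=k}^{n-1}(\Delta_X(j)+\Delta_Y(j)) = O(M^4T/n^{1/4}+M^3nT^{1/2})$. The first term of $\Delta(j)$ contributes only $O(MT/n^{1/2})$ after summing over at most $n$ indices and multiplying by $T$. For the term $2\delta_{M,T}(j,n)/n$ I would insert the bound on $\delta_{M,T}(j,n)$ from Lemma \ref{detRbd}, using $j\ge\lceil\sqrt n\rceil$ and the hypothesis $T>8n^{9/2}M^{3/2}$; and for the cross term $\tfrac{1}{\sqrt n}\sqrt{2\delta_{M,T}(j,n)(f_X(\sfrac{j+1}{n})-f_X(\sfrac{j}{n}))}$ I would apply Cauchy--Schwarz,
\[\sum_{j=k}^{n-1}\sqrt{\delta_{M,T}(j,n)\big(f_X(\sfrac{j+1}{n})-f_X(\sfrac{j}{n})\big)} \le \Big(\sum_{j=k}^{n-1}\delta_{M,T}(j,n)\Big)^{1/2}\Big(\sum_{j=k}^{n-1}\big(f_X(\sfrac{j+1}{n})-f_X(\sfrac{j}{n})\big)\Big)^{1/2},\]
together with $\sum_{j=k}^{n-1}(f_X(\sfrac{j+1}{n})-f_X(\sfrac{j}{n})) = f_X(1)-f_X(k/n)\le M$ since $f\in G_M^2$. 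Combining these estimates with the $\hat q$ bound above yields the stated inequality.

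Since Lemmas \ref{BigQtolittleqprod}, \ref{qhatlb} and \ref{qlblem} --- and behind them the compound Poisson coupling of Section \ref{coupling_sec} --- do all the real work, this proposition is essentially bookkeeping. The one point that requires genuine care is the error arithmetic of the final step: verifying that the square-root cross term in $\Delta(j)$, controlled via Cauchy--Schwarz, is indeed of lower order than $M^4T/n^{1/4}$, and checking that the hypothesis $T>8n^{9/2}M^{3/2}$ is strong enough to tame the $\delta_{M,T}(j,n)$-dependent contributions that are responsible for the $M^3nT^{1/2}$ error.
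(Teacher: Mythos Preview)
Your proposal is correct and follows essentially the same route as the paper: combine Lemmas \ref{BigQtolittleqprod}, \ref{qhatlb} and \ref{qlblem} (with $3M$ in the $\Lambda$-parameter), telescope the integrals to produce $I(f,k/n,1)$, and control the accumulated $\Delta(j)$-errors termwise, using \eqref{sumdeltaeq} for the $\delta_{M,T}(j,n)/n$ part and Cauchy--Schwarz together with $f_X(1)\le M$ for the cross term. The only cosmetic difference is that the paper groups the Cauchy--Schwarz step with the factor $1/\sqrt n$ already absorbed, writing $\sum_j\sqrt{\tfrac{2\delta_{M,T}(j,n)}{n}(f_X(\tfrac{j+1}{n})-f_X(\tfrac{j}{n}))}$ and then invoking \eqref{sumdeltaeq} directly; the hypothesis $T>8n^{9/2}M^{3/2}$ is used only to apply Lemma \ref{qlblem}, not to tame the $\delta_{M,T}$ sums (those are handled purely by \eqref{sumdeltaeq}).
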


\begin{proof}
Combining Lemmas \ref{BigQtolittleqprod}, \ref{qhatlb} and \ref{qlblem}, we have
\begin{multline*}
\Q\Big(\xi^T|_{[k/n,1]}\in \Lambda_{3M,T}(f,n)|_{[k/n,1]}\,\Big|\, \xi^T(k/n) = w\Big)\\
\ge \exp\bigg( -TI(f,k/n,1) - 2T\sum_{j=k}^{n-1}\Delta(j) - O\Big(\frac{M^4T}{n^{1/2}} + M^3 n\Big)\bigg).
\end{multline*}
Recall that
\[\Delta(j) = \frac{2(M+1)}{n^{3/2}} + \frac{2\delta_{M,T}(j,n)}{n} + \sqrt{\frac{2\delta_{M,T}(j,n)}{n}\big(f_X(\sfrac{j+1}{n})-f_X(\sfrac{j}{n})\big)}.\]
By \eqref{sumdeltaeq},
\[\sum_{j=\lceil\sqrt n\rceil}^{n-1}\frac{\delta_{M,T}(j,n)}{n} = O\Big(\frac{M^4}{n^{1/2}}+\frac{M^3 n}{T}\Big).\]
By Cauchy-Schwarz,
\[\sum_{j=\lceil\sqrt n\rceil}^{n-1} \sqrt{\frac{2\delta_{M,T}(j,n)}{n}\big(f_X(\sfrac{j+1}{n})-f_X(\sfrac{j}{n})\big)} \le \bigg(\sum_{j=\lceil\sqrt n\rceil}^{n-1} \frac{2\delta_{M,T}(j,n)}{n}\sum_{i=\lceil\sqrt n\rceil}^{n-1} \big(f_X(\sfrac{i+1}{n})-f_X(\sfrac{i}{n})\big)\bigg)^{1/2}.\]
Using \eqref{sumdeltaeq} again, together with the fact that $f\in G_M^2$, and that $\sqrt{a+b}\le\sqrt{a}+\sqrt{b}$ for $a,b\ge 0$, we have
\[\sum_{j=\lceil\sqrt n\rceil}^{n-1} \frac{1}{\sqrt n}\sqrt{2\delta_{M,T}(j,n)\big(f_X(\sfrac{j+1}{n})-f_X(\sfrac{j}{n})\big)} = O\Big(\frac{M^2}{n^{1/4}} + \frac{M^{3/2}n^{1/2}}{T^{1/2}}\Big)M^{1/2} = O\Big(\frac{M^{5/2}}{n^{1/4}} + \frac{M^2n^{1/2}}{T^{1/2}}\Big).\]
Therefore
\[\sum_{j=k}^{n-1} \Delta(j) \le \sum_{j=\lceil\sqrt n\rceil}^{n-1} \Delta(j) = O\Big(\frac{M}{n^{1/2}} + \frac{M^4}{n^{1/2}}+\frac{M^3 n}{T} + \frac{M^{5/2}}{n^{1/4}} + \frac{M^2n^{1/2}}{T^{1/2}}\Big).\]
Combining error terms gives the result.
\end{proof}

As promised, we can now easily prove Proposition \ref{1mlbprop}.

\begin{proof}[Proof of Proposition \ref{1mlbprop}]
For $u \in \mathcal{N}_{kT/n}$, let $\mathcal{N}_T^{(u)}$ be the set of descendants of $u$ in $\mathcal{N}_T$. Since $u \in \mathcal{N}_{kT/n}$, by the Markov property and Lemma \ref{MtO}, for any $k\in\{0,\ldots,n-1\}$,
\begin{multline*}
\E\Bigg[\sum_{v\in\Nc_T^{(u)}} \ind_{\{Z_v^T|_{[k/n,1]}\in\Lambda_{3M,T}(f,n)|_{[k/n,1]}\}}\,\Bigg|\,\F_{kT/n}\Bigg]\\
= \Q\Big[\ind_{\{\xi^T|_{[k/n,1]}\in\Lambda_{3M,T}(f,n)|_{[k/n,1]}\}}e^{\int_{kT/n}^T R(\xi_s)ds} \,\Big|\, \xi^T(k/n)=w \Big]\Big|_{w=Z_u^T(k/n)}.
\end{multline*}
Now, since $k\ge \lceil\sqrt n\rceil$ and $f\in G^2_M\subset G^2_{3M}$, by \eqref{LambdaGamma} and Lemma \ref{finaldetRbd}, if $\xi^T|_{[k/n,1]}\in\Lambda_{3M,T}(f,n)|_{[k/n,1]}$, then
\[\int_{kT/n}^T R(\xi_s)ds = T\int_{k/n}^1 R(T\xi^T(s)) ds \ge T\int_{k/n}^1 R^*(f(s))ds - T\eta(3M,n,T),\]
and therefore
\begin{multline*}
\E\Bigg[\sum_{v\in\Nc_T^{(u)}} \ind_{\{Z_v^T|_{[k/n,1]}\in\Lambda_{3M,T}(f,n)|_{[k/n,1]}\}}\,\Bigg|\,\F_{kT/n}\Bigg]\\
\ge e^{T\int_{k/n}^1 R^*(f(s))ds - T\eta(3M,n,T)}\Q\Big(\xi^T|_{[k/n,1]}\in\Lambda_{3M,T}(f,n)|_{[k/n,1]} \,\Big|\, \xi^T(k/n)=w \Big)\Big|_{w=Z_u^T(k/n)}.
\end{multline*}
We also know from Proposition \ref{Qlbprop} that if $w\in\mathcal Z_{k}$, then
\begin{multline*}
\Q\Big(\xi^T|_{[k/n,1]}\in \Lambda_{3M,T}(f,n)|_{[k/n,1]}\,\Big|\, \xi^T(k/n) = w\Big)\\
\ge \exp\bigg( -TI(f,k/n,1) - O\Big(\frac{M^4T}{n^{1/4}} + M^3 nT^{1/2}\Big)\bigg).
\end{multline*}
Combining these estimates and recalling that $\eta(3M,n,T)=O(M^4n^{-1/2} + M^3nT^{-1/3})$ gives the result.
\end{proof}

\subsection{Upper bound on the second moment: proof of Proposition \ref{2mubprop}}\label{2mubsec}

For our first moment bounds we used the many-to-one lemma, Lemma \ref{MtO}, which gives a method for calculating expectations of sums over all the particles in our population at a fixed time. For our second moment bound, we will need an analogue for calculating expectations of squares of sums over particles. This will involve another measure $\Q^2$, whose description is again adapted from \cite{harris_roberts:many_to_few}, this time in the case $k=2$.

Let $\Q^2$ be a probability measure under which $\xi^1_t$ and $\xi^2_t$ are Markov processes each living in $\R^2$ constructed in the following way:
\begin{itemize}
\item Take an exponential random variable $\mathbbm{e}$ of parameter $1$.
\item Let $(\chi_t,t\ge0)$ be a pure jump Markov process in $\R^2$ independent of $\mathbbm{e}$ such that $\chi_0=0$ and when $\chi_t$ is in state $z$, jumps occur at rate $2R(z)$. When there is a jump from state $z$, it is of the form $(\mathcal{E},0)$ with probability $P(z)$ and $(0,\mathcal{E})$ with probability $1-P(z)$, where $\mathcal{E}$ is an independent exponentially-distributed random variable with parameter 1.
\item Let $\tau = \inf\{t>0 : \int_0^t 2R(\chi_s) ds > \mathbbm{e}\}$.
\item Let $\xi^1_t = \xi^2_t = \chi_t$ for $t<\tau$.
\item Let $\xi^1_\tau$ equal $\chi_\tau$ plus a jump of the form $(-\log\mathcal{U},0)$ with probability $P(\chi_\tau)$ and $(0,-\log\mathcal{U})$ with probability $1-P(\chi_\tau)$, where $\mathcal{U}$ is an independent uniformly distributed random variable on $(0,1)$; let $\xi^2_\tau$ equal $\chi_\tau$ plus either $(-\log(1-\mathcal{U}),0)$ or $(0,-\log(1-\mathcal{U}))$ respectively.
\item Conditionally on $\tau$, $(\xi_t^1)_{t\le \tau}$ and $(\xi_t^2)_{t\le\tau}$, the processes $(\xi_{\tau+t}^1, t\ge 0)$ and $(\xi_{\tau+t}^2, t\ge 0)$ behave independently as if under $\Q_{\xi_\tau^1}$ and $\Q_{\xi_\tau^2}$ respectively.
\end{itemize}
We write $\Q^2$ both for the measure and for its corresponding expectation operator.

\begin{lem}[Many-to-two, Lemma 1 of \cite{harris_roberts:many_to_few} with $k=2$] \label{mt2}
	Suppose that $t\ge0$. For any measurable function $f:(\R^2)^2\to\R$,
	\[\E \Bigg[ \sum_{u_1, u_2 \in \mathcal{N}_t} f(Z_{u_1}(t), Z_{u_2}(t)) \Bigg] = \Q^2 \left[ f(\xi_t^1, \xi_t^2) e^{3 \int_0^{\tau \wedge t} R(\xi^1_s) ds +\int_{\tau \wedge t}^t R(\xi^1_s) ds + \int_{\tau \wedge t}^t R(\xi^2_s) ds} \right].\]
\end{lem}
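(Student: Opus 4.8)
The plan is to deduce Lemma \ref{mt2} from the general many-to-few formula of Harris and Roberts \cite{harris_roberts:many_to_few}, after checking that our fragmenting-rectangle system fits their framework, and I would also outline the direct argument that underlies it. In the language of \cite{harris_roberts:many_to_few} our process is a branching Markov process in which a particle at $z$ performs no spatial motion between branch events, branches at rate $R(z)$, and at each branch event is replaced by exactly two children whose displacements from $z$ are $(-\log\mathcal U,0)$ and $(-\log(1-\mathcal U),0)$ with probability $P(z)$, or the corresponding pair in the second coordinate with probability $1-P(z)$. One first checks the mild regularity hypotheses: $R$ is locally bounded, and the process is non-explosive, i.e.\ $\Nc_t$ is almost surely finite for every $t$ (immediate from the estimates of Section \ref{GMTsec}, or by comparison with a Yule process after localising $R$). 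With the offspring number identically equal to $2$, the $k$-spine construction of \cite{harris_roberts:many_to_few} then specialises, for $k=2$, to exactly the measure $\Q^2$ described above: along the stretch where the two spines coincide the shared particle $\chi$ jumps at the size-biased rate $2R$, the spine decoupling time $\tau$ has conditional hazard $2R(\chi_s)$ given $\chi$, at $\tau$ the two spines follow the two distinct children, and thereafter evolve as independent copies under $\Q$; the weights $e^{3\int_0^{\tau\wedge t}R(\xi^1_s)\d s}$ on the shared part and $e^{\int_{\tau\wedge t}^t R(\xi^i_s)\d s}$ on each separated part are the associated Radon--Nikodym corrections.

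To make the argument self-contained I would instead give the following direct derivation, assuming first that $f\ge0$ (the general case then follows by linearity, after reducing to bounded $f$ by a monotone-class argument). Split
\[\sum_{u_1,u_2\in\Nc_t}f(Z_{u_1}(t),Z_{u_2}(t)) = \sum_{u\in\Nc_t}f(Z_u(t),Z_u(t)) + \sum_{u_1\ne u_2}f(Z_{u_1}(t),Z_{u_2}(t)).\]
For the diagonal term, Lemma \ref{MtO} gives $\E_0\big[\sum_{u\in\Nc_t}f(Z_u(t),Z_u(t))\big]=\Q\big[f(\xi_t,\xi_t)e^{\int_0^t R(\xi_s)\d s}\big]$; since $\chi$ has the law of $\xi$ under $\Q$ and $\P(\tau>t\mid\chi)=e^{-\int_0^t 2R(\chi_s)\d s}$, this equals $\Q^2\big[f(\xi^1_t,\xi^2_t)\ind_{\{\tau>t\}}e^{3\int_0^t R(\chi_s)\d s}\big]$, which is precisely the restriction of the claimed right-hand side to $\{\tau>t\}$, where $\tau\wedge t=t$ and $\xi^1=\xi^2=\chi$.

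For the off-diagonal term, decompose over the most recent common ancestor $w=u_1\wedge u_2$, its branch time $s$, and its two children $w^{(1)},w^{(2)}$. Conditionally on $\F_s$ and on the branch data of $w$, the subtrees rooted at $w^{(1)}$ and $w^{(2)}$ are independent, so two applications of Lemma \ref{MtO} give, with $z^{(i)}$ the position of $w^{(i)}$,
\[\E\Big[\sum_{u_1\ge w^{(1)},\,u_2\ge w^{(2)}}f(Z_{u_1}(t),Z_{u_2}(t))\,\Big|\,\F_s\Big] = \Q_{z^{(1)}}\otimes\Q_{z^{(2)}}\Big[f(\xi^1_{t-s},\xi^2_{t-s})e^{\int_0^{t-s}R(\xi^1_r)\d r+\int_0^{t-s}R(\xi^2_r)\d r}\Big],\]
and adding the term with $w^{(1)},w^{(2)}$ interchanged accounts for both orderings of $(u_1,u_2)$; by the symmetry of the joint law of $(\mathcal U,1-\mathcal U)$ these two contributions are equal, producing a factor $2$. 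Summing over all branch events with $s\le t$, applying the L\'evy-system (compensation) formula and then Lemma \ref{MtO} once more to the resulting sum over $\Nc_s$ yields
\[\E\Big[\sum_{u_1\ne u_2}f(Z_{u_1}(t),Z_{u_2}(t))\Big] = \int_0^t \Q\big[2R(\chi_s)e^{\int_0^s R(\chi_r)\d r}\,\Psi(s,\chi_s)\big]\d s,\]
where $\Psi(s,z)$ is the expectation over the two child displacements from $z$ of the single-ordering product expectation above. On the other side, in $\Q^2\big[f(\xi^1_t,\xi^2_t)\ind_{\{\tau\le t\}}e^{3\int_0^\tau R(\chi_r)\d r+\int_\tau^t R(\xi^1_r)\d r+\int_\tau^t R(\xi^2_r)\d r}\big]$ the decoupling time contributes the density $2R(\chi_s)e^{-2\int_0^s R(\chi_r)\d r}$, and the children at $\tau$ have exactly the displacements $(-\log\mathcal U,0),(-\log(1-\mathcal U),0)$ (or the second-coordinate analogues), so integrating out $\tau$ gives the same expression $\int_0^t\Q\big[2R(\chi_s)e^{\int_0^s R(\chi_r)\d r}\Psi(s,\chi_s)\big]\d s$ — here the $e^{-2\int_0^s R}$ cancels two of the three factors of $R$, and the factor $2$ from the accelerated jump rate of $\chi$ matches the factor $2$ from summing over the two orderings of the tagged pair. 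Adding the diagonal and off-diagonal contributions gives the lemma.

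The obstacle is bookkeeping rather than conceptual: one must track the exponential functionals of $R$ carefully across the change of rates (the $2R$ on the shared lineage, the $3R$ weight, and the two separate $R$'s) so that they combine as stated, and justify the Tonelli interchanges and the compensation formula in this spatially inhomogeneous setting. Working with $f\ge0$ throughout, together with non-explosion of $\Nc_t$ and an initial restriction to bounded $f$, renders every step legitimate, and the general signed case then follows by linearity.
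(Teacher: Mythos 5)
The paper states Lemma \ref{mt2} purely as a citation of Lemma~1 of \cite{harris_roberts:many_to_few} with $k=2$ and gives no proof, so the relevant comparison is with your first paragraph, which does exactly that (check the branching Markov process fits the framework, identify $\Q^2$ as their two-spine measure); this matches the paper's approach. Your supplementary direct derivation---diagonal term via Lemma \ref{MtO} and the survival probability $e^{-2\int_0^t R}$ of $\tau$, off-diagonal term via the MRCA decomposition, compensation formula, two applications of Lemma \ref{MtO} on the subtrees, and a final Lemma \ref{MtO} at the branch time---has the bookkeeping right (the $2R$ hazard of $\tau$ cancels against $e^{3\int R}$ to leave $e^{\int R}$, and the factor of $2$ from the decoupling rate matches the two orderings of $(u_1,u_2)$); it is a correct and genuinely self-contained alternative to the citation, at the cost of redoing the Harris--Roberts spine calculus by hand for this special case.
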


In fact, by using the description of $\Q^2$ above, the key to the second moment bound will be to estimate terms of the form
\[\Q\big(\xi^T|_{[a,b]}\in\Lambda_{M,T}(f,n)\big|_{[a,b]}\,\big|\,\xi^T_a=z\big)\]
where $\Q=\Q_0$ is the measure seen in Section \ref{probabilistic_ingredients}. The same coupling used for Proposition \ref{Qprobcor} will yield the following result.

\begin{prop}\label{Qprobcor2}
Suppose that $f\in E^2$, $n\in\N$, $T>1$ and $M>1$. Then for any $0\le a<b\le 1$ and $z$ such that $\|z-f(a)\|<1/n^2$,
\begin{multline*}
\Q\big(\xi^T|_{[a,b]}\in\Lambda_{M,T}(f,n)\big|_{[a,b]}\,\big|\,\xi^T_a=z\big)\\
\le \exp\bigg(-T\sum_{j=\lfloor an\rfloor}^{\lceil bn\rceil-1} \big(\mathcal E^+_X(I_j\cap[a,b],\Lambda_{M,T}(f,n),T) + \mathcal E^+_Y(I_j\cap[a,b],\Lambda_{M,T}(f,n),T)\big)\bigg).
\end{multline*}
\end{prop}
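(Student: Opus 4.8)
The statement to prove, Proposition~\ref{Qprobcor2}, is essentially Proposition~\ref{Qprobcor} with the interval $[i/n,\theta]$ replaced by an arbitrary subinterval $[a,b]$, the set $\Gamma_{M,T}(f,n)$ replaced by the smaller set $\Lambda_{M,T}(f,n)$, and the partition of $[a,b]$ allowed to have two ``ragged'' end-pieces $I_{\lfloor an\rfloor}\cap[a,b]$ and $I_{\lceil bn\rceil-1}\cap[a,b]$. The approach is to run exactly the same coupling argument that is used to prove Proposition~\ref{Qprobcor} in Section~\ref{coupling_sec}, namely sandwiching $\xi$ between the two coupled compound Poisson processes $X_-$ and $X_+$ (and similarly in the $Y$-coordinate) on each subinterval, valid for as long as $\xi^T$ stays inside $\Lambda_{M,T}(f,n)$, and then multiplying the per-subinterval probabilities. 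The only genuinely new bookkeeping is the handling of the two fractional end-intervals and the fact that we condition at the left endpoint $a$ rather than at a lattice point; the containment $\Lambda_{M,T}(f,n)\subset\Gamma_{M,T}(f,n)$ from \eqref{LambdaGamma}, together with the monotonicity of $R_X^\pm$ in the set argument \eqref{Rtrap}, guarantees that the same coupling bounds applied for $\Gamma$ are still valid for $\Lambda$, and in fact the quantities $\mathcal E^+_X$, $\mathcal E^+_Y$ built from $\Lambda_{M,T}(f,n)$ are at least as large as those built from $\Gamma_{M,T}(f,n)$, so the resulting bound is (a priori) sharper.

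First I would set up the Markov decomposition: writing $i_0=\lfloor an\rfloor$ and $i_1=\lceil bn\rceil$, decompose $[a,b]=[a,(i_0+1)/n]\cup[(i_0+1)/n,(i_0+2)/n]\cup\cdots\cup[i_1/n\wedge b,\,b]$, i.e. into the fractional left piece $I_{i_0}\cap[a,b]$, the full interior intervals $I_{i_0+1},\dots,I_{i_1-2}$, and the fractional right piece $I_{i_1-1}\cap[a,b]$ (with the obvious degeneracies if $b-a<1/n$ or if $a$, $b$ are themselves lattice points). By the Markov property of $\xi$ under $\Q$,
\[
\Q\big(\xi^T|_{[a,b]}\in\Lambda_{M,T}(f,n)\big|_{[a,b]}\,\big|\,\xi^T_a=z\big)
= \Et\!\left[\prod_{j=i_0}^{i_1-1}\Q\big(\xi^T|_{I_j\cap[a,b]}\in\Lambda_{M,T}(f,n)\big|_{I_j\cap[a,b]}\,\big|\,\xi^T_{(I_j\cap[a,b])^-}\big)\right],
\]
where the outer expectation is over the path of $\xi^T$ on $[a,b]$ constrained to $\Lambda_{M,T}(f,n)$; each factor is then bounded above by the supremum over admissible starting positions $z'$ with $\|z'-f((I_j\cap[a,b])^-)\|\le 1/n^2$. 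For each such factor I would invoke the single-interval coupling estimate established in Section~\ref{coupling_sec} for the proof of Proposition~\ref{Qprobcor} (applied to the interval $I=I_j\cap[a,b]$ and the set $F=\Lambda_{M,T}(f,n)$), which produces the bound $\exp\big(-T(\mathcal E^+_X(I_j\cap[a,b],\Lambda_{M,T}(f,n),T)+\mathcal E^+_Y(I_j\cap[a,b],\Lambda_{M,T}(f,n),T))\big)$. Multiplying and telescoping yields the product over $j$ in the statement.

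The step I expect to be the main obstacle — or at least the one requiring the most care — is verifying that the single-interval coupling bound really does apply verbatim to a subinterval of the form $I_j\cap[a,b]$ rather than a full mesh interval $I_j$. The coupling in Section~\ref{coupling_sec} traps $\xi$ between $X_-$ and $X_+$ (resp. $Y_-$, $Y_+$) only so long as $\xi^T$ stays within $\Lambda_{M,T}(f,n)$; on a fractional interval the relevant window-width bound $\|z'-f(s)\|<1/n^2$ and the displacement over the interval are both controlled by exactly the same constraints that define $\Lambda_{M,T}(f,n)=B_\rho(f,1/n^2)\cap G_{M,T}^2$, so the rate bounds $R_X^\pm(I_j\cap[a,b],\Lambda_{M,T}(f,n),T)$ are the correct inputs and the coupling construction goes through with no change, the only difference being that $|I_j\cap[a,b]|$ may be smaller than $1/n$. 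One should double-check that the ``$X-$ case / $X+$ case / otherwise'' trichotomy defining $\mathcal E^+_X$ behaves sensibly on a short interval — it does, because the definition is stated for an arbitrary non-empty interval $I$ — and that the endpoint conditioning at $a$ (not a lattice point) is covered by the hypothesis $\|z-f(a)\|<1/n^2$, which is exactly the admissibility condition propagated at each step. Once these points are checked, the proof is a routine transcription of the argument for Proposition~\ref{Qprobcor}.
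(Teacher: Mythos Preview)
Your proposal is correct and follows essentially the same route as the paper's own proof: decompose $[a,b]$ into the subintervals $I_j\cap[a,b]$ for $j=\lfloor an\rfloor,\ldots,\lceil bn\rceil-1$, bound the conditional probability by a product of suprema over admissible starting points via the Markov property, and then apply the single-interval coupling bound (Lemma~\ref{QtoEub}) with $I=I_j\cap[a,b]$ and $F=\Lambda_{M,T}(f,n)$. The paper's proof is in fact slightly terser than yours, invoking Lemma~\ref{QtoEub} directly together with \eqref{LambdaGamma} and \eqref{Rtrap}, but the substance is identical.
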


We postpone the details of the proof to Section \ref{coupling_sec}. We then need to relate the right-hand side in Proposition \ref{Qprobcor2} to our rate function, in the form of the following lemma.

\begin{lem}\label{Esum2mlb}
Suppose that $M,T>1$, $n\ge 2M$ and $f\in PL_n^2 \cap G_M^2$. Then for any $a,b$ such that $\lceil\sqrt n\rceil/n\le a<b\le 1$,
\[\sum_{j=\lfloor an\rfloor}^{\lceil bn\rceil -1} \Big(\mathcal E_X^+(I_j\cap[a,b],\Lambda_{M,T}(f,n),T)+\mathcal E_Y^+(I_j\cap[a,b],\Lambda_{M,T}(f,n),T)\Big) \ge I(f,a,b) - O\Big(\frac{M^4}{n^{1/4}}+\frac{M^3 n}{T^{1/2}}\Big).\]
\end{lem}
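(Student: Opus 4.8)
The plan is to compare the discrete sum of the $\mathcal E^+$ quantities on the intervals $I_j\cap[a,b]$ with the integral $I(f,a,b)=\int_a^b(\sqrt{2R^*_X(f(s))}-\sqrt{f'_X(s)})^2\,ds+\int_a^b(\sqrt{2R^*_Y(f(s))}-\sqrt{f'_Y(s)})^2\,ds$. Since $f\in\PL_n^2$, on each interior interval $I_j=[j/n,(j+1)/n]$ the derivatives $f'_X,f'_Y$ are constant, and on $I_j\cap[a,b]$ (which equals $I_j$ except possibly for $j=\lfloor an\rfloor$ or $j=\lceil bn\rceil-1$, where it is a subinterval of length at most $1/n$) the displacements $x^+(I^+,F)-x^-(I^-,F)$ for $F=\Lambda_{M,T}(f,n)$ differ from $f_X((j{+}1)/n)-f_X(j/n)=|I_j|f'_X(j/n)$ by at most $2/n^2$, using the definition $\Lambda_{M,T}(f,n)=B_\rho(f,1/n^2)\cap G_{M,T}^2$. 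So each of the square roots $\sqrt{x^\pm(\cdot)-x^\mp(\cdot)}$ differs from $\sqrt{|I_j|f'_X(j/n)}$ by at most $\sqrt{2/n^2}=\sqrt2/n$ (using $|\sqrt u-\sqrt v|\le\sqrt{|u-v|}$), while by \eqref{Rtrap2} the jump-rate terms $2R^\pm_X(I_j,\Lambda_{M,T}(f,n),T)|I_j|$ lie within $2\delta_{M,T}(j,n)|I_j|$ of $2R^*_X(f(s))|I_j|$ for any $s\in I_j$, so $\sqrt{2R^\pm_X(I_j,\cdot,T)|I_j|}$ differs from $\sqrt{2R^*_X(f(s))|I_j|}$ by at most $\sqrt{2\delta_{M,T}(j,n)|I_j|}$.

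Next I would handle the case structure in the definition of $\mathcal E^+_X$. The point is that $\mathcal E^+_X(I_j\cap[a,b],\Lambda_{M,T}(f,n),T)$ is always at least $(\sqrt{2R^\mp_X|I_j|}-\sqrt{\text{the displacement with the larger or smaller sign}})^2$ with signs chosen so that this is a \emph{lower} bound on $(\sqrt{2R^*_X(f(s))|I_j|}-\sqrt{|I_j|f'_X(j/n)})^2$ up to the error terms above, and in the ``otherwise'' case where $\mathcal E^+_X=0$ one has $x^-(I_j^+)-x^+(I_j^-)\le 2R^+_X|I_j|$ and $2R^-_X|I_j|\le x^+(I_j^+)-x^-(I_j^-)$, which after absorbing the $O(1/n)+O(\sqrt{\delta_{M,T}|I_j|})$ slack forces $(\sqrt{2R^*_X(f(s))|I_j|}-\sqrt{|I_j|f'_X(j/n)})^2$ itself to be of order the error, so the inequality holds trivially there. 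In all cases one gets, for $j\ge\lceil\sqrt n\rceil$,
\[\mathcal E^+_X(I_j\cap[a,b],\Lambda_{M,T}(f,n),T)\ge \int_{I_j\cap[a,b]}\big(\sqrt{2R^*_X(f(s))}-\sqrt{f'_X(s)}\big)^2\,ds - C\Big(\tfrac1{n^{3/2}}+\tfrac{\delta_{M,T}(j,n)}{n}+\tfrac1{\sqrt n}\sqrt{\delta_{M,T}(j,n)(f_X(\tfrac{j+1}n)-f_X(\tfrac jn))}\Big)\]
for a universal constant $C$, exactly as in Lemma \ref{qlblem} (indeed this is essentially the per-interval estimate underlying that lemma). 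The boundary intervals $j=\lfloor an\rfloor$ and $j=\lceil bn\rceil-1$ are treated the same way since the estimates only used $|I_j\cap[a,b]|\le 1/n$; the hypothesis $a\ge\lceil\sqrt n\rceil/n$ guarantees $\lfloor an\rfloor\ge\lceil\sqrt n\rceil$ so that \eqref{Rtrap2} applies on every interval in the sum.

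Finally I would sum over $j$ from $\lfloor an\rfloor$ to $\lceil bn\rceil-1$ and add the $X$ and $Y$ contributions. The integral terms telescope to $I(f,a,b)$ exactly. For the errors, $\sum_j 1/n^{3/2}=O(1/n^{1/2})$, while $\sum_{j\ge\lceil\sqrt n\rceil}\delta_{M,T}(j,n)/n=O(M^4/n^{1/2}+M^3n/T)$ by \eqref{sumdeltaeq}, and the cross term is handled by Cauchy--Schwarz exactly as in the proof of Proposition \ref{Qlbprop}: $\sum_j\tfrac1{\sqrt n}\sqrt{\delta_{M,T}(j,n)(f_X(\tfrac{j+1}n)-f_X(\tfrac jn))}\le(\sum_j\tfrac{\delta_{M,T}(j,n)}n)^{1/2}(\sum_j(f_X(\tfrac{j+1}n)-f_X(\tfrac jn)))^{1/2}=O(M^2/n^{1/4}+M^{3/2}n^{1/2}/T^{1/2})\cdot M^{1/2}$, using $f\in G_M^2$ so the telescoping sum of increments is at most $M$. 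Collecting everything gives the claimed error $O(M^4/n^{1/4}+M^3n/T^{1/2})$ (note $M^3n/T\le M^3n/T^{1/2}$ for $T\ge1$). The main obstacle is bookkeeping the case split in the definition of $\mathcal E^+_X$ carefully enough that the ``otherwise'' case genuinely contributes only an error-sized quantity rather than losing a whole interval's worth of the integral; this is where one must use that the two mutually exclusive cases cover everything except a regime where the true integrand is itself small, and it is essentially the same delicate point that makes Lemma \ref{qlblem} nontrivial, so I would expect to be able to quote or closely mirror that argument.
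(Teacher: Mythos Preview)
Your proposal is correct and follows essentially the same route as the paper: the paper isolates the per-interval bound as a separate Lemma~\ref{E+2mbd} (whose error term is exactly your $C(M/n^{3/2}+\delta_{M,T}(j,n)/n+\tfrac{1}{\sqrt n}\sqrt{\delta_{M,T}(j,n)(f_X(\tfrac{j+1}{n})-f_X(\tfrac{j}{n}))})$), proved by expanding the quadratic and handling the three cases via the \eqref{simplifyE+}-style inequality, and then sums using \eqref{sumdeltaeq} and the same Cauchy--Schwarz step you describe. Your treatment of the ``otherwise'' case by arguing the integrand itself is error-sized is equivalent to the paper's expanded-quadratic bookkeeping, and your reference to Lemma~\ref{qlblem} for the shape of the per-interval error is apt (the error $\Delta(j)$ there is identical in form).
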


The proof of Lemma \ref{Esum2mlb} is similar to the deterministic bounds required for the upper bound in Section \ref{det_bds_upper_statement}, but also uses the uniform structure of $\Lambda_{M,T}(f,n)$ and therefore requires slightly different estimates. We carry this out in Appendix \ref{det_bds_rate_2}, and for now continue to the proof of Proposition \ref{2mubprop}. The proof is fairly long, but uses only the ingredients above together with bounds already developed for the upper bound on the first moment.

\begin{proof}[Proof of Proposition \ref{2mubprop}]
Recall the construction of $\Q^2$ together with the Markov processes $\xi^1$ and $\xi^2$ above. For $s\ge 0$ and $T>0$, write $\xi^{1,T}_s = \xi^1_{sT}/T$ and $\xi^{2,T}_s = \xi^2_{sT}/T$. For $i=1,2$, define the event
\[\mathcal B_i = \{\xi^{i,T}|_{[k/n,1]} \in \Lambda_{3M,T}(f,n)|_{[k/n,1]}\}\]
and for the single spine $\xi$ defined under $\Q$, define
\[\mathcal B(a,b) = \{\xi^T|_{[a,b]} \in \Lambda_{3M,T}(f,n)|_{[a,b]}\}\]
By Lemma \ref{mt2},
\begin{multline*}
\E\Bigg[\Bigg(\sum_{v\in\Nc_T^{(u)}} \ind_{\{Z_v^T|_{[k/n,1]} \in \Lambda_{3M,T}(f,n)|_{[k/n,1]}\}}\Bigg)^2\,\Bigg|\,\F_{kT/n}\Bigg]\\
= \Q^2\Big[\ind_{\mathcal B_1\cap\mathcal B_2} e^{3\int_{kT/n}^{T\wedge \tau} R(\xi^1_s)ds + \int_{T\wedge\tau}^T R(\xi^1_s)ds + \int_{T\wedge\tau}^T R(\xi^2_s)ds} \,\Big|\, \tau>\sfrac{kT}{n},\, \xi^{1,T}_{k/n} = z\Big]\Big|_{z=Z_u(kT/n)/T}.
\end{multline*}
From the construction of $\Q^2$ before Lemma \ref{mt2}, it is clear that $\tau$ has a density, and that
\begin{align*}
&\Q^2\Big[\ind_{\mathcal B_1\cap\mathcal B_2} e^{3\int_{kT/n}^{T\wedge \tau} R(\xi^1_s)ds + \int_{T\wedge\tau}^T R(\xi^1_s)ds + \int_{T\wedge\tau}^T R(\xi^2_s)ds} \,\Big|\, \tau>\sfrac{kT}{n},\, \xi^{1,T}_{k/n} = z\Big]\\
&\le \int_{kT/n}^T \Q^2\Big[\ind_{\mathcal B_1\cap\mathcal B_2\cap\{\tau\in dt\}}  \,\Big|\, \tau>\sfrac{kT}{n},\, \xi^{1,T}_{k/n} = z\Big] \sup_{g\in\Lambda_{3M,T}(f,n)} e^{3\int_{kT/n}^t R(Tg(s/T))ds + 2\int_t^T R(Tg(s/T))ds}\\
& \hspace{20mm} + \Q^2\Big[\ind_{\mathcal B_1\cap\mathcal B_2\cap\{\tau>T\}}  \,\Big|\, \tau>\sfrac{kT}{n},\, \xi^{1,T}_{k/n} = z\Big] \sup_{g\in\Lambda_{3M,T}(f,n)} e^{3\int_{kT/n}^T R(Tg(s/T))ds}.
\end{align*}
It also follows from the construction of $\Q^2$ before Lemma \ref{mt2} that
\begin{align*}
&\Q^2\Big[\ind_{\mathcal B_1\cap\mathcal B_2\cap\{\tau\in dt\}}  \,\Big|\, \tau>\sfrac{kT}{n},\, \xi^{1,T}_{k/n} = z\Big]\\
&\le \Q\Big[\ind_{\mathcal B(k/n,t/T)} 2R(\xi_t)e^{-2\int_{kT/n}^t R(\xi_s)ds}dt\,\Big|\,\xi^T_{k/n} = z\Big] \sup_{\|w-f(t/T)\|<1/n^2} \Q\Big(\mathcal B(t/T,1)\,\Big|\,\xi^T_{t/T}=w\Big)^2\\
&\le \Q\Big(\mathcal B(k/n,t/T) \,\Big|\,\xi^T_{k/n} = z\Big)\sup_{\|w-f(t/T)\|<1/n^2} \Q\Big(\mathcal B(t/T,1)\,\Big|\,\xi^T_{t/T}=w\Big)^2\\
&\hspace{70mm}\cdot \sup_{h\in\Lambda_{3M,T}(f,n)} 2R(Th(t/T))e^{-2T\int_{k/n}^{t/T} R(Th(s))ds}
\end{align*}
and that
\begin{align*}
&\Q^2\Big[\ind_{\mathcal B_1\cap\mathcal B_2\cap\{\tau>T\}}  \,\Big|\, \tau>\sfrac{kT}{n},\, \xi^{1,T}_{k/n} = z\Big]\\
&\hspace{25mm}= \Q\Big[\ind_{\mathcal B(k/n,1)}e^{-2\int_{kT/n}^T R(\xi_s)ds}\,\Big|\,\xi^T_{k/n} = z\Big]\\
&\hspace{25mm}\le \Q\Big(\mathcal B(k/n,1)\,\Big|\,\xi^T_{k/n} = z\Big) \sup_{h\in\Lambda_{3M,T}(f,n)} e^{-2T\int_{k/n}^1 R(Th(s))ds}.
\end{align*}
Combining these bounds, we have shown that
\begin{align}
&\E\Bigg[\Bigg(\sum_{v\in\Nc_T^{(u)}} \ind_{\{Z_v^T|_{[k/n,1]} \in \Lambda_{3M,T}(f,n)|_{[k/n,1]}\}}\Bigg)^2\,\Bigg|\,\F_{kT/n}\Bigg]\nonumber\\
&\le \int_{kT/n}^T \Q\Big(\mathcal B(k/n,t/T)\,\Big|\,\xi^T_{k/n} = z\Big)\Big|_{z=Z_u(kT/n)/T} \cdot \sup_{\|w-f(t/T)\|<1/n^2} \Q\Big(\mathcal B(t/T,1)\,\Big|\,\xi^T_{t/T}=w\Big)^2\nonumber\\
&\hspace{5mm}\cdot \sup_{h\in\Lambda_{3M,T}(f,n)} 2R(Th(t/T))e^{-2T\int_{k/n}^{t/T} R(Th(s))ds} \sup_{g\in\Lambda_{3M,T}(f,n)} e^{3T\int_{k/n}^{t/T} R(Tg(s))ds + 2T\int_{t/T}^1 R(Tg(s))ds}dt\nonumber\\
& \hspace{5mm} + \Q\Big(\mathcal B(k/n,1)\,\Big|\,\xi^T_{k/n} = z\Big)\Big|_{z=Z_u(kT/n)/T}\nonumber\\
&\hspace{20mm}\cdot \sup_{h\in\Lambda_{3M,T}(f,n)} e^{-2T\int_{k/n}^1 R(Th(s))ds} \sup_{g\in\Lambda_{3M,T}(f,n)} e^{3T\int_{k/n}^1 R(Tg(s))ds}.\label{2momubmaster}
\end{align}

Recall that $k\ge \lceil\sqrt n\rceil$. By \eqref{LambdaGamma} and Lemma \ref{finaldetRbd}, for any $t\in[kT/n,T]$,
\[\sup_{g\in\Lambda_{3M,T}(f,n)}\int_{k/n}^{t/T} R(Tg(s))ds \le \int_{k/n}^{\lfloor nt/T\rfloor/n} R^*(f(s))ds + \eta(3M,n,T)\]
and
\[\inf_{h\in\Lambda_{3M,T}(f,n)}\int_{k/n}^{t/T} R(Th(s))ds \ge \int_{k/n}^{\lfloor nt/T\rfloor/n} R^*(f(s))ds - \eta(3M,n,T).\]
Thus
\begin{multline*}
\sup_{h\in \Lambda_{3M,T}(f,n)} e^{-2T\int_{k/n}^{t/T} R(Th(s))ds} \cdot \sup_{g\in\Lambda_{3M,T}(f,n)} e^{3T\int_{k/n}^{t/T} R(Tg(s))ds + 2T\int_{t/T}^1 R(Tg(s))ds}\\
\le \exp\bigg( -T\int_{k/n}^{\lfloor nt/T\rfloor/n} R^*(f(s))ds + 2T\int_{ k/n}^1 R^*(f(s))ds + 7T\eta(3M,n,T)\bigg).
\end{multline*}
Similarly,
\begin{multline}\label{2momexpterms2}
\sup_{h\in \Lambda_{3M,T}(f,n)} e^{-2T\int_{k/n}^1 R(Th(s))ds} \cdot \sup_{g\in\Lambda_{3M,T}(f,n)} e^{3T\int_{k/n}^1 R(Tg(s))ds}\\
\le \exp\bigg(T\int_{k/n}^1 R^*(f(s))ds + 5T\eta(3M,n,T)\bigg).
\end{multline}
By the definition of $G_{M,T}$, plus the assumption that $T^{2/3}\ge 9Mn^{1/2}$, for any $t\in [kT/n,T]$ we also have
\[\sup_{h\in\Lambda_{3M,T}(f,n)} 2R(Th(t/T)) \le 2\frac{TM(t/T+2T^{-2/3})+1}{T(t/(MT)-2T^{-2/3})} \le \frac{6MT}{Tk/(2Mn)} \le 12M^2n.\]

The above estimates bound the non-probabilistic terms in \eqref{2momubmaster}. For the other terms we apply Proposition \ref{Qprobcor2} and Lemma \ref{Esum2mlb} to obtain the bound
\begin{align*}
&\Q\Big(\mathcal B(a,b) \,\Big|\,\xi^T_{a}=z\Big)\\
&\hspace{15mm}\le \exp\bigg(-T\sum_{j=\lfloor an\rfloor}^{\lceil bn\rceil-1} \big(\mathcal E^+_X(I_j\cap[a,b],\Lambda_{3M,T}(f,n),T) + \mathcal E^+_Y(I_j\cap[a,b],\Lambda_{3M,T}(f,n),T)\big)\bigg)\\
&\hspace{15mm}\le \exp\Big(-TI(f,a,b) + O\Big(\frac{M^4 T}{n^{1/4}}+ M^3 n T^{1/2}\Big)\Big).
\end{align*}
Putting all these ingredients together, we obtain that
\begin{align}
&\E\Bigg[\Bigg(\sum_{v\in\Nc_T^{(u)}} \ind_{\{Z_v^T|_{[k/n,1]} \in \Lambda_{3M,T}(f,n)|_{[k/n,1]}\}}\Bigg)^2\,\Bigg|\,\F_{kT/n}\Bigg]\nonumber\\
&\le \int_{kT/n}^T \exp\bigg(-TI(f,k/n,t/T)-2TI(f,t/T,1)+ O\Big(\frac{M^4 T}{n^{1/4}}+ M^3 n T^{1/2}\Big)\bigg)\nonumber\\
&\hspace{15mm}\cdot 12M^2n \exp\bigg( -T\int_{k/n}^{\lfloor nt/T\rfloor/n} R^*(f(s))ds + 2T\int_{k/n}^1 R^*(f(s))ds + 7T\eta(3M,n,T)\bigg) dt\nonumber\\
& \hspace{5mm} + \exp\bigg(-TI(f,k/n,1)+ O\Big(\frac{M^4 T}{n^{1/4}}+ M^3 n T^{1/2}\Big) + T\int_{k/n}^1 R^*(f(s))ds + 5T\eta(3M,n,T)\bigg).\label{2momubmaster2}
\end{align}
Using that $f\in\PL_n^2$ and therefore is absolutely continuous, we see that
\begin{multline*}
-I(f,k/n,t/T) - 2I(f,t/T,1) - \int_{k/n}^{\lfloor nt/T\rfloor/n} R^*(f(s))ds + 2\int_{k/n}^1 R^*(f(s))ds\\
\le 2\tilde K(f,k/n,1) - \tilde K(f,k/n,t/T) + O(M^2/n). 
\end{multline*}
The result follows from substituting this into \eqref{2momubmaster2} and recalling from Lemma \ref{finaldetRbd} that
\[\eta(3M,n,T) = O\Big(\frac{M^4}{n^{1/2}} + \frac{M^3n}{T^{1/3}}\Big).\qedhere\]
\end{proof}

\section{Detailed construction and ruling out difficult paths: proof of Lemma \ref{AlwaysGMT}}\label{GMTsec}

In this section, we prove Lemma \ref{AlwaysGMT}, which said that for large $M$ all particles are $(M,T)$-good with high probability as $T\to\infty$. We will begin by defining a discrete tree with labels to represent the positions and split times of particles, which besides being a necessary step in our proof, also provides a formal construction of the process introduced in Section \ref{intro_sec}.

Take an infinite binary tree $\mathbb T$ and let $\mathbb T_n$ be the vertices in the $n$th generation of $\mathbb T$, so that $|\mathbb T_n|=2^n$. Attach to each vertex $v\in \mathbb T$ two independent random variables $\mathcal U^{\text{split}}_v$ and $\mathcal U^{\text{dir}}_v$, both uniformly distributed on $(0,1)$. Also attach another independent random variable $\mathbbm{e}_v$ which is exponentially distributed with parameter $1$.

We recursively define random variables $B_v$, $H_v$ and $T_v$ for each vertex $v\in\mathbb T$, which represent the base, height and birth time of the rectangle corresponding to $v$. Write $\rho$ for the unique vertex in $\mathbb T_0$, which we call the root. Under the probability measure $\P_{a,b}$, set $B_\rho = a$, $H_\rho = b$ and $T_\rho=0$. We write $\P$ as shorthand for $\P_{1,1}$.

Now take an integer $n\ge0$ and suppose that we have defined $B_u$, $H_u$ and $T_u$ for all vertices $u$ in generations $0,\ldots,n$. For a vertex $v\in \mathbb{T}_n$, define
\[D_v = \begin{cases} 1 & \text{ if } \mathcal U^{\text{dir}}_v\le P(-\log B_v, -\log H_v)\\
					  0 & \text{ if } \mathcal U^{\text{dir}}_v > P(-\log B_v, -\log H_v).\end{cases}\]
Write $v1$ and $v2$ for the two children of $v$ in generation $n+1$. If $D_v=1$, then set
\[B_{v1} = \mathcal U^{\text{split}}_v B_v, \,\,\,\, B_{v2} = (1-\mathcal U^{\text{split}}_v) B_v, \,\,\,\,\text{ and } \,\,\,\, H_{v1}=H_{v2}=H_v;\]
if on the other hand $D_v=0$, then set
\[H_{v1} = \mathcal U^{\text{split}}_v H_v, \,\,\,\, H_{v2} = (1-\mathcal U^{\text{split}}_v) H_v, \,\,\,\,\text{ and } \,\,\,\, B_{v1}=B_{v2}=B_v.\]
Then, for each $v\in\mathbb T$, define
\[X_v = -\log B_v \,\,\,\,\text{ and }\,\,\,\, Y_v = -\log H_v.\]
Finally, set
\[T_{v1} = T_{v2} = T_v + \frac{\mathbbm{e}_{v}}{R(X_v,Y_v)}.\]

We now translate this discrete-time process (with continuous labels) into the continuous-time model described in the introduction. For each $t\ge0$, define
\[\Nc_t = \Big\{v\in\mathbb T : T_v\le t<T_v+\frac{\mathbbm{e}_{v}}{R(X_v,Y_v)}\Big\},\]
the set of particles that are alive at time $t$. Then for $v\in\Nc_t$ and $s\le t$, if $u$ is the unique ancestor of $v$ in $\mathbb T$ that satsfies $T_u\le s<T_u+\mathbbm{e}_{u}/R(X_u,Y_u)$, then set $B_v(s) = B_u$, $H_v(s)=H_u$, $X_v(s) = X_u$ and $Y_v(s) = Y_u$. We call $Z_v(s) = (X_v(s),Y_v(s))$ the \emph{position} of particle $v$ at time $s$. For $T>0$, we can also consider particles' paths rescaled by $T$, by which we mean, for $s\le t$ and $v\in\Nc_{tT}$,
\[X_v^T(s) = \frac{X_v(sT)}{T}, \,\,\,\, Y_v^T(s) = \frac{Y_v(sT)}{T}, \,\,\,\, Z_v^T(s) = (X_v^T(s),Y_v^T(s)).\]
If we have $v\in\Nc_T$ then we may refer to $X_v^T$ to mean the function $X_v^T:[0,1]\to\R$, and similarly for $Y_v^T$ and $Z_v^T$.

\begin{lem}\label{discretedifficult}
For any $\kappa>0$, there exists $M>1$ and $N\in\N$ such that
\begin{multline*}
\P\Big(\exists v\in\mathbb T_n : X_v \not\in[n/M, Mn] \,\text{ or }\, Y_v \not\in[n/M, Mn] \,\text{ or }\, T_v < n/M \,\text{ or }\, T_v+\frac{\mathbbm{e}_{v}}{R(X_v,Y_v)} > Mn\Big)\\
\le e^{-\kappa n}
\end{multline*}
for all $n\ge N$.
\end{lem}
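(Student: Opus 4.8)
The plan is to argue vertex by vertex and take a union bound over the $2^n$ vertices of $\mathbb T_n$. For a fixed $v\in\mathbb T_n$ with ancestral line $\rho=w_0\prec w_1\prec\dots\prec w_n=v$, the event that $v$ violates one of the four conditions depends only on the random variables attached to $w_0,\dots,w_n$, and by the symmetry of the tree its probability does not depend on which $v$ we take; so it suffices to bound it, for one fixed ancestral line, by $e^{-(\kappa+\log2+1)n}$, the spare $\log2+1$ absorbing the union bound and the polynomial losses below. Along the fixed line, write $\delta_j$ for the increment added at the $j$th split (to the first coordinate if $D_{w_j}=1$, to the second if $D_{w_j}=0$). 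Since $-\log\mathcal U$ and $-\log(1-\mathcal U)$ are both $\mathrm{Exp}(1)$ and $\mathcal U^{\text{split}}_{w_j}$ is independent of $\mathcal U^{\text{dir}}_{w_j}$, each $\delta_j$ is $\mathrm{Exp}(1)$, independent of $D_{w_j}$ and of $\mathcal F_j$ (the information in generations $0,\dots,j$), while $\P(D_{w_j}=1\mid\mathcal F_j)=P(Z_{w_j})$. In particular $X_{w_i}+Y_{w_i}=\sum_{j<i}\delta_j$ has the $\Gamma(i,1)$ distribution, whatever the directions chosen. I will also use that the model is invariant under swapping the two coordinates together with the two split directions (as $P(y,x)=1-P(x,y)$ and $R$ is symmetric), so any estimate for $X$ transfers to $Y$.

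The upper bounds, and the bound on the death time, follow at once from Cram\'er's inequality for the $\Gamma(m,1)$ laws, whose upper-tail rate at level $Mm$ is $M-1-\log M\to\infty$. Indeed $X_{w_i}\le X_{w_i}+Y_{w_i}=\Gamma(i,1)$, so for $M$ large $X_{w_n},Y_{w_n}\le Mn$ and $X_{w_i}+Y_{w_i}\le Mi$ for all $i\le n$, off an event of the required small probability; and since $R\ge1$ the death time satisfies $T_v+\mathbbm e_v/R(Z_v)\le\sum_{j=0}^n\mathbbm e_{w_j}=\Gamma(n+1,1)$, to which the same bound applies. For the lower bound on $T_v$ I use $T_v\ge\sum_{j=\lceil n/2\rceil}^{n-1}\mathbbm e_{w_j}/R(Z_{w_j})$; on the event $\{cj\le X_{w_j},Y_{w_j}\le Mj\ \forall\,j\in[n/2,n)\}$ — whose upper bounds are the $\Gamma$ estimate just given and whose lower bounds are the content of the next paragraph — each $R(Z_{w_j})$ with $j\ge n/2$ is at most $2M/c$, so $T_v\ge(c/2M)\sum_{j=\lceil n/2\rceil}^{n-1}\mathbbm e_{w_j}$; as the $\mathbbm e_{w_j}$ are independent of the events constraining the $Z_{w_j}$, a further Cram\'er bound gives $\sum_{j=\lceil n/2\rceil}^{n-1}\mathbbm e_{w_j}\ge n/4$ off an exponentially unlikely event, so $T_v\ge cn/(8M)\ge n/M$ once the final $M$ is chosen large enough.

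It remains to prove the lower bounds $X_{w_i}\ge ci$ (hence, by symmetry, $Y_{w_i}\ge ci$) for $i\in[n/2,n]$; this is the heart of the argument and the place where the discontinuity of $R^*$ at the origin — equivalently the degeneracy of $P$ near the axes — must be confronted. The mechanism is that $X$ is self-correcting: when $X_{w_j}<Y_{w_j}$ one has $P(Z_{w_j})=1-\tfrac{X_{w_j}+1}{2(Y_{w_j}+1)}$, which is close to $1$ precisely when $X_{w_j}\ll Y_{w_j}$, so a lagging $X$ is strongly pushed up. To turn this into a bound whose rate beats the $2^n$, fix a small $c>0$, work on $G=\{X_{w_i}+Y_{w_i}\ge i/M_1\ \forall\,i\in[n/4,n]\}$ — whose complement has probability at most $n(e/M_1)^{n/4}$, with rate increasing in $M_1$ — and suppose $X_{w_n}<cn$. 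Then $X_{w_j}<cn$ for every $j$, and on $G$, for $j\ge n/2$, $Y_{w_j}\ge j/M_1-cn\ge n/(4M_1)$, so at such steps the conditional probability of a $Y$-split is $1-P(Z_{w_j})\le\tfrac{cn+1}{2(n/(4M_1)+1)}\le C_1c$. Now split on the number $K$ of $X$-splits among the steps $j\in[n/2,n)$: if $K\le n/4$ there are at least $n/4$ $Y$-splits there, an event dominated on $G$ (by a standard coupling with auxiliary $\mathrm{Bernoulli}(C_1c)$ variables, run up to the time $X$ first reaches $cn$) by $\{\mathrm{Bin}(n/2,C_1c)\ge n/4\}$, of probability at most $\binom{n/2}{n/4}(C_1c)^{n/4}\le(C_2c)^{n/4}$ with $C_2=C_2(M_1)$; if instead $K>n/4$ then, since the $X$-increments are i.i.d.\ $\mathrm{Exp}(1)$ given $K$ and sum to at most $X_{w_n}<cn$, we have $\Gamma(\lceil n/4\rceil,1)<cn$, of probability at most $(4ec)^{n/4}$. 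Both rates tend to $\infty$ as $c\to0$, so choosing $c$ small (depending on $M_1$ and $\kappa$), then $M_1$ large, and finally $M$ large enough to carry all the constants above, makes each contribution at most $e^{-(\kappa+\log2+1)n}$; summing over the $2^n$ ancestral lines and the $O(n)$ generations involved yields the lemma.

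The hard point is this last dichotomy, and the reason one cannot get away with a crude uniform bound $P\ge p_0>0$: such a bound would only give a rate of order $-\log(1-p_0)$, too small to overcome the factor $2^n$, whereas the genuine vanishing of $1-P$ as $X/Y\to0$ is exactly what makes the ``$Y$-splits are rare'' branch of the argument produce an arbitrarily large rate as $c\to0$.
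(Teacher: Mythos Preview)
Your proof is essentially correct and handles all four bad events for a single ancestral line, then takes a union bound, just as the paper does. The treatment of the three ``easy'' events (upper bounds on $X,Y$, the death-time bound, and the lower bound on $T_v$ via bounded $R$ on the second half) is the same as the paper's. There are a couple of small slips: the claim ``$X_{w_i}+Y_{w_i}\le Mi$ for all $i\le n$'' cannot hold off an event of probability $e^{-Cn}$ (for $i=1$ the failure probability is $e^{-M}$, independent of $n$), but you only use it for $i\ge n/2$, where the Gamma tail does give the required rate; and the order of choice of constants should be $M_1$ first, then $c$ depending on $M_1$, not the other way round. Also, ``the $X$-increments are i.i.d.\ $\mathrm{Exp}(1)$ given $K$'' is not quite right as stated, but what you need---and what the conditional independence $\delta_j\perp(\mathcal F_j,D_{w_j})$ does give---is that the successive $X$-increments form an i.i.d.\ $\mathrm{Exp}(1)$ sequence (by a sequential coupling with external exponentials), so that on $\{K>n/4\}$ their sum dominates a $\Gamma(\lceil n/4\rceil,1)$; your bound $(4ec)^{n/4}$ then follows.

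Where your argument genuinely differs from the paper is in the lower bound on the coordinates. You bound $X$ alone via a dichotomy on the number of $X$-splits in $[n/2,n)$ (few $Y$-splits are rare because $1-P$ is small; many $X$-splits summing to little is a Gamma lower tail), and then invoke the $X\leftrightarrow Y$ symmetry for $Y$. The paper instead works with the \emph{minimum} $X_v\wedge Y_v$, which is nondecreasing along the ancestral line: if $X_u\wedge Y_u<n/M-1$ and $X_{u'}+Y_{u'}$ is large at the halfway ancestor $u'$, then the ratio $(X_v\vee Y_v+1)/(X_v\wedge Y_v+1)$ is uniformly large on the whole segment, so the minimum receives an increment with probability close to $1$ at every step; a single exponential-MGF bound on a sum of ``exponential-or-zero'' variables then finishes. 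The paper's route is a little slicker because it avoids the case split and handles both coordinates at once, while yours makes the self-correcting mechanism very explicit and shows directly why the rate can be pushed past $\log 2$ as $c\to 0$. Both approaches are valid.
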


\begin{proof}
Note that for any $u\in\mathbb T_n$,
$X_u$ is the sum of $n$ random variables, each of which is (stochastically) bounded above by an independent exponential random variable with parameter $1$ (this is the distribution of $-\log U$ when $U$ is $U(0,1)$). Thus, if $E\sim \text{Exp}(1)$,
\[\P(X_u > Mn) \le \E[e^{X_u/2}]e^{-Mn/2} \le \E[e^{E/2}]^n e^{-Mn/2} = 2^n e^{-Mn/2}\]
and, since there are $2^n$ vertices in $\mathbb T_n$, a union bound gives
\[\P(\exists v\in\mathbb T_n : X_v > Mn) \le 4^n e^{-Mn/2}.\]
By choosing $M$ large enough, we can make this smaller than $e^{-\kappa n}$. By symmetry we also have
\[\P(\exists v\in\mathbb T_n : Y_v > Mn) \le e^{-\kappa n}.\]

For a lower bound on $X_v$ and $Y_v$, we first give a lower bound on $X_v+Y_v$. Indeed, note that for $u\in\mathbb T_n$, $X_u+Y_u$ is a sum of $n$ independent random variables, each of which is exponentially distributed with parameter $1$. Thus, for any $\lambda>0$ and any $u\in\mathbb{T}_n$,
\[\P(X_u+Y_u < n/M) \le \E[e^{-\lambda (X_u+Y_u)}]e^{\lambda n/M} = \E[e^{-\lambda E}]^n e^{\lambda n/M} = \frac{1}{(1+\lambda)^n} e^{\lambda n/M},\]
so that we can choose $M_0$ large enough that
\begin{equation}\label{XplusYlb}
\P(X_u+Y_u < n/M_0) \le 2^{-2n-2}e^{-2\kappa (n+1)}.
\end{equation}

Take $u\in\mathbb{T}_n$, let $u'$ be the unique ancestor of $u$ in $\mathbb T_{\lfloor n/2\rfloor}$ and take $M>M_0$. Note that, applying \eqref{XplusYlb}, if $n \ge 6$
\begin{align}
&\P(\exists v\in\mathbb T_n : X_v\wedge Y_v < n/M - 1)\nonumber\\
&\hspace{5mm}\le \E[\#\{v\in \mathbb T_n : X_v\wedge Y_v < n/M-1\}]\nonumber\\
&\hspace{5mm}= 2^n\P(X_u\wedge Y_u < n/M-1)\nonumber\\
&\hspace{5mm}\le 2^n\P(X_u\wedge Y_u < n/M-1 \text{ and } X_{u'}+Y_{u'}\ge \lfloor n/2 \rfloor /M_0) + 2^n\P(X_{u'}+Y_{u'}< \lfloor n/2 \rfloor /M_0)\nonumber\\
&\hspace{5mm}\le 2^n\P(X_u\wedge Y_u < n/M-1 \text{ and } X_{u'}+Y_{u'}\ge n/(3M_0)) + 2^n\cdot 2^{-2\lfloor n/2\rfloor-2}e^{-2\kappa(\lfloor n/2 \rfloor +1)}\nonumber\\
&\hspace{5mm}\le 2^n\P(X_u\wedge Y_u < n/M-1 \text{ and } X_{u'}+Y_{u'}\ge n/(3M_0)) + e^{-\kappa n}/2.\label{XminYtoX+Y}
\end{align}
Now, if $X_u\wedge Y_u < n/M-1$ and $X_{u'}+Y_{u'}\ge n/(3M_0)$, then for all vertices $v$ on the path from $u'$ to $u$, we have
\[\frac{X_v\vee Y_v + 1}{X_v\wedge Y_v+1} \ge \frac{X_v+Y_v - X_v \wedge Y_v + 1}{X_v\wedge Y_v+1} \ge \frac{X_v+Y_v}{X_v\wedge Y_v+1} -1 \ge \frac{M}{3M_0}-1.\]
Recalling the definition of $P$, this means that
\[ P(X_v, Y_v) \ge 1- \frac{X_v \wedge Y_v+1}{2(X_v \vee Y_v +1)} \ge 1- \frac{1}{2M/(3M_0)-2} \] 
and the same holds for $1-P(X_v,Y_v)$.
This means that $X_u\wedge Y_u - X_{u'}\wedge Y_{u'}$ consists of $\lceil n/2\rceil$ random variables, each of which is (stochastically) bounded below by an independent random variable $E'$ which is zero with probability $1/(2M/(3M_0)-2)$ and equals an independent copy of $E$ with probability $1-1/(2M/(3M_0)-2)$. Thus, for any $\lambda>0$,
\begin{align*}
&\P(X_u\wedge Y_u < n/M-1 \text{ and } X_{u'}+Y_{u'}\ge n/M_0)\\
&\hspace{35mm}\le \E\Big[e^{-\lambda X_u\wedge Y_u} \ind_{\big\{\frac{X_v\vee Y_v + 1}{X_v\wedge Y_v+1} \ge \frac{M}{3M_0}-1 \big\}}\Big]e^{\lambda n/M}\\
&\hspace{35mm}\le \E\Big[e^{-\lambda (X_u\wedge Y_u - X_{u'}\wedge Y_{u'} )} \ind_{\big\{\frac{X_v\vee Y_v + 1}{X_v\wedge Y_v+1} \ge \frac{M}{3M_0}-1\big\}}\Big]e^{\lambda n/M}\\
&\hspace{35mm}\le \E[e^{-\lambda E'}]^{\lceil n/2 \rceil}e^{\lambda n/M}\\
&\hspace{35mm}\le \Big(\frac{1}{2M/(3M_0)-2} + \E[e^{-\lambda E}]\Big(1-\frac{1}{2M/(3M_0)-2}\Big)\Big)^{\lceil n/2 \rceil}e^{\lambda n/M}\\
&\hspace{35mm}\le  \Big(\frac{1}{2M/(3M_0)-2} + \frac{1}{\lambda+1}\Big)^{\lceil n/2 \rceil}e^{\lambda n/M}.
\end{align*}
By choosing $\lambda$ large and then $M$ large, we can ensure that this is smaller than $2^{-n} e^{-\kappa n}/2$, which when combined with \eqref{XminYtoX+Y}, shows that for $n$ sufficiently large,
\begin{equation}\label{preUps}
\P\big(\exists v\in\mathbb T_n : X_v \not\in[n/M, Mn] \,\text{ or }\, Y_v \not\in[n/M, Mn]\big)\le e^{-\kappa n}.
\end{equation}

We now turn to $T_v$. As for $X_v$ and $Y_v$, the upper bound is easy: since $R(x,y)\ge 1$ for all $x$ and $y$, for any fixed $u\in\mathbb{T}_n$ we have
\begin{align*}
\P\Big(T_u + \frac{\mathbbm{e}_u}{R(X_u,Y_u)} > Mn\Big) &= \P\bigg(\sum_{w\le u} \frac{\mathbbm e_w}{R(X_w,Y_w)} > Mn\bigg)\\
&\le \P\bigg(\sum_{w\le u} \mathbbm e_w > Mn\bigg) \le \E[e^{E/2}]^{n+1} e^{-Mn/2} = 2^{n+1}e^{-Mn/2},
\end{align*}
so a union bound gives
\[\P\Big(\exists v\in \mathbb{T}_n : T_v + \frac{\mathbbm{e}_v}{R(X_v,Y_v)} > Mn \Big) \le 2\cdot 4^n e^{-Mn/2}\]
which can be made smaller than $e^{-\kappa n}$ by choosing $M$ large.

For a lower bound on $T_v$, define the event
\[\Upsilon_{n,M} = \{X_v \in[k/M, Mk] \,\text{ and }\, Y_v \in[k/M, Mk] \,\,\,\,\forall v\in\mathbb{T}_k,\,\,\forall k\ge n\}.\]
By \eqref{preUps}, for any $\kappa>0$, we may choose $N$ and $M_0$ sufficiently large that
\begin{equation}\label{Upsbd}
\P(\Upsilon_{n,M_0}^c) \le \sum_{j=n}^\infty \P\big(\exists v\in\mathbb T_n : X_v \not\in[n/M_0, M_0n] \,\text{ or }\, Y_v \not\in[n/M_0, M_0n]\big) \le 2^{-2n-3}e^{-2\kappa (n+1)}
\end{equation}
for all $n\ge N$. Fix $u\in\mathbb{T}_n$ and let $\rho = u_0,u_1,u_2,\ldots,u_n = u$ be the unique path from the root $\rho$ to $u$ in the tree. Then for $n\ge 2N$,
\begin{align}
\P(T_u < n/M) &= \P\bigg(\sum_{j=0}^{n-1} \frac{\mathbbm e_{u_j}}{R(X_{u_j},Y_{u_j})} < \frac{n}{M}\bigg)\nonumber\\
&\le \P(\Upsilon_{\lfloor n/2\rfloor,M_0}^c) + \P\bigg(\Upsilon_{\lfloor n/2\rfloor,M_0}\cap\bigg\{\sum_{j=\lfloor n/2\rfloor}^n \frac{\mathbbm e_{u_j}}{R(X_{u_j},Y_{u_j})} < \frac{n}{M}\bigg\}\bigg).\label{splitUps}
\end{align}
Since $n\ge 2N$, we have
\begin{equation}\label{Upsbd2}
\P(\Upsilon_{\lfloor n/2\rfloor,M_0}^c) \le 2^{-2\lfloor n/2\rfloor -3} e^{-2\kappa(\lfloor n/2\rfloor+1)}/2 \le 2^{-n-1}e^{-\kappa n}.
\end{equation}
On the event $\Upsilon_{\lfloor n/2\rfloor,M_0}$, we have
\[R(X_{u_j},Y_{u_j})\le \frac{M_0 j + 1}{j/M_0 +1} \le M_0^2\]
for all $j\ge \lfloor n/2\rfloor$; therefore
\[\P\bigg(\Upsilon_{\lfloor n/2\rfloor,M_0}\cap\bigg\{\sum_{j=\lfloor n/2\rfloor}^n \frac{\mathbbm e_{u_j}}{R(X_{u_j},Y_{u_j})} < \frac{n}{M}\bigg\}\bigg) \le \P\bigg(\sum_{j=\lfloor n/2\rfloor}^n \frac{\mathbbm e_{u_j}}{M_0^2} < \frac{n}{M}\bigg).\]
But for any $\lambda>0$,
\begin{align*}
\P\bigg(\sum_{j=\lfloor n/2\rfloor}^n \frac{\mathbbm e_{u_j}}{M_0^2} < \frac{n}{M}\bigg) &= \P\big(e^{-\lambda \sum_{j=\lfloor n/2\rfloor}^n \mathbbm e_{u_j}} > e^{-\lambda M_0^2 n/M}\big)\\
&\le \E[e^{-\lambda \sum_{j=\lfloor n/2\rfloor}^n \mathbbm e_{u_j}}]e^{\lambda M_0^2 n/M}\\
&\le \E[e^{-\lambda E}]^{n/2} e^{\lambda M_0^2 n/M} = \frac{1}{(1+\lambda)^{n/2}} e^{\lambda M_0^2 n/M}.
\end{align*}
Substituting this and \eqref{Upsbd2} into \eqref{splitUps}, we have
\[\P(T_u < n/M) \le 2^{-n-1}e^{-\kappa n} + \frac{1}{(1+\lambda)^{n/2}} e^{\lambda M_0^2 n/M}.\]
Finally, taking a union bound over all $2^n$ vertices in $\mathbb{T}_n$, we obtain
\[\P(\exists v\in\mathbb T_n : T_v < n/M) \le e^{-\kappa n}/2 + \Big(\frac{2e^{\lambda M_0^2/M}}{\sqrt{1+\lambda}}\Big)^n\]
which can be made smaller than $e^{-\kappa n}$ by choosing $\lambda$ large and then $M$ large.
\end{proof}

Fix $\alpha\in(0,1)$ and define the event
\begin{multline*}
\mathcal G_M(T) = \Big\{X_v\in\Big[\frac{n}{M}-T^\alpha, Mn+T^\alpha\Big],\, Y_v\in\Big[\frac{n}{M}-T^\alpha, Mn+T^\alpha\Big] ,\\
T_v \ge \frac{n}{M}-T^\alpha \,\text{ and }\, T_v+\mathbbm{e}_v \le Mn +T^\alpha\,\,\,\,\forall v\in\mathbb T_n \,\,\,\,\forall n\ge 0\Big\}.
\end{multline*}

\begin{cor}\label{calGcor}
There exist $M>1$ and $\delta>0$ such that for any $T\ge 0$,
\[\P(\mathcal G_M(T)^c) \le \exp(-\delta T^\alpha).\]
\end{cor}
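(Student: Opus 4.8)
The event $\mathcal G_M(T)$ requires, simultaneously over all generations $n$ and all $v\in\mathbb{T}_n$, that $X_v,Y_v\in[n/M-T^\alpha,\,Mn+T^\alpha]$ and that $n/M-T^\alpha\le T_v$ and $T_v+\mathbbm{e}_v\le Mn+T^\alpha$. Writing $A_n$ for the event that at least one of these constraints fails for some $v\in\mathbb{T}_n$, we have $\mathcal G_M(T)^c\subseteq\bigcup_{n\ge 0}A_n$, so it is enough to bound $\sum_{n\ge 0}\P(A_n)$. The plan is to split this sum at the threshold $n_0:=\lfloor MT^\alpha\rfloor$: for small generations the lower-bound constraints will be vacuous and the shift by $+T^\alpha$ in the upper thresholds will produce a factor $e^{-T^\alpha/2}$, while for large generations we will simply invoke Lemma~\ref{discretedifficult}, whose per-generation bound $e^{-\kappa n}$ already sums over $n>n_0$ to something of order $e^{-\kappa MT^\alpha}$. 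Concretely I would fix $\kappa>0$, then choose $M$ large enough that Lemma~\ref{discretedifficult} holds with this $M$ and some $N\in\mathbb{N}$ and that in addition $4e^{-M/2}\le e^{-\kappa}$ (which makes the Chernoff-and-union estimates below geometrically summable), and finally restrict to $T$ large enough that $n_0\ge N$. (For small $T$ the bound cannot hold anyway --- already the constraint $Y_v\ge n/M-T^\alpha$ at $n=1$ fails whenever $T^\alpha<1/M$ --- but only large $T$ is used later, e.g.\ in Lemma~\ref{AlwaysGMT}.)

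For the small generations $0\le n\le n_0$ we have $n\le MT^\alpha$ and hence $n/M-T^\alpha\le 0$, so the constraints $X_v\ge n/M-T^\alpha$, $Y_v\ge n/M-T^\alpha$ and $T_v\ge n/M-T^\alpha$ are automatically satisfied, since positions and times are nonnegative. Thus $A_n\subseteq\{\exists v\in\mathbb{T}_n:\ X_v>Mn+T^\alpha\ \text{or}\ Y_v>Mn+T^\alpha\ \text{or}\ T_v+\mathbbm{e}_v>Mn+T^\alpha\}$. Each of $X_v$, $Y_v$ and $T_v+\mathbbm{e}_v$ is stochastically dominated by a sum of at most $n+1$ independent $\mathrm{Exp}(1)$ random variables --- for $T_v+\mathbbm{e}_v$ one uses $R\ge 1$, exactly as in the proof of Lemma~\ref{discretedifficult} --- so a Chernoff bound at rate $1/2$ together with a union bound over the $2^n$ vertices of $\mathbb{T}_n$ yields $\P(A_n)\le C_0(4e^{-M/2})^n e^{-T^\alpha/2}$. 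Summing the geometric series gives $\sum_{0\le n\le n_0}\P(A_n)\le C_1 e^{-T^\alpha/2}$.

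For the large generations $n>n_0$ we have $n/M-T^\alpha<n/M$ and $Mn+T^\alpha>Mn$, so any violation at generation $n$ of a constraint defining $\mathcal G_M(T)$ --- with the single exception of the event $\{T_v+\mathbbm{e}_v>Mn+T^\alpha\}$ --- is already contained in the bad event appearing in Lemma~\ref{discretedifficult}. That exceptional event has probability at most $C_0(4e^{-M/2})^n\le C_0 e^{-\kappa n}$ by the same Chernoff-and-union argument. Since $n>n_0\ge N$, Lemma~\ref{discretedifficult} therefore gives $\P(A_n)\le C_2 e^{-\kappa n}$, and summing the geometric tail from $n_0$ gives $\sum_{n>n_0}\P(A_n)\le C_3 e^{-\kappa n_0}\le C_4 e^{-\kappa MT^\alpha}$.

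Adding the two contributions, $\P(\mathcal G_M(T)^c)\le C e^{-\delta' T^\alpha}$ with $\delta'=\min\{1/2,\kappa M\}$ and $C=C_1+C_4$; taking any $0<\delta<\delta'$ and further enlarging the lower bound on $T$ so that $C\le e^{(\delta'-\delta)T^\alpha}$ then gives $\P(\mathcal G_M(T)^c)\le e^{-\delta T^\alpha}$, as required. The step needing the most care is purely organisational: selecting a single $M$ that simultaneously makes Lemma~\ref{discretedifficult} applicable and keeps all the Chernoff estimates summable, and arranging the cutoff $n_0$ to lie above the generation $N$ supplied by that lemma. There is no real analytic obstacle here, since all of the genuine probabilistic work has already been done inside Lemma~\ref{discretedifficult}.
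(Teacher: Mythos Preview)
Your argument is correct and follows essentially the same route as the paper: split at a threshold proportional to $T^\alpha$, observe that below the threshold the lower constraints $\ge n/M-T^\alpha$ are vacuous so only Chernoff-type tail bounds on sums of exponentials are needed, and above the threshold invoke Lemma~\ref{discretedifficult} and sum the geometric tail.

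There are a few small differences worth noting. The paper's cutoff is $T^\alpha/M$ rather than your $MT^\alpha$; either choice makes $n/M-T^\alpha\le 0$, so this is immaterial. For small generations the paper, instead of summing a Chernoff bound over each $n$, uses monotonicity to reduce the whole intersection $\bigcap_{n\le \lfloor T^\alpha/M\rfloor}\mathcal G_{M,n}(T)$ to a single event at generation $\lfloor T^\alpha/M\rfloor$ and applies the lemma once --- slightly slicker, but equivalent in effect. You are also more careful than the paper about the mismatch between $T_v+\mathbbm e_v$ in the definition of $\mathcal G_M(T)$ and $T_v+\mathbbm e_v/R(X_v,Y_v)$ in the statement of Lemma~\ref{discretedifficult}; the paper quietly applies the lemma as if it covered the former, whereas you correctly observe that the extra event needs its own Chernoff bound (which is immediate since $T_v+\mathbbm e_v\le\sum_{w\le v}\mathbbm e_w$). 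Finally, your remark that the stated bound cannot literally hold for all small positive $T$ is correct: at generation $n=1$ one of $X_v,Y_v$ equals $0$ almost surely, so $\P(\mathcal G_M(T)^c)=1$ whenever $T^\alpha<1/M$. The paper's proof, like yours, only delivers the estimate for sufficiently large $T$, which is all that is ever used downstream.
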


\begin{proof}
By Lemma \ref{discretedifficult} we may choose $M\in(1,\infty)$ such that for all $n$ large enough,
\[\P(\exists v\in\mathbb T_n : X_v \not\in[n/M, Mn] \,\text{ or }\, Y_v \not\in[n/M, Mn] \,\text{ or }\, T_v < n/M \,\text{ or }\, T_v+\mathbbm{e}_v > Mn) \le e^{-n}.\]
Let
\begin{multline*}
\mathcal G_{M,n}(T) = \Big\{X_v\in\Big[\frac{n}{M}-T^\alpha, Mn+T^\alpha\Big],\, Y_v\in\Big[\frac{n}{M}-T^\alpha, Mn+T^\alpha\Big] ,\\
T_v \ge \frac{n}{M}-T^\alpha \,\text{ and }\, T_v+\mathbbm{e}_v \le Mn +T^\alpha\,\,\,\,\forall v\in\mathbb T_n\Big\},
\end{multline*}
so that
\[\mathcal G_M(T) = \bigcap_{n=0}^\infty \mathcal G_{M,n}(T).\]

For $n \le T^\alpha/M$, since $n/M-T^\alpha \le 0$, we have
\[\mathcal G_{M,n}(T) = \big\{X_v\le Mn+T^\alpha,\, Y_v\le Mn+T^\alpha \,\text{ and }\, T_v+\mathbbm{e}_v \le Mn +T^\alpha\,\,\,\,\forall v\in\mathbb T_n\big\}\]
and therefore
\[\mathcal G_{M,n}(T) \supset \big\{X_v\le T^\alpha,\, Y_v\le T^\alpha \,\text{ and }\, T_v+\mathbbm{e}_v \le T^\alpha\,\,\,\,\forall v\in\mathbb T_n\big\}.\]
By monotonicity
\[\bigcap_{n=0}^{\lfloor T^\alpha/M\rfloor} \mathcal G_{M,n}(T) \supset \big\{X_v\le T^\alpha,\, Y_v\le T^\alpha \,\text{ and }\, T_v+\mathbbm{e}_v \le T^\alpha\,\,\,\,\forall v\in\mathbb T_{\lfloor T^\alpha/M\rfloor}\big\}.\]
and thus, by our choice of $M$,
\[\P\bigg(\bigcup_{n=0}^{\lfloor T^\alpha/M\rfloor} \mathcal G_{M,n}(T)^c\bigg) \le \P\big(\exists v\in\mathbb T_{\lfloor T^\alpha/M\rfloor} : X_v> T^\alpha,\,\text{ or }\, Y_v> T^\alpha \,\text{ or }\, T_v+\mathbbm{e}_v > T^\alpha\big) \le e^{-\lfloor T^\alpha/M\rfloor}.\]
On the other hand, for $n>T^\alpha/M$,
\[\mathcal G_{M,n}(T) \supset \Big\{X_v\in\Big[\frac{n}{M}, Mn\Big],\, Y_v\in\Big[\frac{n}{M}, Mn\Big] ,\, T_v \ge \frac{n}{M} \,\text{ and }\, T_v+\mathbbm{e}_v \le Mn\,\,\,\,\forall v\in\mathbb T_n\Big\}\]
so by our choice of $M$,
\[\P(\mathcal G_{M,n}(T)^c) \le e^{-n}.\]
Combining the bounds for $n\le T^\alpha/M$ and $n> T^\alpha/M$, we have
\[\P\bigg(\bigcup_{n=0}^\infty \mathcal G_{M,n}(T)^c \bigg) \le e^{-\lfloor T^\alpha/M\rfloor} + \sum_{n>T^\alpha/M} e^{-n}\]
and choosing $\delta < 1/M$ completes the proof.
\end{proof}

We can now prove our main result for this section, Lemma \ref{AlwaysGMT}.



\begin{proof}[Proof of Lemma \ref{AlwaysGMT}]
For $t\ge0$, suppose that $u\in\mathcal N_{t}$ and let $n(u)$ be the unique $n$ such that $u\in\mathbb T_n$. By the definition of $\Nc_{t}$, we have $T_u\le t<T_u+\mathbbm{e}_u$. On $\mathcal G_M(T)$, we have $T_v+\mathbbm{e}_v \le t$ for all $v\in\mathbb T_n$ with $Mn+T^\alpha\le t$; so we must have $n(u)>(t-T^\alpha)/M$. Similarly, on $\G_M(T)$, we have $T_v>t$ for all $v\in \mathbb T_n$ with $n/M-T^\alpha>t$; so we must have $n(u)\le M(t+T^\alpha)$. Thus, on $\G_M(T)$, we have
\[\frac{t-T^\alpha}{M} < n(u)\le M(t+T^\alpha)\]
and therefore also
\[\frac{t-T^\alpha}{M^2}-T^\alpha < X_u \le M^2 (t+T^\alpha) + T^\alpha \,\,\,\,\text{ and }\,\,\,\, \frac{t-T^\alpha}{M^2}-T^\alpha < Y_u \le M^2 (t+T^\alpha) + T^\alpha.\]
Since this holds for any particle $u\in\Nc_t$ for any $t\ge0$, taking $\alpha=1/3$ and rescaling by $T$ we deduce that on $\mathcal G_M(T)$, the paths of all particles fall within $G_{M^2,T}^2$, and the result follows from Corollary \ref{calGcor}.
\end{proof}

\section{Growth of the population at small times}\label{small_times_sec}

In this section we prove two results that are essentially concerned with showing that the number of particles near any reasonable straight line $(\lambda s,\mu s)$, $s\ge 0$, grows exponentially fast. The first of these results is Proposition \ref{ctssquareprop}, which considers the case $\lambda=\mu=1/2$; the idea in this case is that our rectangles prefer to be ``roughly square'', and relatively simple moment bounds will show that there are indeed many particles near this line. This will be the content of Section \ref{ctssquaresec}. We then move on to proving Proposition \ref{hfmnprop}, which concerns a function that begins by moving along the line $(s/2,s/2)$ but then gradually shifts its gradient towards a general slope $(\lambda s,\mu s)$. This is done in Section \ref{hfmnsec}.

\subsection{The lead diagonal: proof of Proposition \ref{ctssquareprop}}\label{ctssquaresec}

Recall the discrete-time setup from Section \ref{GMTsec}. In order to initially remove the dependence between time and space, let $\tilde T_\rho = 0$, and recursively for each $v\in\mathbb{T}$ let $\tilde T_{v1} = \tilde T_{v2} = \tilde T_v + \eb_v$.

For $v\in\mathbb T_k$ and $j\le k$, write $X_v(j)$ to mean $X_u$ where $u$ is the unique ancestor of $v$ in $\mathbb T_j$. Similarly write $Y_v(j)$, $T_v(j)$, $\tilde T_v(j)$ and $Z_v(j)$. Also define
\[\Delta_v(j) = X_v(j)-Y_v(j) \;\;\text{ and }\;\; S_v(j) = X_v(j) + Y_v(j) - j,\]
and let $(\G_j, j\ge 0)$ be the natural filtration of the discrete-time process. We begin with sixth moment estimates on $\Delta_v(j)$ and $S_v(j)$. The reason for using the sixth moment is that this eventually gives us a decay of order $1/T$, which will be strong enough for our purposes. We could use higher moments if we wanted to get a better rate of decay.

\begin{lem}\label{4momlem}
There exists a finite constant $C$ such that for any vertex $v\in\mathbb T_k$ and $0\le j\le k$, we have
\[\E\big[(X_v(j) - Y_v(j))^6\big] \le Cj^3\]
and
\[\E[(X_v(j) + Y_v(j) - j)^6] \le Cj^3.\]
\end{lem}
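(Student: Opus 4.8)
The plan is to prove both bounds simultaneously by exploiting the recursive structure of the discrete-time process and deriving a recursion for the relevant moments across generations. Fix $v\in\mathbb T_k$ and write $j$ for a generation index with $0\le j\le k$; let $u_0=\rho,u_1,\dots,u_j$ be the ancestral line, so that $\Delta_v(i)=X_v(i)-Y_v(i)$ and $S_v(i)=X_v(i)+Y_v(i)-i$ evolve along this line. At each step $i\to i+1$ exactly one coordinate increases by an independent $\mathbbm e\sim\mathrm{Exp}(1)$, so $S_v(i+1)-S_v(i)=\mathbbm e_{u_i}-1$ is a centred random variable with all moments finite, \emph{regardless} of which side splits. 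Hence $S_v(\cdot)$ is a mean-zero random walk with i.i.d.\ increments, and the bound $\E[S_v(j)^6]\le Cj^3$ is immediate from the classical Marcinkiewicz--Zygmund / Rosenthal inequality (or a direct expansion: the only surviving terms in $\E[(\sum_{i<j}(\mathbbm e_{u_i}-1))^6]$ are products of at most three paired factors, giving $O(j^3)$).

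The estimate for $\Delta_v(j)$ is the more delicate one, because the increment $\Delta_v(i+1)-\Delta_v(i)$ equals $+\mathbbm e_{u_i}$ if the split is vertical and $-\mathbbm e_{u_i}$ if horizontal, and the split direction is \emph{not} symmetric: conditionally on $\G_i$ it has probability $P(X_{u_i},Y_{u_i})$ of being $+$, and this probability is biased towards \emph{reducing} $|\Delta_v(i)|$ (rectangles prefer to split along their longer side). The key point to extract from the definition of $P$ is a drift inequality: there is an absolute constant $c>0$ such that, writing $d_i=\Delta_v(i)$,
\[
\E[\,|d_{i+1}|^6 - |d_i|^6 \mid \G_i\,]\le C_1(|d_i|^5 + 1) - c\,|d_i|^5\cdot\ind_{\{|d_i|\ge 1\}},
\]
or more simply $\E[|d_{i+1}|^6\mid\G_i]\le |d_i|^6 + C_2(|d_i|^4+1)$, which already suffices. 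This comes from Taylor-expanding $(d_i\pm\mathbbm e)^6$, using that $P(x,y)-\tfrac12$ has the sign of $-(x-y)$ with $|P-\tfrac12|$ bounded (so the odd-power drift terms are nonpositive up to a bounded correction from the ``$+1$'' constants in $R,P$), and that $\mathbbm e$ has finite moments. Summing this conditional recursion over $i=0,\dots,j-1$ and taking expectations gives $\E[|d_j|^6]\le C_2\sum_{i=0}^{j-1}(\E[|d_i|^4]+1)$, and then an induction on the order of the moment — first showing $\E[d_i^2]=O(i)$, then $\E[d_i^4]=O(i^2)$, then $\E[d_i^6]=O(i^3)$, each step feeding the previous bound into the analogous recursion — closes the argument.

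The main obstacle, and the step deserving the most care, is establishing the sign of the drift in the $\Delta$-recursion cleanly, since $P$ is defined piecewise (by whether $x\ge y$ or $x<y$) and involves the shifted quantities $x+1,y+1$ rather than $x,y$. One must check that on $\{x\ge y\}$ we have $P(x,y)=\frac{y+1}{2(x+1)}\le\tfrac12$, so the $+$ move (which increases $x$, hence increases $|d|=x-y$) has probability $\le\tfrac12$; and symmetrically on $\{x<y\}$. Thus the expected increment of $|d_i|$ is $\le 0$ up to the bounded slack caused by the ``$+1$'' terms, and when one expands the sixth power the leading odd-degree term $6 d_i^5\,\E[\pm\mathbbm e\mid\G_i]$ carries the right sign while all remaining terms are $O(|d_i|^4+1)$. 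Once this is in hand, the moment induction is routine bookkeeping; no control on $T_v$ or on the continuous-time embedding is needed here, since the lemma is purely about the label process along a fixed ancestral line. I would also note in passing that the constant $C$ can be taken uniform in $k$ because all estimates are along a single ray of length $j$ and the branching structure plays no role (we fixed $v$).
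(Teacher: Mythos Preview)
Your proposal is correct and takes essentially the same approach as the paper: the $S$-bound via an i.i.d.\ centred sum, and the $\Delta$-bound via a conditional moment recursion (second $\to$ fourth $\to$ sixth) driven by exactly the sign observation $\Delta_i\,\E[2D_{u_i}-1\mid\G_i]\le 0$. One cosmetic remark: your hedging about ``bounded corrections from the $+1$ constants'' is unnecessary---as you yourself verify, $P\le\tfrac12$ holds \emph{exactly} when $x\ge y$, so the odd-power drift terms are cleanly nonpositive with no slack; relatedly, your first displayed inequality with the $-c|d_i|^5$ term cannot hold with a uniform $c>0$ (since $|P-\tfrac12|=\frac{|x-y|}{2((x\vee y)+1)}$ is not bounded below), but you correctly discard it in favour of the simpler bound $\E[d_{i+1}^6\mid\G_i]\le d_i^6+C_2(d_i^4+1)$, which is precisely what the paper uses.
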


\begin{proof}
Let $v_j$ be the vertex in $\mathbb T_j$ consisting of all $1$s, i.e.~$v_j = v_{j-1}1$ for all $j$. By symmetry it suffices to consider $v=v_k$. Letting $E_j = -\log\mathcal U_{v_j}^{\text{split}}$, we see by construction that
\[X_{v_j} - X_{v_{j-1}} = D_{v_{j-1}}E_{j-1}\;\;\text{ and }\;\;Y_{v_j} - Y_{v_{j-1}} = (1-D_{v_{j-1}})E_{j-1}.\]
We also note that $\{E_j : j\ge 0\}$ is a collection of independent exponentially distributed random variables of parameter $1$, such that $E_j$ is independent of $D_{v_j}$ for each $j$.

Let $\Delta_j = X_{v_j} - Y_{v_j}$. We begin by bounding the second moment of $\Delta_j$, then the fourth moment, before we tackle the sixth moment. By the above,
\begin{align}
\E\big[\Delta_j^2\big|\G_{j-1}\big] &= \E\big[\big(\Delta_{j-1} + (2D_{v_{j-1}}-1)E_{j-1}\big)^2\big|\G_{j-1}\big]\nonumber\\
&= \Delta_{j-1}^2 + 2\Delta_{j-1}\E[(2D_{v_{j-1}}-1)E_{j-1}|\G_{j-1}]+ \E[(2D_{v_{j-1}}-1)^2E_{j-1}^2|\G_{j-1}]\nonumber\\
&= \Delta_{j-1}^2 + 2\Delta_{j-1}\E[2D_{v_{j-1}}-1|\G_{j-1}]+ 2,\label{Deltarecurse}
\end{align}
where the last line follows from the independence of $E_{j-1}$ from $D_{v_{j-1}}$ and $\G_{j-1}$ and the fact that $(2D_{v_{j-1}}-1)^2 = 1$. Now we note that, from the definition of $D_{v_j}$, if $\Delta_j\ge 0$ then $D_{v_j}$ equals $1$ with probability at most $1/2$, whereas if $\Delta_j \le 0$ then $D_{v_j}$ equals $1$ with probability at least $1/2$. Thus
\begin{equation}\label{Deltanegcorrel}
\Delta_{j-1}\E[2D_{v_{j-1}}-1|\G_{j-1}] \le 0,
\end{equation}
so that \eqref{Deltarecurse} becomes
\[\E\big[\Delta_j^2\big|\G_{j-1}\big] \le \Delta_{j-1}^2 + 2.\]
Taking expectations and summing over $i\le j$, we obtain
\begin{equation}\label{Del2mom}
\E\big[\Delta_j^2\big] \le 2j.
\end{equation}
We now move on to the fourth moment, following a very similar argument:
\begin{align}
\E\big[\Delta_j^4\big|\G_{j-1}\big] &= \E\big[\big(\Delta_{j-1} + (2D_{v_{j-1}}-1)E_{j-1}\big)^4\big|\G_{j-1}\big]\nonumber\\
&= \Delta_{j-1}^4 + 4\Delta_{j-1}^3\E[(2D_{v_{j-1}}-1)E_{j-1}|\G_{j-1}]+ 6\Delta_{j-1}^2\E[(2D_{v_{j-1}}-1)^2E_{j-1}^2|\G_{j-1}]\nonumber\\
&\hspace{25mm}+ 4\Delta_{j-1}\E[(2D_{v_{j-1}}-1)^3E_{j-1}^3|\G_{j-1}] + \E[(2D_{v_{j-1}}-1)^4E_{j-1}^4|\G_{j-1}] \nonumber\\
&= \Delta_{j-1}^4 + 4\Delta_{j-1}^3\E[2D_{v_{j-1}}-1|\G_{j-1}]+ 6\Delta_{j-1}^2\E[E_{j-1}^2]\nonumber\\
&\hspace{25mm} + 4\Delta_{j-1}\E[2D_{v_{j-1}}-1|\G_{j-1}]\E[E_{j-1}^3] + \E[E_{j-1}^4],\label{Deltarecurse2}
\end{align}
where again for the last line we used the independence of $E_{j-1}$ from $D_{v_{j-1}}$ and $\G_{j-1}$ and the fact that $(2D_{v_{j-1}}-1)^2 = 1$. By \eqref{Deltanegcorrel} and the facts that $\E[E_{j-1}^2]=2$ and $\E[E_{j-1}^4]=24$, we obtain
\[\E\big[\Delta_j^4\big|\G_{j-1}\big] \le \Delta_{j-1}^4 + 12\Delta_{j-1}^2 + 24.\]
Taking expectations and using \eqref{Del2mom}, we have
\[\E\big[\Delta_j^4\big] \le \E[\Delta_{j-1}^4] + 24(j-1) + 24 = \E[\Delta_{j-1}^4] + 24j.\]
Summing over $i\le j$, this gives
\begin{equation}\label{Del4mom}
\E\big[\Delta_j^4\big] \le \sum_{i=1}^j 24j = 12j(j+1).
\end{equation}

For the sixth moment, the same strategy, expanding out $\Delta_j^6 = (\Delta_{j-1} + (2D_{v_{j-1}}-1)E_{j-1})^6$ and using the independence of $E_{j-1}$ from $D_{v_{j-1}}$ and $\G_{j-1}$, and then applying \eqref{Deltanegcorrel}, works again. Omitting the calculations, the upshot is that
\[\E\big[\Delta_j^6\big|\G_{j-1}\big] \le \Delta_{j-1}^6 + 30\Delta_{j-1}^4 + 360\Delta_{j-1}^2 + 720.\]
Taking expectations and using \eqref{Del2mom} and \eqref{Del4mom}, we have
\[\E\big[\Delta_j^6\big] \le \E[\Delta_{j-1}^6] + 360j(j+1) + 720j + 720 = \E[\Delta_{j-1}^6] + 360(j+1)(j+2).\]
Summing over $i\le j$, we have
\[\E\big[\Delta_j^6\big] \le 360\sum_{i=1}^j (i+1)(i+2) = 120j(j^2+6j+11)\]
which proves the first part of the lemma.

The second statement of the lemma is much simpler to prove, since $X_{v_j} + Y_{v_j} = \sum_{i=0}^{j-1}E_i$. Either by direct calculation or by using the moment generating function, one may write down an expression for every moment of $X_v(j) + Y_v(j) - j$; in particular one may check that
\[\E[(X_v(j) + Y_v(j) - j)^6] \le Cj^3\]
for some constant $C$, completing the proof.
\end{proof}

\begin{lem}\label{UnT}
Let $K(n,T) = \lceil n^{7/8}\rceil T/n + \lceil 2T/n^2\rceil$ and let
\[U_{n,T} = \big\{v\in\mathbb{T}_{K(n,T)} : \| Z_v(k) - (k/2,k/2)\|\le \sfrac{T}{32n^4} \text{ and } |\tilde T_v(k)-k|\le \sfrac{T}{4n^2} \,\,\forall k\le K(n,T)\big\}.\]
Then there exists a finite constant $C$ such that for any $T\ge Cn^{48}$,
\[\P\big(|U_{n,T}|\ge \sfrac{1}{2T^2} 2^{K(n,T)}\big) \ge 1-T^{-3/2}.\]
\end{lem}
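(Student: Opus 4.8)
The plan is to run a first/second moment argument on the random variable $|U_{n,T}|$, using the discrete-time construction from Section \ref{GMTsec} (with the decoupled clock $\tilde T$) so that the jump times are independent of the positions. Write $K = K(n,T) = \lceil n^{7/8}\rceil T/n + \lceil 2T/n^2\rceil$, and let $A_v$ be the event (measurable with respect to the labels on the ancestral line of $v$) that $\|Z_v(k) - (k/2,k/2)\| \le \frac{T}{32n^4}$ and $|\tilde T_v(k) - k| \le \frac{T}{4n^2}$ for all $k \le K$, so that $|U_{n,T}| = \sum_{v \in \mathbb{T}_K} \ind_{A_v}$. By symmetry, $\E[|U_{n,T}|] = 2^K\, p$ where $p = \P(A_{v_K})$ for the all-ones vertex $v_K$. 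The key quantitative input is a lower bound $p \ge T^{-1}(1+o(1))$ (or at least $p \ge c/T$ for a constant we can absorb), which is where Lemma \ref{4momlem} enters.

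\medskip\noindent\textbf{First moment.} The two constraints decouple nicely. For the spatial constraint, $Z_v(k) - (k/2,k/2)$ has $X$-component $\Delta_v(k)/2 + S_v(k)/2$ and $Y$-component $-\Delta_v(k)/2 + S_v(k)/2$ in the obvious notation, so $\|Z_v(k)-(k/2,k/2)\| \le \frac12(|\Delta_v(k)| + |S_v(k)|)$; it thus suffices to control $\sup_{k\le K}|\Delta_v(k)|$ and $\sup_{k\le K}|S_v(k)|$. Since $\Delta_v(\cdot)$ is (as shown in the proof of Lemma \ref{4momlem}) a process whose conditional increments are negatively correlated with its current sign, $\Delta_v(k)^2 - 2k$ is a supermartingale, so a maximal/Doob-type inequality combined with the sixth-moment bound $\E[\Delta_v(K)^6] \le CK^3$ gives $\P(\sup_{k\le K}|\Delta_v(k)| > \frac{T}{8n^4}) \lesssim K^3 n^{24}/T^6$; and similarly for $S_v(k) = \sum_{i<k} E_i - k$, which is a genuine mean-zero random walk with exponential increments, so Doob plus $\E[S_v(K)^6]\le CK^3$ gives the same order of bound. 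For the time constraint, $\tilde T_v(k) - k = \sum_{i<k} E_{v_i} - k$ is likewise a mean-zero random walk, and a maximal inequality for its sixth moment bounds $\P(\sup_{k\le K}|\tilde T_v(k) - k| > \frac{T}{4n^2})$ by $O(K^3 n^{12}/T^6)$. Since $K \asymp T/n^{1/8}$ and $T \ge Cn^{48}$, all of these probabilities are $o(1)$, in fact much smaller than any fixed threshold, so $p \ge 1 - o(1) \ge 1/T$ for $T$ large — in fact $p \to 1$, so $\E[|U_{n,T}|] \ge (1-o(1))2^K$. (If one only needs $p \ge 1/T$ this is very comfortable; the statement $|U_{n,T}| \ge \frac{1}{2T^2}2^K$ leaves a lot of room.)

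\medskip\noindent\textbf{Second moment.} Write $\E[|U_{n,T}|^2] = \sum_{u,v} \P(A_u \cap A_v)$. Split according to the generation $g(u,v)$ at which the lineages of $u$ and $v$ diverge. For a pair diverging at generation $m$, the events $A_u$ and $A_v$ are both determined by: the common ancestral data up to generation $m$ (one copy), and then two conditionally independent continuations. Bounding $\P(A_u\cap A_v) \le \P(A_{u}(\text{up to }m)) \cdot \sup (\text{continuation prob})^2$ and using $\P(A_u) \le 1$ throughout, one gets $\E[|U_{n,T}|^2] \le \sum_{m=0}^{K} (\#\text{pairs diverging at }m)\, \P(A_{v_K}) \le \sum_m 2^m \cdot 2^{K-m} \cdot 2^{K-m} \cdot p_m$ where $p_m$ is a continuation probability $\le 1$; this is dominated by the $m=K$ term, giving $\E[|U_{n,T}|^2] \le \E[|U_{n,T}|] + (\text{lower order in }2^K)$ — i.e. the second moment is essentially the first moment squared divided by nothing worse than a polynomial, because branching is fast (rate $\ge 1$) relative to the spatial/temporal fluctuations we're tracking. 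More carefully, the standard bound gives $\E[|U_{n,T}|^2] \le \E[|U_{n,T}|] + \left(\sum_{m} 2^{-m}\right)\E[|U_{n,T}|]^2 \le \E[|U_{n,T}|] + 2\E[|U_{n,T}|]^2$. The Paley–Zygmund inequality then yields $\P(|U_{n,T}| \ge \frac12 \E[|U_{n,T}|]) \ge \frac{\E[|U_{n,T}|]^2}{4\E[|U_{n,T}|^2]} \ge \frac{1}{8 + 4/\E[|U_{n,T}|]}$, which is bounded below by a constant but \emph{not} by $1-T^{-3/2}$.

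\medskip\noindent\textbf{Upgrading to probability $1 - T^{-3/2}$.} Paley–Zygmund alone is not enough for the claimed high-probability bound; the fix is the standard one for supercritical branching-type systems. One runs the first/second moment estimate not from the root but restarted from each particle alive at a small generation $m_0$ (say $m_0 = \lceil \log^2 T\rceil$, still $\ll K$), conditioning on $\mathcal{G}_{m_0}$: there are roughly $2^{m_0}$ such particles, each lies (on a high-probability event, controlled exactly as in the first-moment step above) within $O(m_0) = O(\log^2 T)$ of $(m_0/2, m_0/2)$ in space and within $O(\log T)$ of $m_0$ in time, which is negligible compared to the target window $T/n^4$; each such particle then independently produces, with probability $\ge c > 0$, at least $\frac12\E[\cdots] \gtrsim 2^{K-m_0}$ good descendants (Paley–Zygmund applied to the sub-population). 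The number of particles that succeed stochastically dominates a $\mathrm{Binomial}(\lfloor 2^{m_0}/2\rfloor, c)$, which is $\ge \frac{c}{4}2^{m_0}$ except with probability $e^{-c' 2^{m_0}} \le e^{-c' \log^2 T} \le T^{-3/2}$ for $T$ large. On that event $|U_{n,T}| \ge \frac{c}{4}2^{m_0}\cdot \frac12 2^{K-m_0} \cdot (1-o(1)) \ge \frac{1}{2T^2}2^K$, the last inequality holding comfortably since $c/8 \gg T^{-2}$. Adding in the $o(1)$-probability bad events from the first-moment control of the early particles (all polynomially small in $T$, hence $\le \frac12 T^{-3/2}$ each after adjusting constants), we get the claim.

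\medskip\noindent\textbf{The main obstacle} is the sixth-moment maximal inequality for $\Delta_v(\cdot)$: one must leverage the negative correlation $\Delta_{j-1}\E[2D_{v_{j-1}}-1 \mid \mathcal{G}_{j-1}] \le 0$ (established in the proof of Lemma \ref{4momlem}) not just to bound $\E[\Delta_v(K)^6]$ but to get a \emph{running maximum} bound $\E[\sup_{k\le K}\Delta_v(k)^6] \le C'K^3$ — this follows because $\Delta_v(k)^2$ is a submartingale dominated by the martingale part of a sum of exponentials plus a linear drift, so $|\Delta_v(k)|$ is dominated by $|W_k| + \sqrt{2k}$ for a mean-zero exponential-increment walk $W$, to which Doob's $L^6$ inequality applies directly. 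Once that maximal bound is in hand, everything else is routine bookkeeping with the polynomial budget afforded by $T \ge Cn^{48}$.
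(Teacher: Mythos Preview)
Your argument is more elaborate than necessary, and the extra machinery introduces at least one slip. The paper's proof is a single application of Paley--Zygmund with the \emph{trivial} second moment bound $\E[|U_{n,T}|^2] \le 2^{2K}$, which works because the first moment is extremely close to $2^K$: summing the sixth-moment tail bounds over $k\le K$ (no maximal inequality needed) gives $1-p = O(K^4 n^{24}/T^6) = O(n^{24}/T^2)$, and the hypothesis $T\ge Cn^{48}$ forces $n^{24}\le (T/C)^{1/2}$, so $1-p = O(T^{-3/2})$. Then Paley--Zygmund with $\theta=1/T^2$ yields
\[
\P\big(|U_{n,T}|\ge \tfrac{1}{T^2}\E[|U_{n,T}|]\big) \ge \Big(1-\tfrac{1}{T^2}\Big)^2\frac{\E[|U_{n,T}|]^2}{2^{2K}} \ge \Big(1-\tfrac{1}{T^2}\Big)^2\big(1-O(T^{-3/2})\big)^2 \ge 1-T^{-3/2}
\]
directly --- no bootstrap needed. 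You computed the right tail bounds but only recorded $p \ge 1-o(1)$, then spent the second-moment step on a branching decomposition that throws this precision away.

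Your bootstrap would work in principle, but the choice $m_0 = \lceil \log^2 T\rceil$ is inconsistent: you want $2^{m_0}$ many independent trials (so $2^{m_0}$ should be at least polylogarithmic in $T$ for the binomial tail $e^{-c'2^{m_0}}\le T^{-3/2}$), but you also need a union bound over all $2^{m_0}$ early paths, which blows up if $2^{m_0}$ is super-polynomial. Your text writes $m_0=\lceil \log^2 T\rceil$ but then claims $e^{-c'2^{m_0}}\le e^{-c'\log^2 T}$, which would mean $2^{m_0}=\log^2 T$, i.e.\ $m_0\approx 2\log_2\log T$ --- a much smaller quantity. Either reading can be made to work, but you should pick one and check both constraints. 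More importantly, the whole detour is avoidable once you notice how sharp the first-moment estimate already is.
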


\begin{proof}
Note that, for any $k\ge 0$ and $u\in\mathbb T_k$, by the triangle inequality we have
\[|X_u-k/2| = \frac{1}{2}|X_u+Y_u-k + X_u-Y_u| \le \frac{1}{2}|X_u+Y_u-k| + \frac{1}{2}|X_u-Y_u|,\]
and similarly for $|Y_u-k/2|$. Thus
\[\P\big(\|Z_u - (k/2,k/2)\| > \sfrac{T}{32n^4}\big) \le \P(|X_u +Y_u - k| > \sfrac{T}{32n^4}) + \P(|X_u -Y_u| > \sfrac{T}{32n^4}).\]
Applying Markov's inequality and the sixth moment estimates from Lemma \ref{4momlem}, we obtain
\begin{align*}
\P\big(\|Z_u - (k/2,k/2)\| > \sfrac{T}{32n^4}\big) &\le \E\big[|X_u +Y_u - k|^6\big]\big(\sfrac{32 n^{4}}{T}\big)^6 + \E\big[|X_u -Y_u|^6\big]\big(\sfrac{32 n^{4}}{T}\big)^6\\
&\le 2Ck^3\big(\sfrac{32 n^{4}}{T}\big)^6
\end{align*}
where $C$ is a finite constant. Thus, for $K\ge 0$, $v\in\mathbb{T}_{K}$ and $k\le K$,
\[\P\big(\|Z_v(k) - (k/2,k/2)\| > \sfrac{T}{32n^4}\big) \le 2Ck^3\big(\sfrac{32n^4}{T}\big)^6.\]

Now note that $\tilde T_v(k)$ is a sum of $k$ independent exponential random variables of parameter $1$, and therefore has the same distribution as $X_v(k)+Y_v(k)$. Thus, again by Lemma \ref{4momlem},
\[\E\big[|\tilde T_v(k) - k|^6\big] \le Ck^3\]
and therefore
\[\P\big(|\tilde T_v(k)-k| > \sfrac{T}{4n^2}\big) \le \E\big[|\tilde T_v(k) - k|^6\big]\big(\sfrac{4 n^{2}}{T}\big)^6 \le Ck^3\big(\sfrac{4 n^{2}}{T}\big)^6.\]
We deduce that, for some finite constant $C'$,
\begin{equation}\label{ZTKsum}
\P\big(\exists k\le K : \|Z_v(k) - (k/2,k/2)\| > \sfrac{T}{32n^4} \text{ or } |\tilde T_v(k)-k| > \sfrac{T}{4n^2}\big) \le \sum_{k=1}^{K} \frac{C' k^3 n^{24}}{T^6}.
\end{equation}
Summing over $k$, this is at most $C'K^4 n^{24}/T^6$, and since $K(n,T) = O(n^{-1/8} T) \le O(T)$, we have
\[\P\big(\exists k\le K(n,T) : \|Z_v(k) - (k/2,k/2)\| > \sfrac{T}{32n^4} \text{ or } |\tilde T_v(k)-k| > \sfrac{T}{4n^2}\big) \le \frac{C'' n^{24}}{T^2}\]
for some finite constant $C''$.

Converting the above to a statement about $U_{n,T}$, we have shown that
\[\P(v\in U_{n,T}) \ge 1 - \frac{C'' n^{24}}{T^2},\]
and since there are $2^{K(n,T)}$ vertices in $\mathbb{T}_{K(n,T)}$,
\[\E\big[|U_{n,T}|\big] \ge 2^{K(n,T)}\Big(1 - \frac{C'' n^{24}}{T^2}\Big).\]
Obviously we also have
\[\E\big[|U_{n,T}|^2\big] \le 2^{2K(n,T)},\]
and therefore by the Paley-Zygmund inequality,
\[\P\bigg(|U_{n,T}|\ge \frac{1}{T^2} 2^{K(n,T)}\Big(1 - \frac{C'' n^{24}}{T^2}\Big) \bigg) \ge \bigg(1-\frac{1}{T^2}\bigg)^2\frac{\E\big[|U_{n,T}|\big]^2}{\E\big[|U_{n,T}|^2\big]} \ge \bigg(1-\frac{2}{T^2}\bigg)\Big(1 - \frac{C'' n^{24}}{T^2}\Big)^2.\]
The result follows.
\end{proof}

\begin{lem}\label{VnT}
Define
\[V_{n,T} = \big\{v\in\mathbb{T}_{K(n,T)} : \|Z_v(k) - (k/2,k/2)\|\le \sfrac{T}{32n^4} \text{ and } |T_v(k)-k|\le \sfrac{7T}{8n^2} \,\,\,\forall k\le K(n,T)\big\},\]
where $K(n,T) = \lceil n^{7/8}\rceil T/n + \lceil 2T/n^2\rceil$ as in Lemma \ref{UnT}.
Then there exists a finite constant $C$ such that for any $T\ge Cn^{48}$,
\[\P\big(|V_{n,T}|\ge \sfrac{1}{2T^2} 2^{K(n,T)}\big) \ge 1-T^{-3/2}.\]
\end{lem}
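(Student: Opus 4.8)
The plan is to deduce this from Lemma~\ref{UnT} by comparing the true birth times $T_v(k)$ with the artificial times $\tilde T_v(k)$, writing $K=K(n,T)$ throughout. Since $R\ge 1$ everywhere we always have $T_v(k)\le\tilde T_v(k)$, so on $\{v\in U_{n,T}\}$ the upper bound $T_v(k)\le k+\sfrac{7T}{8n^2}$ holds automatically (it follows from $\tilde T_v(k)\le k+\sfrac{T}{4n^2}$). For the lower bound, decompose $k-T_v(k)=(k-\tilde T_v(k))+(\tilde T_v(k)-T_v(k))$; on $\{v\in U_{n,T}\}$ the first term is at most $\sfrac{T}{4n^2}$, and since
\[\tilde T_v(k)-T_v(k)=\sum_{j=0}^{k-1}\eb_{v(j)}\Big(1-\frac{1}{R(X_{v(j)},Y_{v(j)})}\Big)\]
has non-negative terms it is non-decreasing in $k$, hence at most $\tilde T_v(K)-T_v(K)$. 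Consequently, setting
\[W=\big\{v\in\mathbb T_K:\ v\in U_{n,T}\ \text{and}\ \tilde T_v(K)-T_v(K)\le\sfrac{5T}{8n^2}\big\},\]
every $v\in W$ also lies in $V_{n,T}$ (the position constraint is inherited from $U_{n,T}$, and $\sfrac{T}{4n^2}+\sfrac{5T}{8n^2}=\sfrac{7T}{8n^2}$), so $|V_{n,T}|\ge|W|$ and it suffices to bound $|W|$ from below.

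The heart of the matter is to show that, on $\{v\in U_{n,T}\}$, the ``lag'' $\tilde T_v(K)-T_v(K)$ is only of order $\sfrac{T\log n}{n^4}$, which is $o(T/n^2)$. Put $j_0=\lceil T/n^4\rceil$ and split the sum above at $j_0$. For $j<j_0$ we use only the crude bound $1-R(X_{v(j)},Y_{v(j)})^{-1}<1$, so these generations contribute at most $\sum_{j<j_0}\eb_{v(j)}=\tilde T_v(j_0)$. For $j\ge j_0$, the defining constraint $\|Z_v(j)-(j/2,j/2)\|\le\sfrac{T}{32n^4}$ of $U_{n,T}$ gives $|X_{v(j)}-Y_{v(j)}|\le\sfrac{T}{16n^4}$ and $\min(X_{v(j)},Y_{v(j)})\ge\sfrac{j}{2}-\sfrac{T}{32n^4}\ge\sfrac{j}{4}$ (using $j\ge j_0\ge T/n^4$), whence $R(X_{v(j)},Y_{v(j)})-1=\frac{|X_{v(j)}-Y_{v(j)}|}{\min(X_{v(j)},Y_{v(j)})+1}\le\frac{T}{4jn^4}$; these generations contribute at most $\frac{T}{4n^4}\sum_{j=j_0}^{K-1}\eb_{v(j)}/j$. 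Thus on $\{v\in U_{n,T}\}$,
\[\tilde T_v(K)-T_v(K)\ \le\ \tilde T_v(j_0)+\frac{T}{4n^4}\sum_{j=j_0}^{K-1}\frac{\eb_{v(j)}}{j}.\]
The first term is a sum of $\approx T/n^4$ i.i.d.\ exponentials, with mean $\ll\sfrac{5T}{16n^2}$; the second term has mean $\frac{T}{4n^4}\sum_{j=j_0}^{K-1}\frac1j=O\big(\frac{T\log(K/j_0)}{n^4}\big)=O\big(\frac{T\log n}{n^4}\big)\ll\sfrac{5T}{16n^2}$ (using $K/j_0=O(n^{31/8})$) and summable variance. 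A standard Chernoff bound on each of these two quantities then yields, for a single vertex $v$,
\[\P\big(v\in U_{n,T},\ \tilde T_v(K)-T_v(K)>\sfrac{5T}{8n^2}\big)\le e^{-cT}\]
for some $c=c(n)>0$.

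To finish, note that $|W|\le 2^K$ deterministically, and that by the above, by symmetry, and by the bound $\E[|U_{n,T}|]\ge 2^K(1-C''n^{24}/T^2)$ established in the proof of Lemma~\ref{UnT},
\[\E\big[2^K-|W|\big]=\E\big[2^K-|U_{n,T}|\big]+\E\big[|U_{n,T}|-|W|\big]\le\frac{C''n^{24}}{T^2}2^K+2^Ke^{-cT}.\]
Markov's inequality applied to the non-negative variable $2^K-|W|$ gives $\P\big(|W|<\sfrac{1}{2T^2}2^K\big)=\P\big(2^K-|W|>2^K(1-\sfrac{1}{2T^2})\big)\le 2\big(\tfrac{C''n^{24}}{T^2}+e^{-cT}\big)$, which is at most $T^{-3/2}$ once $T\ge Cn^{48}$ for a large enough absolute constant $C$ --- this is precisely what the exponent $48$ buys us, since it forces $n^{24}/T^2\le\tfrac14T^{-3/2}$, while $e^{-cT}$ is negligible on that range. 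Since $|V_{n,T}|\ge|W|$, the lemma follows.

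The step I expect to be the main obstacle is the single-vertex tail estimate. The naive bound $R(X_{v(j)},Y_{v(j)})-1\le\sfrac{T}{16n^4}$ valid for all $j$ is hopelessly weak --- it would only give a lag bound of order $T^2/n^8\gg T/n^2$ --- so one genuinely needs both the generation-dependent improvement $R-1\le\frac{T}{4jn^4}$ for $j\ge j_0$ and the observation that the uncontrolled early generations contribute only $\tilde T_v(j_0)=O(T/n^4)$ because $1-R^{-1}$ is always bounded by $1$. Some care with the constants in the exponential-moment computations is needed (this is why the cut-off is placed at $j_0\approx T/n^4$); with that care the argument goes through for all $n$ above some absolute constant, and the finitely many remaining values of $n$ can be absorbed into $C$ by a direct and easier estimate.
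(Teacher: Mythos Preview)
Your approach is correct, but it is significantly more involved than necessary. The paper observes that the inclusion $U_{n,T}\subset V_{n,T}$ holds \emph{deterministically} (for the relevant range of $n,T$), so the result follows immediately from Lemma~\ref{UnT} with no further probabilistic work. The key difference from your argument is the choice of cutoff and of the bound on $R$: instead of splitting at $j_0=\lceil T/n^4\rceil$ and using the generation-dependent estimate $R-1\le T/(4jn^4)$, the paper splits at the larger $k_0=\lfloor T/(4n^2)\rfloor$, where the position constraint already gives the \emph{uniform} bound $R(X_{v(j)},Y_{v(j)})\le 1/(1-\tfrac{1}{2n^2})$ for all $j\ge k_0$. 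This lets one write
\[
T_v(k)\ \ge\ \sum_{i=k_0}^{k-1}\frac{\eb_{v(i)}}{R(X_{v(i)},Y_{v(i)})}\ \ge\ \Big(1-\frac{1}{2n^2}\Big)\big(\tilde T_v(k)-\tilde T_v(k_0)\big),
\]
and both $\tilde T_v(k)$ and $\tilde T_v(k_0)$ are already controlled by the defining constraint $|\tilde T_v(\cdot)-\cdot|\le T/(4n^2)$ of $U_{n,T}$; a short calculation then yields $T_v(k)\ge k-7T/(8n^2)$ directly.

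What your sharper bound $R-1\le T/(4jn^4)$ buys is nothing here: it forces you into a random sum $\sum_j \eb_{v(j)}/j$ that then needs a Chernoff bound, an application of Markov's inequality to $2^K-|W|$, and a dependence of $c$ on $n$ to be tracked through to the end. The paper's coarser uniform bound sacrifices nothing, because the factor $(1-\tfrac{1}{2n^2})$ applied to $\tilde T_v(k)\le K+T/(4n^2)=O(T/n^{1/8})$ already produces a lag of order $T/n^{17/8}\ll T/n^2$. Your reliance on the internal estimate $\E[|U_{n,T}|]\ge 2^K(1-C''n^{24}/T^2)$ from the proof of Lemma~\ref{UnT} is harmless but also unnecessary once one sees the deterministic inclusion.
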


\begin{proof}
We claim that every $v\in U_{n,T}$ is also in $V_{n,T}$. By Lemma \ref{UnT} this is sufficient to complete the proof.

Take $v\in U_{n,T}$. In particular, for each $k\le K(n,T)$, we have $\|Z_v(k)-(k/2,k/2)\|\le \sfrac{T}{32n^4}$. This ensures that $v$ satisfies the first condition required to be in $V_{n,T}$, but it also implies that
\[R(X_v(k),Y_v(k)) \le \frac{\frac{k}{2} + \frac{T}{32 n^4} + 1}{\frac{k}{2} - \frac{T}{32 n^4} + 1} = \frac{1+\frac{T}{32n^4(k/2+1)}}{1-\frac{T}{32n^4(k/2+1)}},\]
and so for $k\ge \frac{T}{4n^2}-2$,
\begin{equation}\label{VnTRbd}
R(X_v(k),Y_v(k)) \le \frac{1+\frac{1}{4n^2}}{1-\frac{1}{4n^2}} \le \frac{1}{(1-\frac{1}{4n^2})^2} \le \frac{1}{1-\frac{1}{2n^2}},
\end{equation}
where we used the fact that $1+x\le 1/(1-x)$ for $x\in[0,1)$.

Now, $\tilde T_v(k)$ consists of a sum of $k$ independent exponential random variables of parameter $1$, which we call $\mathbbm{e}_v(0),\ldots,\mathbbm{e}_v(k-1)$. For $k\ge \lfloor T/4n^2\rfloor$ we then have, by definition,
\[T_v(k) = \sum_{i=0}^{k-1} \frac{\mathbbm{e}_v(i)}{R(X_v(i),Y_v(i))} \ge \sum_{i=\lfloor T/4n^2\rfloor}^{k-1} \frac{\mathbbm{e}_v(i)}{R(X_v(i),Y_v(i))}.\]
Applying \eqref{VnTRbd}, this is at least
\[\Big(1-\frac{1}{2n^2}\Big) \sum_{i=\lfloor T/4n^2\rfloor}^{k-1} \mathbbm{e}_v(i) = \Big(1-\frac{1}{2n^2}\Big)\big(\tilde T_v(k) - \tilde T_v(\lfloor T/4n^2\rfloor)\big).\]
Since $v\in U_{n,T}$, whenever $k\le K(n,T)$ we have $|\tilde T_v(k) - k|\le T/4n^2$, and we obtain
\[T_v(k) \ge \Big(1-\frac{1}{2n^2}\Big)\big(k-T/4n^2 - (\lfloor T/4n^2\rfloor + T/4n^2)\big) \ge \Big(1-\frac{1}{2n^2}\Big)\Big(k-\frac{3T}{4n^2}\Big) \ge k-\frac{7T}{8n^2}.\]
We also obviously have $T_v(k) \ge 0 \ge k-\frac{7T}{8n^2}$ when $k<\lfloor T/4n^2\rfloor$; and since $R(x,y)\ge 1$ for all $x$ and $y$, we have $T_v(k)\le \tilde T_v(k)$ for all $k$. Thus we have shown that if $v\in U_{n,T}$ then $|T_v(k)-k|\le 7T/8n^2$ for all $k\le K(n,T)$, and we deduce that also $v\in V_{n,T}$, as required.
\end{proof}

We now want to move from discrete to continuous time. We need some more notation. For $v\in\mathcal N_t$ and $s\le t$, let $v(s)$ be the unique ancestor of $v$ that is in $\mathcal N_s$. Also let $\gen(v)$ be the unique integer such that $v\in\mathbb{T}_{\gen(v)}$.

\begin{lem}\label{genlem}
Recall the definition of $V_{n,T}$ from Lemma \ref{VnT}. If $v\in V_{n,T}$ then $v\in\mathcal N_t$ for some $t\ge \lceil n^{7/8}\rceil T/n + \lceil 2T/n^2\rceil - T/n^2$, and
\[\big|\gen(v(s)) - s\big| \le \frac{7T}{8n^2} + 1\]
for all $s\le \lceil n^{7/8}\rceil T/n + \lceil 2T/n^2\rceil - T/n^2 - 1$.
\end{lem}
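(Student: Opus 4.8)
The plan is to prove both assertions directly from the defining inequalities of $V_{n,T}$, after unwinding the definitions of $\mathcal N_t$, of the ancestor $v(s)$, and of $\gen$. Write $K = K(n,T) = \lceil n^{7/8}\rceil T/n + \lceil 2T/n^2\rceil$ and let $\rho = w_0, w_1, \dots, w_K = v$ be the ancestral line of $v$, so $w_k$ is the unique ancestor of $v$ in $\mathbb T_k$ and $T_v(k) = T_{w_k}$. From the construction in Section \ref{GMTsec}, $T_{w_{k+1}} = T_{w_k} + \mathbbm{e}_{w_k}/R(X_{w_k},Y_{w_k})$, so the ancestral birth times $0 = T_v(0)\le T_v(1)\le\cdots\le T_v(K) = T_v$ are non-decreasing (strictly increasing almost surely, since each $\mathbbm{e}_{w_k}>0$ and $R\ge 1$), and for $k<K$ the particle $w_k$ is alive exactly on $[T_v(k),T_v(k+1))$, while $v=w_K$ is alive exactly on $[T_v,T_v+\mathbbm{e}_v/R(X_v,Y_v))$.

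For the first assertion, apply the defining condition of $V_{n,T}$ with $k=K$: since $v\in\mathbb T_K$, this gives $T_v = T_v(K)\ge K-\frac{7T}{8n^2}\ge K-\frac{T}{n^2}$, and $v\in\mathcal N_{T_v}$ because $\mathbbm{e}_v/R(X_v,Y_v)>0$ almost surely; thus $v\in\mathcal N_t$ for $t=T_v\ge \lceil n^{7/8}\rceil T/n + \lceil 2T/n^2\rceil - T/n^2$. For the second assertion, fix $s$ with $0\le s\le K-\frac{T}{n^2}-1$. Since $\frac{7T}{8n^2}<\frac{T}{n^2}+1$ we have $s< K-\frac{7T}{8n^2}\le T_v$, so $v(s)$ is a \emph{strict} ancestor of $v$; hence $k:=\gen(v(s))<K$, so in particular $k+1\le K$ and both the generation-$k$ and generation-$(k+1)$ inequalities of $V_{n,T}$ are available. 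Since $v(s)=w_k\in\mathcal N_s$ we have $T_v(k)\le s< T_v(k+1)$. Then $k-\frac{7T}{8n^2}\le T_v(k)\le s$ gives $k\le s+\frac{7T}{8n^2}$, and $s< T_v(k+1)\le (k+1)+\frac{7T}{8n^2}$ gives $k> s-1-\frac{7T}{8n^2}$; combining, $|\gen(v(s))-s|\le \frac{7T}{8n^2}+1$.

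There is no substantial obstacle here; the argument is a matter of carefully unwinding definitions. The two points that need a little attention are verifying that the restriction $s\le K-\frac{T}{n^2}-1$ really forces $v(s)$ to lie strictly above the root and strictly below $v$ (so that the generation-$(k+1)$ bound from $V_{n,T}$ may legitimately be invoked), and tracking the slack between $\frac{7T}{8n^2}$ and $\frac{T}{n^2}$ that makes both the first assertion and the strict-ancestor claim go through.
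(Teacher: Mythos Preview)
Your proof is correct and follows essentially the same approach as the paper's: both use the defining inequality $|T_v(k)-k|\le 7T/(8n^2)$ at $k=K$ for the first assertion, and at $k=\gen(v(s))$ and $k+1=\gen(v(s))+1$ for the second. The only cosmetic difference is that you verify $\gen(v(s))+1\le K$ by first observing $s<T_v$ forces $v(s)$ to be a strict ancestor, whereas the paper first derives the upper bound $\gen(v(s))\le s+7T/(8n^2)$ and uses that together with the hypothesis on $s$ to reach the same conclusion.
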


\begin{proof}
If $v\in V_{n,T}$ then $|T_v(k)-k|\le 7T/8n^2$ for all $k\le K(n,T)$. In particular
\[T_v\ge K(n,T)-T/n^2 = \lceil n^{7/8}\rceil T/n + \lceil 2T/n^2\rceil - T/n^2,\]
and therefore $v\in\mathcal N_t$ for some $t\ge \lceil n^{7/8}\rceil T/n + \lceil 2T/n^2\rceil - T/n^2$.

Now, for any $s\le \lceil n^{7/8}\rceil T/n + \lceil 2T/n^2\rceil - T/n^2 - 1$, since $v\in V_{n,T}$,
\[T_{v(s)} = T_v(\gen(v(s))) \ge \gen(v(s)) - 7T/8n^2,\]
so since $T_{v(s)}\le s$ (because $v(s)\in\mathcal N_s$) we have
\begin{equation}\label{genvs}
\gen(v(s))\le s+7T/8n^2.
\end{equation}
Since $s\le \lceil n^{7/8}\rceil T/n + \lceil 2T/n^2\rceil - T/n^2 - 1$, the above implies in particular that $\gen(v(s))+1\le K(n,T) = \gen(v)$ and therefore we also have (again since $v\in V_{n,T}$)
\[T_{v(s)}+\mathbbm{e}_{v(s)} = T_v(\gen(v(s))+1) \le \gen(v(s))+1+7T/8n^2.\]
Combining this with the fact that $T_{v(s)}+\mathbbm{e}_{v(s)} > s$ (because $v(s)\in\mathcal N_s$), we obtain
\[s<\gen(v(s))+1+7T/8n^2,\]
and combining this with \eqref{genvs} gives the result.
\end{proof}

We can now prove the main result of this section.

\begin{proof}[Proof of Proposition \ref{ctssquareprop}]
By Lemma \ref{VnT}, with probability at least $1-1/T^{3/2}$, we have $|V_{n,T}|\ge \sfrac{1}{2T^2} 2^{K(n,T)}$. Suppose that $v\in V_{n,T}$ and let $t=\lceil n^{7/8}\rceil T/n$. Then by Lemma \ref{genlem}, $\gen(v(t)) \ge t-7T/8n^2-1$, and of course $\gen(v) = K(n,T) = \lceil n^{7/8}\rceil T/n + \lceil 2T/n^2\rceil$. Thus the number of descendants that $v(t)$ has in $V_{n,T}$ is at most
\[2^{\lceil n^{7/8}\rceil T/n + \lceil 2T/n^2\rceil - (\lceil n^{7/8}\rceil T/n - 7T/8n^2 - 1)} \le 2^{3T/n^2}.\]
We deduce that if $|V_{n,T}|\ge \frac{1}{2T^2} 2^{K(n,T)}$ then the number of \emph{distinct} ancestors of particles in $V_{n,T}$ that are in $\mathcal N_t$ must be at least
\[\frac{2^{K(n,T)}}{2T^2 \cdot 2^{3T/n^2}} \ge 2^{T/n^{1/8} - T/n^2 - 2\log_2 T - 1}.\]
For $T\ge Cn^{48}$ and $C$ large the right-hand side is certainly larger than $2^{T/n^{1/8} - 2T/n^2}$.

Now, if $u\in\mathcal N_t$ is an ancestor of a particle $v\in V_{n,T}$, and $s\le t$, then
\begin{align*}
\|Z_u(s)-(s/2,s/2)\| &= \|Z_v(s)-(s/2,s/2)\|\\
&\le \big\|Z_v(s) - \big(\sfrac{\gen(v(s))}{2},\sfrac{\gen(v(s))}{2}\big)\big\| + \|\big(\sfrac{\gen(v(s))}{2},\sfrac{\gen(v(s))}{2}\big) - (s/2,s/2)\big\|\\
&\le \frac{T}{32n^4} + \frac{1}{2}|\gen(v(s))-s|\\
&\le \frac{T}{32n^4} + \frac{1}{2}\Big(\frac{7T}{8n^2} + 1\Big)
\end{align*}
where for the first inequality we used the triangle inequality, for the second we used that $v\in V_{n,T}$, and for the third we again used that $v\in V_{n,T}$ together with Lemma \ref{genlem}. For $T\ge Cn^{48}$ and $C$ large this is smaller than $T/2n^2$, which completes the proof.
\end{proof}

\subsection{From the lead diagonal to other gradients: proof of Proposition \ref{hfmnprop}}\label{hfmnsec}

We will build up to the proof of Proposition \ref{hfmnprop} gradually, first constructing a suitable candidate function $h_{f,n}$, and then proving several lemmas that establish the required properties of $h_{f,n}$.

For $\mu\ge \lambda>0$ let
\[\kappa(\lambda,\mu) = \frac{\mu}{\lambda} - \Big(\sqrt2\Big(\frac{\mu}{\lambda}-\frac{1}{2}\Big)^{1/2} - \lambda^{1/2}\Big)^2 - (1-\mu^{1/2})^2.\]
We have defined $\kappa$ in such a way that, for $\mu\ge\lambda>0$, if $g(s)=(\lambda s,\mu s)$ for $s\in[0,1]$ then
\[\tilde K(g,0,t) = \kappa(\lambda,\mu)t.\]

We would like our function $h_{f,n}$ to begin with gradient $(1/2,1/2)$, but then to transition in small steps to having gradient $(f'_X(0),f'_Y(0))$. In order to ensure that $\tilde K(h_{f,n},0,t)$ remains positive for all small $t$, we need to check that $\kappa(\lambda,\mu)$ is strictly positive for all the gradients $(\lambda,\mu)$ that $h_{f,n}$ passes through at small times. If $\kappa$ was concave (or even concave on the region where it is positive) then this would be trivial since we could ask $h_{f,n}$ to transition linearly. Unfortunately there is a small region on which $\kappa$ is positive and not concave, so we have to use a more complicated argument. This is done in the following lemma.

\begin{lem}\label{kappaconcave}
For every $0 < \lambda \le \mu$ such that $\kappa(\lambda, \mu)>0$, there exists a path $\gamma(t)=(\gamma_X(t), \gamma_Y(t))$, $t \in [0,1]$ and $\kappa_0>0$ such that 
\begin{enumerate}[(i)]
\item \label{concave1} $(\gamma_X(0), \gamma_Y(0))=(1/2, 1/2)$ and $(\gamma_X(1), \gamma_Y(1))=(\lambda, \mu)$;
\item \label{concave2} $\kappa(\gamma(t)) \ge \kappa_0>0$ for all $t \in [0,1]$; 
\item \label{concave3} $\gamma$ is piecewise linear and $|\gamma_X'(t)|\le 20$ and $|\gamma_Y'(t)|\le 20$ for all $t \in [0,1]$ such that $\gamma$ is differentiable at $t$;
\item \label{concave4} $\gamma_X(t) \in [ 3/2 - \sqrt2, 10]$ and $\gamma_Y(t) \in [ 3/2 - \sqrt2, 10]$ for all $t\in[0,1]$.
\end{enumerate}
\end{lem}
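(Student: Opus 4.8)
The plan is to analyse the planar region $S := \{(\lambda,\mu) : 0 < \lambda \le \mu,\ \kappa(\lambda,\mu) > 0\}$, which contains $(1/2,1/2)$ since $\kappa(1/2,1/2) = 2\sqrt2 - 2 > 0$, and to route a polygonal path through it. Using the alternative representation \eqref{Kalt} of $\tilde K$ one gets the explicit form $\kappa(\lambda,\mu) = -\mu/\lambda + 2\sqrt2\,\sqrt{\mu - \lambda/2} + 2\sqrt\mu - \lambda - \mu$ for $0 < \lambda \le \mu$; in particular $\kappa(\lambda,\lambda) = 4\sqrt\lambda - 2\lambda - 1$, which is positive precisely for $\lambda \in (3/2 - \sqrt2,\ 3/2 + \sqrt2)$. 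A short estimate localises $S$: dropping $\sqrt{\mu-\lambda/2} \le \sqrt\mu$ and using $\mu/\lambda + \lambda \ge 2\sqrt\mu$ gives $\kappa(\lambda,\mu) \le 2\sqrt2\sqrt\mu - \mu$, so $\mu < 8$ on $S$; and for $\lambda \le 3/2-\sqrt2$ the map $\mu \mapsto \kappa(\lambda,\mu)$ is concave with non-positive derivative at $\mu = \lambda$ while $\kappa(\lambda,\lambda) \le 0$, so also $\lambda > 3/2 - \sqrt2$ on $S$. Hence $S \subset (3/2-\sqrt2,\ 8)^2$, which will give conclusion (iv).

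Next I would parametrise $S$ by the ratio $c = \mu/\lambda \ge 1$. Along the ray $\{(\lambda, c\lambda) : \lambda > 0\}$ one computes $\kappa(\lambda, c\lambda) = -(1+c)\lambda + \beta(c)\sqrt\lambda - c$ with $\beta(c) = 2\sqrt2\,\sqrt{c-1/2} + 2\sqrt c$, a strictly concave function of $\lambda$; so $\{\lambda > 0 : \kappa(\lambda, c\lambda) > 0\}$ is an open interval $(\lambda_-(c),\lambda_+(c))$, non-empty exactly when the discriminant $\beta(c)^2 - 4c(1+c)$ is positive, which holds for $c \in [1, c^*)$ with $c^* \in (4,5)$. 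The functions $\lambda_\pm$ are continuous on $[1,c^*)$, and the ``vertex curve'' $c \mapsto V(c) := (u_m(c)^2,\ c\,u_m(c)^2)$, where $u_m(c) = \beta(c)/(2(1+c))$, is smooth on $[1,c^*]$, lies in $S$ (as $u_m(c)^2 \in (\lambda_-(c),\lambda_+(c))$), and satisfies $V(1) = (1,1)$. Thus $S = \{(\lambda, c\lambda) : 1 \le c < c^*,\ \lambda_-(c) < \lambda < \lambda_+(c)\}$ is open, bounded and connected in the half-plane $\{\lambda \le \mu\}$.

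The path for a target $(\lambda_0,\mu_0) \in S$ with $c_0 = \mu_0/\lambda_0$ is then the concatenation of: (a) the segment from $(1/2,1/2)$ to $V(1) = (1,1)$, which stays in $S$ because $\kappa(\lambda,\lambda) > 0$ for $\lambda \in [1/2,1]$; (b) the vertex curve $V(c)$ for $c$ from $1$ to $c_0$, which stays in $S$; and (c) the segment from $V(c_0)$ to $(\lambda_0,\mu_0)$, which stays in $S$ since both endpoints lie on the ray of ratio $c_0$, along which $\{\kappa > 0\}$ is an interval. Replacing (b) by a fine enough inscribed polygon keeps it inside the open set $S$ and makes $\gamma$ piecewise linear, and I would reparametrise $\gamma$ to constant speed on $[0,1]$. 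Conclusion (i) then holds by construction; (iv) follows from $\gamma([0,1]) \subset S \subset (3/2-\sqrt2,8)^2$; (ii) holds because $\gamma([0,1])$ is a compact subset of the open set $S$, so that $\kappa_0 := \min_{t\in[0,1]}\kappa(\gamma(t)) > 0$; and for (iii), the constant-speed parametrisation has $|\gamma'|$ equal to the total polygonal length, which is bounded by a universal constant strictly below $20$ (the length of (a) is $1/\sqrt2$; of (b) at most $\sup_{[1,c^*]}|V'|\cdot(c^*-1)$, which is small since $u_m \approx 1$ and $|u_m'|$ is small on $[1,c^*]$; and of (c) at most $(\lambda_+(c_0)-\lambda_-(c_0))\sqrt{1+c_0^2}$, controlled using that $\lambda_+ - \lambda_-$ decreases in $c$).

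The main obstacle is the behaviour near the extreme ratio $c^*$, where the slice $(\lambda_-(c),\lambda_+(c))$ pinches to a point and $S$ becomes thin: one must check that $V$, its polygonal approximation, and the final segment (c) still have controlled slope there, which uses the smoothness of $\beta$ on $\{c > 1/2\}$ and the first-order vanishing of $\beta(c)^2 - 4c(1+c)$ at $c^*$. Pinning down the explicit constant $20$ in (iii) requires the length estimates to be kept slightly sharper than the crude bound by the diameter of the box, but this is routine once the description of $S$ above is in hand.
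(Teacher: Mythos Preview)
Your approach is correct but follows a genuinely different route from the paper's. The paper exploits \emph{axis-parallel} concavity: it checks that $\partial^2\kappa/\partial\lambda^2<0$ and $\partial^2\kappa/\partial\mu^2<0$, so along any horizontal or vertical line the set $\{\kappa>0\}$ is an interval. It then splits the positivity region into two explicit sub-regions $\Upsilon_1,\Upsilon_2$ and, for any target $(\lambda,\mu)$, writes down an explicit polygon with at most three segments (a diagonal piece to a suitable point $(\lambda',\lambda')$, then one or two axis-parallel pieces), verifying positivity on each segment by the diagonal formula $\kappa(\lambda,\lambda)=4\sqrt\lambda-2\lambda-1$, the axis-parallel concavity, and a monotonicity check of $\partial\kappa/\partial\lambda$ on $\Upsilon_2$. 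By contrast, you exploit \emph{radial} concavity: along each ray $\mu=c\lambda$ the function $\lambda\mapsto\kappa(\lambda,c\lambda)$ is a downward quadratic in $\sqrt\lambda$, so the slice is an interval $(\lambda_-(c),\lambda_+(c))$; you then route the path via the vertex curve $V(c)$ (polygonally approximated) and a final radial segment.

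Both approaches work. The paper's is more elementary and explicit—three fixed segments, no approximation step—whereas yours is more geometric and yields the slightly sharper localisation $S\subset(3/2-\sqrt2,8)^2$. Your approach does require you to argue that the inscribed polygon in $V$ stays in $S$ (compactness of $V([1,c_0])$ in the open set $S$) and that the total arc length is below $20$; your sketch of these points is essentially right, though the claim that ``$\lambda_+-\lambda_-$ decreases in $c$'' controls segment (c) is a little loose—what matters is the product $(\lambda_+-\lambda_-)\sqrt{1+c^2}$, which is not monotone but is uniformly bounded (in fact by about $7$), so the total length stays comfortably under $20$. Your worry about the pinch near $c^*$ is not a real obstacle: the vertex curve is smooth up to $c^*$ with bounded arc length, inscribed polygons are no longer than the curve, and segment (c) has length tending to $0$ as $c_0\to c^*$.
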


\hspace{0.02 cm}
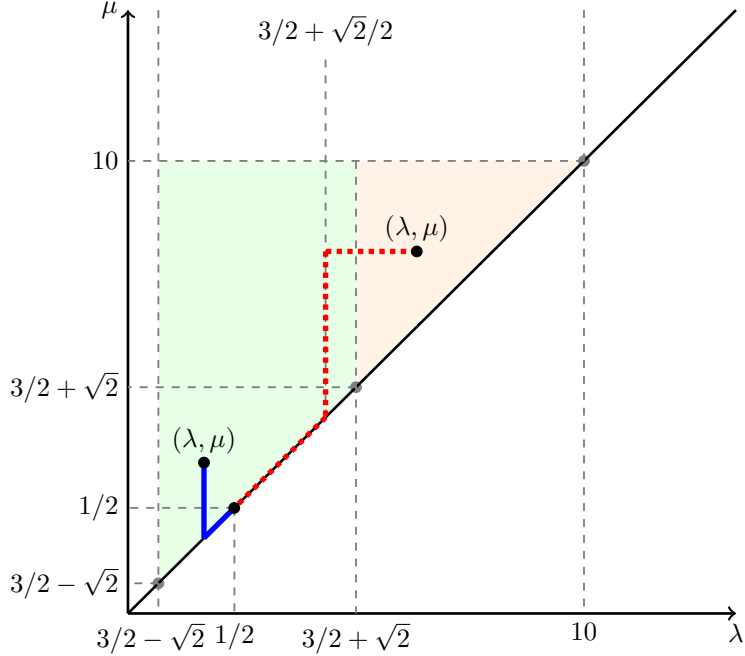
\begin{figure}[htp!]
\centering
\begin{tikzpicture}[ scale=2]
\draw (0, 0) coordinate(O)node[below left]{};

\draw (-3.8, -4) coordinate(A)node[below]{\hspace{-2mm}$3/2-\sqrt{2}$};
\draw (-3.3, -4) coordinate(B)node[below]{$1/2$};
\draw (-2.5, -4) coordinate(C)node[below]{$3/2+\sqrt{2}$};
\draw (-2.7, -0.3) coordinate(C')node[above]{$3/2+\sqrt{2}/2$};
\draw (-1, -4) coordinate(D)node[below]{$10$};

\draw (-4, -3.8) coordinate(E)node[left]{$3/2-\sqrt{2}$};
\draw (-4, -3.3) coordinate(F)node[left]{$1/2$};
\draw (-4, -2.5) coordinate(G)node[left]{$3/2+\sqrt{2}$};
\draw (-4, -1) coordinate(H)node[left]{$10$};

\coordinate (I) at (-3.8,-3.8);
\coordinate (II) at (intersection of -1,-9--D and 0,-1--H);
\coordinate (III) at (intersection of 0,-1--H and -3.8,0--A);
\coordinate (IV) at (intersection of 0,-2.5--G and -3.8,0--A);
\coordinate (V) at (-2.5,-2.5);
\coordinate (VI) at (-2.5,-1);
\fill[green!10](I)--(V)--(VI)--(III)--(IV)--cycle;
\fill[orange!10](V)--(VI)--(II)--cycle;
\filldraw [gray] (I) circle (1pt);
\filldraw [gray] (II) circle (1pt);
\filldraw [gray] (-2.5,-2.5) circle (1pt);

\draw[<-, line width=1pt] (-4,0) node[left]{$\mu$}--(-4,-4);
\draw[<-, line width=1pt] (0,-4) node[below]{$\lambda$} --(-4,-4);
\path[draw,line width=1pt] (0,0) -- (-4,-4);

\path[draw,line width=0.7pt, dashed, gray] (-3.8,0) -- (A);
\path[draw,line width=0.7pt, dashed, gray] (-3.3,-3.3) -- (B);
\path[draw,line width=0.7pt, dashed, gray] (-2.5,-1) -- (C);
\path[draw,line width=0.7pt, dashed, gray] (-2.7,-2.7) -- (C');
\path[draw,line width=0.7pt, dashed, gray] (-1,0) -- (D);

\path[draw,line width=0.7pt, dashed, gray] (-3.8,-3.8) -- (E);
\path[draw,line width=0.7pt, dashed, gray] (-3.3,-3.3) -- (F);
\path[draw,line width=0.7pt, dashed, gray] (-2.5,-2.5) -- (G);
\path[draw,line width=0.7pt, dashed, gray] (-1,-1) -- (H);

\coordinate (VI) at (intersection of -3.3,-3.3--B and -3.3,-3.3--F);
\draw (-2.1,-1.6) coordinate(L)node[above]{$(\lambda, \mu)$};
\path[draw,line width=2pt, dotted, red] (L) -- (-2.7,-1.6);
\path[draw,line width=2pt, dotted, red] (-2.7,-1.6)--(-2.7,-2.7);
\path[draw,line width=2pt, dotted, red] (-2.7,-2.7)--(VI);

\draw (-3.5,-3) coordinate(LL)node[above]{$(\lambda, \mu)$};
\filldraw [cyan] (-3.5,-3) circle (1pt);
\path[draw,line width=2pt, blue] (LL) -- (-3.5,-3.5);
\path[draw,line width=2pt, blue] (-3.5,-3.5)--(VI);

\filldraw [black] (L) circle (1pt);
\filldraw [black] (LL) circle (1pt);
\filldraw [black] (VI) circle (1pt);

\end{tikzpicture}
\caption{The pale green region is $\Upsilon_1$ and the pale orange region is $\Upsilon_2$. The thick blue (solid) and red (dotted) paths show our definition of $\gamma$ when $(\lambda, \mu)$ is in $\Upsilon_1$ and $\Upsilon_2$ respectively.} 
\label{fig:regionplot}
\end{figure}
\vspace{-0.2 cm}

\begin{proof}
We define $\Upsilon = \Upsilon_1\cup \Upsilon_2$ where
\[ \Upsilon_1 = \{(\lambda,\mu) : \lambda\in(3/2-\sqrt{2}, 3/2+\sqrt{2}),\, \mu\in[\lambda, 10)\} \]
and
\[ \Upsilon_2 = \{(\lambda,\mu) : \mu\in(3/2+\sqrt{2}, 10),\, \lambda\in[3/2+\sqrt{2},\mu]\}. \]
Figure \ref{fig:regionplot} shows $\Upsilon_1$ and $\Upsilon_2$ in pale green and pale orange respectively.
We show that the statement of the Lemma holds for all the points ($\lambda, \mu) \in \Upsilon$ and that $\kappa(\lambda, \mu)<0$ if $(\lambda, \mu) \notin \Upsilon$.
It is easy to see that
\begin{equation}\label{posdiag}
\kappa(\lambda,\lambda )= -2\lambda+4 \sqrt{\lambda}-1>0 \,\,\,\,\text{ for }\,\,\,\, \lambda \in (3/2-\sqrt{2}, 3/2+\sqrt{2}).
\end{equation}
and this is concave as a function of $\lambda$. Since for $0<\lambda\le\mu$ we have
\[\frac{\partial^2 \kappa(\lambda, \mu)}{\partial \lambda^2} = -(1/2)(2\mu-\lambda)^{-3/2} - (2 \mu)\lambda^{-3/2} <0\]
and
\[\frac{\partial^2 \kappa(\lambda, \mu)}{\partial \mu^2} = -2(2\mu-\lambda)^{-3/2} - (1/2)\mu^{-3/2} <0,\]
the functions $\kappa(\cdot,\mu)$ on $(0,\mu]$ and $\kappa(\lambda,\cdot)$ on $[\lambda,\infty)$ are concave for each fixed $\lambda$ and $\mu$ respectively. This means that if we move parallel to either axis, we have the concave property; and so if for example $\kappa(\lambda_1,\mu)>0$ and $\kappa(\lambda_2,\mu)>0$ with $\lambda_1,\lambda_2\le \mu$, then $\kappa(\lambda,\mu)>0$ for all $\lambda\in[\lambda_1,\lambda_2]$.

We now take advantage of this concavity parallel to the axes. For every $(\lambda, \mu) \in \Upsilon_1$ such that $\kappa(\lambda, \mu)>0$ we choose $\gamma$ to be the union of the linear paths connecting $(1/2,1/2)$ to $(\lambda, \lambda)$ and then to $(\lambda, \mu)$.
Then clearly $\gamma$ satisfies \eqref{concave1} and \eqref{concave4}. Since $0<\lambda\le\mu\le 10$ throughout $\Upsilon$, the total length of the linear paths described is at most $20$, and therefore we may choose a time parameterization of $\gamma$ such that $|\gamma_X'(t)|\le 20$ and $|\gamma_Y'(t)|\le 20$, so that $\gamma$ satisfies \eqref{concave3}. We claim that $\gamma$ also satisfies \eqref{concave2}. Indeed, by \eqref{posdiag} $\kappa(\gamma(t))$ is positive on the first linear segment; in particular $\kappa(\lambda,\lambda)>0$, and since by assumption $\kappa(\lambda,\mu)>0$, by concavity parallel to the axes $\kappa(\gamma(t))$ is positive on the second linear segment too and $\kappa (\lambda,\mu) \ge \kappa_0$ where $\kappa_0 :=\min\{ \kappa(1/2,1/2), \kappa(\lambda, \lambda), \kappa(\lambda, \mu)\} >0$.

Now consider $(\lambda, \mu) \in \Upsilon_2$ such that $\kappa(\lambda, \mu)>0$. Since $\lambda \ge 3/2+\sqrt{2}/2$ and $\mu<10$, we have
\begin{equation}\label{derivpos}
 \frac{\partial \kappa(\lambda, \mu)}{\partial \lambda} = \frac{\mu}{\lambda^2} - \frac{1}{\sqrt{2\mu-\lambda}}-1 \le \frac{4 \mu}{(3+\sqrt{2})^2} - \frac{1}{\sqrt{2\mu-\lambda}}-1 <0,
\end{equation}
so $\kappa(\lambda', \mu)> \kappa(\lambda, \mu)>0$ for every $\lambda' \in [3/2+\sqrt{2}/2, \lambda]$. In particular, $\kappa (3/2+\sqrt{2}/2, \mu)>0$ and $(3/2+\sqrt{2}/2, \mu) \in \Upsilon_1$, so we can define $\gamma$ as the union of the linear paths connecting $(1/2,1/2)$ to $(3/2+\sqrt{2}/2, 3/2+\sqrt{2}/2)$, then to $(3/2+\sqrt{2}/2, \mu)$, and then to $(\lambda, \mu)$. Then as above, $\gamma$ clearly satisfies \eqref{concave1} and \eqref{concave4} and can be parameterized such that it satisfies \eqref{concave3}. Also $\kappa(\gamma(t))$ is positive on the first and second linear segments by the analysis of the $\lambda \in (3/2-\sqrt{2}, 3/2+\sqrt{2})$ case above, it is positive on the third linear segment by \eqref{derivpos} and $\kappa(\lambda, \mu) \ge \kappa_0$. Thus $\gamma$ satisfies \eqref{concave2} too.

To complete our proof, it remains to show that $\kappa(\lambda, \mu)<0$ for $(\lambda,\mu) \notin \Upsilon$. If $0< \lambda \le \mu \le 3/2-\sqrt{2}$, this follows from the fact that for every $\lambda \le 3/2-\sqrt{2}$,
\begin{equation}\label{kapneg1}
\kappa(\lambda, 3/2-\sqrt{2}) = -\frac{3/2-\sqrt{2}}{\lambda}-\lambda+2 \sqrt{3-2\sqrt{2}-\lambda}-\frac{3}{2} +\sqrt{2}+2 \sqrt{3/2-\sqrt{2}}<0,
\end{equation}
and for every $t \in [0,3/2-\sqrt{2}-\mu]$,
\begin{align*} 
\frac{d}{dt} \kappa(\lambda+t,\mu+t) &= \left( \frac{\partial \kappa(\lambda, \mu)}{\partial \lambda} + \frac{\partial \kappa(\lambda, \mu)}{\partial \mu} \right) \bigg\lvert_{(\lambda, \mu) =(\lambda+t, \mu+t)} \\
&=\frac{\mu-\lambda}{(\lambda+t)^2} + \frac{1}{\sqrt{2\mu+t-\lambda}} +\frac{1}{\sqrt{\mu+t}} -2 \\
&\ge \frac{1}{\sqrt{3/2-\sqrt{2}}} -2 >0.
\end{align*}
Secondly, if $0<\lambda\le 3/2-\sqrt 2 < \mu$, then we use \eqref{kapneg1} plus the fact that
\begin{equation*} 
\frac{\partial \kappa(\lambda, \mu)}{\partial \mu} = -\frac{1}{\lambda} + \frac{2}{\sqrt{2 \mu-\lambda}} + \frac{1}{\sqrt{\mu}} -1 \le -\frac{1}{3/2-\sqrt 2} + \frac{3}{\sqrt{3/2-\sqrt 2}}-1 < 0.
\end{equation*}
Finally, when $0< \lambda \le \mu$ and $\mu \ge 10$, the key fact is to observe that for every $\mu \ge 10$
\begin{equation}\label{eqn:mu_derivative} 
\frac{\partial \kappa(\lambda, \mu)}{\partial \mu} = -\frac{1}{\lambda} + \frac{2}{\sqrt{2 \mu-\lambda}} + \frac{1}{\sqrt{\mu}} -1 \le -\frac{1}{\mu} + \frac{3}{\sqrt{\mu}}-1 < 0.
\end{equation}
Since $\kappa(\lambda, 10)<0$ for $\lambda \le 10$, \eqref{eqn:mu_derivative} gives that $\kappa(\lambda, \mu)<0$ for any $\mu \ge 10$ and $\lambda \le 10$; and since $\kappa(\lambda,\lambda)<0$ for $\lambda\ge 10$, \eqref{eqn:mu_derivative} gives that $\kappa(\lambda,\mu)<0$ whenever $\mu\ge 10$ and $\lambda\ge 10$. 
This completes the proof.
\end{proof}

Take $f\in G_M^2$ such that $\frac{d}{dt} \tilde K(f,0,t)|_{t=0} > 0$ and $\tilde K(f,0,t)>0$ for all $t\in(0,1]$. Also fix $n\in\N$ and $m\in\N$ such that $n\ge m$. We now construct a function $h=h_{f,n,m}$ which depends on $n$ and $m$; we will later show that for $m$ sufficiently large (and $n$ even larger) the resulting function satisfies the properties of Proposition \ref{hfmnprop}.

Let $\tau = m^m \lceil n^{7/8}\rceil/n$. We will eventually choose $n$ much larger than $m$, so that $\tau$ is small. Also let $\lambda = f'_X(0)$ and $\mu = f'_Y(0)$. Take $\gamma$ as in Lemma \ref{kappaconcave} and for $j\in\{0,1,\ldots,m\}$ define
\begin{equation}\label{lambdajmuj}
\lambda_j = \lambda_j^{(m)} = \gamma_X(j/m), \hspace{8mm} \mu_j = \mu_j^{(m)} = \gamma_Y(j/m)\hspace{4mm}\text{ and }\hspace{4mm}\tau_j = \tau m^{j-m}.
\end{equation}

Begin by defining $h(s) = (s/2,s/2) = (\lambda_0 s,\mu_0 s)$ for $s\le \tau_0$. Then recursively, for each $j=1,\ldots,m$, suppose that $h(s)$ is defined for $s\le \tau_{j-1}$ and set
\[h(s) = h(\tau_{j-1}) + \big(\lambda_j(s-\tau_{j-1}),\,\mu_j(s-\tau_{j-1})\big) \,\,\,\, \text{ for } s\in(\tau_{j-1},\tau_j].\]
Also define
\[h(s) = h(\tau) + \big(f(2\tau)-h(\tau))\Big(\frac{s-\tau}{\tau}\Big) \,\,\,\,\text{ for } s\in(\tau,2\tau].\]
Finally, for each $j\in\{2\tau n,2\tau n+1,\ldots,n\}$ let $h(j/n)=f(j/n)$ and interpolate linearly between these values.

Note that, since $\tilde K$ has only downward jumps and $\tilde K(f,0,t)>0$ for all $t\in(0,1]$, we have $\inf_{s\in[\nu,1]} \tilde K(f,0,s) > 0$ for every $\nu>0$. Thus we may choose $\nu = \nu_{f,m}\in(0,1]$ such that
\begin{enumerate}[(a)]
\item $\|f(s)-(\lambda s,\mu s)\|\le s/m$ for all $s\le \nu$, \label{fnearlinearitem}
\item $\tilde K(f,s,t)>0$ for all $s\le\nu$ and $t\ge s$, and \label{Kpositem}
\item $\tilde K(f,\nu,1) \ge \tilde K(f,0,1)-1/m$. \label{Kapprox1item}
\end{enumerate}

\begin{lem}\label{hnearlinearlem}
Suppose that $\mu\ge\lambda>0$ and $\kappa(\lambda,\mu)>0$. For any $m\ge 2$, $j\in\{1,\ldots,m\}$ and any $s\in[\tau_{j-1},\tau_j]$,
\[\|h_{f,n,m}(s)-(\lambda_j s,\mu_j s)\| \le 40\tau_{j-1}/m.\]
Moreover, if $2\tau\le\nu$, then for any $s\in[\tau,\nu]$,
\[\|h_{f,n,m}(s)-(\lambda s,\mu s)\| \le 40 s/m.\]
\end{lem}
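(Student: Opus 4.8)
The plan is to exploit that $h := h_{f,n,m}$ is, by construction, piecewise linear with slopes that change by a controlled amount at the breakpoints $\tau_j$, and to track how the deviation from the reference line accumulates segment by segment. Throughout I write $\|(x,y)\|=|x|\vee|y|$.

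For the first assertion I would fix $j\in\{1,\dots,m\}$ and note that on $(\tau_{j-1},\tau_j]$ the slope of $h$ is the constant $(\lambda_j,\mu_j)$, so for $s$ in this segment
\[ h(s)-(\lambda_j s,\mu_j s) = h(\tau_{j-1})-(\lambda_j\tau_{j-1},\mu_j\tau_{j-1}), \]
which does not depend on $s$. Writing $e_j := h(\tau_j)-(\lambda_j\tau_j,\mu_j\tau_j)$ and using $h(\tau_{j-1}) = (\lambda_{j-1}\tau_{j-1},\mu_{j-1}\tau_{j-1})+e_{j-1}$ (with $e_0=0$) this yields the recursion $\|e_j\| \le \big(|\lambda_j-\lambda_{j-1}|\vee|\mu_j-\mu_{j-1}|\big)\tau_{j-1}+\|e_{j-1}\|$. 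By \eqref{concave3} in Lemma~\ref{kappaconcave}, $\gamma_X$ and $\gamma_Y$ are $20$-Lipschitz, so $|\lambda_j-\lambda_{j-1}|\vee|\mu_j-\mu_{j-1}|\le 20/m$ by \eqref{lambdajmuj}; hence $\|e_j\|\le(20/m)\sum_{i=0}^{j-1}\tau_i$. Since $\tau_i=\tau m^{i-m}$ is geometric with ratio $1/m\le1/2$, one has $\sum_{i=0}^{\ell}\tau_i\le\frac{m}{m-1}\tau_\ell\le 2\tau_\ell$ for $m\ge2$, so $\|e_j\|\le(40/m)\tau_{j-1}$ and in particular $\|e_{j-1}\|\le(40/m)\tau_{j-2}=(40/m^2)\tau_{j-1}\le(20/m)\tau_{j-1}$. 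Feeding this back into the displayed identity gives $\|h(s)-(\lambda_j s,\mu_j s)\|\le(20/m)\tau_{j-1}+\|e_{j-1}\|\le 40\tau_{j-1}/m$, which is the first claim (for $j=1$ the term $\|e_0\|$ vanishes and the bound is immediate).

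For the second assertion (assuming $2\tau\le\nu$, which forces $n$ large enough that $2\tau n = 2m^m\lceil n^{7/8}\rceil\le n$ and $1/n\le\nu$) I would split $[\tau,\nu]$ into $[\tau,2\tau]$ and $[2\tau,\nu]$. On $[2\tau,\nu]$, $h$ is the piecewise-linear interpolant of $f$ through the grid points $j/n$, $j\in\{2\tau n,\dots,n\}$; if $s\in[j/n,(j+1)/n]$ and $t:=n(s-j/n)\in[0,1]$, then $h(s)$ and $(\lambda s,\mu s)$ are the same convex combination of the corresponding objects at $j/n$ and $(j+1)/n$, so by \eqref{fnearlinearitem}
\[ \|h(s)-(\lambda s,\mu s)\| \le \tfrac{1}{m}\Big((1-t)\tfrac{j}{n}+t\tfrac{j+1}{n}\Big) = \tfrac{s}{m} \le \tfrac{40s}{m}. \]
On $[\tau,2\tau]$, $h(s)$ is the affine interpolant between $h(\tau)$ and $f(2\tau)$; writing $s=(1+t)\tau$ with $t\in[0,1]$ gives $h(s)-(\lambda s,\mu s) = (1-t)\big(h(\tau)-(\lambda\tau,\mu\tau)\big)+t\big(f(2\tau)-(2\lambda\tau,2\mu\tau)\big)$. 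The first bracket is $e_m$ (recall $\lambda_m=\lambda$, $\mu_m=\mu$, $\tau_m=\tau$), of norm $\le 40\tau/m^2\le 40\tau/m$ by the first assertion, and the second has norm $\le 2\tau/m$ by \eqref{fnearlinearitem} since $2\tau\le\nu$; a convex combination of two quantities each at most $40\tau/m$ is at most $40\tau/m\le 40s/m$ since $s\ge\tau$. Combining the two ranges proves the second claim.

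The one step where a little care is needed — and hence, modest as it is, the main obstacle — is the single grid interval straddling $\nu$: for $j=\lfloor\nu n\rfloor$ the right endpoint $(j+1)/n$ lies just above $\nu$, so \eqref{fnearlinearitem} does not literally apply there. I would handle this by strengthening the choice of $\nu$ so that $\|f(s)-(\lambda s,\mu s)\|\le s/m$ holds for all $s\le 2\nu$ rather than just $s\le\nu$; this is harmless, since \eqref{fnearlinearitem} only bounds $\nu$ from above while \eqref{Kpositem} and \eqref{Kapprox1item} persist for all sufficiently small $\nu$ (indeed $\tilde K(f,\nu,1)\to\tilde K(f,0,1)$ as $\nu\to0$). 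Then $(j+1)/n\le\nu+1/n\le 2\nu$ and the estimate on $[2\tau,\nu]$ goes through verbatim. Apart from this bookkeeping the argument is entirely elementary; the only genuine idea is that on each linear segment the deviation from the reference line is frozen in $s$, so that the first assertion reduces to summing the slope increments $|\lambda_j-\lambda_{j-1}|\le 20/m$ against the geometrically decaying lengths $\tau_{j-1}$.
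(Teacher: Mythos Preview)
Your proof is correct and follows essentially the same approach as the paper: both exploit that $h(s)-(\lambda_j s,\mu_j s)$ is constant on each segment, establish the bound on this constant by induction/recursion using the $20/m$ Lipschitz control on $(\lambda_j,\mu_j)$, and then handle $[\tau,2\tau]$ and $[2\tau,\nu]$ separately by bounding at endpoints. Your summation of the geometric series is a clean alternative to the paper's step-by-step induction, and your convex-combination bounds are equivalent to the paper's $\max$ bounds since the error is affine on each piece. You also correctly identify and patch the boundary issue at the grid interval straddling $\nu$, which the paper's own proof silently ignores; your fix (extending property~\eqref{fnearlinearitem} to $s\le 2\nu$) is perfectly sound.
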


\begin{proof}
We begin by noting that for $s\in[\tau_{j-1},\tau_j]$,
\begin{equation}\label{hconst}
h(s)-(\lambda_j s,\mu_j s) = h(\tau_{j-1})-(\lambda_{j}\tau_{j-1},\mu_{j}\tau_{j-1}),
\end{equation}
so 
for the first part of the lemma it suffices to show that for any $j\in\{1,\ldots,m\}$,
\begin{equation}\label{lammuj}
\|h(\tau_{j-1})-(\lambda_{j}\tau_{j-1},\mu_{j}\tau_{j-1})\|\le 40\tau_{j-1}/m.
\end{equation}
We prove \eqref{lammuj} by induction. Recall that for each $j$, $\lambda_j = \gamma_X(j/m)$, and by Lemma \ref{kappaconcave} \eqref{concave3}, $|\lambda_{j-1} - \lambda_{j}|\le 20/m$ and $\lvert \mu_{j}- \mu_{j-1} \lvert \le 20/m$. Thus we first have
\[\|h(\tau_0)-(\lambda_1 \tau_0,\mu_1 \tau_0)\| = \max \{ \lvert \lambda_0-\lambda_1 \lvert \tau_0, \lvert \mu_0-\mu_1 \lvert\tau_0 \} \le 20 \tau_0/m.\]
Suppose that $j\in\{1,\ldots,m-1\}$ and \eqref{lammuj} holds for $j$. By the triangle inequality,
\[|h_X(\tau_j)-\lambda_{j+1}\tau_j| \le |h_X(\tau_j) - \lambda_j\tau_j| + |\lambda_j\tau_j - \lambda_{j+1}\tau_j|\]
and then by \eqref{hconst}, this equals
\[|h_X(\tau_{j-1}) - \lambda_j\tau_{j-1}| + |\lambda_j-\lambda_{j+1}|\tau_j.\]
Applying \eqref{lammuj} and using the fact that $|\lambda_{j-1} - \lambda_{j}|\le 20/m$, we obtain that
\[|h_X(\tau_j)-\lambda_{j+1}\tau_j| \le \frac{40\tau_{j-1}}{m} + \frac{20\tau_j}{m} \le \frac{40\tau_j}{m}.\]
By symmetry we also have $|h_Y(\tau_{j})-\mu_{j+1} \tau_{j}|\le 40\tau_j/m$. Hence, by induction, \eqref{lammuj} holds for all $j\in\{1,\ldots,m\}$, proving the first part of the lemma.

For the second part, suppose that $2\tau\le \nu$. Note first that for $s\in[\tau,2\tau]$, $h$ is linear and therefore
\[\|h(s)-(\lambda s,\mu s)\| \le \max\big\{\|h(\tau)-(\lambda \tau,\mu \tau)\|,\,\|h(2\tau)-(2\lambda\tau,2\mu\tau)\|\big\}.\]
By the first part of the lemma,
\[\|h(\tau)-(\lambda \tau,\mu \tau)\|\le 40\tau_{m-1}/m \le 40\tau/m\]
and since $h(2\tau) = f(2\tau)$, by property (\ref{fnearlinearitem}) of $f$,
\[\|h(2\tau)-(2\lambda \tau,2\mu \tau)\| \le 2\tau/m.\]
This proves the second part of the lemma for $s\in[\tau,2\tau]$; for $s\in [2\tau,\nu]$, we note that $h$ linearly interpolates between values of $f$, and therefore for $j$ such that $s\in[j/n,(j+1)/n]$,
\[\|h(s)-(\lambda s,\mu s)\| \le \max\big\{\|f(\sfrac{j}{n})-(\lambda \sfrac{j}{n},\mu \sfrac{j}{n})\|,\,\|f(\sfrac{j+1}{n})-(\lambda \sfrac{j+1}{n},\mu \sfrac{j+1}{n})\|\big\} \le \frac{j+1}{nm} \le \frac{s+1/n}{m} \le \frac{2s}{m}\]
where we used (\ref{fnearlinearitem}) and the fact that $2\tau\ge 1/n$.
\end{proof}

\begin{cor}\label{hnearlinearcor}
Suppose that $\mu\ge\lambda>0$, $\kappa(\lambda,\mu)>0$ and $m\ge 1600$, $m\in\N$. Then for any $j\in\{1,\ldots,m\}$ and any $s\in[\tau_{j-1},\tau_j]$,
\[\frac{\mu_j}{\lambda_j}\Big(1-\frac{1600}{m}\Big) \le R^*(h_{f,n,m}(s)) \le \frac{\mu_j}{\lambda_j}\Big(1+\frac{3200}{m}\Big),\]
and if $2\tau\le\nu$ then for any $s\in[\tau,\nu]$
\[\frac{\mu}{\lambda}\Big(1-\frac{1600}{m}\Big) \le R^*_X(h_{f,n,m}(s))+1/2 \le R^*(h_{f,n,m}(s)) \le \frac{\mu}{\lambda}\Big(1+\frac{3200}{m}\Big).\]
\end{cor}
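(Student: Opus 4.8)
The whole statement follows from Lemma~\ref{hnearlinearlem} by converting an additive approximation of $h_{f,n,m}$ into a multiplicative one for $R^*$. Two facts feed in: from Lemma~\ref{kappaconcave}(iv) (together with $\gamma(1)=(\lambda,\mu)$) we have $\lambda_j,\mu_j\in[3/2-\sqrt2,10]$ for every $j$ and $\lambda,\mu\in[3/2-\sqrt2,10]$; and from the construction of $\gamma$ in the proof of Lemma~\ref{kappaconcave}, which moves from $(1/2,1/2)$ to $(\lambda,\mu)$ always keeping the second coordinate at least the first, we have $\mu_j\ge\lambda_j$ (and $\mu\ge\lambda$ is assumed). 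The key point is that $3/2-\sqrt2>0$, so all of these slopes are bounded away from $0$, which together with $m\ge1600$ makes the relevant error ratios small.

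For the first claim, fix $j\in\{1,\dots,m\}$ and $s\in[\tau_{j-1},\tau_j]$, and write $h=h_{f,n,m}$. Lemma~\ref{hnearlinearlem} gives $\|h(s)-(\lambda_j s,\mu_j s)\|\le 40\tau_{j-1}/m\le 40s/m$, using $s\ge\tau_{j-1}$, so dividing by $s$ yields $|h_X(s)/s-\lambda_j|\le 40/m$ and $|h_Y(s)/s-\mu_j|\le 40/m$. Bounding $h_X(s)/h_Y(s)$ and $h_Y(s)/h_X(s)$ using these estimates together with $\lambda_j\le\mu_j$ shows
\[
\frac{\mu_j-40/m}{\lambda_j+40/m}\ \le\ R^*(h(s))\ \le\ \frac{\mu_j+40/m}{\lambda_j-40/m}.
\]
Writing the two sides as $\frac{\mu_j}{\lambda_j}$ times $\frac{1\mp 40/(m\mu_j)}{1\pm 40/(m\lambda_j)}$, bounding $40/(m\lambda_j)$ and $40/(m\mu_j)$ by $40/(m(3/2-\sqrt2))<1/2$, and applying $1/(1+x)\ge 1-x$ and $1/(1-x)\le 1+2x$ for $x\in[0,1/2)$, one obtains $R^*(h(s))\ge \frac{\mu_j}{\lambda_j}(1-1600/m)$ and $R^*(h(s))\le \frac{\mu_j}{\lambda_j}(1+3200/m)$; the constants $1600$ and $3200$ are chosen with enough slack that no sharp estimate is needed.

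For the second claim, assume $2\tau\le\nu$ and take $s\in[\tau,\nu]$. The second part of Lemma~\ref{hnearlinearlem} gives $\|h(s)-(\lambda s,\mu s)\|\le 40s/m$, hence $|h_X(s)/s-\lambda|\le 40/m$ and $|h_Y(s)/s-\mu|\le 40/m$, and the bound $R^*(h(s))\le \frac{\mu}{\lambda}(1+3200/m)$ follows exactly as above. Since $h_Y(s)\ge(\mu-40/m)s>0$, the middle inequality $R^*_X(h(s))+1/2\le R^*(h(s))$ is the elementary identity, checked straight from the definitions of $R^*$, $P^*$ and $R^*_X$, that $R^*_X(x,y)+1/2=R^*(x,y)$ when $x\le y$ and $R^*_X(x,y)+1/2=1\le R^*(x,y)$ when $x\ge y$. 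For the remaining lower bound I split into cases: if $h_X(s)\le h_Y(s)$ then $R^*_X(h(s))+1/2=R^*(h(s))\ge (\mu-40/m)/(\lambda+40/m)\ge \frac{\mu}{\lambda}(1-1600/m)$ as before; if $h_X(s)>h_Y(s)$ then $R^*_X(h(s))+1/2=1$, while $(\lambda+40/m)s\ge h_X(s)>h_Y(s)\ge(\mu-40/m)s$ forces $\mu<\lambda+80/m$, so that $\mu/\lambda<1+80/(m(3/2-\sqrt2))$ and hence $\frac{\mu}{\lambda}(1-1600/m)<1$ whenever $m\ge1600$.

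I do not anticipate a real obstacle; the one point requiring care is the final case split, where one must notice that $R^*_X(h(s))+1/2$ is pinned at $1$ rather than at $\mu/\lambda$ once $h_X(s)$ overtakes $h_Y(s)$, and that this can only happen when $\mu-\lambda=O(1/m)$, which keeps $\mu/\lambda$ close enough to $1$ that the inequality still holds. Everything else is routine manipulation of the explicit constants.
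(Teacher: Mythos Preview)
Your proof is correct and follows essentially the same approach as the paper: both use Lemma~\ref{hnearlinearlem} to get an additive error of order $s/m$, invoke Lemma~\ref{kappaconcave}(iv) to bound $\lambda_j,\mu_j$ away from $0$, and then apply $1/(1+x)\ge 1-x$ and $1/(1-x)\le 1+2x$ to turn this into a multiplicative bound on $R^*$. The only notable difference is in the lower bound for the second part when $h_X(s)>h_Y(s)$: the paper observes directly that $R^*_X(h(s))+1/2\ge h_Y(s)/h_X(s)$ always holds and then bounds $h_Y(s)/h_X(s)$ from below by $\tfrac{\mu}{\lambda}(1-1600/m)$, whereas you instead argue that this case forces $\mu-\lambda<80/m$ so that $\tfrac{\mu}{\lambda}(1-1600/m)<1$. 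Both arguments are valid; yours is slightly more indirect but equally short. You are also a touch more explicit than the paper in noting that $\mu_j\ge\lambda_j$ comes from the specific construction of $\gamma$ in the proof of Lemma~\ref{kappaconcave}, which is indeed what is needed to conclude $h_Y(s)\ge h_X(s)$ on $[0,\tau]$.
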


\begin{proof}
We begin with the lower bound on $R^*(h(s))$ for $s\in [\tau_{j-1},\tau_j]$. Since $\mu\ge\lambda$, we have $h_Y(s)\ge h_X(s)$, and therefore by the first part of Lemma \ref{hnearlinearlem},
\[R^*(h(s)) = \frac{h_Y(s)}{h_X(s)} \ge \frac{\mu_j s - 40\tau_{j-1}/m}{\lambda_j s + 40\tau_{j-1}/m} = \frac{\mu_j}{\lambda_j}\Big(\frac{1 - 40\tau_{j-1}/(m\mu_j s)}{1 + 40\tau_{j-1}/(m\lambda_j s)}\Big).\]
Using the fact that $s\ge\tau_{j-1}$, and then that $1/(1+x)\ge 1-x$ for $x\ge0$, this is at least
\[\frac{\mu_j}{\lambda_j}\Big(1 - \frac{40}{m\mu_j}\Big)\Big(1 - \frac{40}{m\lambda_j}\Big).\]
By Lemma \ref{kappaconcave} \eqref{concave4}, we have $\lambda_j\ge 3/2-\sqrt{2} \ge 1/20$ and similarly for $\mu_j$, so the above is at least $\frac{\mu_j}{\lambda_j}(1-1600/m)$, 
and the first lower bound on $R^*(h(s))$ follows. The first upper bound is similar, using that $1/(1-x) \le 1+2x$ for $x\in[0,1/2]$; since $m\ge 1600$ and $\lambda_j\ge 1/20$ we have $40/(m\lambda_j) \le 1/2$, and we obtain
\[R^*(h(s)) \le \frac{\mu_j}{\lambda_j}\Big(1 + \frac{40}{m\mu_j}\Big)\Big(1 + \frac{80}{m\lambda_j}\Big);\]
then since $\lambda_j\ge 1/20$, $\mu_j\ge 1/20$ and $m\ge 1600$, the product of the last two terms reduces to the desired form.

The proof of the second part of the corollary, when $s\in[\tau,\nu]$, is almost identical. Indeed, if $h_Y(s)\ge h_X(s)$ then $R^*_X(h(s))+1/2 = R^*(h(s))$ and we use the same argument but apply the second part of Lemma \ref{hnearlinearlem} rather than the first part. The same applies to the lower bound even when $h_Y(s)<h_X(s)$, since in any case $R^*_X(h(s)) +1/2 \ge h_Y(s)/h_X(s)$. However, we have to make a slight modification to the upper bound when $h_Y(s)<h_X(s)$; in this case, we instead have $R^*_X(h(s))+1/2 \le R^*(h(s))$ where
\[R^*(h(s)) = \frac{h_X(s)}{h_Y(s)}, \]
and then the argument above gives
\[R^*(h(s)) = \frac{h_X(s)}{h_Y(s)} \le \frac{\lambda s + 40s/m}{\mu s - 40s/m} \le \frac{\lambda}{\mu}\Big(1+\frac{40}{\lambda m}\Big)\Big(1+\frac{80}{\mu m}\Big) \le \frac{\lambda}{\mu}\Big(1+\frac{3200}{m}\Big).\]
However, since $\lambda\le\mu$, we have $\lambda/\mu \le 1 \le \mu/\lambda$ and so the same conclusion holds. 
\end{proof}

\begin{cor}\label{Ktildekappacor}
Suppose that $\mu\ge\lambda>0$, $\kappa(\lambda,\mu)>0$ and $m\ge 1600$, $m\in\N$. There exists a finite constant $C$ such that for any $j\in\{1,\ldots,m\}$ and any $s,t\in[\tau_{j-1},\tau_j]$ with $s\le t$, we have
\[\tilde K(h_{f,n,m},s,t) \ge \kappa(\lambda_j,\mu_j)(t-s) - \frac{C}{m}(t-s),\]
and if $2\tau\le\nu$ then for any $s,t\in[\tau,\nu]$ with $s\le t$, we have
\[\tilde K(h_{f,n,m},s,t) \ge \kappa(\lambda,\mu)(t-s) - \frac{C}{m}(t-s).\]
\end{cor}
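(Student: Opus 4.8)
The plan is to reduce everything to the alternative representation \eqref{Kalt} of $\tilde K$. This is legitimate on each of the intervals in question, because $h=h_{f,n,m}$ is piecewise linear there (its breakpoints, the $\tau_i$ and the $i/n$, lie on the grid), hence absolutely continuous with $\hat h_X=\hat h_Y\equiv 0$, and Corollary \ref{hnearlinearcor} bounds $R^*(h(\cdot))$ from above, so $\int R^*(h(u))\,du<\infty$ and \eqref{Kalt} applies. The whole argument is then: replace $R^*(h(u))$ by the appropriate constant ratio up to $O(1/m)$ using Corollary \ref{hnearlinearcor}, identify $R^*_X(h(u))$ and $R^*_Y(h(u))$, and recognise that what is left is exactly $\kappa$ of that ratio, by the defining identity of $\kappa$.

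For the first statement, fix $j$ and $s\le t$ in $[\tau_{j-1},\tau_j]$; here $h$ is linear with slope $(\lambda_j,\mu_j)$, so $h_X(t)-h_X(s)=\lambda_j(t-s)$ and $h_Y(t)-h_Y(s)=\mu_j(t-s)$ exactly. First I would check that $h_Y(u)\ge h_X(u)$ for all $u\in[0,\tau]$: this follows by induction on $j$, since $h_Y-h_X$ vanishes at $\tau_0$, is non-decreasing on each $(\tau_{i-1},\tau_i)$ because $\mu_i\ge\lambda_i$ (the path $\gamma$ of Lemma \ref{kappaconcave} stays in $\{\mu\ge\lambda\}$), and $h$ is continuous. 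Hence on $[\tau_{j-1},\tau_j]$ we have $R^*_X(h(u))=R^*(h(u))-\tfrac12$ and $R^*_Y(h(u))=\tfrac12$, so by \eqref{Kalt} $\tilde K(h,s,t)=-\int_s^t R^*(h(u))\,du+2\sqrt2\,\sqrt{\lambda_j}\int_s^t\sqrt{R^*(h(u))-\tfrac12}\,du+2\sqrt{\mu_j}\,(t-s)-\lambda_j(t-s)-\mu_j(t-s)$. By Corollary \ref{hnearlinearcor}, $R^*(h(u))=\tfrac{\mu_j}{\lambda_j}+O(1/m)$ with an absolute implied constant (since $\mu_j/\lambda_j\le 10/(3/2-\sqrt2)$ by Lemma \ref{kappaconcave}\eqref{concave4}); and since $R^*\ge 1$ everywhere, $R^*(h(u))-\tfrac12\ge\tfrac12$ is bounded away from $0$, so $r\mapsto 2\sqrt2\,\sqrt{\lambda_j}\,\sqrt{r-\tfrac12}$ is Lipschitz on the relevant range and I may replace $R^*(h(u))$ by $\mu_j/\lambda_j$ at a cost of $O(1/m)(t-s)$. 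The surviving expression is $\big(-\tfrac{\mu_j}{\lambda_j}+2\sqrt2\,\sqrt{(\tfrac{\mu_j}{\lambda_j}-\tfrac12)\lambda_j}+2\sqrt{\mu_j}-\lambda_j-\mu_j\big)(t-s)$, which, on expanding the two squares in the definition of $\kappa$, equals exactly $\kappa(\lambda_j,\mu_j)(t-s)$; this proves the first statement.

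For the second statement I would repeat the computation with $(\lambda,\mu)$ in place of $(\lambda_j,\mu_j)$, now using the second part of Corollary \ref{hnearlinearcor}, which gives $R^*(h(u))=\tfrac\mu\lambda+O(1/m)$, $R^*_X(h(u))=\tfrac\mu\lambda-\tfrac12+O(1/m)$, and (from $R^*_Y=R^*-R^*_X$) $R^*_Y(h(u))=\tfrac12+O(1/m)$ throughout $[\tau,\nu]$. On the piece $[\tau,2\tau]$ the argument is identical to the first statement, since there $h$ is linear with slope $\tfrac1\tau(f(2\tau)-h(\tau))$, which by Lemma \ref{hnearlinearlem} and property \eqref{fnearlinearitem} of $\nu$ equals $(\lambda,\mu)+O(1/m)$. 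On the piece $[2\tau,\nu]$, where $h$ linearly interpolates the grid values $f(i/n)$, the extra work is to control the two variational integrals after pulling $\sqrt{R^*_X(h(u))}$ and $\sqrt{R^*_Y(h(u))}$ out of them: I would use the Cauchy--Schwarz identity $\int_{i/n}^{(i+1)/n}\sqrt{h'_X(u)}\,du=n^{-1/2}\big(f_X(\tfrac{i+1}n)-f_X(\tfrac in)\big)^{1/2}\ge\int_{i/n}^{(i+1)/n}\sqrt{f'_X(u)}\,du$, then compare $f_X(\tfrac{i+1}n)-f_X(\tfrac in)$ and $h_X(t)-h_X(s)$ with $\tfrac\lambda n$ and $\lambda(t-s)$ via \eqref{fnearlinearitem}, sum the per-cell errors, and add the $[\tau,2\tau]$ bound; this yields $\tilde K(h,s,t)\ge\kappa(\lambda,\mu)(t-s)-\tfrac Cm(t-s)$.

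The first statement is essentially bookkeeping around the defining identity of $\kappa$ once the sign of $h_Y-h_X$ and Corollary \ref{hnearlinearcor} are in hand. The main obstacle is the $[2\tau,\nu]$ part of the second statement: bounding $\int_s^t\sqrt{h'_X(u)}\,du$ below by $\sqrt\lambda\,(t-s)-O(1/m)(t-s)$ when the slopes of $h$ are merely the grid-increments of $f$, and making sure every error term is genuinely proportional to $t-s$ rather than to $\nu$ or to some power of $n$. This is precisely the point at which the closeness of $f$ to the linear function on $[0,\nu]$ and the averaging effect of the grid interpolation have to be used carefully.
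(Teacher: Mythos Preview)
Your treatment of the first statement, and of the second statement on $[\tau,2\tau]$, is essentially the paper's proof: write $\tilde K$ via \eqref{Kalt}, use $h_Y\ge h_X$ on $[0,\tau]$ to identify $R^*_X(h)=R^*(h)-\tfrac12$ and $R^*_Y(h)=\tfrac12$, replace $R^*(h(u))$ by the constant ratio via Corollary~\ref{hnearlinearcor}, and recognise $\kappa$. Your Lipschitz argument for the square-root term is a clean alternative to the paper's explicit $(1-x)^{1/2}\ge 1-x$ manipulation, but the two are equivalent.

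The divergence is on $[2\tau,\nu]$, where $h$ is the mesh-$1/n$ piecewise linear interpolant of $f$. The paper does not do what you propose: it simply asserts the pointwise bound $|h'_X(u)-\lambda|\le 42/m$ for all $u\in[\tau,\nu]$, displaying the computation $h'_X(u)=(f_X(2\tau)-h_X(\tau))/\tau$ which is only the slope on $[\tau,2\tau]$, and then says the proof proceeds as before. Your per-cell approach is more honest about the difficulty, but it does not close the gap either. Property~\eqref{fnearlinearitem} gives $|f_X(s)-\lambda s|\le s/m$, so the increment error is
\[
\Big|f_X\big(\tfrac{i+1}{n}\big)-f_X\big(\tfrac{i}{n}\big)-\tfrac{\lambda}{n}\Big|\le \tfrac{2i+1}{nm},
\]
which translates into a per-cell error of order $i/m$ in the slope, not $1/m$. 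When you pass to $\sqrt{\cdot}$ and sum over $i\le \nu n$, the accumulated error is of order $\nu^2 n/(m\sqrt\lambda)$, which blows up with $n$; it is certainly not $O((t-s)/m)$. The related endpoint error $|h_X(t)-h_X(s)-\lambda(t-s)|$ from~\eqref{fnearlinearitem} is $O(\nu/m)$, again not proportional to $t-s$. In short, \eqref{fnearlinearitem} controls $f_X$ within a cone of half-angle $1/m$, but places no constraint on the local slope of $f$ (hence of $h$) on $[2\tau,\nu]$; neither the paper's asserted derivative bound nor your summation argument follows from it. You have correctly located the obstacle; what is missing is an argument that genuinely produces an error proportional to $(t-s)/m$ on this piece.
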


\begin{proof}
We begin with the first statement. Using \eqref{Kalt}, since $h(t)-h(s) = (\lambda_j(t-s),\mu_j(t-s))$, we have
\begin{multline*}
\tilde K(h,s,t) = -\int_s^t R^*(h(u))du + 2\sqrt2 \int_s^t \sqrt{R^*_X(h(u))\lambda_j}\, du\\
+ 2\sqrt2 \int_s^t \sqrt{R^*_Y(h(u))\mu_j}\, du - \lambda_j(t-s) - \mu_j(t-s).
\end{multline*}
Since $\mu\ge\lambda$, we have $h_Y(u)\ge h_X(u)$ for all $u\le\tau$ and therefore $R^*_X(h(u))=R^*(h(u))-1/2$ and $R^*_Y(h(u))=1/2$ for all $u\le\tau$. Thus
\begin{multline}\label{tildeKnearlinear1}
\tilde K(h,s,t) = -\int_s^t R^*(h(u))du + 2\sqrt2 \int_s^t \sqrt{(R^*(h(u))-1/2)\lambda_j}\, du\\
+ 2\sqrt{\mu_j} (t-s) - \lambda_j(t-s) - \mu_j(t-s).
\end{multline}
By Corollary \ref{hnearlinearcor}, for any $u\in[s,t]$ we have
\[ \frac{\mu_j}{\lambda_j}\Big(1-\frac{1600}{m}\Big) \le R^*(h(u)) \le \frac{\mu_j}{\lambda_j}\Big(1+\frac{3200}{m}\Big)\]
and, using also that $(1-x)^{1/2}\ge 1-x$ for $x\in[0,1)$,
\begin{align*}
\sqrt{(R^*(h(u))-1/2)} &\ge \bigg(\frac{\mu_j}{\lambda_j}\Big(1-\frac{1600}{m}\Big) - 1/2\bigg)^{1/2}\\
&= \Big(\frac{\mu_j}{\lambda_j}-\frac{1}{2}\Big)^{1/2}\Big(1-\frac{1600\mu_j}{m\lambda_j(\mu_j/\lambda_j-1/2)}\Big)^{1/2}\\
&\ge \Big(\frac{\mu_j}{\lambda_j}-\frac{1}{2}\Big)^{1/2}\Big(1-\frac{3200}{m}\Big).
\end{align*}
Substituting these estimates into \eqref{tildeKnearlinear1}, we have
\begin{multline*}
\tilde K(h,s,t) \ge -\frac{\mu_j}{\lambda_j}\Big(1+\frac{3200}{m}\Big)(t-s) + 2\sqrt2 \Big(\frac{\mu_j}{\lambda_j}-\frac{1}{2}\Big)^{1/2}\lambda_j^{1/2}\Big(1-\frac{3200}{m}\Big)(t-s)\\
+ 2 \sqrt{\mu_j} (t-s) - \lambda_j(t-s) - \mu_j(t-s).
\end{multline*}
Recognising that
\[\kappa(\lambda_j,\mu_j) = -\frac{\mu_j}{\lambda_j} + 2\sqrt2\Big(\frac{\mu_j}{\lambda_j}-\frac{1}{2}\Big)^{1/2}\lambda_j^{1/2} + 2\mu_j^{1/2}-\lambda_j-\mu_j,\]
we see that
\[\tilde K(h,s,t) \ge \kappa(\lambda_j,\mu_j) (t-s)- \frac{3200\mu_j}{\lambda_j m}(t-s) - 2\sqrt2 \Big(\frac{\mu_j}{\lambda_j}-\frac{1}{2}\Big)^{1/2}\lambda_j^{1/2}\frac{3200}{m}(t-s)\]
and the first part of the result follows using Lemma \ref{kappaconcave} \eqref{concave4}.

The proof of the second part is almost identical, though since for $u\in[\tau,\nu]$ we do not have exactly $h'_X(u) = \lambda$ and $h'_Y(u)=\mu$, we must additionally use the bounds
\[h'_X(u) = \frac{f(2\tau)-h(\tau)}{\tau} \le \frac{2\lambda \tau + 2\tau/m^2 - \lambda\tau + 40\tau/m}{\tau} \le \lambda + \frac{42}{m}\]
and similarly
\[h'_X(u) \ge \lambda - \frac{42}{m}\]
and
\[\mu - \frac{42}{m} \le h'_Y(u) \le \mu + \frac{42}{m}.\]
With the addition of these estimates, the proof proceeds as before.
\end{proof}

Corollary \ref{Ktildekappacor} essentially guarantees that $\tilde K(h_{f,n,m},s,t)$ is positive for $0\le s<t\le \nu$, provided that $\kappa(\lambda,\mu)>0$. We now need to show that $\tilde K(h_{f,n,m},\nu,t)$ is not too negative for $t\ge\nu$. The following result will be used to check that $\tilde K(f,\nu,t)$ is closely approximated by $\tilde K(h_{f,n,m},\nu,t)$.

\begin{prop}\label{lowersemicts}
Suppose that $0\le s\le t\le 1$ and that $f\in G_M^2$. Let $f_n$ be the function in $\PL_n$ constructed by setting $f_n(j/n)=f(j/n)$ for each $j=0,\ldots,n$ and interpolating linearly. Then
\[\liminf_{n\to\infty}\tilde K(f_n,s,t) \ge \tilde K(f,s,t).\]
\end{prop}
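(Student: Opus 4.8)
The plan is to use the alternative representation \eqref{Kalt} for both $\tilde K(f,s,t)$ and $\tilde K(f_n,s,t)$ and then pass to the limit term by term. If $\tilde K(f,s,t)=-\infty$ there is nothing to prove, so assume $J(f,s,t)<\infty$. Since $f\in G_M^2$ we have $R^*(f(u)),R^*_X(f(u)),R^*_Y(f(u))\le M^2$ for all $u$, so $\int_s^t R^*(f)<\infty$ and $I(f,s,t)<\infty$, and \eqref{Kalt} applies to $\tilde K(f,s,t)$. The interpolant $f_n$ also lies in $G_M^2$ (the constraints $u/M\le f_\bullet(u)\le Mu$ are linear in $u$, hence preserved by linear interpolation), so \eqref{Kalt} applies to $\tilde K(f_n,s,t)$ as well; since $f_n$ is absolutely continuous its singular part vanishes, giving
\[\tilde K(f_n,s,t)=-\int_s^t R^*(f_n)+2\sqrt2\int_s^t\sqrt{R^*_X(f_n)f_{n,X}'}+2\sqrt2\int_s^t\sqrt{R^*_Y(f_n)f_{n,Y}'}-\big(f_{n,X}(t)-f_{n,X}(s)\big)-\big(f_{n,Y}(t)-f_{n,Y}(s)\big).\]

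I would then show that each of the three integral terms converges to its counterpart for $f$. For a.e.\ $u$ the function $f$ is continuous at $u$ and $f(u)$ is bounded away from the origin (as $f\in G_M^2$), so $R^*,R^*_X,R^*_Y$ are continuous at $f(u)$ and $f_n(u)\to f(u)$; bounded convergence gives $\int_s^t R^*(f_n)\to\int_s^t R^*(f)$. For the square-root terms, write $f_{n,X}'=b_n+c_n$ on each interval $I_k=[k/n,(k+1)/n]$, with $b_n=n\int_{I_k}f_X'$ and $c_n=n\big(\hat f_X(\tfrac{k+1}{n})-\hat f_X(\tfrac{k}{n})\big)$. Then $b_n$ is the average of the $L^1$ function $f_X'$ over the partition interval containing a point, so $b_n\to f_X'$ a.e.\ by Lebesgue differentiation, and $\|\sqrt{b_n}\|_{L^2}^2=\|b_n\|_{L^1}=\int_s^t f_X'$ is bounded, so $\{\sqrt{b_n}\}$ is uniformly integrable. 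Likewise $c_n$ is the average of the singular measure $\mathrm{d}\hat f_X$ over these intervals, so $c_n\to0$ a.e.\ and $\|\sqrt{c_n}\|_{L^2}^2=\int_s^t c_n$ is bounded, hence $\sqrt{c_n}\to0$ in $L^1$. Combining $\sqrt{b_n}\le\sqrt{f_{n,X}'}\le\sqrt{b_n}+\sqrt{c_n}$ with $R^*_X(f_n)\le M^2$, the a.e.\ convergence $R^*_X(f_n)\to R^*_X(f)$, and Vitali's convergence theorem, one obtains $\int_s^t\sqrt{R^*_X(f_n)f_{n,X}'}\to\int_s^t\sqrt{R^*_X(f)f_X'}$, and similarly for $Y$.

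For the boundary terms, $f_{n,X}(1)=f_X(1)$ exactly (as $1$ is a grid point), and at a continuity point $u$ of $f$ one has $f_{n,X}(u)\to f_X(u)$; continuity at $s=0$ is automatic, and in all of our applications $f$ is piecewise linear, hence continuous at every $s$ and $t$. Combining with the integral limits above yields $\tilde K(f_n,s,t)\to\tilde K(f,s,t)$, which is stronger than the asserted liminf bound; in general one still has $f_X(\lfloor sn\rfloor/n)\le f_{n,X}(s)$ and $f_{n,X}(t)\le f_X(\lceil tn\rceil/n)$, which suffices to take the liminf. The one genuinely delicate point is the handling of the singular part in the square-root terms: smearing singular mass over intervals of length $1/n$ makes its contribution vanish, which is precisely the statement that $\sqrt{c_n}$ stays bounded in $L^2$ while $c_n\to0$ a.e.; everything else is routine dominated/Vitali convergence together with Lebesgue differentiation.
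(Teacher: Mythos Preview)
Your argument is correct and in fact establishes more than the paper does for the cross terms: you obtain genuine convergence of $\int_s^t\sqrt{R^*_X(f_n)f_{n,X}'}$ rather than only a $\liminf$ inequality. The paper's route (Lemma~\ref{liminfsqrt}) is cruder: it replaces $\sqrt{R^*_X(f_n(s))}$ by its infimum over each grid interval, so that the remaining factor $\sqrt{n(f_X(i/n)-f_X((i-1)/n))}$ can be absorbed into a dominating sequence and generalised dominated convergence applied; this yields only the one-sided bound. Your decomposition $f_{n,X}'=b_n+c_n$ separating the averaged absolutely continuous and singular parts, together with Lebesgue differentiation for $b_n\to f_X'$ and for $c_n\to 0$ a.e., and the observation that $\sqrt{b_n},\sqrt{c_n}$ are bounded in $L^2$ hence uniformly integrable, is a cleaner and sharper argument. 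One small correction: in the paper's application (Proposition~\ref{hfmnprop}) $f$ is \emph{not} piecewise linear but a general element of $G_M^2$, so you should rely on your fallback bounds $f_{n,X}(t)\le f_X(\lceil tn\rceil/n)\to f_X(t)$ by right-continuity (and the analogous statement at $s$, which needs $f$ continuous at $s$; the paper is equally informal on this point, and in the application $\nu$ can be taken to be a continuity point).
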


We prove this in Appendix \ref{lsc_sec}. Later, in Proposition \ref{uppersemicts}, we will also show that the opposite inequality holds in certain circumstances. We now have the pieces in place to prove Proposition \ref{hfmnprop}.

\begin{proof}[Proof of Proposition \ref{hfmnprop}]
As usual let $\lambda = f'_X(0)$, $\mu = f'_Y(0)$ and $\tau = m^m\lceil n^{7/8}\rceil/n$, with $\lambda_j$ and $\mu_j$ as in \eqref{lambdajmuj} and $\tau_j = \tau m^{j-m}$, for $j\in\{0,1,\ldots,m\}$. We will check that $h_{f,n,m}$ satisfies the desired properties when $m$ and $n$ are sufficiently large. Without loss of generality we assume that $\mu\ge\lambda$.

Since $\tau_0 n$ is an integer we have $h_{f,n,m}\in\PL_n^2$, and since $f\in G_M^2$ it is easy to see that $h_{f,n,m}\in G_M^2$ too. Since $\|h_{f,n,m}(s)\|\le Ms$ and $\|f(s)\|\le Ms$ for $s\le 2\tau = 2m^m\lceil n^{7/8}\rceil/n$, and $h_{f,n,m}(j/n)=f(j/n)$ for $j\ge 2\tau n$, by choosing $n$ large enough that $2\tau M\le \eps$ we have $h_{f,n,m}\in B(f,\eps)$. This proves that $h_{f,n,m}$ satisfies \eqref{hfmnprop1} when $n$ is large.

For \eqref{hfmnprop2}, note first that $\tau_0 = \lceil n^{7/8}\rceil/n$. Take $n$ large enough that $2\tau<\nu$. Then we claim that since $\lim_{t\to 0}\tilde K(f,0,t)/t > 0$, we have $\kappa(\lambda,\mu)>0$. To see why the claim holds, for small $s$ we have $\|f'(s)-(\lambda,\mu)\|\le 1/m$ and $\|f(s)-(\lambda s,\mu s)\|\le s/m$. The same argument as in Corollary \ref{hnearlinearcor} then shows that for some finite constant $C$,
\[\frac{\mu}{\lambda}\Big(1-\frac{C}{m}\Big) \le R^*_X(f(s)) + \frac{1}{2} \le R^*(f(s)) \le \frac{\mu}{\lambda}\Big(1+\frac{C}{m}\Big)\]
and plugging these estimates into \eqref{Kalt} with $a=0$ and $b=t$ and using standard approximations shows that $\tilde K(f,0,t) \le \kappa(\lambda,\mu)t + C't/m$ for some finite constant $C'$. This implies the claim.
 
 Since $\kappa(\lambda, \mu)>0$, by Lemma \ref{kappaconcave} we may choose $\kappa_0>0$ such that $\kappa(\gamma(t))\ge \kappa_0$ for all $t\in[0,1]$, and then $\kappa(\lambda_j,\mu_j)\ge \kappa_0$ for all $j\in\{0,\ldots,m\}$. Corollary \ref{Ktildekappacor} then tells us that for $s\in[\tau_{j-1},\tau_j]$ we have
\begin{align*}
\tilde K(h_{f,n,m},\tau_0,s) &\ge \sum_{i=1}^{j-1}(\kappa(\lambda_i,\mu_i)-C/m)(\tau_i-\tau_{i-1}) + (\kappa(\lambda_j,\mu_j)-C/m)(s-\tau_{j-1})\\
&\ge (\kappa_0-C/m)(s-\tau_0),
\end{align*}
and for $s\in[\tau,\nu]$ we have
\begin{align*}
\tilde K(h_{f,n,m},\tau_0,s) &\ge \sum_{i=1}^{m}(\kappa(\lambda_i,\mu_i)-C/m)(\tau_i-\tau_{i-1}) + (\kappa(\lambda,\mu)-C/m)(s-\tau)\\
&\ge (\kappa_0-C/m)(s-\tau_0).
\end{align*}
Thus, by choosing $m$ large enough, we may ensure that $\tilde K(h_{f,n,m},\tau_0,s)\ge \kappa_0(s-\tau_0)/2$ for all $s\in[\tau_0,\nu]$.

For $s>\nu$, by the above argument we have
\begin{equation}\label{finalKeq}
\tilde K(h_{f,n,m},\tau_0,s) \ge \tilde K(h_{f,n,m},\tau_0,\nu) + \tilde K(h_{f,n,m},\nu,s) \ge \kappa_0(\nu-\tau_0)/2 + \tilde K(h_{f,n,m},\nu,s),
\end{equation}
and since $\kappa_0(\nu-\tau_0)/2$ increases to $\kappa_0\nu/2$ as $n\to\infty$ and $\tilde K(f,\nu,s)>0$ by \eqref{Kpositem}, to show \eqref{hfmnprop2} it suffices to show that for large $n$,
\[\tilde K(h_{f,n,m},\nu,s) \ge \tilde K(f,\nu,s) - \kappa_0\nu/4.\]
But since $h$ is the piecewise linear interpolation of $f$ on the interval $[\nu,1]$, this follows from Proposition \ref{lowersemicts}.

Finally, for \eqref{hfmnprop3}, applying \eqref{finalKeq} with $s=1$, we certainly have
\[\tilde K(h_{f,n,m},\tau_0,1) \ge \tilde K(h_{f,n,m},\nu,1);\]
by Proposition \ref{lowersemicts} the right-hand side converges to $\tilde K(f,\nu,1)$ as $n\to\infty$; and by \eqref{Kapprox1item} we know that $\tilde K(f,\nu,1) \ge \tilde K(f,0,1)-1/m$. This completes the proof.
\end{proof}

\section{Coupling $\xi^T$ with simpler processes}\label{coupling_sec}

One problem we face is that $\xi_X$ and $\xi_Y$ are not independent, because their jump rates at time $t$ are functions of the pair $(\xi_X(t),\xi_Y(t))$. However, if we already know that $\xi^T$ has remained near a fixed function $f$, then the jump rates are ``almost deterministic'' and therefore $\xi_X$ and $\xi_Y$ are ``almost independent''. In order to take advantage of this idea, we will construct new processes $Z_+$ and $Z_-$ which have the maximal and minimal jump rates (respectively) that $\xi^T$ may have if it remains near $f$. We will couple these processes with another process, $Z$, which will have the same distribution as $\xi^T$ but will be trapped between $Z_+$ and $Z_-$, as long as $Z$ remains near $f$.\\
Recall the definitions of $R_X^-(I,F,T)$, $R_X^+(I,F,T)$, $R_Y^-(I,F,T)$, $R_Y^+(I,F,T)$, $|I|$, $I^+$, $I^-$, $x^-(s,F)$, $x^+(s,F)$, $y^-(s,F)$, $y^+(s,F)$, $\Gamma_{M,T}(f,n)$ and $I_j$ from Section \ref{probabilistic_ingredients}. In what follows, the reader can think of the case $I=I_j$ and $F=\Gamma_{M,T}(f,n)$ for some function $f$.

Let
\[V(I,F) = [x^-(I^-,F),x^+(I^-,F)]\times [y^-(I^-,F),y^+(I^-,F)].\]
Take $z=(x,y)\in V(I,F)$. Under a probability measure $Q_z = Q^{I,F,T}_z$, let $(X_+(I^-+s), s\in |I|)$ be a compound Poisson process started from $x$ with rate $2R_X^+(I,F,T)T$ and jumps that are exponentially distributed with parameter $T$, and let $(Y_+(I^-+s), s\in |I|)$ be an independent compound Poisson process started from $y$ with rate $2R_Y^+(I,F,T)T$ and jumps that are exponentially distributed with parameter $T$. Let $Z_+ = (X_+,Y_+)$.

We now construct---again under $Q_z^{I,F,T}$---two more (pure jump) processes $Z(I^-+s)$ and $Z_-(I^-+s)$ for $s\in |I|$ recursively as follows. Start by setting $Z(I^-)=z$ and $Z_-(I^-) = z$. The jumps of both $Z$ and $Z_-$ are subsets of the jumps of $Z_+$. Suppose that $Z_+$ has a jump at time $s$, and that $Z(s-) = z'$. Let $U$ be an independent Uniform$[0,1]$ random variable. Since $X_+$ and $Y_+$ are independent, exactly one of $X_+$ or $Y_+$ jumps at time $s$. Suppose for a moment that $X_+$ has a jump of size $x'>0$. Then accept the jump for $Z$ if $U\le R_X(z')/R_X^+(I,F,T)$ and reject it otherwise; in other words, set $Z(s) = z'+(x',0)$ with probability $R_X(z')/R_X^+(I,F,T)$ and $Z(s) = z'$ otherwise. Accept the jump for $Z_-$ if $U\le R_X^-(I,F,T)/R_X^+(I,F,T)$ and reject it otherwise. Similarly, if $Y_+$ has a jump of size $y>0$, then accept the jump for $Z$ if $U\le R_Y(z')/R_Y^+(I,F,T)$, and accept it for $Z_-$ if $U\le R_Y^-(I,F,T)/R_Y^+(I,F,T)$.

Recall that for $F\subset E^2$, $g\in E$ and an interval $I\subset[0,1]$, we say that $g|_I\in F|_I$ if there exists a function $h\in F$ such that $h(u)=g(u)$ for all $u\in I$. Let
\[\mathcal A_\xi(I,F,T) = \big\{\xi^T|_I \in F|_I\big\}\]
and
\[\mathcal A(I,F,T) = \big\{Z|_I \in F|_I\big\}.\]

Note that for any $z\in V(I,F)$, on the event $\mathcal A(I,F,T)$, under $Q_z^{I,F,T}$ we always have
\[R_X(Z(s))\in [R_X^-(I,F,T),R_X^+(I,F,T)] \hspace{4mm}\text{ and }\hspace{4mm} R_Y(Z(s))\in [R_Y^-(I,F,T),R_Y^+(I,F,T)],\]
for all $s\in I$. Thus, by our construction:
\begin{enumerate}[(i)]
\item under $Q^{I,F,T}_z$, on the event $\mathcal A(I,F,T)$, we have $X_-(s) \le X(s)\le X_+(s)$ and $Y_-(s)\le Y(s)\le Y_+(s)$ for all $s\in I$;
\item the process $(Z(s)\ind_{\mathcal A(I\cap[0,s],F,T)})_{s\in I}$ under $Q_z^{I,F,T}$ is equal in distribution to the process\\
$(\xi^T(s)\ind_{\mathcal A_\xi(I\cap[0,s],F,T)})_{s\in I}$ conditionally on $\xi^T(I^-)=z$ under $\Q$;
\item under $Q^{I,F,T}_z$, the processes $(X_-,X_+)$ and $(Y_-,Y_+)$ are independent.
\setcounter{enumeratecounter}{\value{enumi}}
\end{enumerate}
Furthermore, by the thinning property of Poisson processes,
\begin{enumerate}[(i)]
\setcounter{enumi}{\value{enumeratecounter}}
\item under $Q^{I,F,T}_z$, the processes $X_-$ and $X_+-X_-$ are independent, as are $Y_-$ and $Y_+-Y_-$.
\end{enumerate}

\subsection{Applying the coupling to the upper bound: proof of Propositions \ref{Qprobcor} and \ref{Qprobcor2}}

Recall the terminology ``$X+$ case'' and ``$X-$ case'' from Section \ref{probabilistic_ingredients}, and the definitions of $\mathcal E^+_X(I,F,T)$ and $\mathcal E^+_Y(I,F,T)$.
The main part of the proof of Proposition \ref{Qprobcor} is the following lemma.

\begin{lem}\label{QtoEub}
Suppose that $F\subset E^2$ and $T>1$. Then for any $I\subset[0,1]$ and $z\in V(I,F)$,
\[\Q\big(\mathcal A_\xi(I,F,T) \,\big|\,\xi^T(I^-)=z\big) \le \exp\big(-T\mathcal E^+_X(I,F,T)-T\mathcal E^+_Y(I,F,T)\big).\]
\end{lem}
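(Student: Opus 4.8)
The plan is to reduce, via property (ii) of the coupling, the probability under $\Q$ to a probability under $Q_z^{I,F,T}$, and then to bound that probability by controlling the coupled process $Z$ from above and below using $Z_+$ and $Z_-$. By property (ii),
\[\Q\big(\mathcal A_\xi(I,F,T)\,\big|\,\xi^T(I^-)=z\big) = Q_z^{I,F,T}\big(\mathcal A(I,F,T)\big).\]
Write $I=[a,b]$ with $a=I^-$, $b=I^+$, $|I|=b-a$. On the event $\mathcal A(I,F,T)$ we have, for every $s\in I$, both $Z(s)\in F|_I$-values, so in particular $X(b)\in[x^-(b,F),x^+(b,F)]$ and $X(a)=x\in[x^-(a,F),x^+(a,F)]$. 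Hence on $\mathcal A(I,F,T)$, the increment $X(b)-X(a)$ satisfies $X(b)-X(a)\le x^+(b,F)-x^-(a,F)$ and $X(b)-X(a)\ge x^-(b,F)-x^+(a,F)$. Now use property (i): $X_-(b)\le X(b)$ and $X(b)\le X_+(b)$ (and $X_\pm(a)=x$), so on $\mathcal A(I,F,T)$ we have $X_+(b)-x\ge x^-(b,F)-x^+(a,F)$ and $X_-(b)-x\le x^+(b,F)-x^-(a,F)$. The analogous statements hold for $Y$. Thus
\[\mathcal A(I,F,T)\subseteq \{X_+(b)-x\ge x^-(b,F)-x^+(a,F)\}\cap\{X_-(b)-x\le x^+(b,F)-x^-(a,F)\},\]
and the same for $Y$; by property (iii) the $X$- and $Y$-events are independent under $Q_z^{I,F,T}$, so it suffices to bound each factor separately, and it suffices to produce the bound $\exp(-T\mathcal E_X^+(I,F,T))$ for the $X$-factor. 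In the ``neither'' case $\mathcal E_X^+=0$ and there is nothing to prove, so assume we are in the $X-$ or $X+$ case.

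In the $X-$ case, the relevant constraint is $X_-(b)-x \le x^+(b,F)-x^-(a,F)$, i.e. $X_-$ moves \emph{less} than its drift $2R_X^-(I,F,T)T|I|$ would predict: here $X_-(b)-x$ is a compound Poisson process run for time $|I|$ with rate $2R_X^-(I,F,T)T$ and Exp$(T)$ jumps, hence has mean $2R_X^-(I,F,T)T|I|\cdot T^{-1}=2R_X^-(I,F,T)|I|$, and we are in the regime $2R_X^-(I,F,T)|I| > x^+(b,F)-x^-(a,F)$, so we need an upper-deviation bound on the event that this process is \emph{small}. In the $X+$ case, the relevant constraint is $X_+(b)-x\ge x^-(b,F)-x^+(a,F)$, i.e. $X_+$ moves \emph{more} than its drift; here $X_+(b)-x$ is compound Poisson run for time $|I|$ with rate $2R_X^+(I,F,T)T$ and Exp$(T)$ jumps, with mean $2R_X^+(I,F,T)|I|$, and we are in the regime $x^-(b,F)-x^+(a,F) > 2R_X^+(I,F,T)|I|$, so we need a large-deviation bound on the probability this process exceeds a value above its mean. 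In both cases this is a Chernoff bound for a compound Poisson process with exponential jumps: for such a process $W$ with rate $\lambda$ and Exp$(T)$ jumps, $\E[e^{\theta W}] = \exp(\lambda t(\frac{T}{T-\theta}-1))$ for $\theta<T$ (and for lower deviations we use $\theta<0$, where the MGF is always finite). Optimizing the exponent $\theta\cdot(\text{target}) - \lambda t(\frac{T}{T-\theta}-1)$ over $\theta$ gives, after the substitution making the rate $2R_X^\pm(I,F,T)T$ and time $|I|$ and target $T\times(\text{distance})$, precisely a rate of the form $T(\sqrt{2R_X^\pm(I,F,T)|I|}-\sqrt{\text{distance}})^2$; this is exactly the definition of $\mathcal E_X^+(I,F,T)$ in the two cases. (The $\sqrt{\cdot}-\sqrt{\cdot}$ squared form is the standard Cramér rate function for compound-Poisson-with-exponential-jumps, which one recognizes after carrying out the Legendre transform.)

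The main obstacle is getting the Chernoff/large-deviation computation to land \emph{exactly} on $\mathcal E_X^+$ with the correct sign conventions in both the $X-$ and $X+$ cases — in particular making sure that the \emph{lower}-deviation estimate in the $X-$ case (process smaller than expected) and the \emph{upper}-deviation estimate in the $X+$ case (process larger than expected) both reduce to the same $(\sqrt{a}-\sqrt{b})^2$ expression, and checking the boundary/degenerate subcases (e.g. the target distance being zero or negative, where one must be careful that the bound is still valid and equals $\exp(0)=1$ or better). One should also verify the $z\in V(I,F)$ hypothesis is used correctly, namely that $x=X_\pm(a)\in[x^-(a,F),x^+(a,F)]$ so that the reduction to increments is legitimate. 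I expect the bulk of the work to be this Chernoff computation; it is routine but must be done carefully, and it may be cleaner to isolate it as a separate elementary lemma (indeed the excerpt mentions ``an elementary but somewhat intricate bound on compound Poisson processes'' proved in Appendix \ref{CPP_append}, which is presumably exactly this estimate, so the proof of Lemma \ref{QtoEub} would just invoke it).
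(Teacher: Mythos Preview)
Your proposal is correct and follows essentially the same route as the paper: reduce via (ii), trap $Z$ between $Z_\pm$ via (i), factor via (iii), and then apply a Chernoff bound to the compound Poisson increments $X_-(I^+)-x$ (for the $X-$ case) and $X_+(I^+)-x$ (for the $X+$ case), optimizing the exponential parameter to land on $(\sqrt{2R_X^\pm|I|}-\sqrt{\cdot})^2$. One small correction: the paper does this Chernoff computation inline in the proof of Lemma~\ref{QtoEub} itself rather than invoking Appendix~\ref{CPP_append}; that appendix proves Lemma~\ref{CPPfollowtubecor}, which is the more delicate two-sided \emph{lower} bound needed for Lemma~\ref{qlblem}, not the one-sided upper bound needed here.
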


\begin{proof}
For $z\in V(I,F)$, using (ii), (i) and (iii) in that order,
\begin{align*}
\Q\big(\mathcal A_\xi(I,F,T) \,\big|\,\xi^T(I^-)=z\big) &= Q^{I,F,T}_z\big(\mathcal A(I,F,T) \big)\\
&\le Q^{I,F,T}_z\big(X_-(I^+)\le x^+(I^+,F),\; Y_-(I^+)\le y^+(I^+,F) \big)\\
&= Q^{I,F,T}_z\big(X_-(I^+)\le x^+(I^+,F)\big)Q^{I,F,T}_z\big(Y_-(I^+)\le y^+(I^+,F) \big).
\end{align*}
We will apply this bound when we are in the $X-$ and $Y-$ cases. Of course, we were not forced to concentrate on the two upper boundaries $x^+(I^+,F)$ and $y^+(I^+,F)$, and by considering the other permutations of boundaries we obtain upper bounds on the same quantity of the form
\[Q^{I,F,T}_z\big(X_+(I^+)\ge x^-(I^+,F)\big)Q^{I,F,T}_z\big(Y_+(I^+)\ge y^-(I^+,F) \big),\]
\[Q^{I,F,T}_z\big(X_+(I^+)\ge x^-(I^+,F)\big)Q^{I,F,T}_z\big(Y_-(I^+)\le y^+(I^+,F) \big)\]
and
\[Q^{I,F,T}_z\big(X_-(I^+)\le x^+(I^+,F)\big)Q^{I,F,T}_z\big(Y_+(I^+)\ge y^-(I^+,F) \big)\]
which we can apply in other cases as appropriate.
Now, for any $\lambda>0$, by Markov's inequality,
\begin{align*}
Q^{I,F,T}_z\big(X_-(I^+)\le x^+(I^+,F)\big) & = Q^{I,F,T}_z\big(e^{-\lambda X_-(I^+)} \ge e^{-\lambda x^+(I^+,F)}\big)\\
&\le Q^{I,F,T}_z\big[e^{-\lambda(X_-(I^+)-X_-(I^-))}\big]e^{\lambda(x^+(I^+,F) - x^-(I^-,F))}\\
&= \exp\Big(-2R^-_X(I,F,T)T|I|\frac{\lambda}{T+\lambda}+\lambda (x^+(I^+,F) - x^-(I^-,F))\Big).
\end{align*}
In the $X-$ case we have $2R^-_X(I,F,T)|I| > x^+(I^+,F) - x^-(I^-,F)$, so we can choose the optimal value
\[\lambda = T\sqrt{\frac{2R^-_X(I,F,T)|I|}{x^+(I^+,F) - x^-(I^-,F)}} - T >0.\]
Simplifying gives
\[Q^{I,F,T}_z\big(X_-(I^+)\le x^+(I^+,F)\big)\le \exp\Big(-T\Big(\sqrt{2R^-_X(I,F,T)|I|}-\sqrt{x^+(I^+,F) - x^-(I^-,F)}\Big)^2\Big)\]
which equals $\exp\big(-T\mathcal E^+_X(I,F,T)\big)$ in the $X-$ case. Similarly, in the $X+$ case, by using
\[Q^{I,F,T}_z\big(X_+(I^+)\ge x^-(I^+,F)\big) \le Q^{I,F,T}_z[\exp\big(\mu(X_+(I^+)-X_+(I^-))\big) - \mu(x^-(I^+,F) - x^+(I^-,F))]\]
for $\mu>0$, we obtain
\begin{align*}
Q^{I,F,T}_z\big(X_+(I^+)\ge x^-(I^+,F)\big) &\le \exp\Big(-T\Big(\sqrt{2R^+_X(I,F,T)|I|}-\sqrt{x^-(I^+,F) - x^+(I^-,F)}\Big)^2\Big)\\
&= \exp\big(-T\mathcal E^+_X(I,F,T)\big)
\end{align*}
and when we are in neither the $X-$ nor $X+$ case we can use a trivial upper bound of $1$. By symmetry we obtain the same bounds in terms of $Y$. Applying these bounds in the appropriate cases completes the proof.
\end{proof}

Our main results in this section are now easy corollaries of Lemma \ref{QtoEub}.

\begin{proof}[Proof of Proposition \ref{Qprobcor}]
Recall that $I_j = [j/n,(j+1)/n]$ and let $V(j) = V(I_j,\Gamma_{M,T}(f,n))$. Note that the restrictions on $z$ ensure that $z\in V(i)$, and therefore by the Markov property,
\begin{align*}
\Q \big(\xi^T|_{[i/n,\theta]}\in\Gamma_{M,T}(f,n)\big|_{[i/n,\theta]}\,\big|\,\xi^T_{i/n} = z\big) &\le \prod_{j=i}^{\lfloor \theta n\rfloor-1} \sup_{z'\in V(j)} \Q\big(\xi^T|_{I_j}\in\Gamma_{M,T}(f,n)|_{I_j} \,\big|\, \xi^T_{j/n} = z'\big)\\
&= \prod_{j=i}^{\lfloor \theta n\rfloor -1} \sup_{z'\in V(j)} \Q\big(\mathcal A_\xi(I_j,\Gamma_{M,T}(f,n),T) \,\big|\, \xi^T_{j/n} = z'\big).
\end{align*}
The result now follows from Lemma \ref{QtoEub}.
\end{proof}




\begin{proof}[Proof of Proposition \ref{Qprobcor2}]
Let $i=\lfloor an\rfloor$ and $\ell=\lceil bn\rceil$. Let $V_i = \{w : \|w-f(a)\|<1/n^2\}$, and for $j \in\{i+1,\ldots,\ell\}$ let $V_j = \{w : \|w-f(j/n)\|<1/n^2\}$. Note that, by the Markov property,
\begin{align*}
\Q \big(\xi^T|_{[a,b]}\in\Lambda_{M,T}(f,n)\big|_{[a,b]}\,\big|\,\xi^T_a = z\big) &\le \prod_{j=i}^{\ell-1} \sup_{w\in V_j} \Q\big(\xi^T|_{I_j\cap[a,b]}\in\Lambda_{M,T}(f,n)\big|_{I_j\cap[a,b]}\,\big|\,\xi^T_{j/n} = w\big)\\
&= \prod_{j=i}^{\ell-1} \sup_{w\in V_j} \Q\big(\mathcal A_\xi(I_j\cap[a,b],\Lambda_{M,T}(f,n),T) \,\big|\, \xi^T_{j/n} = w\big).
\end{align*}
The result now follows from Lemma \ref{QtoEub}, together with \eqref{LambdaGamma} and \eqref{Rtrap}.
\end{proof}

\subsection{Applying the coupling to the lower bound: proofs of Lemmas \ref{BigQtolittleqprod}, \ref{qhatlb} and \ref{qlblem}}\label{CPP_sec}

We begin this section with the proof of Lemma \ref{BigQtolittleqprod}, which links the probability that we want to bound with our coupled compound Poisson processes.

\begin{proof}[Proof of Lemma \ref{BigQtolittleqprod}]
We begin by splitting $[0,1]$ into its subintervals $I_j$, $j=0,\ldots,n-1$. By applying the Markov property at each time $j/n$,
\begin{align*}
&\Q\Big(\xi^T|_{[k/n,1]}\in \Lambda_{M,T}(f,n)|_{[k/n,1]}\,\Big|\, \xi^T(k/n) = w\Big)\\
& \ge \Q\Big( \|\xi^T(s)-f(s)\|<\sfrac{1}{n^2}\;\;\forall s\in I_j, \; \xi^T(\sfrac{j+1}{n})\in \mathcal Z_{j+1}, \\
&\hspace{50mm} \xi^T|_{I_j}\in G_{M,T}^2|_{I_j} \;\; \forall j\in\{k,\ldots,n-1\}\,\Big|\, \xi^T(\sfrac{k}{n}) = w\Big)\\
&\ge \prod_{j=k}^{n-1} \inf_{z \in \mathcal Z_j} \Q\Big( \big\|\xi^T(s)-f(s)\big\| <1/n^2 \;\; \forall s\in I_j, \; \xi^T(\sfrac{j+1}{n})\in \mathcal Z_{j+1}, \; \xi^T|_{I_j}\in G_{M,T}^2|_{I_j} \,\Big|\, \xi^T(\sfrac{j}{n})=z\Big).
\end{align*}
It therefore remains to show that for each $j$ and any $z\in\mathcal Z_j$,
\begin{multline}\label{Qlbremains}
\Q\Big( \big\|\xi^T(s)-f(s)\big\| <1/n^2 \;\; \forall s\in I_j, \; \xi^T(\sfrac{j+1}{n})\in \mathcal Z_{j+1}, \; \xi^T|_{I_j}\in G_{M,T}^2|_{I_j} \,\Big|\, \xi^T(\sfrac{j}{n})=z\Big)\\
\ge q^X_{n,M,T}(z,j,f)\, \hat q^X_{n,M,T}(z,j,f)\, q^Y_{n,M,T}(z,j,f)\, \hat q^Y_{n,M,T}(z,j,f).
\end{multline}

We now use the coupling from Section \ref{coupling_sec}, with $I=I_j$ and $F=\Lambda_{M,T}(f,n)$. We simply write $Q_z$ as shorthand for $Q_z^{I_j,\Lambda_{M,T}(f,n),T}$. By property (ii) of the coupling, we have
\begin{multline*}
\Q\Big( \big\|\xi^T(s)-f(s)\big\| <1/n^2 \;\; \forall s\in I_j, \; \xi^T(\sfrac{j+1}{n})\in \mathcal Z_{j+1}, \; \xi^T|_{I_j}\in G_{M,T}^2|_{I_j} \,\Big|\, \xi^T(\sfrac{j}{n})=z\Big)\\
= Q_z\Big( \big\|Z(s)-f(s)\big\| \le 1/n^2\;\;\forall s\in I_j, \;  Z(\sfrac{j+1}{n})\in \mathcal Z_{j+1}, \; Z|_{I_j}\in G_{M,T}^2|_{I_j}\Big)
\end{multline*}
which, by property (i), is at least
\[Q_z\Big(\big\|Z_-(s)- f(s)\big\| \le \sfrac{1}{n^2} \,\,\forall s\in I_j, \, Z_-(\sfrac{j+1}{n})\in \mathcal Z_{j+1}, \, Z_-|_{I_j}\in G_{M,T}^2|_{I_j},\, Z_+(s)-Z_-(s) =0\,\,\forall s\in I_j\Big).\]
By property (iv), this equals
\begin{multline*}
Q_z\Big(\big\|Z_-(s)- f(s)\big\| \le \sfrac{1}{n^2} \;\;\forall s\in I_j, \; Z_-(\sfrac{j+1}{n})\in \mathcal Z_{j+1}, \; Z_-|_{I_j}\in G_{M,T}^2|_{I_j}\Big)\\
\cdot Q_z\Big( Z_+(s)-Z_-(s) =0\;\;\forall s\in I_j\Big)
\end{multline*}
and finally, by property (iii), the above equals
\begin{multline*}
Q_z\Big(\big|X_-(s)- f_X(s)\big| \le \sfrac{1}{n^2} \;\;\forall s\in I_j,\; \big|X_-(\sfrac{j+1}{n})-f_X(\sfrac{j+1}{n})\big| \le \sfrac{1}{2n^2}, \; X_-|_{I_j}\in G_{M,T}|_{I_j}\Big)\\
\cdot Q_z\Big(\big|Y_-(s)- f_Y(s)\big| \le \sfrac{1}{n^2} \;\;\forall s\in I_j,\; \big|Y_-(\sfrac{j+1}{n})- f_Y(\sfrac{j+1}{n})\big| \le \sfrac{1}{2n^2}, \; Y_-|_{I_j}\in G_{M,T}|_{I_j}\Big)\\
\cdot Q_z\Big( X_+(s)-X_-(s) =0\;\;\forall s\in I_j\Big) \cdot Q_z\Big( Y_+(s)-Y_-(s) =0\;\;\forall s\in I_j\Big).
\end{multline*}
Noting that $X_+-X_-$ and $Y_+-Y_-$ are increasing, this is exactly
\[q^X_{n,M,T}(z,j,f)\, \hat q^X_{n,M,T}(z,j,f)\, q^Y_{n,M,T}(z,j,f)\, \hat q^Y_{n,M,T}(z,j,f).\]
Thus we have shown \eqref{Qlbremains} and the proof is complete.
\end{proof}

The proof of Lemma \ref{qhatlb}, which bounds the $\hat q$ terms, is elementary.

\begin{proof}[Proof of Lemma \ref{qhatlb}]
Recall that
\[\hat q^X_{n,M,T}(z,j,f) = Q_z^{I_j,\Lambda_{M,T}(f,n),T}\Big( X_+(\sfrac{j+1}{n})-X_-(\sfrac{j+1}{n}) =0\Big).\]
Also recall that under $Q_z^{I_j,\Lambda_{M,T}(f,n),T}$, the process $X_+ - X_-$ jumps at rate
\[2\big(R_X^+(I_j,\Lambda_{M,T}(f,n),T) - R_X^-(I_j,\Lambda_{M,T}(f,n),T)\big)T.\]
Therefore, for each $j\in\{0,\ldots,n-1\}$ and $z\in\mathcal Z_j$, using \eqref{Rtrap},
\[\hat q^X_{n,M,T}(z,j,f) \ge \exp\Big(-2\big(R_X^+(I_j,\Gamma_{M,T}(f,n),T) - R_X^-(I_j,\Gamma_{M,T}(f,n),T)\big)T/n\Big).\]
Since $f\in G_M^2$, for $j\ge\sqrt n$, by \eqref{Rtrap2} we have
\[\hat q^X_{n,M,T}(z,j,f) \ge \exp\big(-4\delta_{M,T}(j,n)T/n\big),\]
and by symmetry
\[\hat q^Y_{n,M,T}(z,j,f) \ge \exp\big(-4\delta_{M,T}(j,n)T/n\big).\]
The result then follows from \eqref{sumdeltaeq}.
\end{proof}

The proof of Lemma \ref{qlblem} is much more delicate. Our next result provides a bound on compound Poisson processes, which we prove using standard arguments in Appendix \ref{CPP_append}. This will then be applied to prove Lemma \ref{qlblem}.

\begin{lem}\label{CPPfollowtubecor}
Suppose that $\delta,t,A>0$ and $a\in\R$ satisfy $a<tA/2$ and $|a|\le\delta/2$. Suppose also that $R\ge 1/2$. Let $(X(s),s\ge0)$ be a compound Poisson process of rate $RT$ whose jumps are exponentially distributed with parameter $T$. Then for $T>\frac{2(A-a/t)^{3/2}(4t+\delta)}{R^{1/2}\delta^{2}((A-a/t)\wedge 1)^2}$,
\begin{multline*}
\P(|a+X(s)-As|<\delta \,\, \forall s\le t, \,\, |a+X(t)-At|<\delta/2)\\
\ge \frac{1}{2}\exp\Big(-tT(\sqrt R - \sqrt A)^2 - \delta\Big(1+\sqrt{R}\big(\sqrt{2t/\delta}+1/2\big)\Big)T\Big).
\end{multline*}
\end{lem}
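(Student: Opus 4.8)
The plan is to combine an exponential (Esscher) change of measure with a Doob-type second-moment estimate. Write $A' = A - a/t$; the hypotheses give $A' > A/2 > 0$ (from $a < tA/2$) and $|A - A'| = |a|/t \le \delta/(2t)$ (from $|a| \le \delta/2$). Since $X$ is a compound Poisson process of rate $RT$ with $\mathrm{Exp}(T)$ jumps, its Laplace exponent is $\psi(\lambda) = RT\lambda/(T-\lambda)$ for $\lambda < T$, and $(e^{\lambda X(s) - \psi(\lambda)s})_{s\ge0}$ is a martingale. Taking $\lambda' = T(1-\sqrt{R/A'}) < T$, I would define $\Pt$ by $\frac{d\Pt}{d\P}\big|_{\F_s} = e^{\lambda' X(s) - \psi(\lambda')s}$; under $\Pt$, the process $X$ is again compound Poisson, now of rate $T\sqrt{RA'}$ with $\mathrm{Exp}(T\sqrt{R/A'})$ jumps, hence of drift exactly $A'$ and with $\mathrm{Var}_{\Pt}(X(t)) = 2(A')^{3/2}t/(T\sqrt R)$. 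An elementary computation gives $\psi(\lambda')t - \lambda' A' t = -tT(\sqrt R - \sqrt{A'})^2$.

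First I would bound the event in the lemma from below under $\Pt$. Under $\Pt$, $M(s) := X(s) - A's$ is a martingale with $M(0)=0$; the deterministic path $s\mapsto a + A's$ stays within $|a|(1 - s/t) \le \delta/2$ of $As$ for $s\le t$ and equals $At$ at $s=t$. By Doob's $L^2$ maximal inequality and Chebyshev's inequality, $\Pt\big(\sup_{s\le t}|M(s)| \ge \delta/2\big) \le 16\,\mathrm{Var}_{\Pt}(X(t))/\delta^2$, and the stated lower bound on $T$ is exactly what is needed to make this at most $1/2$ (this is where the factor $(A'\wedge1)^2$ and the hypothesis $R\ge1/2$ are used to absorb constants). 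On the complementary event, $|a + X(s) - As| \le |M(s)| + \delta/2 < \delta$ for all $s\le t$ and $|a+X(t)-At| = |M(t)| < \delta/2$, so this event is contained in the event of the lemma; hence $\Pt(\text{event}) \ge 1/2$.

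Then I would transfer back: $\P(\text{event}) = \Et\big[e^{\psi(\lambda')t - \lambda'X(t)}\ind_{\text{event}}\big]$. On the event $X(t) \in (A't-\delta/2,\, A't+\delta/2)$ (using $At - a = A't$), and since the exponent is affine in $X(t)$ it is at least $\psi(\lambda')t - \lambda'A't - |\lambda'|\delta/2 = -tT(\sqrt R - \sqrt{A'})^2 - |\lambda'|\delta/2$. Using $|\sqrt{A'}-\sqrt A| \le (|A-A'|)^{1/2}$, $\sqrt A + \sqrt{A'} \ge 2(A\wedge A')^{1/2}$, the bounds $|A-A'| \le \delta/(2t)$ and $A' \ge A/2$, and (when $A' < R$) $A' \ge \delta/(2t)$ on the part of the event where $A't \ge \delta/2$, one checks that $tT(\sqrt R - \sqrt{A'})^2 + |\lambda'|\delta/2 \le tT(\sqrt R - \sqrt A)^2 + \delta\big(1 + \sqrt R(\sqrt{2t/\delta}+1/2)\big)T$; in the remaining case $A't < \delta/2$ one instead discards the term $-\lambda'X(t) \ge 0$ (valid since then $\lambda' < 0$) and absorbs $tT\sqrt{A'}(\sqrt R - \sqrt{A'}) \le tT(A'R)^{1/2}$ into the same error. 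Combining this with $\Pt(\text{event}) \ge 1/2$ yields the claimed bound.

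The conceptual content is just the Esscher tilt (which produces the exponent $-tT(\sqrt R - \sqrt{A'})^2$) together with the observation that the tilted process has the right drift and $O(T^{-1})$ variance, so it follows the tube with probability at least $1/2$. The main obstacle is the last step: converting $-tT(\sqrt R - \sqrt{A'})^2 - |\lambda'|\delta/2$ into the precise stated error term requires the somewhat delicate case split above and uses all three hypotheses ($a < tA/2$, $|a|\le\delta/2$, $R\ge1/2$) together with the lower bound on $T$.
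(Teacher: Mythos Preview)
Your outline is essentially the paper's approach: Esscher-tilt so that $X$ has drift $A'=A-a/t$, show the tilted process stays in the tube with probability at least $1/2$, then transfer back and convert $A'$ to $A$. Two quantitative details, however, do not go through as written.

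First, the second-moment bound is not sharp enough for the stated threshold on $T$. Kolmogorov's inequality gives
\[
\Pt\big(\sup_{s\le t}|M(s)|\ge \delta/2\big)\le \frac{4\,\mathrm{Var}_{\Pt}(X(t))}{\delta^{2}}=\frac{8(A')^{3/2}t}{\sqrt R\,T\,\delta^{2}},
\]
and substituting the hypothesis on $T$ bounds this by $\frac{4t(A'\wedge1)^{2}}{4t+\delta}$, which is $<1$ but not $\le 1/2$ (take $A'\ge 1$ and $\delta\ll t$). Your factor $16$ from ``Doob $L^{2}$ then Chebyshev'' is even worse. The paper instead applies Doob's submartingale inequality to $e^{\pm\nu M(s)}$ with $\nu=\pm 2/(\delta(A'\wedge1))$, obtaining $\le e^{-3/2}<1/4$ for each tail; this is what produces exactly the stated threshold on $T$. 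Crucially, this exponential bound is run for the \emph{smaller} tube of radius $\tfrac{\delta}{2}(A'\wedge 1)$, not $\tfrac{\delta}{2}$.

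Second, that smaller tube is precisely what makes the transfer-back step clean. With $|X(t)-A't|<\tfrac{\delta}{2}(A'\wedge1)$ one gets the error term
\[
\tfrac{\delta}{2}(A'\wedge1)\,|\lambda'|
=\tfrac{\delta}{2}(A'\wedge1)\,T\big|1-\sqrt{R/A'}\big|
\le \tfrac{\delta}{2}(1+\sqrt R)\,T,
\]
since $(A'\wedge1)/\sqrt{A'}\le 1$. Without the $(A'\wedge1)$ factor your error $|\lambda'|\delta/2$ blows up as $A'\to 0$. Your proposed rescue for small $A'$ (discard $-\lambda'X(t)\ge 0$ when $A't<\delta/2$) presumes $\lambda'\le 0$, i.e.\ $R\ge A'$, which is not implied by $A't<\delta/2$; and even when $\lambda'\le 0$, the remaining exponent $\psi(\lambda')t=-tTR+tT\sqrt{RA'}$ is not obviously dominated by the target when, say, $R$ is large and $\delta/t$ is moderate. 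Once the $(A'\wedge1)$ tube is in place, the final conversion from $A'$ to $A$ is a two-line estimate using $(1-x)^{1/2}\ge 1-x^{1/2}$:
\[
t\big(\sqrt R-\sqrt{A'}\big)^{2}\le t\big(\sqrt R-\sqrt A\big)^{2}+\tfrac{\delta}{2}+\sqrt{2\delta R t},
\]
and no case split is needed.
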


We now apply Lemma \ref{CPPfollowtubecor} to prove Lemma \ref{qlblem}. However we still need to consider two cases: if $f_X$ does not change much over the interval $I_j$ then we may simply ask our process not to jump over that interval, and a bound similar to that in the proof of Lemma \ref{qhatlb} is better than the estimate provided by Lemma \ref{CPPfollowtubecor}.

\begin{proof}[Proof of Lemma \ref{qlblem}]
Recall that
\begin{multline*}
q^X_{n,3M,T}(z,j,f) = Q_z^{I_j,\Lambda_{3M,T}(f,n),T}\Big(\big|X_-(s)- f_X(s)\big| \le \sfrac{1}{n^2} \,\,\forall s\in I_j,\\
\big|X_-(\sfrac{j+1}{n})-f_X(\sfrac{j+1}{n})\big| \le \sfrac{1}{2n^2},\, X_-|_{I_j}\in G_{3M,T}|_{I_j}\Big)
\end{multline*}
Write $Q_z$ as shorthand for $Q_z^{I_j,\Lambda_{3M,T}(f,n),T}$.

Since $f\in G_M^2$, $j\ge 1$ and $n\ge 2M$, under $Q_z$ we also have, for any $s\in I_j$,
\[X_-(s)\ge x \ge f_X \Big(\frac{j}{n}\Big)-\frac{1}{2n^2} \ge \frac{j}{Mn} - \frac{1}{2n^2} \ge \frac{3j}{3Mn} - \frac{1}{3Mn} \ge \frac{j+2}{3Mn} - \frac{1}{3Mn} = \frac{j+1}{3Mn} \ge \frac{s}{3M},\]
and if $\big|X_-(\sfrac{j+1}{n})-f_X(\sfrac{j+1}{n})\big| \le \sfrac{1}{2n^2}$ then also
\[X_-(s)\le X_-(\sfrac{j+1}{n}) \le f_X(\sfrac{j+1}{n}) + \sfrac{1}{2n^2} \le M\sfrac{j+1}{n} + \sfrac{1}{2n^2} \le 3Ms.\]
Thus in fact, under the conditions of the lemma, $X_-|_{I_j}$ is always in $G_{3M,T}|_{I_j}$, so
\begin{equation}\label{slimq}
q^X_{n,3M,T}(z,j,f) = Q_z^{I_j,\Lambda_{3M,T}(f,n),T}\Big(\big|X_-(s)- f_X(s)\big| \le \sfrac{1}{n^2} \;\;\forall s\in I_j,\; \big|X_-(\sfrac{j+1}{n})-f_X(\sfrac{j+1}{n})\big| \le \sfrac{1}{2n^2}\Big).
\end{equation}

For the remainder of this proof, for $I\subset[0,1]$, we write $\hat R^-_X(I)$ as shorthand for the quantity $R^-_X(I,\Lambda_{M,T}(f,n),T)$, and similarly for $\hat R^+_X(I)$, $\hat R^-_Y(I)$ and $\hat R^+_Y(I)$. (Recall that we wrote $R^-_X(I)$ in Section \ref{coupling_sec} to mean $R^-_X(I,\Gamma_{M,T}(f,n),T)$.)

\vspace{3mm}

\noindent
\textbf{Case 1:} $f_X(\frac{j+1}{n})\le x + \frac{1}{2n^2}$.\\
Note that since $z\in\mathcal Z_j$, we have $x\le f_X(\sfrac{j}{n})+\sfrac{1}{2n^2} \le f_X(\sfrac{j+1}{n})+\sfrac{1}{2n^2}$, and therefore $|x-f_X(s)|\le \sfrac{1}{2n^2}$ for all $s\in I_j$. 
Thus \eqref{slimq} can be bounded in the following trivial way:
\[q^X_{n,3M,T}(z,j,f) \ge Q_z\big(X_-(s) = x \;\;\forall s\in I_j) = Q_z\big(X_-(\sfrac{j+1}{n}) = x).\]
Under $Q_z$, $X_-$ jumps at rate $2\hat R_X^-(j)T$, so we deduce that
\begin{equation}\label{lbqtoR-case1}
q^X_{n,3M,T}(z,j,f) \ge \exp\Big(-\frac{2\hat R_X^-(j)T}{n}\Big).
\end{equation}
On the other hand we have
\begin{align*}
&\int_{j/n}^{(j+1)/n} \Big(\sqrt{2R^*_X(f(s))}-\sqrt{f'_X(s)}\Big)^2 ds\\
&\ge 2\int_{j/n}^{(j+1)/n} R^*_X(f(s)) ds - 2\int_{j/n}^{(j+1)/n} \sqrt{2R^*_X(f(s))n\big(f_X(\sfrac{j+1}{n})-f_X(\sfrac{j}{n})\big)} ds + f_X(\sfrac{j+1}{n}) - f_X(\sfrac{j}{n}).
\end{align*}
By \eqref{Rtrap2},
\[R_X^*(f(s)) \ge \hat R_X^+(j) - \delta_{M,T}(j,n) \ge \hat R_X^-(j) - \delta_{M,T}(j,n),\]
so since $f_X(\sfrac{j+1}{n}) - f_X(\sfrac{j}{n}) \le \sfrac{1}{n^2}$ and $R_X^*(f(s))\le M$ for all $s$,
\[\int_{j/n}^{(j+1)/n} \Big(\sqrt{2R^*_X(f(s))}-\sqrt{f'_X(s)}\Big)^2 ds \ge \frac{2\hat R^-_X(j) - 2\delta_{M,T}(j,n)}{n}  - \frac{2 \sqrt{2}M^{1/2}}{n^{3/2}} .\]
The result now follows from this and \eqref{lbqtoR-case1}.

\vspace{3mm}

\noindent
\textbf{Case 2:} $f_X(\frac{j+1}{n})> x+\frac{1}{2n^2}$.\\
Note that $X_-$ jumps at rate $2\hat R_X^-(j)T$ and has exponential jumps of parameter $T$ under $Q_z$. We therefore aim to apply Lemma \ref{CPPfollowtubecor}, with $A=n(f_X(\frac{j+1}{n})-f_X(\frac{j}{n}))$, $\delta = 1/n^2$, $t=1/n$ and $a=x-f_X(j/n)$. We need to check that $a<tA/2$; to see this, note that since $z\in\mathcal Z(j)$ and we are in Case 2,
\[2a=2\big(x-f_X(\sfrac{j}{n})\big)\le \sfrac{1}{2n^2} + x-f_X(\sfrac{j}{n}) < f_X(\sfrac{j+1}{n}) - f_X(\sfrac{j}{n})=tA.\]
It is also easy to check that for $T>8n^{9/2}M^{3/2}$, $T$ is large enough that the conclusion of Lemma \ref{CPPfollowtubecor} holds. Thus applying Lemma \ref{CPPfollowtubecor} to \eqref{slimq} gives
\begin{multline*}
q^X_{n,3M,T}(z,j,f)\\
\ge \frac{1}{2} \exp\bigg(-\frac{T}{n}\Big(\sqrt{2\hat R_X^-(j)}- \sqrt{n\big(f_X(\sfrac{j+1}{n})-f_X(\sfrac{j}{n})\big)}\Big)^2 - \frac{1}{n^2}\Big(1+\sqrt{2\hat R_X^-(j)}\big(\sqrt{2n}+1/2\big)\Big)T\bigg).
\end{multline*}
Since $f\in G_M^2$, we have $\hat R_X^-(j)\le M$ and therefore
\[1+\sqrt{2\hat R_X^-(j)}\big(\sqrt{2n}+1/2\big) \le 1 + \sqrt{2M}\big(\sqrt{2n}+1/2\big) \le 2(M+1)n^{1/2}.\]
Thus
\begin{equation}\label{case2lbpt1}
q^X_{n,3M,T}(z,j,f) \ge \frac{1}{2} \exp\bigg(-\frac{T}{n}\Big(\sqrt{2\hat R_X^-(j)} - \sqrt{n\big(f_X(\sfrac{j+1}{n})-f_X(\sfrac{j}{n})\big)}\Big)^2 - \frac{2(M+1)T}{n^{3/2}}\bigg).
\end{equation}

Noting that since $f\in \PL_n^2$ we have $n\big(f_X(\sfrac{j+1}{n})-f_X(\sfrac{j}{n})\big) = f'(s)$ for all $s\in I_j$, and by \eqref{Rtrap2}
\[\big(R^*_X(f(s))-\delta_{M,T}(j,n)\big)\vee 0 \le \hat R_X^-(j) \le \hat R_X^+(j) \le R_X^*(f(s))+\delta_{M,T}(j,n),\]
we deduce that
\[\frac{2}{n}\hat R_X^-(j) \le \int_{j/n}^{(j+1)/n} 2R^*_X(f(s))ds + \frac{2\delta_{M,T}(j,n)}{n}\]
and using also that $\sqrt{(a-b)\wedge 0} \ge \sqrt a - \sqrt b$ for $a,b\ge0$,
\begin{multline*}
\frac{1}{n}\sqrt{2\hat R_X^-(j)n\big(f_X(\sfrac{j+1}{n})-f_X(\sfrac{j}{n})\big)}\\
\ge \int_{j/n}^{(j+1)/n} \sqrt{2R^*_X(f(s))f'_X(s)}ds - \frac{1}{\sqrt n}\sqrt{2\delta_{M,T}(j,n)\big(f_X(\sfrac{j+1}{n})-f_X(\sfrac{j}{n})\big)}.
\end{multline*}
Thus
\begin{multline*}
\frac{1}{n}\Big(\sqrt{2\hat R_X^-(j)} - \sqrt{n\big(f_X(\sfrac{j+1}{n})-f_X(\sfrac{j}{n})\big)}\Big)^2 \le \int_{j/n}^{(j+1)/n} \Big(\sqrt{2R^*_X(f(s))} - \sqrt{f'(s)}\Big)^2 ds + \frac{2\delta_{M,T}(j,n)}{n}\\
+ \frac{1}{\sqrt n}\sqrt{2\delta_{M,T}(j,n)\big(f_X(\sfrac{j+1}{n})-f_X(\sfrac{j}{n})\big)}.
\end{multline*}
This combines with \eqref{case2lbpt1} to give the result.
\end{proof}

\section{The final details for the upper bound}\label{finaldetailssec}

\subsection{Compactness and semicontinuity}\label{cpctsemisec}

There are a few more technical issues that must be resolved in order to complete the proof of the upper bound in Theorem \ref{mainthm}. One of the remaining ingredients is to prove that the set of functions that we are interested in can be covered by a finite collection of small balls around suitably chosen functions. Recall that $\PL_n$ is the subset of functions in $E$ that are linear on each interval $[i/n,(i+1)/n]$ for all $i=0,\ldots,n-1$ and continuous on $[0,1]$. For $F\subset E$ and $r>0$, write $B_d(F,r) = \bigcup_{f\in F} B_d(f,r)$, where $B_d(f,r)$ is the ball of radius $r$ about $f$ in the metric $d$.

\begin{lem}\label{coverGMT}
Suppose that $F\subset E^2$ and $M>1$. For any $n\ge 4M$, there exist $N\in\N\cup\{0\}$ and $g_1,\ldots,g_N\in G_{4M}^2\cap\PL_n^2$ such that
\[F\cap G_{M,T}^2\subset \bigcup_{i=1}^N \big( B_{\Delta_n}(g_i,1/n^2) \cap B_d(g_i,1/n)\big)\subset B_d(F,2/n)\]
for all $T\ge (4Mn)^{3/2}$.
\end{lem}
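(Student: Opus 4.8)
The plan is to discretise functions in $F\cap G_{M,T}^2$ through their values at the grid points $j/n$, $j=0,\dots,n$, let the $g_i$ be the piecewise-linear interpolations of finitely many such value-tuples, and then verify the two inclusions directly from the definitions of $G_{4M}$, $\Delta_n$ and the L\'evy metric \eqref{metricdefn}. First I would reduce to a single value of $T$: since $2T^{-2/3}$ decreases in $T$, which tightens both defining inequalities of $G_{M,T}$, we have $G_{M,T}^2\subset G_{M,T_0}^2$ whenever $T\ge T_0:=(4Mn)^{3/2}$. As we will choose the $g_i$ without reference to $T$, it is enough to prove the two inclusions with $T_0$ in place of $T$; throughout I may then use $2T_0^{-2/3}=\tfrac1{2Mn}$ and $2MT_0^{-2/3}=\tfrac1{2n}$.

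For $\phi\in E$ let $\tilde\phi\in\PL_n$ be the continuous function with $\tilde\phi(j/n)=\phi(j/n)$ for each $j$ that is linear on every $[j/n,(j+1)/n]$, and write $\tilde f=(\tilde f_X,\tilde f_Y)$. The first step is the deterministic claim that $f\in G_{M,T_0}^2$ implies $\tilde f\in G_{4M}^2\cap\PL_n^2$; this is a short computation from the inequalities defining $G_{M,T_0}$. For example, for $s\in[k/n,(k+1)/n]$ with $k\ge1$, monotonicity of $\tilde f_X$ gives $\tilde f_X(s)\le f_X(\tfrac{k+1}n)\le M\tfrac{k+1}n+\tfrac1{2n}\le 4M\tfrac kn\le 4Ms$ and $\tilde f_X(s)\ge f_X(\tfrac kn)\ge\tfrac k{Mn}-\tfrac1{2Mn}\ge\tfrac{k+1}{4Mn}\ge\tfrac s{4M}$, while for $k=0$ one has $\tilde f_X(s)=ns\,f_X(\tfrac1n)$ with $\tfrac1{2M}\le nf_X(\tfrac1n)\le\tfrac{2M+1}2\le 4M$; the same bounds hold with $Y$ in place of $X$, and $\tilde f$ is non-decreasing with $\tilde f(0)=0$. (The constant $4M$ is essentially sharp here — the $k=1$ estimate is an equality at $T=T_0$ — which dictates how fine the value-discretisation must be.) Now, since every $f\in G_{M,T_0}^2$ has $0\le f_X(j/n),f_Y(j/n)\le M+\tfrac1{2n}\le 2M$, the tuple $\pi(f):=(f(0),f(\tfrac1n),\dots,f(1))$ lies in the bounded box $[0,2M]^{2(n+1)}\subset(\R^2)^{n+1}$. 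Fix the mesh $\ell:=\tfrac1{10Mn^2}$, partition $(\R^2)^{n+1}$ into half-open boxes of side $\ell$ in the coordinatewise sense, and note that only finitely many of these meet $[0,2M]^{2(n+1)}$. For each such box $Q$ that also meets $\{\pi(f):f\in F\cap G_{M,T_0}^2\}$, fix a witness $f_Q\in F\cap G_{M,T_0}^2$ with $\pi(f_Q)\in Q$ and set $g_Q:=\widetilde{f_Q}$, which by Step 1 lies in $G_{4M}^2\cap\PL_n^2$. Let $g_1,\dots,g_N$ enumerate these finitely many functions.

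The analytic input for the last step is the elementary estimate $d(\phi,\tilde\phi)\le\phi(1)/(1+n\phi(1))<1/n$, valid for non-decreasing $\phi\in E$: one checks that $r:=\max_{0\le k<n}\omega_k/(1+n\omega_k)$ works in \eqref{metricdefn}, where $\omega_k:=\phi(\tfrac{k+1}n)-\phi(\tfrac kn)\ge0$ satisfy $\sum_k\omega_k=\phi(1)$, and uses that $x\mapsto x/(1+nx)$ is increasing. Together with $d(\psi_1,\psi_2)\le\sup_s\|\psi_1(s)-\psi_2(s)\|$ and the fact that interpolants of coordinatewise $\ell$-close tuples are $\ell$-close uniformly, this gives, for $f\in F\cap G_{M,T_0}^2$ and a box $Q$ with $\pi(f)\in Q$, both $\Delta_n(f,\widetilde{f_Q})\le\max_j\|f(j/n)-f_Q(j/n)\|\le\ell$ and $d(f,\widetilde{f_Q})\le d(f,\tilde f)+d(\tilde f,\widetilde{f_Q})\le\tfrac{2M}{1+2Mn}+\ell$. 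For the second inclusion: each $g_i=\widetilde{f_Q}$ with $f_Q\in F$ and $d(g_i,f_Q)\le\tfrac{2M}{1+2Mn}<\tfrac1n$, so $B_d(g_i,1/n)\subset B_d(f_Q,2/n)\subset B_d(F,2/n)$, whence $\bigcup_i(B_{\Delta_n}(g_i,1/n^2)\cap B_d(g_i,1/n))\subset B_d(F,2/n)$. For the first inclusion: given $f\in F\cap G_{M,T}^2\subset F\cap G_{M,T_0}^2$, the tuple $\pi(f)$ lies in some box $Q$ of the partition, which is therefore one of the chosen boxes, so $g_Q=g_i$ for some $i$; then $\Delta_n(f,g_i)\le\ell<\tfrac1{n^2}$ (as $10M>1$) and $d(f,g_i)\le\tfrac{2M}{1+2Mn}+\ell<\tfrac1n$, the last inequality because $\ell=\tfrac1{10Mn^2}<\tfrac1{n(1+2Mn)}=\tfrac1n-\tfrac{2M}{1+2Mn}$ (equivalently $1<8Mn$). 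Hence $f\in B_{\Delta_n}(g_i,1/n^2)\cap B_d(g_i,1/n)$, and recalling $G_{M,T}^2\subset G_{M,T_0}^2$ completes the proof.

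I expect the only real obstacle to be the sharpness of the constant $4M$ (equivalently of the hypothesis $T\ge(4Mn)^{3/2}$): because of it, the slack $\tfrac1n-d(f,\tilde f)$ can be as small as $\tfrac1{n(1+2Mn)}$, so the value-discretisation mesh $\ell$ must be taken considerably smaller than the nominal $\Delta_n$-radius $1/n^2$. Everything else is routine bookkeeping with the definitions of $G_M$, $G_{M,T}$, $\Delta_n$ and \eqref{metricdefn}, together with the finiteness of a box cover of a bounded subset of Euclidean space.
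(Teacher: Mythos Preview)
Your proof is correct and follows essentially the same strategy as the paper: discretise the values of functions in $F\cap G_{M,T}^2$ at the grid points $j/n$, take piecewise-linear interpolants as the $g_i$, and verify the inclusions directly. The paper's implementation is a little more direct: instead of partitioning tuple space into boxes of side $\ell=1/(10Mn^2)$ and picking witnesses, it simply sets $g(j/n)=\lfloor n^2 h(j/n)\rfloor/n^2$ for each $h\in F\cap G_{M,T}^2$, which gives finiteness and $\Delta_n(g,h)<1/n^2$ in one stroke; and it obtains $d(g,h)\le 1/n$ from the one-line observation that $g(s)\in[h(j/n)-1/n^2,\,h((j+1)/n)]$ for $s\in[j/n,(j+1)/n]$, avoiding your sharper estimate $d(\phi,\tilde\phi)\le\phi(1)/(1+n\phi(1))$ and the triangle-inequality detour. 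Your monotonicity reduction to $T_0=(4Mn)^{3/2}$ is a nice explicit touch that the paper leaves implicit.
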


\noindent
We will prove this in Appendix \ref{rel_cpct_sec}.

In order to check that the supremum of our rate function $\tilde K$ over $f\in B_d(F,\eps)$ is close to the supremum over $f\in F$ when $\eps$ is small, we will need to show that $\tilde K$ has some form of upper semi-continuity.

\begin{prop}\label{uppersemicts}
Suppose that $0<\theta\le 1$ and there exists $M\in(1,\infty)$ such that $f,f_n\in G_M^2$ for all $n$. Suppose also that either $f$ is continuous at $\theta$, or $\theta=1$. If $d(f_n,f)\to 0$ then
\[\limsup_{n\to\infty} \tilde K(f_n,0,\theta) \le \tilde K(f,0,\theta).\]
\end{prop}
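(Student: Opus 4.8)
I want to show upper semicontinuity of $f \mapsto \tilde K(f,0,\theta)$ along $d$-convergent sequences in $G_M^2$. Recall that by definition $\tilde K(f,0,\theta) = \int_0^\theta R^*(f(s))\,ds - J(f,0,\theta)$ when $J(f,0,\theta)<\infty$, and $-\infty$ otherwise, where $J(f,0,\theta) = I(f,0,\theta) + \hat f_X(\theta) + \hat f_Y(\theta)$ (since $\hat f(0)=0$). So I need a \emph{lower} bound on $\liminf_n J(f_n,0,\theta)$ and an \emph{upper} bound on $\limsup_n \int_0^\theta R^*(f_n(s))\,ds$. The second is easy: since all functions lie in $G_M^2$, $R^*$ is bounded by $M^2$ on the relevant region, and $L$évy ($=M_2$) convergence implies convergence of $f_n(s)\to f(s)$ at every continuity point $s$ of $f$, hence Lebesgue-a.e.; so by bounded convergence (using continuity of $R^*$ away from $0$, and that $f\in G_M^2$ keeps us a definite distance from the bad diagonal point only at $s=0$, which has measure zero) $\int_0^\theta R^*(f_n)\,ds \to \int_0^\theta R^*(f(s))\,ds$. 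The endpoint condition ($f$ continuous at $\theta$, or $\theta=1$) is exactly what is needed so that restricting to $[0,\theta]$ behaves well under $M_2$-convergence.

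The heart of the matter is the lower-semicontinuity-type bound $\liminf_n J(f_n,0,\theta) \ge J(f,0,\theta)$. This splits into the absolutely continuous part $I$ and the singular increments $\hat f_X(\theta)+\hat f_Y(\theta)$. The clean way to handle both at once is to use the representation of $J$ as a convex functional of the \emph{measure} $df_n$. Writing each coordinate increment as a positive measure on $[0,\theta]$, the quantity $\int_0^\theta (\sqrt{2R_X^*(f(s))} - \sqrt{f_X'(s)})^2\,ds + \hat f_X(\theta)$ can be recognized (after expanding the square and using $\tilde f_X'(s)\,ds + d\hat f_X = df_X$) as
\[
\int_0^\theta 2R_X^*(f(s))\,ds \;-\; 2\sqrt2\int_0^\theta \sqrt{R_X^*(f(s))}\,\sqrt{f_X'(s)}\,ds \;+\; f_X(\theta) - f_X(0),
\]
so the only term that is not already continuous is $-2\sqrt2\int_0^\theta \sqrt{R_X^*(f(s)) f_X'(s)}\,ds$, and I need an \emph{upper} semicontinuity statement for $\int \sqrt{R_X^*(f_n)\,f_n'}$. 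This is a standard fact about the functional $\mu \mapsto \int \sqrt{a(s)}\,\sqrt{d\mu/ds}\,ds$ (the absolutely continuous density only): it is concave and upper semicontinuous in the weak topology on measures, because $\sqrt{d\mu/ds}$ is the "recession-free" part — jumps and singular mass contribute zero. Concretely I would argue: pass to a subsequence realizing the limsup; $df_{n,X}|_{[0,\theta]}$ is a bounded sequence of measures (mass $\le M\theta$), so extract a weakly convergent subsequence with limit $\nu \ge df_X|_{[0,\theta]}$ (the limit dominates because $M_2$-convergence gives pointwise convergence at continuity points, hence $f_n(s)\to f(s)$ a.e., so liminf of the measures on intervals is $\ge$ the increment of $f$); then by the Benamou–Brenier / Ioffe-type lower semicontinuity of the convex integral functional $g \mapsto \int_0^\theta (\sqrt{2R_X^*(f(s))} - \sqrt{g'(s)})^2\,ds + \nu_{\text{sing}}([0,\theta])$ applied with the \emph{continuous} weight $R_X^*(f(\cdot))$ (here using $f_n\to f$ a.e.\ to replace $R_X^*(f_n)$ by $R_X^*(f)$, controlled because $R^*$ is bounded on $G_M^2$ away from $s=0$), together with $\nu \ge df_X$ so that the limit functional evaluated at $\nu$ dominates $J_X(f,0,\theta)$, I get $\liminf_n J_X(f_n,0,\theta)\ge J_X(f,0,\theta)$. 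Summing the $X$ and $Y$ contributions gives $\liminf_n J(f_n,0,\theta)\ge J(f,0,\theta)$.

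Combining: if $\tilde K(f,0,\theta)=-\infty$ there is nothing to prove, so assume $J(f,0,\theta)<\infty$; then
\[
\limsup_n \tilde K(f_n,0,\theta) \;=\; \limsup_n\Big(\int_0^\theta R^*(f_n)\,ds - J(f_n,0,\theta)\Big) \;\le\; \int_0^\theta R^*(f)\,ds - \liminf_n J(f_n,0,\theta) \;\le\; \tilde K(f,0,\theta),
\]
where in the last step if $J(f_n,0,\theta)=\infty$ along a subsequence that subsequence contributes $-\infty$ and is harmless. This is the statement.

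\textbf{The main obstacle.} The delicate point is the handling of the weight $R_X^*(f_n(\cdot))$ versus $R_X^*(f(\cdot))$ inside the square-root cross term near $s=0$, where $R^*$ is discontinuous; the $G_M^2$ constraint $s/M \le f(s)\le Ms$ saves the day because it forces $f_n(s)/f_{n,Y}(s)$ to stay in $[1/M^2,M^2]$, so $R^*(f_n(s))$ is uniformly bounded and converges a.e.\ to $R^*(f(s))$ on $(0,\theta]$, and the single point $s=0$ is Lebesgue-null — but one must be careful that the cross term $\int_0^\theta \sqrt{R_X^*(f_n(s)) f_{n,X}'(s)}\,ds$ really does converge (upper semicontinuously) and is not sabotaged by mass of $f_{n,X}'$ concentrating near $0$; here the bound $f_{n,X}(s)\le Ms$ caps the total increment on $[0,\varepsilon]$ by $M\varepsilon$, giving the needed uniform integrability. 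The second, more bookkeeping-level obstacle is making the weak-limit-of-measures argument clean given that we only have $M_2$ convergence rather than $J_1$ — but this is precisely why the paper works with increasing functions and the L\'evy metric, and the pointwise-a.e.\ convergence at continuity points plus the endpoint hypothesis at $\theta$ is exactly enough.
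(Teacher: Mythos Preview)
Your overall strategy coincides with the paper's: both expand $\tilde K$ via the alternative representation \eqref{Kalt},
\[
\tilde K(f,0,\theta) = -\int_0^\theta R^*(f(s))\,ds + 2\sqrt2\int_0^\theta \sqrt{R^*_X(f)f'_X}\,ds + 2\sqrt2\int_0^\theta \sqrt{R^*_Y(f)f'_Y}\,ds - f_X(\theta) - f_Y(\theta),
\]
observe that the first term converges (bounded convergence, since $R^*$ is continuous away from $0$ and bounded by $M^2$ on $G_M^2$) and the endpoint terms converge (Lemma~\ref{pointwiselem}), and then isolate the cross term $\int_0^\theta\sqrt{R^*_X(f_n)f'_{n,X}}\,ds$ as the only place where genuine upper semicontinuity is needed.

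The difference is in how you handle that cross term. The paper (Lemmas~\ref{limsupsqrt} and~\ref{limsupsqrt2}) gives a self-contained elementary argument: apply Cauchy--Schwarz on the subintervals $[(i-1)/m,i/m]$ to get
\[
\int_0^\theta \sqrt{R^*_X(f_n)f'_{n,X}}\,ds \le \sum_{i}\Big(\int_{(i-1)/m}^{i/m} R^*_X(f_n)\Big)^{1/2}\big(f_{n,X}(\tfrac{i}{m})-f_{n,X}(\tfrac{i-1}{m})\big)^{1/2},
\]
pass to the limit in $n$ using pointwise convergence at continuity points, then let $m\to\infty$ via generalised dominated convergence; the non-differentiability set of $f$ is handled by covering it with short intervals of small total length. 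Your route instead packages the cross term as a concave functional of the measure $df_{n,X}$, invokes weak convergence $df_{n,X}\to df_X$ on $[0,\theta]$, and appeals to Reshetnyak/Ioffe lower semicontinuity for the convex functional $\mu\mapsto\int(\sqrt{2R_X^*(f)}-\sqrt{d\mu^{ac}/ds})^2\,ds+\mu^{\text{sing}}([0,\theta])$. Your swap of the weight $R_X^*(f_n)$ for $R_X^*(f)$ via Cauchy--Schwarz and dominated convergence is correct. The abstract route is valid but you should check the hypotheses: the version you need allows a merely Borel-measurable (not continuous) weight $s\mapsto R_X^*(f(s))$, which holds for normal-integrand formulations but not all statements of Reshetnyak's theorem. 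The paper's discretisation avoids this technicality entirely and is fully self-contained.

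One minor slip: you write ``if $\tilde K(f,0,\theta)=-\infty$ there is nothing to prove''. This is backwards---that would be the \emph{hardest} case---but harmless here, since for $f\in G_M^2$ one always has $J(f,0,\theta)<\infty$ (both $R^*_X,R^*_Y$ are bounded by $M^2$, $\int_0^\theta f'_X\,ds\le M\theta$, and the singular increment is at most $M\theta$), so $\tilde K(f,0,\theta)>-\infty$ throughout.
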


The following simple corollary of Proposition \ref{uppersemicts} is written in a more convenient form.

\begin{cor}\label{uppersemictscor}
Suppose that $M\in(1,\infty)$ and $F\subset E^2$ is closed. Then
\[\lim_{\eps\to 0} \sup_{f\in B_d(F,\eps) \cap G_M^2} \tilde K(f,0,1) \le \sup_{f\in F \cap G_M^2} \tilde K(f,0,1).\]
\end{cor}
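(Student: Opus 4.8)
The plan is to deduce the corollary from Proposition \ref{uppersemicts} by a compactness argument. First I would record two preliminary observations. On the one hand, $\eps\mapsto \sup_{f\in B_d(F,\eps)\cap G_M^2}\tilde K(f,0,1)$ is non-decreasing in $\eps$, since $B_d(F,\eps)\cap G_M^2$ grows with $\eps$; hence $L:=\lim_{\eps\to 0}\sup_{f\in B_d(F,\eps)\cap G_M^2}\tilde K(f,0,1)$ exists in $[-\infty,\infty]$ and equals $\inf_{\eps>0}\sup_{f\in B_d(F,\eps)\cap G_M^2}\tilde K(f,0,1)$. Since $F\cap G_M^2\subset B_d(F,\eps)\cap G_M^2$ for every $\eps>0$, we automatically have $L\ge \sup_{f\in F\cap G_M^2}\tilde K(f,0,1)$, so it is the reverse inequality that must be proved. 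On the other hand, for $f\in G_M^2$ one has $s/M\le f_X(s),f_Y(s)\le Ms$, so $R^*(f(s))\le M^2$ for all $s$; and since $I(f,0,1)\ge 0$ while the singular parts $\hat f_X,\hat f_Y$ are non-decreasing, $J(f,0,1)\ge 0$, whence $\tilde K(f,0,1)\le\int_0^1 R^*(f(s))\d s\le M^2$ (with $\tilde K=-\infty$ if $J(f,0,1)=\infty$). In particular $L\le M^2$, so if $L=-\infty$ there is nothing to prove and we may assume $L\in(-\infty,M^2]$.

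Next I would extract a near-optimal sequence. For each $n\in\N$, since $\sup_{f\in B_d(F,1/n)\cap G_M^2}\tilde K(f,0,1)\ge L>-\infty$, choose $f_n\in B_d(F,1/n)\cap G_M^2$ with $\tilde K(f_n,0,1)\ge L-1/n$, and then $g_n\in F$ with $d(f_n,g_n)<1/n$. The key topological input is that $G_M^2$ is compact in $(E^2,d)$: this is a Helly-type selection fact for uniformly bounded, non-decreasing c\`adl\`ag functions pinned at $0$ in the L\'evy (equivalently $M_2$) topology, of the kind handled in Appendix \ref{techsec} — one must also check that a $d$-limit of functions in $G_M$ still satisfies the two-sided bound $s/M\le f(s)\le Ms$. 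Granting this, pass to a subsequence with $f_{n_k}\to f^*$ in $d$ and $f^*\in G_M^2$. Then $d(g_{n_k},f^*)\le d(g_{n_k},f_{n_k})+d(f_{n_k},f^*)<1/n_k+d(f_{n_k},f^*)\to 0$, and since $F$ is closed and $g_{n_k}\in F$ we obtain $f^*\in F$; thus $f^*\in F\cap G_M^2$.

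Finally I would apply Proposition \ref{uppersemicts} with $\theta=1$ (the continuity-at-$\theta$ hypothesis is automatic when $\theta=1$): since $f_{n_k},f^*\in G_M^2$ and $d(f_{n_k},f^*)\to 0$,
\[\limsup_{k\to\infty}\tilde K(f_{n_k},0,1)\le\tilde K(f^*,0,1)\le\sup_{f\in F\cap G_M^2}\tilde K(f,0,1).\]
On the other hand $\tilde K(f_{n_k},0,1)\ge L-1/n_k$ gives $\limsup_{k\to\infty}\tilde K(f_{n_k},0,1)\ge L$. Combining the two displays yields $L\le\sup_{f\in F\cap G_M^2}\tilde K(f,0,1)$, as required.

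The only genuine obstacle is the compactness of $G_M^2$ in the L\'evy metric (precisely the reason the $M_2$ rather than $J_1$ topology is used); modulo that, everything is routine. If one prefers to avoid choosing subsequences, the same argument can be repackaged: for each $c<L$ the sets $\{f\in G_M^2:\tilde K(f,0,1)\ge c\}\cap\overline{B_d(F,\eps)}$ are nonempty compacts decreasing, as $\eps\downarrow 0$, to $\{f\in G_M^2:\tilde K(f,0,1)\ge c\}\cap F$, which is therefore nonempty; hence $\sup_{f\in F\cap G_M^2}\tilde K(f,0,1)\ge c$, and letting $c\uparrow L$ finishes the proof.
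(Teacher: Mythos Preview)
Your proof is correct and follows essentially the same approach as the paper: choose near-optimisers $f_n\in B_d(F,1/n)\cap G_M^2$, use compactness of $G_M^2$ in $(E^2,d)$ to extract a convergent subsequence with limit $f^*\in F\cap G_M^2$, and apply Proposition~\ref{uppersemicts} with $\theta=1$. Your version is slightly more careful than the paper's in explicitly handling the case $L=-\infty$ and in introducing the auxiliary $g_n\in F$ to justify $f^*\in F$ (the paper asserts this directly from $f_{n_j}\in B_d(F,1/n_j)$ and $F$ closed, which amounts to the same thing).
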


We will prove Proposition \ref{uppersemicts} and Corollary \ref{uppersemictscor} in Appendix \ref{usc_sec}.

\subsection{The result for fixed $T$: proof of Propositions \ref{fixedgprop} and \ref{fixedTprop}}\label{fixedTsec}

\begin{proof}[Proof of Proposition \ref{fixedgprop}]
By Markov's inequality, for any $\kappa>0$,
\[\P\Big(N_T\big(\Gamma_{M,T}(g,n),\theta\big) \ge \kappa\Big)\le \E\bigg[\sum_{v\in\Nc_T} \ind_{\{Z_v^T|_{[0,\theta]}\in\Gamma_{M,T}(g,n)|_{[0,\theta]}\}}\bigg]\frac{1}{\kappa},\]
and by Lemma \ref{MtO},
\[\E\bigg[\sum_{v\in\Nc_T} \ind_{\{Z_v^T|_{[0,\theta]}\in\Gamma_{M,T}(g,n)|_{[0,\theta]}\}}\bigg] = \Q\big[\ind_{\{\xi^T|_{[0,\theta]}\in\Gamma_{M,T}(g,n)|_{[0,\theta]}\}}e^{\int_0^{\theta T} R(\xi_s)ds}\big].\]
Now, if $\xi^T|_{[0,\theta]}\in\Gamma_{M,T}(g,n)|_{[0,\theta]}$, then by Lemma \ref{finaldetRbd},
\[\int_0^{\theta T} R(\xi_s)ds = T\int_0^\theta R(T\xi^T(s)) ds \le T\int_0^{\lfloor\theta n\rfloor/n} R^*(g(s))ds + T\eta(M,n,T),\]
and therefore
\begin{multline*}
\Q\big[\ind_{\{\xi^T|_{[0,\theta]}\in\Gamma_{M,T}(g,n)|_{[0,\theta]}\}}e^{\int_0^{\theta T} R(\xi_s)ds}\big]\\
\le \Q\big(\xi^T|_{[0,\theta]}\in\Gamma_{M,T}(g,n)|_{[0,\theta]}\big) e^{T\int_0^{\lfloor\theta n\rfloor/n} R^*(g(s))ds + T\eta(M,n,T)}.
\end{multline*}
We also know from Proposition \ref{Qprobcor} that
\[\Q(\xi^T|_{[0,\theta]}\in\Gamma_{M,T}(g,n)|_{[0,\theta]}) \le \exp\bigg(-T\sum_{j=0}^{\lfloor\theta n\rfloor-1} \big(\mathcal E^+_X(I_j,\Gamma_{M,T}(g,n),T) + \mathcal E^+_Y(I_j,\Gamma_{M,T}(g,n),T)\big)\bigg),\]
and by Proposition \ref{detboundratefn} that, if $g = (g_X,g_Y)$,
\[\sum_{j=\lceil\sqrt n\rceil}^{\lfloor \theta n\rfloor-1} \mathcal E^+_X(I_j,\Gamma_{M,T}(g,n),T) \ge \int_{\lceil\sqrt n\rceil/n}^{\lfloor \theta n\rfloor/n} \Big(\sqrt{2R_X^*(g(s))} - \sqrt{g_X'(s)}\Big)^2 ds - O\Big(\frac{M^4}{n^{1/4}} + \frac{M^3n}{T^{1/2}}\Big).\]
Since $g\in G_{M}^2$, we also have
\begin{align}
\int_0^{\lceil\sqrt n\rceil/n} \Big(\sqrt{2R_X^*(g(s))} - \sqrt{g'_{X}(s)}\Big)^2 ds &\le \int_0^{\lceil\sqrt n\rceil/n} 2R_X^*(g(s)) ds + \int_0^{\lceil\sqrt n\rceil/n} g'_{X}(s) ds\nonumber\\
&\le \frac{2M^2\lceil\sqrt n\rceil}{n} + \frac{M\lceil\sqrt n\rceil}{n} \le \frac{4M^2}{\sqrt n}\label{smallsIbd}
\end{align}
so
\[\sum_{j=\lceil\sqrt n\rceil}^{\lfloor \theta n\rfloor-1} \mathcal E^+_X(I_j,\Gamma_{M,T}(g,n),T) \ge \int_0^{\lfloor \theta n\rfloor/n} \Big(\sqrt{2R_X^*(g(s))} - \sqrt{g_X'(s)}\Big)^2 ds - O\Big(\frac{M^4}{n^{1/4}} + \frac{M^3n}{T^{1/2}}\Big)\]
and by symmetry the same bound holds for $Y$. Recalling from Lemma \ref{finaldetRbd} that $\eta(M,n,T) = O\Big(\frac{M^4}{n^{1/2}} + \frac{M^3n}{T^{1/3}}\Big)$, we deduce that
\begin{align*}
&\P\Big(N_T\big(\Gamma_{M,T}(g,n),\theta\big) \ge \kappa\Big)\\
&\hspace{10mm}\le e^{-T\int_0^{\lfloor \theta n\rfloor/n} \big(\sqrt{2R_X^*(g(s))} - \sqrt{g_X'(s)}\big)^2 ds - T\int_0^{\lfloor \theta n\rfloor/n} \big(\sqrt{2R_Y^*(g(s))} - \sqrt{g_Y'(s)}\big)^2 ds}\\
&\hspace{50mm}\cdot e^{O\left(\frac{M^4T}{n^{1/4}} + M^3nT^{2/3}\right) + T\int_0^{\lfloor \theta n\rfloor/n} R^*(g(s))ds}\cdot \frac{1}{\kappa}\\
&\hspace{10mm} = \frac{1}{\kappa} e^{T \tilde K(g,0,\lfloor \theta n\rfloor/n) + O\left(\frac{M^4T}{n^{1/4}} + M^3n T^{2/3}\right)}
\end{align*}
as required, where for the last equality we used the fact that $g\in G_M^2\cap \PL_n^2$, and therefore $\tilde K(g,0,s)=\int_0^s R^*(g(u))du - I(g,0,s)$ for all $s$.
\end{proof}

Proposition \ref{fixedTprop} essentially establishes the upper bound in Theorem \ref{mainthm} with high probability for a fixed (large) $T$. The proof mostly involves using Lemma \ref{AlwaysGMT} and the technical results stated in Section \ref{cpctsemisec} to ensure that we can cover our set in a suitable way with finitely many balls around piecewise linear functions, and then applying Proposition \ref{fixedgprop}.

\begin{proof}[Proof of Proposition \ref{fixedTprop}]
Take $M\ge M_0$ and the other parameters as in the statement of the Proposition. By Lemma \ref{AlwaysGMT},
\[\P(\exists v\in\Nc_T : Z^T_v\not\in G_{M,T}^2) \le e^{-\delta_0 T^{1/3}}.\]
By Corollary \ref{uppersemictscor}, since $F$ is closed we may choose $n$ large enough such that $n\ge 4M$ and
\[\sup_{f\in B_d(F,2/n)\cap G_{4M}^2} \tilde K(f,0,1) \le \sup_{f\in F\cap G_{4M}^2} \tilde K(f,0,1) + \eps/3.\]
By Lemma \ref{coverGMT} we may choose $N\in\N$ and $g_1,\ldots,g_N\in G_{4M}^2\cap\PL_n^2$ such that
\[F\cap G_{M,T}^2 \subset \bigcup_{i=1}^N \big(B_{\Delta_n}(g_i,1/n^2)\cap B_d(g_i,1/n)\big) \subset B_d(F,2/n)\]
for all $T\ge (4Mn)^{3/2}$. Recall that $\Gamma_{M,T}(g_i,n) = B_{\Delta_n}(g_i,1/n^2)\cap B_d(g_i,1/n)\cap G_{M,T}^2$. Then for any $A\ge 0$,
\begin{align}
\P\big(N_T(F) \ge e^{A T}\big) &\le \P\big(\exists v\in\Nc_T : Z_v^T \not\in G_{M,T}^2\big) + \sum_{i=1}^N \P\Big(N_T\big(\Gamma_{M,T}(g_i,n)\big) \ge \frac{e^{A T}}{N}\Big)\nonumber\\
&\le e^{-\delta_0 T^{1/3}} + \sum_{i=1}^N \P\Big(N_T\big(\Gamma_{M,T}(g_i,n)\big) \ge \frac{e^{A T}}{N}\Big).\label{decompFeq}
\end{align}
By Proposition \ref{fixedgprop}, for each $i$ we have
\[\P\Big(N_T\big(\Gamma_{M,T}(g_i,n)\big) \ge \frac{e^{A T}}{N}\Big) \le \frac{N}{e^{AT}}\exp\bigg(T \tilde K(g_i,0,1) + O\Big(\frac{M^4T}{n^{1/4}} + M^3n T^{2/3}\Big)\bigg),\]
and combining this with \eqref{decompFeq} we see that
\[\P\big(N_T(F) \ge e^{A T}\big) \le e^{-\delta_0 T^{1/3}} + \frac{N^2}{e^{A T}}\max_{i\in\{1,\ldots,N\}} \exp\bigg(T \tilde K(g_i,0,1) + O\Big(\frac{M^4T}{n^{1/4}} + M^3n T^{2/3}\Big)\bigg).\]
By our choice of $g_1,\ldots,g_N$ and $n$, we have
\[\max_{i\in\{1,\ldots,N\}} \tilde K(g_i,0,1) \le \sup_{f\in B_d(F,2/n)\cap G^2_{4M}}\tilde K(f,0,1) \le \sup_{f\in F\cap G^2_{4M}} \tilde K(f,0,1) + \eps/3\]
and therefore
\[\frac{1}{T^{1/3}}\log\P\big(N_T(F) \ge e^{A T}\big) \le (-\delta_0)\vee \bigg(\sup_{f\in F\cap G^2_{4M}} \tilde K(f,0,1) T^{2/3}- A T^{2/3} + \frac{\eps T^{2/3}}{3} + O\Big(\frac{M^4 T^{2/3}}{n^{1/4}}\Big)\bigg).\]
Increasing $n$ if necessary so that the $O(\frac{M^4 T^{2/3}}{n^{1/4}})$ term is smaller than $\sfrac{\eps T^{2/3}}{3}$, and choosing
\[A = \sup_{f\in F\cap G_{4M}^2} \tilde K(f,0,1) + \eps,\]
we have
\[\lim_{T\to\infty} \frac{1}{T^{1/3}}\log \P\big(N_T(F,\theta) \ge e^{A T}\big) \le -\delta_0.\]
This is precisely the statement of the proposition, but with $4M$ in place of $M$. Since we only assumed that $M\ge M_0$ in the proof, the proposition holds when $M\ge 4M_0$.
\end{proof}

\subsection[Paths with $K(f)$ negative are unlikely: proof of Lemma \ref{supKnull}]{Paths with $K(f)=-\infty$ are unlikely: proof of Lemma \ref{supKnull}}\label{sKn_sec}

Before proving Lemma \ref{supKnull}, we need to relate $K$ to $\tilde K$.

\begin{lem}\label{supKneg}
Suppose that $M>1$. If $F\subset E^2$ is closed and $\sup_{f\in F} K(f) = -\infty$, then there exists $\eps>0$ such that
\[\sup_{f\in B(F,\eps)\cap G_{M,1}^2} \inf_{\theta\in[0,1]} \tilde K(f,0,\theta) < 0.\]
\end{lem}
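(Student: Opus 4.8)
The statement is a compactness-plus-contradiction argument. Suppose, for contradiction, that no such $\eps$ exists; then for every $n\in\N$ we can find $f_n\in B(F,1/n)\cap G_{M,1}^2$ with $\inf_{\theta\in[0,1]}\tilde K(f_n,0,\theta)\ge -1/n$. Since each $f_n$ is $(M,1)$-good, the sequence $(f_n)$ lies in a $d$-compact subset of $E^2$ (this is exactly the kind of relative compactness established via Lemma~\ref{coverGMT}, or by a direct Helly-type argument for uniformly bounded monotone functions in the L\'evy metric). So, passing to a subsequence, $f_n\to f$ in $d$ for some $f$. The first thing to check is that $f\in F$: since $f_n\in B(F,1/n)$ there are $\bar f_n\in F$ with $d(f_n,\bar f_n)<1/n$, hence $d(\bar f_n,f)\to 0$, and $F$ is closed, so $f\in F$; moreover $f\in G_M^2\subset G_{M,1}^2$ since $G_M^2$ is $d$-closed. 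Since $\sup_{g\in F}K(g)=-\infty$ we have $K(f)=-\infty$, so by the definition of $K$ there exists $s_0\le 1$ with $\tilde K(f,0,s_0)<0$.

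The crux is then to derive a contradiction with $\inf_\theta \tilde K(f_n,0,\theta)\ge -1/n$, i.e.\ to show $\limsup_n \tilde K(f_n,0,s_0)\le \tilde K(f,0,s_0)<0$ (at least along a further subsequence). If $\tilde K(f,0,s)<0$ for some $s$ at which $f$ is continuous, this is immediate from Proposition~\ref{uppersemicts} applied with $\theta=s$: $\limsup_n\tilde K(f_n,0,s)\le \tilde K(f,0,s)<0$, contradicting $\tilde K(f_n,0,s)\ge -1/n\to 0$. The only subtlety is the possibility that $\tilde K(f,0,s_0)<0$ only at a discontinuity point $s_0$ of $f$. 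But $\tilde K(f,0,\cdot)$ has only downward jumps (the singular part $\hat f_X+\hat f_Y$ and the drift term $f_X+f_Y$ contribute only downward jumps, while $\int_0^\cdot R^*(f(s))\,ds$ is continuous and the absolutely continuous part of $I$ is continuous), so if $\tilde K(f,0,s_0)<0$ then either $\tilde K(f,0,s)<0$ for some $s<s_0$ at which $f$ is continuous (take $s$ slightly less than $s_0$, using right-continuity of $f$ and that the set of discontinuities is countable, or take the left limit which is $\ge\tilde K(f,0,s_0)$ — one must check the left limit can be made $<0$; if $\tilde K(f,0,s_0^-)\ge 0$ then the jump at $s_0$ strictly decreases it, and for $s$ slightly above $s_0$ but before the next discontinuity, $\tilde K(f,0,s)<0$ at a continuity point), or $s_0=1$. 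In either case we obtain a continuity point $s^*\le 1$ of $f$ (or $s^*=1$) with $\tilde K(f,0,s^*)<0$, and applying Proposition~\ref{uppersemicts} with $\theta=s^*$ gives the contradiction. One should also handle the trivial case $\tilde K(f,0,s_0)=-\infty$ (i.e.\ $J(f,0,s_0)=\infty$): then one can still find a continuity point where $\tilde K$ is finite and negative, or argue directly that $\tilde K(f_n,0,s_0)$ cannot stay bounded below — this requires a minor lower-semicontinuity-of-cost statement, but in fact Proposition~\ref{uppersemicts} already gives $\limsup\tilde K(f_n,0,\theta)\le\tilde K(f,0,\theta)=-\infty$ at a continuity point $\theta$ close to $s_0$.

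The main obstacle, then, is the bookkeeping around discontinuities of $f$ at the point where $\tilde K$ first goes negative: ensuring that the bad point can be taken to be either $1$ or a continuity point of $f$, so that Proposition~\ref{uppersemicts} applies. This is where the ``$\tilde K$ has only downward jumps'' observation (already used implicitly in the definition of $K$ and noted after the definition of $\tilde K$) does the work: since the discontinuity set of $f$ is countable and $\tilde K(f,0,\cdot)$ jumps only downward, for any $s_0$ with $\tilde K(f,0,s_0)<0$ there is a continuity point $s^*\in(s_0,s_0+\eta)$ (for the next interval free of discontinuities, which is nonempty unless $s_0=1$, the case $s_0=1$ being covered separately) with $\tilde K(f,0,s^*)\le \tilde K(f,0,s_0)<0$ by monotonicity of the downward-jump part on $(s_0,s^*]$ combined with continuity of the continuous part — strictly, $\tilde K(f,0,s^*)\le \tilde K(f,0,s_0)+\int_{s_0}^{s^*}R^*(f(u))du$, which we can make $<0$ by choosing $\eta$ small since $R^*(f(u))\le M^2$ on $G_M^2$. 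With this in hand the proof is complete; the remaining verifications (compactness of $(f_n)$ in $d$, $d$-closedness of $G_M^2$) are routine and may be quoted from the appendix.
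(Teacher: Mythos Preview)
Your proposal is correct and follows essentially the same route as the paper: argue by contradiction, extract a convergent subsequence in the compact set $G_{M,1}^2$, land on a limit $f_\infty\in F$, and use Proposition~\ref{uppersemicts} together with the fact that $t\mapsto\tilde K(f,0,t)$ has only downward jumps. The only cosmetic difference is direction: the paper shows $\tilde K(f_\infty,0,\theta)\ge 0$ at every continuity point and then extends to \emph{all} $\theta$ via right-continuity, concluding $K(f_\infty)\ge 0$; you instead start from $K(f_\infty)=-\infty$, locate some $s_0$ with $\tilde K(f_\infty,0,s_0)<0$, and then hunt for a nearby continuity point where the same holds. These are contrapositives of one another, and the paper's formulation is slightly more economical since it avoids your case analysis around discontinuities. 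One small slip: you assert $f\in G_M^2$, but the $f_n$ lie only in $G_{M,1}^2$, so the limit is in $G_{M,1}^2$ (which is what the paper uses); this does not affect the argument.
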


\begin{proof}
If the result is not true, then for each $n\in\N$ we may choose $f_n\in B(F,1/n)\cap G_{M,1}^2$ such that
\[\inf_{\theta\in[0,1]} \tilde K(f_n,0,\theta) \ge -1/n.\]
It is easy to check that $G_{M,1}^2$ is closed and totally bounded. Since $(E^2,d)$ is complete, $G_{M,1}^2$ is compact. We may therefore find a subsequence $(f_{n_j})_{j\ge 1}$ such that $d(f_{n_j},f_\infty)\to 0$ as $j\to\infty$ for some $f_\infty\in G_{M,1}^2$. Since $F\cap G_{M,1}^2$ is closed, and $d(f_\infty,F\cap G_{M,1}^2)=0$, we must in fact have $f_\infty\in F\cap G_{M,1}^2$. On the other hand, by Proposition \ref{uppersemicts}, for any $\theta\in[0,1]$ such that $f_\infty$ is continuous at $\theta$,
\[\tilde K(f_\infty,0,\theta) \ge \limsup_{j\to\infty}\tilde K(f_{n_j},0,\theta) \ge 0.\]
But $f_\infty$ is non-decreasing and therefore continuous almost everywhere, and $t\mapsto\tilde K(f,0,t)$ has only downward jumps, so we must have $\tilde K(f_\infty,0,\theta)\ge 0$ for all $\theta\in[0,1]$. Thus $K(f_\infty)\ge 0$, which contradicts the hypothesis of the lemma.
\end{proof}

We can now prove Lemma \ref{supKnull}, which says that if $F$ is closed and $\sup_{f\in F} K(f) = -\infty$, then with high probability $N_T(F)$ is zero.

\begin{proof}[Proof of Lemma \ref{supKnull}]
Choose $M\ge M_0$. Since $G_{4M}\subset G_{4M,1}$, by Lemma \ref{supKneg} we may choose $n_0\ge 4M$ such that
\[\sup_{f\in B(F,2/n_0)\cap G_{4M}^2} \inf_{\theta\in[0,1]} \tilde K(f,0,\theta)<0.\]
Let
\begin{equation}\label{supKnulleta}
\eta = -\sup_{f\in B(F,2/n_0)\cap G_{4M}^2} \inf_{\theta\in[0,1]} \tilde K(f,0,\theta) > 0.
\end{equation}
Then take $n\ge n_0$ such that the error term in Proposition \ref{fixedgprop} is smaller than $\eta T/3$ for $T$ sufficiently large, and such that $(4M)^2/n \le \eta/3$.

By Lemma \ref{coverGMT} we may choose $N\in\N\cup\{0\}$ and $g_1,\ldots,g_N\in G_{4M}^2\cap \PL_n^2$ such that
\[F\cap G_{M,T}^2 \subset \bigcup_{i=1}^N \big( B_{\Delta_n}(g_i,1/n^2) \cap B_d(g_i,1/n)\big)\subset B_d(F,2/n)\]
for all $T\ge (4Mn)^{3/2}$.


For each $i=1,\ldots,N$, note that since $g_i\in G_{4M}^2$, by the definition of $\tilde K$, for any $0\le s\le t\le 1$ we have
\begin{equation}\label{Krcts}
\tilde K(g_i,0,t) \le \tilde K(g_i,0,s) + (4M)^2(t-s).
\end{equation}
In particular, the function $t\mapsto \tilde K(g_i,0,t)$ has only downward jumps, and therefore its infimum is achieved. Thus, by \eqref{supKnulleta}, we may choose $\theta_i$ such that
\[\tilde K(g_i,0,\theta_i) = \inf_{\theta\in[0,1]} \tilde K(g_i,0,\theta)\le -\eta.\]
Let $\hat\theta_i = \lceil\theta_i n\rceil/n$. Using \eqref{Krcts} again, we then have
\begin{equation}\label{hatthetai}
\tilde K(g_i,0,\hat\theta_i) \le -\eta + (4M)^2/n \le -2\eta/3
\end{equation}
where the last inequality holds because we chose $n$ such that $(4M)^2/n\le \eta/3$.

Now, by our choice of $g_1,\ldots,g_N$, we have
\[N_T(F) \le N_T((G_{M,T}^2)^c) + \sum_{i=1}^N N_T(\Gamma_{M,T}(g_i,n))\]
and therefore
\begin{equation}\label{NTFge1}
\P(N_T(F)\ge 1) \le \P\big(N_T((G_{M,T}^2)^c)\ge 1\big) + \sum_{i=1}^N \P\big(N_T(\Gamma_{M,T}(g_i,n))\ge 1\big).
\end{equation}
By Lemma \ref{AlwaysGMT}, the first term on the right-hand side above is at most $e^{-\delta_0 T^{1/3}}$. Also, since a population that is extinct at time $\theta$ must also be extinct at time $1$, for each $i$ we have
\[\P\big(N_T(\Gamma_{M,T}(g_i,n))\ge 1\big) \le \P\big(N_T(\Gamma_{M,T}(g_i,n),\hat\theta_i)\ge 1\big).\]
Since $\hat\theta_i$ is an integer multiple of $1/n$, by Proposition \ref{fixedgprop} we have
\[\P\big(N_T(\Gamma_{M,T}(g_i,n),\hat\theta_i)\ge 1\big) \le \exp\bigg(T\tilde K\Big(g,0,\hat\theta_i\Big) + O\Big(\frac{M^4T}{n^{1/4}} + M^3 n T^{2/3}\Big)\bigg) \le \exp\Big(-\frac{\eta T}{3}\Big),\]
where the last inequality follows from \eqref{hatthetai} and our choice of $n$. Returning to \eqref{NTFge1}, we have shown that
\[\P(N_T(F)\ge 1) \le e^{-\delta_0 T^{1/3}} + N e^{-\eta T/3},\]
which completes the proof.
\end{proof}

\subsection{Lattice times to continuous time: proof of Proposition \ref{thetaUBprop}}\label{tUBsec}

Before moving on to the proof of Proposition \ref{thetaUBprop}, we state and prove two lemmas that will check that paths of particles are not drastically changed by rescaling by a slightly different value of $T$.

\begin{lem}\label{rescalingcts}
Suppose that $M>1$, $t\ge 3M$ and $t-1\le s\le t$. For any $F\subset E^2$, we have
\[N_s(F\cap G_{M,s}^2)\le N_t\big(B(F, 3M/t)\big).\]
\end{lem}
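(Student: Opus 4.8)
The plan is to exhibit an injection from $\{u\in\Nc_s : Z_u^s\in F\cap G_{M,s}^2\}$ into $\{v\in\Nc_t : Z_v^t\in B(F,3M/t)\}$; since both sets are (a.s.) finite, this immediately gives the inequality. To each $u$ in the first set I would attach a single descendant $v(u)\in\Nc_t$ chosen by a fixed deterministic rule: if $u$ is itself alive at time $t$ (that is, $u\in\Nc_t$) take $v(u)=u$; otherwise $u$ branches somewhere in $(s,t]$, and I follow the lineage downwards from $u$, at each branching strictly after time $s$ going to the child whose positional increment is the smaller of the two (that increment is then at most $\log 2$, the two increments being $-\log\mathcal{U}$ and $-\log(1-\mathcal{U})$), stopping at its unique member of $\Nc_t$. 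Injectivity is automatic, because distinct vertices of $\Nc_s$ have disjoint sets of descendants in $\Nc_t$ (every vertex of $\Nc_t$ has a unique ancestor in $\Nc_s$), and each fibre is non-empty.

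Since $Z_u^s\in F$, it suffices to prove $d(Z_{v(u)}^t,Z_u^s)\le 3M/t$, and by definition of the product metric it is enough to work coordinatewise; write $f=X_u^s$, $g=X_{v(u)}^t$ and $r=3M/t$, and note $s/t\in[1-1/t,1]$. On the rescaled-time interval $[0,s/t]$ the particle $v(u)$ shares its ancestral path with $u$, so for $\sigma\le s/t$ one has the exact identity $g(\sigma)=X_{v(u)}(\sigma t)/t=X_u(\sigma t)/t=(s/t)\,f\big((t/s)\sigma\big)$; thus the completed graph of $g|_{[0,s/t]}$ is the completed graph of $f|_{[0,1]}$ scaled towards the origin by the factor $s/t$. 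From $Z_u^s\in G_{M,s}^2$ we get $0\le f(\sigma)\le M(\sigma+2s^{-2/3})\le 3M$ for all $\sigma$ (here $s\ge t-1\ge 3M-1>1$), so every graph point moves by at most $(1-s/t)\cdot 3M=3M(t-s)/t\le 3M/t$ under this scaling, which settles the Lévy comparison for all rescaled times up to $s/t$.

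It remains to handle rescaled times in $[s/t,1]$ together with the extension of $g$ beyond $1$, and this is the main obstacle. Because $s/t+r=(s+3M)/t>1$, one has $\sigma+r>1$ and hence $f(\sigma+r)=f(1)=X_u(s)/s$ for all $\sigma\ge s/t$, so the only non-trivial requirements for the Lévy inequality there are $g(\sigma)>f(\sigma-r)-r$ and $g(\sigma)<X_u(s)/s+r$; the former follows from $g(\sigma)=X_{v(u)}(\sigma t)/t\ge X_u(s)/t=(s/t)f(1)>f(1)-r\ge f(\sigma-r)-r$, and the latter is equivalent to $X_{v(u)}(t)/t\le X_u(s)/s+3M/t$, i.e.\ to a bound on the displacement of $v(u)$ over the real-time window $[s,t]$ of length $\le 1$. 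When $v(u)=u$ there is no displacement and we are done; in the branching case I would argue that along the selected smaller-increment lineage every jump is $\le\log 2$, so the coordinates stay within a bounded multiple of their values at time $s$, whence (via $Z_u^s\in G_{M,s}^2$) the branching rate $R$ along the lineage stays comparable to $R(Z_u(s))$, which the cone constraint bounds in terms of $M$; since the holding times of this lineage between successive branchings in $(s,t]$ sum to at most $t-s\le 1$, this controls the number of branchings and hence the total displacement, closing the estimate. Combining the three pieces — the scaling bound on $[0,s/t]$, the trivial lower bound, and the displacement bound for the tail — gives $d(X_{v(u)}^t,X_u^s)\le 3M/t$; the same argument for $Y$ and taking the maximum finishes the proof. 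The hard part is precisely this last displacement control: everything else is an elementary rescaling computation combined with the cone bound $G_{M,s}^2$, whereas bounding the motion of the chosen descendant over $[s,t]$ deterministically — which is what is needed since the lemma will be applied simultaneously over all $t\in[T,T+1]$ — is where the particular choice of lineage (smaller jumps, staying near its starting point so the rate stays controlled) does the real work.
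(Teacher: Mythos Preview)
Your displacement-control argument has a genuine gap. You need $X_{v(u)}(t)-X_u(s)$ to be bounded by a fixed constant (essentially $3M$) \emph{surely}, but following the smaller-increment child only bounds each individual jump by $\log 2$; the number of branchings along that lineage in $(s,t]$ is a random variable with unbounded support. Your proposed bootstrap---bound the rate via the cone, hence bound the number of jumps, hence bound the displacement---is circular: a rate bound $R\le C$ only constrains the \emph{distribution} of the number of jumps in a window of length $\le 1$ (it is stochastically dominated by a Poisson($C$) variable), not its realised value. With positive probability the exponential clocks are all tiny and the lineage branches arbitrarily many times. Since the inequality $N_s(F\cap G_{M,s}^2)\le N_t(B(F,3M/t))$ is to be applied pathwise and simultaneously over an interval of $t$'s (as you yourself note), a high-probability bound does not suffice.

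The paper proceeds differently: it takes an \emph{arbitrary} descendant $v$ of $u$ and verifies the L\'evy inequality in the form $X_v^t(\tau-r)-r\le X_u^s(\tau)\le X_v^t(\tau+r)+r$. For $\tau\in[0,1]$ both sides follow purely from monotonicity and the cone condition: the upper bound via $X_u(\tau s)\le X_v(\tau t)$ together with $(1-s/t)X_u^s(\tau)\le 3M/t$, and the lower bound via the time comparison $\tau s\ge t(\tau-r)$, which forces $(\tau-r)t\le s$ so that $X_v((\tau-r)t)=X_u((\tau-r)t)$. This avoids any choice of lineage. However, the L\'evy metric as defined also requires the inequality for $\tau\in(1,1+r]$, and at $\tau=1+r$ the lower bound reduces to $X_v(t)/t\le X_u(s)/s+r$---precisely your displacement bound. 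The paper's written argument (the identity $X_u^s(\tau)=X_v^s(\tau)$ and $\tau s\ge(\tau-r)t$) does not cover this range, since for $\tau>1$ one has $(\tau-r)t>s$ possible and the identification $X_u(\cdot)=X_v(\cdot)$ fails beyond time $s$. So you have correctly isolated the crux; neither your sketch nor the paper's proof, read literally, closes it.
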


\begin{proof}
Suppose that $u\in\Nc_{s}$ satisfies $Z_u^s\in F\cap G_{M,s}^2$. Take any $v\in\Nc_{t}$ such that $v$ is a descendant of $u$. We claim that $d(X_u^s, X_v^t)\le 3M/t$, which means that for all $\tau\in[-3M/t,1+3M/t]$,
\[X_v^{t}(\tau-3M/t)-3M/t \le X_u^s(\tau) \le X_v^{t}(\tau+3M/t) + 3M/t\]
where $f(\tau)$ is interpreted to equal $f(0)$ for $\tau<0$ and $f(1)$ for $\tau>1$. Since $Z_u^s\in F$, the claim plus its equivalent $Y$ statement ensure that $Z_v^t\in B(F, 3M/t)$, which is enough to complete the proof.

To prove the claim, first note that it holds when $\tau\le 0$, since in this case $X_u^s(\tau)=X_u^s(0)=X_v^t(0) = X_v^t(\tau)$. If $\tau >0$, since $s\le t$ and
\[\tau s \ge \tau(t-1) = t(\tau - \tau/t) \ge t\Big(\tau-\frac{1+3M/t}{t}\Big) \ge t\Big(\tau - \frac{3M}{t}\Big),\]
we have
\[X^s_u(\tau) = X^s_v(\tau) \ge X_v^t(\tau - \sfrac{3M}{t}).\]
Also, since $X_u^s\in G_{M,s}^2$, for any $\tau\in[0,1]$ we have
\begin{align*}
X^s_u(\tau) = \frac{1}{s}X_u(\tau s) &= \frac{1}{t}X_u(\tau s) + \Big(1-\frac{s}{t}\Big)X_u^s(\tau)\\
&\le \frac{1}{t}X_v(\tau t) + \Big(\frac{t-s}{t}\Big) M(1+2s^{-2/3})\\
&\le X_v^{t}(\tau) + \frac{M}{t}(1+2s^{-2/3}) \le X_v^{t}(\tau) + \frac{3M}{t}
\end{align*}
as required. If $\tau>1$ then $X^s_u(\tau) = X^s_u(1)$ and then the argument above gives that that $X^s_u(1)\le X^t_v(1) + 3M/t = X^t_v(\tau) + 3M/t$.
\end{proof}

\begin{lem}\label{rescalingbadpaths}
Suppose that $M>2$, $T\ge 2$ and $t\in[T-1,T]$. If $N_t((G_{M,t}^2)^c)\ge 1$ then either $N_T((G_{M/2,T}^2)^c)\ge 1$ or $N_{T-1}((G_{M/2,T-1}^2)^c)\ge 1$.
\end{lem}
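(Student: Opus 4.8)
The plan is as follows. Fix a particle $u\in\Nc_t$ with $Z_u^t\notin G_{M,t}^2$; the goal is to produce a particle at time $T$, or at time $T-1$, whose rescaled path leaves $G_{M/2,\cdot}^2$. By the symmetry of $G_{M,t}^2=G_{M,t}\times G_{M,t}$ in the two coordinates, I would assume the first coordinate is responsible. Since $X_u^t(s)=X_u(st)/t$, the failure $X_u^t\notin G_{M,t}$ means there is a real time $r_0\in(0,t]$ with either $X_u(r_0)<r_0/M-2t^{1/3}$ (a ``lower violation'') or $X_u(r_0)>Mr_0+2Mt^{1/3}$ (an ``upper violation''); the value $r_0=0$ is automatically excluded because $X_u(0)=0$. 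The key observation is that the number $X_u(r_0)$ is also the position, at real time $r_0$, of (a) the ancestor $u'\in\Nc_{T-1}$ of $u$ (which exists and is meaningful since $t\ge T-1$) whenever $r_0\le T-1$, and of (b) any descendant $v\in\Nc_T$ of $u$ (which exists because the population does not die out) since $r_0\le t\le T$.

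I would then split on whether $r_0\le T-1$ or $r_0>T-1$. In the first case transfer the violation to $u'$, evaluating $X_{u'}^{T-1}$ at parameter $s'=r_0/(T-1)\in(0,1]$; in the second transfer it to $v$, evaluating $X_v^T$ at parameter $s''=r_0/T\in(0,1]$. In each of the four resulting sub-cases it remains only to verify one elementary inequality ensuring that the $(M/2)$-envelope at the new timescale is broken. Three of these are immediate after discarding a manifestly non-negative term ($r_0/M$, respectively $Mr_0/2$): a lower violation pushed to $T-1$ needs $r_0/M-2t^{1/3}\le 2r_0/M-2(T-1)^{1/3}$, which follows from $(T-1)^{1/3}\le t^{1/3}$; an upper violation pushed to $T-1$ needs $Mr_0+2Mt^{1/3}\ge Mr_0/2+M(T-1)^{1/3}$, which follows from $(T-1)^{1/3}\le 2t^{1/3}$; and an upper violation pushed to $T$ needs $Mr_0+2Mt^{1/3}\ge Mr_0/2+MT^{1/3}$, which follows from $T^{1/3}\le 2t^{1/3}$. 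All three hold using only $t\ge T-1\ge 1$ and $T\ge 2$ (for the last, $8t\ge 8(T-1)\ge T$).

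The one sub-case needing care is a lower violation with $r_0>T-1$, which must be pushed forward to time $T$: here one needs $r_0/M-2t^{1/3}\le 2r_0/M-2T^{1/3}$, i.e. $2T^{1/3}-2t^{1/3}\le r_0/M$, and now the term $r_0/M$ cannot simply be dropped. The resolution is to notice that a lower violation is possible at all only because $X_u(r_0)\ge 0$ forces $r_0/M-2t^{1/3}>0$, hence $r_0/M>2t^{1/3}$; meanwhile, by concavity of $x\mapsto x^{1/3}$ together with $T-t\le 1$ one has $2T^{1/3}-2t^{1/3}\le \tfrac{2}{3}t^{-2/3}$, and $\tfrac{2}{3}t^{-2/3}\le 2t^{1/3}$ since $t\ge 1$. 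Chaining these gives $2T^{1/3}-2t^{1/3}\le 2t^{1/3}<r_0/M$, as required. Once all four sub-cases are dispatched, in every situation we have exhibited a particle in $\Nc_{T-1}$ (first case) or in $\Nc_T$ (second case) whose rescaled path leaves $G_{M/2,\cdot}^2$, which is exactly the claim. I expect the lower-violation-at-large-$r_0$ sub-case to be the only real obstacle; everything else is bookkeeping, and the hypothesis $M>2$ is used only so that $M/2>1$, making $G_{M/2,\cdot}$ a legitimate object.
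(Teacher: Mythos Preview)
Your argument is correct, but the route differs from the paper's. You convert to real time $r_0=st$, fix $r_0$, and split into four sub-cases according to the direction of the violation and whether $r_0\le T-1$; this forces you to handle the delicate case of a lower violation at $r_0>T-1$, which you resolve neatly by observing that $X_u(r_0)\ge 0$ forces $r_0/M>2t^{1/3}$, and then using concavity of $x^{1/3}$.

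The paper instead keeps the \emph{rescaled} parameter $s\in[0,1]$ fixed and exploits the monotonicity of the trajectory: for an upper violation it passes to a descendant $w\in\Nc_T$ and uses $X_w(sT)\ge X_v(st)$ together with $t/T\ge 1/2$; for a lower violation it passes to the ancestor $u\in\Nc_{T-1}$ and uses $X_u(s(T-1))\le X_v(st)$ together with $t/(T-1)\le 2$. This yields only two cases and sidesteps your delicate sub-case entirely, since the direction of the violation alone determines whether to push forward or backward in time. Your approach trades the monotonicity trick for the observation that positions agree at a common real time; it is equally valid but a little longer.
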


\begin{proof}
Suppose there exists $v\in\Nc_{t}$ such that $Z^t_v\in (G_{M,t}^2)^c$. It is possible that either $X^t_v$ or $Y^t_v$ (or both) is the reason for $Z^t_v$ falling outside $G_{M,t}^2$; without loss of generality assume that it is $X^t_v$. Then there exists $s\in[0,1]$ such that either $X^t_v(s) > M(s+2t^{-2/3})$, or $X^t_v(s)<s/M - 2t^{-2/3}$. In the first case, take $w\in\Nc_{T}$ such that $w$ is a descendant of $v$. Then
\[X^T_w(s) = \frac{1}{T}X_w(sT) \ge \frac{t}{T}\frac{1}{t}X_v(st) > \frac{1}{2}M(s+2t^{-2/3}) \ge \frac{M}{2}(s+2T^{-2/3})\]
so $Z^T_w\in (G_{M/2,T}^2)^c$. In the second case, let $u$ be the ancestor of $v$ in $\Nc_{T-1}$. Then
\[X^{T-1}_u(s) = \frac{1}{T-1}X_u(s(T-1)) \le \frac{t}{T-1}\frac{1}{t}X_v(st) < \frac{t}{T-1}\Big(\frac{s}{M}-2t^{-2/3}\Big) \le \frac{2s}{M} - 2(T-1)^{-2/3}\]
so $Z^{T-1}_u \in (G_{M/2,T-1}^2)^c$. This completes the proof.
\end{proof}

\begin{proof}[Proof of Proposition \ref{thetaUBprop}]
We begin with the first part of the result. Take $\eps>0$. We start by noting that
\begin{multline}\label{splitFoverG}
\P\Big(\exists t\in[T-1,T] : \frac{1}{t}\log N_t(F)\ge \sup_{f\in F\cap G_M^2} \tilde K(f,0,1) + \eps \Big)\\
\le \P\Big(\exists t\in[T-1,T] : \frac{1}{t}\log N_t(F\cap G_{M,t}^2)\ge \sup_{f\in F\cap G_M^2} \tilde K(f,0,1) + \eps \Big)\\
+ \P\big(\exists t\in [T-1,T] : N_t((G_{M,t}^2)^c)\ge 1\big).
\end{multline}
We show that the right-hand side is exponentially small in $T$. By Corollary \ref{uppersemictscor}, we can choose $\eps'\in(0,1)$ such that
\[\sup_{f\in \overline{B(F,\eps')}\cap G_M^2} \tilde K(f,0,1) \le \sup_{f\in F\cap G_M^2} \tilde K(f,0,1) + \eps/3.\]
By Lemma \ref{rescalingcts}, provided that $3M/T\le \eps'$, we have
\[N_t(F\cap G_{M,t}^2) \le N_T(B(F,\eps'))\]
for all $t\in[T-1,T]$. Therefore for large $T$
\begin{align*}
&\P\Big(\exists t\in[T-1,T] : \frac{1}{t}\log N_t(F\cap G_{M,t}^2)\ge \sup_{f\in F\cap G_M^2} \tilde K(f,0,1) + \eps \Big)\\
&\hspace{30mm}\le \P\Big(\frac{1}{T-1}\log N_T(B(F,\eps'))\ge \sup_{f\in F\cap G_M^2} \tilde K(f,0,1) + \eps \Big)\\
&\hspace{30mm}\le \P\Big(\frac{1}{T}\log N_T(B(F,\eps'))\ge \sup_{f\in \overline{B(F,\eps')}\cap G_M^2} \tilde K(f,0,1) + \eps/3 \Big).
\end{align*}
Then Proposition \ref{fixedTprop} tells us that this is at most $\exp(-\delta_0 T^{1/3}/2)$ for large $T$. Substituting this into \eqref{splitFoverG}, we have
\begin{multline}\label{splitFoverG2}
\P\Big(\exists t\in[T-1,T] : \frac{1}{t}\log N_t(F)\ge \sup_{f\in F\cap G_M^2} \tilde K(f,0,1) + \eps \Big)\\
\le \exp(-\delta_0 T^{1/3}/2) + \P\big(\exists t\in [T-1,T] : N_t((G_{M,t}^2)^c)\ge 1\big).
\end{multline}

For the remaining term, Lemma \ref{rescalingbadpaths} tells us that for $T\ge2$,
\[\P(\exists t\in [T-1,T] : N_t((G_{M,t}^2)^c)\ge 1) \le \P(N_T((G_{M/2,T}^2)^c)\ge 1) + \P(N_{T-1}((G_{M/2,T-1}^2)^c)\ge 1).\]
By Lemma \ref{AlwaysGMT}, this is at most $2 \exp \big( \! -\delta_0 (T-1)^{1/3} \big)$. Returning to \eqref{splitFoverG2}, we have
\[\P\Big(\exists t\in[T-1,T] : \frac{1}{t}\log N_t(F)\ge \sup_{f\in F\cap G_M^2} \tilde K(f,0,1) + \eps\Big)\le \exp(-\delta_0 T^{1/3}/2) + 2 \exp \big( \! -\delta_0 (T-1)^{1/3} \big).\]
By the Borel-Cantelli lemma,
\[\P\Big(\limsup_{t\to\infty}\frac{1}{t}\log N_t(F)\ge \sup_{f\in F\cap G_M^2} \tilde K(f,0,1) + \eps\Big) = 0,\]
and since $\eps>0$ was arbitrary, we deduce the first part of the result.

The proof when $\sup_{f\in F} K(f) = -\infty$ is very similar. By Lemma \ref{supKneg}, we may choose $\eps'' >0$ such that
\begin{equation}\label{supexpanded}
\sup_{f\in \overline{B(F,\eps'')}\cap G_{M,1}^2} K(f) = -\infty.
\end{equation}
Then
\begin{multline}\label{thetaUBpropfinal}
\P(\exists t\in[T-1,T] : N_t(F) \ge 1 ) \le \P(\exists t\in[T-1,T] : N_t(F\cap G_{M,t}^2) \ge 1 )\\
+ \P(\exists t\in[T-1,T] : N_t((G_{M,t}^2)^c) \ge 1 ).
\end{multline}
As argued above, by Lemmas \ref{rescalingbadpaths} and \ref{AlwaysGMT} the last term on the right-hand side is at most $2 \exp \big( \! -\delta_0 (T-1)^{1/3} \big)$ provided that $T\ge 2$. For the first term on the right-hand side, by Lemma \ref{rescalingcts}, provided that $3M/T\le \eps''$ we have
\begin{align*}
\P(\exists t\in[T-1,T] : N_t(F\cap G_{M,t}^2) \ge 1 ) &\le \P(N_T( \overline{B(F,\eps'')}) \ge 1)\\
&\le \P(N_T(\overline{B(F,\eps'')}\cap G_{M,1}^2)\ge 1) + \P(N_T((G_{M,1}^2)^c)\ge 1).
\end{align*}
Due to \eqref{supexpanded}, we can apply Lemma \ref{supKnull} to tell us that the first term on the right-hand side above is at most $e^{-\delta_0 T^{1/3}/2}$, and Lemma \ref{AlwaysGMT} to tell us that the second term on the right-hand side is at most $e^{-\delta_0 T^{1/3}}$. Returning to \eqref{thetaUBpropfinal}, and applying the Borel-Cantelli lemma, we have
\[\P(\limsup_{t\to\infty} N_t(F) \ge 1 ) = 0.\]
This completes the proof.
\end{proof}

\appendix

\section{Deterministic bounds on the rate function}\label{det_bds_rate}

We use the same notation as in Section \ref{coupling_sec}. Our main aim in this section is to prove Proposition \ref{detboundratefn} and Lemma \ref{finaldetRbd}, showing that the bounds obtained in Section \ref{coupling_sec}, in terms of $\mathcal E^+_X(I_j,\Gamma_{M,T}(f,n),T)$, look something like the growth rate seen in our main theorem. This work involves tedious approximations of sums and integrals. Most of the work is in bounding $R^*$ in terms of $R^+$ and $R^-$, which is done using the following lemma. Throughout this section we write $R^-_X(j) = R^-_X(I_j,\Gamma_{M,T}(f,n),T)$ and similarly for $R^+_X$, $R^-_Y$ and $R^+_Y$.


\begin{lem}\label{detRbd}
Suppose that $M>1$, $n\ge 2M$, $f|_{I_j}\in G_M^2|_{I_j}$, $j\ge n^{1/2}$ and $s\in I_j$. Then
\begin{equation}\label{detRbdeq}
R_X^+(j) - \delta_{M,T}(j,n) \le R_X^*(f(s)) \le R_X^-(j) + \delta_{M,T}(j,n)
\end{equation}
where
\[\delta_{M,T}(j,n) = \big(6M^3n^{1/2}+\sfrac{2M^2n}{T}\big)\big(f_X(\sfrac{j+1}{n})-f_X(\sfrac{j}{n})\big) + Mn^{1/2}\big(f_Y(\sfrac{j+1}{n})-f_Y(\sfrac{j}{n})\big) + \sfrac{7M^3}{n^{3/2}} + \sfrac{3M^3n}{T}.\]
Moreover,
\begin{equation}\label{sumdeltaeq}
\sum_{j=\lceil\sqrt n\rceil}^{n-1}\frac{\delta_{M,T}(j,n)}{n} \le \frac{14M^4}{n^{1/2}}+\frac{5M^3 n}{T}.
\end{equation}
\end{lem}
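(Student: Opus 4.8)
The plan is to exploit the uniform representations $R_X(x,y)=\tfrac12\vee\big(\tfrac{y+1}{x+1}-\tfrac12\big)$ and $R^*_X(x,y)=\tfrac12\vee\big(\tfrac{y}{x}-\tfrac12\big)$ (and symmetrically for the $Y$-rates), which follow directly from the definitions of $R$, $P$, $R^*$, $P^*$ by checking the two cases $x\ge y$ and $x<y$. Since $t\mapsto\tfrac12\vee(t-\tfrac12)$ is $1$-Lipschitz, the whole lemma reduces to controlling $\big|\tfrac{Tg_Y(s')+1}{Tg_X(s')+1}-\tfrac{f_Y(s)}{f_X(s)}\big|$ (and its $Y$-analogue) for $s'\in I_j$ and $g\in\Gamma_{M,T}(f,n)$. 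Moreover, because $R_X^+(j)$ and $R_X^-(j)$ are exactly the supremum and infimum of $R_X(Tg(s'))$ over this range of $(s',g)$, the two-sided bound \eqref{detRbdeq} is an immediate consequence of the single uniform estimate $\big|R_X(Tg(s'))-R^*_X(f(s))\big|\le\delta_{M,T}(j,n)$ for all such $(s',g)$ (the supremum gives the left-hand inequality, the infimum the right-hand one), and likewise for $R_Y$.

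The next step is to record the elementary geometric facts. Since $f|_{I_j}\in G_M^2|_{I_j}$, $f$ is non-decreasing on $I_j$ with $\tfrac{j}{Mn}\le f_X(u),f_Y(u)\le \tfrac{M(j+1)}{n}$ for $u\in I_j$; and any $g\in\Gamma_{M,T}(f,n)$ is non-decreasing with $\|g(i/n)-f(i/n)\|\le\tfrac1{n^2}$ at $i=j,j+1$, so writing $\Delta f_X^{(j)}=f_X(\tfrac{j+1}{n})-f_X(\tfrac{j}{n})$ and $\Delta f_Y^{(j)}$ analogously,
\[\big|g_X(s')-f_X(s)\big|\le\Delta f_X^{(j)}+\tfrac1{n^2},\qquad \big|g_Y(s')-f_Y(s)\big|\le\Delta f_Y^{(j)}+\tfrac1{n^2},\]
and, using $j\ge\sqrt n$ and $n\ge2M$, $g_X(s')\ge f_X(\tfrac jn)-\tfrac1{n^2}\ge\tfrac{j}{2Mn}$, similarly $g_Y(s')\ge\tfrac{j}{2Mn}$, while $g_X(s'),g_Y(s')\le\tfrac{2Mj}{n}$. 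Note that only non-decreasingness and $\Delta_n$-closeness to $f$ are needed here, not the $G_{M,T}$ membership of $g$.

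Then I would split $\big|\tfrac{Tg_Y(s')+1}{Tg_X(s')+1}-\tfrac{f_Y(s)}{f_X(s)}\big|$ by the triangle inequality through $\tfrac{g_Y(s')}{g_X(s')}$: the first piece equals $\tfrac{|g_X(s')-g_Y(s')|}{g_X(s')(Tg_X(s')+1)}\le\tfrac{|g_X(s')-g_Y(s')|}{Tg_X(s')^2}$ and, by the bounds above together with $\tfrac nj\le\sqrt n$, is of order $\tfrac{M^3n}{T}$; the second piece is $\le g_Y(s')\tfrac{|g_X(s')-f_X(s)|}{g_X(s')f_X(s)}+\tfrac{|g_Y(s')-f_Y(s)|}{f_X(s)}$, which by the same bounds is of order $M^3n^{1/2}\Delta f_X^{(j)}+Mn^{1/2}\Delta f_Y^{(j)}+\tfrac{M^3}{n^{3/2}}$. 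Tracking the constants carefully (including the mild cross-term between the spread $\Delta f_X^{(j)}$ and the $1/T$-correction) produces precisely $\delta_{M,T}(j,n)$; $1$-Lipschitzness transfers the bound to $R_X$ versus $R^*_X$, and the symmetric computation handles the $Y$-rates. For \eqref{sumdeltaeq} one then simply notes that $\sum_{j=\lceil\sqrt n\rceil}^{n-1}\Delta f_X^{(j)}$ and $\sum_{j}\Delta f_Y^{(j)}$ telescope, so (using $f\in G_M^2$) are at most $f_X(1)\le M$ and $f_Y(1)\le M$; the remaining $O(n)$ summands each contribute at most $\tfrac1n\big(\tfrac{7M^3}{n^{3/2}}+\tfrac{3M^3n}{T}\big)$; and combining these, using $M>1$ and $\tfrac1{n^{3/2}}\le\tfrac1{n^{1/2}}$, yields the stated $\tfrac{14M^4}{n^{1/2}}+\tfrac{5M^3n}{T}$.

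The only genuinely delicate point is the third step: confirming that the elementary Lipschitz estimates for $(x,y)\mapsto y/x$ on the region $\{x,y\in[\tfrac{j}{2Mn},\tfrac{2Mj}{n}]\}$, together with the $O(1/T)$ correction coming from the ``$+1$''s, really do close up with the exact constants in $\delta_{M,T}(j,n)$. This is patient bookkeeping rather than a conceptual obstacle; the factors of $M$ and the $\sqrt n$ (which is where the hypothesis $j\ge\sqrt n$ enters, via $\tfrac nj\le\sqrt n$ and $g_X(s')\gtrsim\tfrac{j}{Mn}$) simply accumulate through these estimates.
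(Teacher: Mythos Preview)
Your approach is correct and is genuinely different from the paper's. The paper does \emph{not} use the Lipschitz representation $R_X(x,y)=\tfrac12\vee\big(\tfrac{y+1}{x+1}-\tfrac12\big)$; instead it treats the two inequalities in \eqref{detRbdeq} separately. For the upper bound it fixes a near-minimising pair $(g,s')$ for $R_X^-(j)$, extracts from $R_X(Tg(s'))\le R_X^-(j)+\eps$ the inequality $f_Y(\tfrac jn)\le (R_X^-(j)+\tfrac12)\big(f_X(\tfrac{j+1}{n})+\tfrac1{n^2}+\tfrac1T\big)+\tfrac1{n^2}$, and then substitutes this into $R_X^*(f(s))=\tfrac{f_Y(s)}{f_X(s)}-\tfrac12$ (handling the case $f_Y(s)\le f_X(s)$ by the trivial bound $R_X^*=\tfrac12\le R_X^-$). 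The lower bound is a mirror argument with $R_X^+(j)$. Your single uniform estimate $|R_X(Tg(s'))-R_X^*(f(s))|\le\delta$ is structurally cleaner: it handles both directions at once, and the $1$-Lipschitzness of $t\mapsto\tfrac12\vee(t-\tfrac12)$ automatically disposes of the case split. The paper's more targeted extraction buys marginally sharper intermediate constants in a couple of terms, but since $\delta_{M,T}(j,n)$ is deliberately non-optimal this is irrelevant.

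One caveat on your claim that the constants close up ``precisely'' to $\delta_{M,T}(j,n)$: your first-piece bound $\tfrac{|g_X-g_Y|}{Tg_X^2}$ comes out as $\tfrac{8M^3n}{Tj}\le\tfrac{8M^3\sqrt n}{T}$, which for very small $n$ can exceed the $\tfrac{3M^3n}{T}$ term in $\delta_{M,T}$. Since the lemma is only ever applied with $n$ large this is harmless, but strictly speaking you should either note that your bound sits below $\delta_{M,T}$ once $n\ge 8$, or simply observe that your intermediate constants are dominated term-by-term by those in the (deliberately generous) stated $\delta_{M,T}$ after crude absorption. The telescoping argument for \eqref{sumdeltaeq} is identical to the paper's.
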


\begin{proof}
We begin with the upper bound in \eqref{detRbdeq}, and claim first that for any $j\in\{0,1,\ldots,n-1\}$ we have
\begin{equation}\label{fytofxclaim}
f_Y\Big(\frac{j}{n}\Big) \le \Big(R_X^-(j) + \frac{1}{2}\Big)\Big(f_X\Big(\frac{j+1}{n}\Big)+\frac{1}{n^2}+\frac{1}{T}\Big) + \frac{1}{n^2}.
\end{equation}
To see why this is true, by the definition of $R^-_X(j)$, for any $\eps>0$ we may take $g\in\Gamma_{M,T}(f,n)$ and $s\in I_j$ such that
\[R_X(Tg(s))\le R^-_X(j)+\eps,\]
and then
\[\frac{g_Y(s)+1/T}{g_X(s)+1/T} - \frac{1}{2} \le R_X(Tg(s)) \le R_X^-(j) + \eps.\]
Noting that $g_Y(s)\ge g_Y(\frac{j}{n})\ge f_Y(\frac{j}{n})-1/n^2$ and $g_X(s)\le g_X(\frac{j+1}{n})\le f_X(\frac{j+1}{n})+1/n^2$, we see that
\[\frac{f_Y(\frac{j}{n})-1/n^2 + 1/T}{f_X(\frac{j+1}{n})+1/n^2+1/T} - \frac{1}{2} \le R_X^-(j) + \eps.\]
Since $\eps>0$ was arbitrary, the left-hand side must in fact be at most $R_X^-(j)$, and then rearranging gives \eqref{fytofxclaim}.

We now aim to bound $R_X^*(f(s))$ above for $s\in I_j$. We concentrate first on the case that $f_Y(s)>f_X(s)$. Whenever this holds, using \eqref{fytofxclaim},
\begin{align*}
R_X^*(f(s)) = \frac{f_Y(s)}{f_X(s)}-\frac12 &= \frac{f_Y(\frac{j}{n})}{f_X(s)}-\frac12 + \frac{f_Y(s)-f_Y(\frac{j}{n})}{f_X(s)}\\
&\le \frac{\big(R_X^-(j)+1/2\big)\big(f_X(\frac{j+1}{n})+1/n^2+1/T\big) + \frac{1}{n^2}}{f_X(s)}-\frac12 + \frac{f_Y(s)-f_Y(\frac{j}{n})}{f_X(s)}.
\end{align*}
Writing
\begin{multline*}
\Big(R_X^-(j)+\frac{1}{2}\Big)\Big(f_X\big(\sfrac{j+1}{n}\big)+\frac{1}{n^2}+\frac{1}{T}\Big)\\
= \Big(R_X^-(j)+\frac{1}{2}\Big)f_X(s) + \Big(R_X^-(j)+\frac{1}{2}\Big)\Big(f_X\big(\sfrac{j+1}{n}\big)-f_X(s)+\frac{1}{n^2}+\frac{1}{T}\Big)
\end{multline*}
and substituting this into the bound above, we have (for $f_Y(s)> f_X(s)$)
\[R_X^*(f(s))\le R_X^-(j) + \frac{\big(R_X^-(j)+\frac{1}{2}\big)\big(f_X(\frac{j+1}{n})-f_X(s)+\frac{1}{n^2}+\frac{1}{T}\big) + \frac{1}{n^2} + f_Y(s)-f_Y(\frac{j}{n})}{f_X(s)}.\]
The first term on the right-hand side is the important one, and we now aim to bound the other terms. Since $f|_{I_j}\in G_M^2|_{I_j}$, we have $f_X(s)\ge s/M$, and since also $f\in\Gamma_{M,T}(f,n)$,
\begin{equation}\label{Rminustrivbd}
R_X^-(j)\le R_X(Tf(s)) \le \frac{Ms+1/T}{s/M}-\frac{1}{2} = M^2 + \frac{M}{sT} - \frac{1}{2},
\end{equation}
so
\[R_X^*(f(s))\le R_X^-(j) + \frac{\big(M^2 + \frac{M}{sT}\big)\big(f_X(\frac{j+1}{n})-f_X(\frac{j}{n})+\frac{1}{n^2}+\frac{1}{T}\big) + \frac{1}{n^2} + f_Y(\frac{j+1}{n})-f_Y(\frac{j}{n})}{s/M}.\]
This is true in the case $f_Y(s)>f_X(s)$, but when $f_Y(s)\le f_X(s)$ we have $R_X^*(f(s))=1/2\le R_X^-(j)$, so the inequality above trivially holds in that case too. Taking $s\ge j/n\ge n^{-1/2}$, and combining some of the terms, we obtain the upper bound in \eqref{detRbdeq}.

The lower bound in \eqref{detRbdeq} is similar. We choose $g\in\Gamma_{M,T}(f,n)$ and $s\in[\frac{j}{n},\frac{j+1}{n}]$ such that $R_X(Tg(s))\ge R^+_X(j)-1/n^2$. If $R_X(Tg(s))=1/2$ then $R_X^+(j)\le 1/2+1/n^2$, so the lower bound on that interval is trivial; we may therefore assume that $R_X(Tg(s))>1/2$ and then we have a similar bound to \eqref{fytofxclaim}:
\begin{equation}\label{fytofxclaim2}
f_Y\Big(\frac{j+1}{n}\Big) \ge \Big(R_X^+(j) + \frac{1}{2}\Big)\Big(f_X\Big(\frac{j}{n}\Big)-\frac{1}{n^2}\Big) - \frac{1}{n^2} - \frac{1}{T}.
\end{equation}
We then apply this essentially as in the proof of the upper bound to obtain
\[R_X^*(f(s))\ge R_X^+(j) - \frac{\big(R_X^+(j)+\frac{1}{2}\big)\big(f_X(s)-f_X(\frac{j}{n})+\frac{1}{n^2}\big) + \frac{1}{n^2} + \frac{1}{T} + f_Y(\frac{j+1}{n}) - f_Y(s)}{f_X(s)}.\]
In place of \eqref{Rminustrivbd} we must use the slightly more involved bound, for $j\ge \sqrt n$ and $n\ge 2M$,
\[R^+_X(j) \le \frac{M\frac{j+1}{n} + \frac{1}{n^2} + \frac{1}{T}}{\frac{1}{M}\frac{j}{n} - \frac{1}{n^2} + \frac{1}{T}} - \frac{1}{2} \le \frac{3M\frac{j}{n} + \frac{1}{T}}{\frac{j}{2Mn}} - \frac{1}{2} = 6M^2 + \frac{2Mn}{jT} - \frac{1}{2} \le 6M^2 + \frac{2M\sqrt n}{T} - \frac{1}{2}.\]
Applying this and taking $s\ge j/n\ge n^{-1/2}$, and combining terms, gives the lower bound in \eqref{detRbdeq}.

To prove \eqref{sumdeltaeq}, summing over $j\ge n^{1/2}$ and telescoping gives
\[\sum_{j=\lceil\sqrt n\rceil}^{n-1}\frac{\delta_{M,T}(j,n)}{n} \le \Big(\frac{6M^3}{n^{1/2}}+\frac{2M^2}{T}\big)f_X(1) + \frac{M}{n^{1/2}}f_Y(1) + \frac{7M^3}{n^{3/2}} + \frac{3M^3n}{T}\]
Using that $f_X(1)\le M$ and $f_Y(1)\le M$, and combining terms, gives the result.
\end{proof}

\subsection{Proof of Proposition \ref{detboundratefn}}\label{dbrf_sec}

We first give a lemma which handles the cross-term that appears when multiplying out the quadratics involved in Proposition \ref{detboundratefn}.

\begin{lem}\label{detcrosstermbound}
Suppose that $f\in \PL_n^2 \cap G_M^2$, $\theta\in(0,1]$, $M>1$ and $n\ge 2M$. Then for any $k\in\{\lceil\sqrt n\rceil,\ldots,\lfloor \theta n\rfloor-1\}$,
\[\int_{k/n}^{\lfloor\theta n\rfloor/n} \sqrt{R_X^*(f(s))f'_X(s)} ds \ge \sum_{j=k}^{\lfloor \theta n\rfloor-1} \sqrt{\frac{R_X^+(j)}{n}(x^+_{j+1}-x^-_j)} - \frac{8M^{5/2}}{n^{1/4}} - \frac{4M^2n^{1/2}}{T^{1/2}}.\]
\end{lem}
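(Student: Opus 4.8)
The plan is to reduce the claimed inequality to an interval-by-interval estimate. Since $f\in\PL_n^2$, on each interval $I_j$ the derivative $f'_X(s)$ is constant, equal to $n(f_X(\frac{j+1}{n})-f_X(\frac{j}{n}))$, so
\[\int_{k/n}^{\lfloor\theta n\rfloor/n} \sqrt{R_X^*(f(s))f'_X(s)}\,ds = \sum_{j=k}^{\lfloor\theta n\rfloor-1} \sqrt{n\big(f_X(\sfrac{j+1}{n})-f_X(\sfrac{j}{n})\big)}\int_{j/n}^{(j+1)/n}\sqrt{R_X^*(f(s))}\,ds.\]
First I would handle the factor $\int_{I_j}\sqrt{R^*_X(f(s))}\,ds$ using Lemma \ref{detRbd}: for $j\ge\sqrt n$ and $s\in I_j$ we have $R_X^*(f(s))\ge R_X^+(j)-\delta_{M,T}(j,n)$, and then $\sqrt{R_X^+(j)-\delta_{M,T}(j,n)}\ge \sqrt{R_X^+(j)}-\sqrt{\delta_{M,T}(j,n)}$ by subadditivity of the square root. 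This gives
\[\int_{j/n}^{(j+1)/n}\sqrt{R_X^*(f(s))}\,ds \ge \frac{1}{n}\sqrt{R_X^+(j)} - \frac{1}{n}\sqrt{\delta_{M,T}(j,n)}.\]

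Next I would relate $n(f_X(\frac{j+1}{n})-f_X(\frac{j}{n}))$ to $x^+_{j+1}-x^-_j$. Since $f\in\Gamma_{M,T}(f,n)$ in particular, and the ball $\Gamma_{M,T}(f,n)$ is built from $B_{\Delta_n}(f,1/n^2)$, we have $x^+_{j+1}:=x^+(\frac{j+1}{n},\Gamma_{M,T}(f,n))\ge f_X(\frac{j+1}{n})-\frac{1}{n^2}$ and $x^-_j:=x^-(\frac{j}{n},\Gamma_{M,T}(f,n))\le f_X(\frac{j}{n})+\frac{1}{n^2}$, so $x^+_{j+1}-x^-_j \ge f_X(\frac{j+1}{n})-f_X(\frac{j}{n})-\frac{2}{n^2}$, i.e.
\[f_X\big(\sfrac{j+1}{n}\big)-f_X\big(\sfrac{j}{n}\big) \le (x^+_{j+1}-x^-_j)+\tfrac{2}{n^2}.\]
Combining, each summand $\sqrt{n(f_X(\frac{j+1}{n})-f_X(\frac{j}{n}))}\cdot\frac1n\sqrt{R_X^+(j)}$ is bounded below by $\sqrt{\frac{R_X^+(j)}{n}(x^+_{j+1}-x^-_j)}$ up to error terms. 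The error contributions are of three types: a term $\frac{1}{n^{3/2}}\sqrt{R_X^+(j)}\le \frac{\sqrt{6}M}{n^{3/2}}$ per interval coming from the $\frac{2}{n^2}$ correction together with $\sqrt{a+b}\le\sqrt a+\sqrt b$ and the crude bound $R_X^+(j)\le 6M^2$ (valid for $j\ge\sqrt n$, $n\ge 2M$, from Lemma \ref{detRbd}); and a term $\frac1n\sqrt{n(f_X(\frac{j+1}{n})-f_X(\frac{j}{n}))}\sqrt{\delta_{M,T}(j,n)}$ from the $R^*$ vs $R^+$ gap.

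The main obstacle — really the only genuine calculation — is summing this last family of errors. I would apply Cauchy--Schwarz:
\[\sum_{j=k}^{\lfloor\theta n\rfloor-1}\sqrt{\tfrac1n\big(f_X(\sfrac{j+1}{n})-f_X(\sfrac{j}{n})\big)}\sqrt{\tfrac{\delta_{M,T}(j,n)}{n}} \le \Big(\sum_j \tfrac1n(f_X(\sfrac{j+1}{n})-f_X(\sfrac{j}{n}))\Big)^{1/2}\Big(\sum_j \tfrac{\delta_{M,T}(j,n)}{n}\Big)^{1/2},\]
then bound the first factor by $(f_X(1)/n)^{1/2}\le (M/n)^{1/2}$ and the second factor by $(\frac{14M^4}{n^{1/2}}+\frac{5M^3n}{T})^{1/2}$ via \eqref{sumdeltaeq}; using $\sqrt{a+b}\le\sqrt a+\sqrt b$ this is $O(M^{5/2}n^{-1/4}+M^2n^{1/2}T^{-1/2})$. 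The per-interval $\frac{M}{n^{3/2}}$ term sums (over at most $n$ intervals) to $O(Mn^{-1/2})$, which is absorbed. Collecting all constants and rounding them up to match the stated $\frac{8M^{5/2}}{n^{1/4}}+\frac{4M^2n^{1/2}}{T^{1/2}}$ completes the proof; I would not optimize these constants, only check they are large enough.
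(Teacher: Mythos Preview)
Your approach is essentially the paper's: on each $I_j$ use Lemma~\ref{detRbd} to replace $R^*_X$ by $R^+_X$, compare the $f_X$-increment to $x^+_{j+1}-x^-_j$ via the $\Delta_n$-ball condition, and handle the resulting $\delta_{M,T}$-errors by Cauchy--Schwarz together with \eqref{sumdeltaeq}. The paper does exactly this, in a slightly different order.

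There is, however, a direction error in your comparison step. You derived
\[
f_X\big(\tfrac{j+1}{n}\big)-f_X\big(\tfrac{j}{n}\big) \le (x^+_{j+1}-x^-_j)+\tfrac{2}{n^2},
\]
but to bound $\sqrt{f_X(\frac{j+1}{n})-f_X(\frac{j}{n})}$ from \emph{below} by $\sqrt{x^+_{j+1}-x^-_j}$ minus an error you need the reverse inequality,
\[
x^+_{j+1}-x^-_j \le f_X\big(\tfrac{j+1}{n}\big)-f_X\big(\tfrac{j}{n}\big)+\tfrac{2}{n^2}.
\]
This follows just as easily from $\Gamma_{M,T}(f,n)\subset B_{\Delta_n}(f,1/n^2)$: any $g$ in that ball satisfies $g_X(\frac{j+1}{n})\le f_X(\frac{j+1}{n})+\frac{1}{n^2}$ and $g_X(\frac{j}{n})\ge f_X(\frac{j}{n})-\frac{1}{n^2}$, hence $x^+_{j+1}\le f_X(\frac{j+1}{n})+\frac{1}{n^2}$ and $x^-_j\ge f_X(\frac{j}{n})-\frac{1}{n^2}$ (the inequalities you wrote are true but point the wrong way). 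With this correction your error analysis goes through unchanged. There is also a stray $n^{-1/2}$ in your displayed Cauchy--Schwarz line (the per-interval error is $\sqrt{A_j\,\delta_{M,T}(j,n)/n}$, not $\sqrt{A_j/n}\,\sqrt{\delta_{M,T}(j,n)/n}$), though your final $O(M^{5/2}n^{-1/4}+M^2n^{1/2}T^{-1/2})$ is the correct bound for the correct term.
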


\begin{proof}
Take $s\in[\frac{j}{n},\frac{j+1}{n}]$ for some $j\in\{0,1,\ldots,n-1\}$. For $a\in\R$, write $a_+$ to mean $\max\{a,0\}$. Note that 
since $f\in\PL_n$, we have
\[f'_X(s) = n(f(\sfrac{j+1}{n})-f(\sfrac{j}{n})) \ge n\big(x_{j+1}^+-1/n^2 - x_j^--1/n^2\big)_+.\]
Using the elementary inequality $\sqrt{(a-b)_+} \ge a^{1/2}-b^{1/2}$ valid for all $a,b\ge0$, we obtain
\[\sqrt{f'_X(s)} \ge n^{1/2}(x_{j+1}^+ - x_j^-)^{1/2} - \sqrt2 n^{-1/2}.\]
Thus
\[\int_{j/n}^{(j+1)/n} \sqrt{R_X^*(f(s))f'_X(s)} ds \ge \big((x_{j+1}^+ - x_j^-)^{1/2} - \sqrt2/n\big)\int_{j/n}^{(j+1)/n} \sqrt{nR_X^*(f(s))} ds\]
and since $f\in G_M^2$, $R_X^*(f(s))\le M^2$ for all $s>0$, so
\[\int_{j/n}^{(j+1)/n} \sqrt{R_X^*(f(s))f'_X(s)} ds \ge (x_{j+1}^+ - x_j^-)^{1/2}\int_{j/n}^{(j+1)/n} \sqrt{nR_X^*(f(s))} ds - \sqrt2 Mn^{-3/2}.\]
We now use the lower bound in \eqref{detRbdeq} to see that for $j\ge \sqrt n$,
\[\int_{j/n}^{(j+1)/n} \hspace{-1mm} \sqrt{nR_X^*(f(s))} ds \ge \int_{j/n}^{(j+1)/n}\hspace{-1mm} \sqrt{(nR_X^+(j)-n\delta_{M,T}(j,n))_+} \,ds = \sqrt{\Big(\frac{R_X^+(j)}{n}-\frac{\delta_{M,T}(j,n)}{n}\Big)_+}.\]
Again using $\sqrt{(a-b)_+} \ge a^{1/2}-b^{1/2}$, we therefore have, for $j\ge \sqrt n$,
\begin{align}
\int_{j/n}^{(j+1)/n} \sqrt{R_X^*(f(s))f'_X(s)} ds 
&\ge \sqrt{\frac{R_X^+(j)}{n}\big(x_{j+1}^+ - x_j^-\big)} - \frac{\sqrt2 M}{n^{3/2}} - \sqrt{\frac{\delta_{M,T}(j,n)}{n}}(x_{j+1}^+ - x_j^-)^{1/2}.\label{jintsqrtlb}
\end{align}

By the Cauchy-Schwartz inequality,
\begin{align*}
\sum_{j=k}^{n-1}\sqrt{\frac{\delta_{M,T}(j,n)}{n}}(x_{j+1}^+ - x_j^-)^{1/2} &\le \bigg(\sum_{j=k}^{n-1}\frac{\delta_{M,T}(j,n)}{n} \sum_{i=k}^{n-1}(x_{i+1}^+-x_i^-)\bigg)^{1/2}\\
&\le \bigg(\sum_{j=k}^{n-1}\frac{\delta_{M,T}(j,n)}{n} (f_X(1)+1/n)\bigg)^{1/2}
\end{align*}
and applying \eqref{sumdeltaeq}, together with the fact that $f_X(1)\le M$, gives
\[\bigg(\sum_{j=k}^{n-1}\frac{\delta_{M,T}(j,n)}{n}(f(1)+1/n)\bigg)^{1/2} \le \Big(\frac{28M^5}{n^{1/2}}+\frac{10M^4 n}{T}\Big)^{1/2} \le \frac{6M^{5/2}}{n^{1/4}} + \frac{4M^2n^{1/2}}{T^{1/2}}.\]
Summing \eqref{jintsqrtlb} over $k\le j \le \lfloor \theta n\rfloor -1$ and substituting the above bound gives the result.
\end{proof}

We can now prove our main proposition for this section.

\begin{proof}[Proof of Proposition \ref{detboundratefn}]
We first claim that for each $j=0,\ldots,n-1$ we have
\begin{equation}\label{simplifyE+}
\mathcal E^+_X(I_j,\Gamma_{M,T}(f,n),T) \ge \frac{2R_X^-(j)}{n} - 2\sqrt{\frac{2R_X^+(j)}{n}(x^+_{j+1}-x^-_j)} + x^-_{j+1} - x^+_j - \frac{4}{n^2}.
\end{equation}
Indeed, in either the $X+$ case or the $X-$ case, this follows directly from the definition of $\mathcal E^+_X$, even without the $4/n^2$ error term on the right-hand side. If we are in neither the $X+$ nor the $X-$ case, then $2R_X^-(j)/n \le x^+_{j+1}-x^-_j$ and $2R_X^+(j)/n \ge x^-_{j+1}-x^+_j$, so
\begin{align*}
&\frac{2R_X^-(j)}{n} - 2\sqrt{\frac{2R_X^+(j)}{n}(x^+_{j+1}-x^-_j)} + x^-_{j+1} - x^+_j\\
&\hspace{30mm}\le x^+_{j+1}-x^-_j -2\sqrt{\big((x^-_{j+1} - x^+_j)\vee0\big)(x^+_{j+1}-x^-_j)} + \big(x^-_{j+1} - x^+_j\big)\vee 0\\
&\hspace{30mm}= \Big(\sqrt{x^+_{j+1}-x^-_j} - \sqrt{\big(x^-_{j+1} - x^+_j\big)\vee 0}\Big)^2
\end{align*}
and using $\sqrt{(a-b)\vee0} \ge a^{1/2}-b^{1/2}$ we have
\[\sqrt{\big(x^-_{j+1} - x^+_j\big)\vee 0} \ge \sqrt{\big(x_{j+1}^+-x_j^- -4/n^2\big)\vee0}\ge \sqrt{x_{j+1}^+-x_j^-} - 2/n,\]
so in this case
\[\frac{2R_X^-(j)}{n} - 2\sqrt{\frac{2R_X^+(j)}{n}(x^+_{j+1}-x^-_j)} + x^-_{j+1} - x^+_j \le 4/n^2 \le \mathcal E^+_X(I_j,\Gamma_{M,T}(f,n),T) + 4/n^2\]
and the claim is proved.

Now write $K=\lfloor \theta n\rfloor$. By the upper bound in \eqref{detRbdeq}, for any $j\ge n^{1/2}$,
\[\int_{\frac{j}{n}}^{\frac{j+1}{n}} R_X^*(f(s)) ds \le \frac{R_X^-(j)}{n} + \frac{\delta_{M,T}(j,n)}{n}.\]
Summing over $k\le j\le K -1$ and applying \eqref{sumdeltaeq} gives
\[\int_{k/n}^{K/n} R_X^*(f(s)) ds \le \sum_{j=k}^{K-1} \frac{R_X^-(j)}{n} + \frac{14M^4}{n^{1/2}} + \frac{5M^3n}{T}.\]
Lemma \ref{detcrosstermbound} gives that
\[\int_{k/n}^{K/n} \sqrt{R_X^*(f(s))f'_X(s)} ds \ge \sum_{j=k}^{K-1} \sqrt{\frac{R_X^+(j)}{n}(x^+_{j+1}-x^-_j)} - \frac{8 M^{5/2}}{n^{1/4}} - \frac{4M^2n^{1/2}}{T^{1/2}}.\]
Also
\[\int_{k/n}^{K/n} f'_X(s) ds \le \sum_{j=k}^{K-1} \big(f(\sfrac{j+1}{n})-f(\sfrac{j}{n})\big) \le \sum_{j=k}^{K-1} \big(x^-_{j+1}+1/n^2 - x^+_j+1/n^2\big) \le \sum_{j=k}^{K-1} \big(x^-_{j+1} - x^+_j\big) + 2/n.\]
Putting these bounds together with \eqref{simplifyE+}, and combining error terms, we obtain
\[\int_{k/n}^{K/n} \Big(\sqrt{2R_X^*(f(s))} - \sqrt{f'_X(s)}\Big)^2 ds \le \sum_{j=k}^{K-1} \mathcal E^+_X(I_j,\Gamma_{M,T}(f,n),T) + O\Big(\frac{M^4}{n^{1/4}} + \frac{M^3n}{T^{1/2}}\Big),\]
completing the proof.
\end{proof}

\subsection{Proof of Lemma \ref{finaldetRbd}}\label{fdrsec}

The proof of Lemma \ref{finaldetRbd} is relatively straightforward. The upper and lower bounds are very similar, but quite lengthy, so we separate them out into two proofs.

\begin{proof}[Proof of Lemma \ref{finaldetRbd}: upper bound]
Write $K=\lfloor \theta n\rfloor$. We split the integral from $0$ to $\theta$ into three parts:
\begin{equation}\label{splitintegralRbound}
\int_0^\theta R(Tg(s)) ds \le \int_0^{3MT^{-2/3}} R(Tg(s))ds + \int_{3MT^{-2/3}}^{\lceil\sqrt n\rceil/n} R(Tg(s))ds + \int_{\lceil\sqrt n\rceil/n}^\theta R(Tg(s))ds.
\end{equation}
For the first term on the right-hand side, note that for any $g\in G_{M,T}^2$ and $s\le 3MT^{-2/3}$,
\[R(Tg(s)) \le \frac{MT(s + 2T^{-2/3})+1}{1} \le MT(3M+2)T^{-2/3}+1 \le 6M^2T^{1/3}.\]
For the second term on the right-hand side of \eqref{splitintegralRbound}, we note that for $s>3MT^{-2/3}$ we have $2T^{-2/3}\le \frac{2}{3}s/M$ and therefore, since $g\in G_{M,T}^2$,
\begin{equation*}
R(Tg(s)) \le \frac{MT(s + 2T^{-2/3})+1}{T(s/M - 2T^{-2/3})} \le \frac{MT(s+\frac{2s}{3M})+1}{T\frac{s}{3M}} \le 3M^2 \Big(1+\frac{2}{3M}\Big) + \frac{3M}{Ts} \le 6M^2 + T^{-1/3}.
\end{equation*}
We now consider the last term in \eqref{splitintegralRbound}, but work with any $k\ge\lceil\sqrt n\rceil$; since $g\in\Gamma_{M,T}(f,n)$, by definition of $R_X^+$ and $R_Y^+$ we have
\begin{align}
\int_{k/n}^\theta R(Tg(s))ds &= \int_{k/n}^\theta R_X(Tg(s))ds + \int_{k/n}^\theta R_Y(Tg(s))ds\nonumber\\
&\le \sum_{j=k}^{K} \int_{\frac{j}{n}}^{\frac{j+1}{n}} R_X^+(I_j,\Gamma_{M,T}(f,n),T) ds + \sum_{j=k}^{K} \int_{\frac{j}{n}}^{\frac{j+1}{n}} R_Y^+(I_j,\Gamma_{M,T}(f,n),T) ds\nonumber\\
&= \sum_{j=k}^{K} \frac{R_X^+(j)}{n} + \sum_{j=k}^{K} \frac{R_Y^+(j)}{n}.\label{generalkfinaldetRbd}
\end{align}
By the lower bound in \eqref{detRbdeq}, for any $s\in[\frac{j}{n},\frac{j+1}{n}]$,
\[R_X^+(j) \le R_X^*(f(s)) + \delta_{M,T}(j,n),\]
so using \eqref{sumdeltaeq} and the fact that $f$ is $M$-good,
\begin{multline*}
\sum_{j=k}^{K} \frac{R_X^+(j)}{n} \le \int_{k/n}^{(K+1)/n} R_X^*(f(s)) ds + \sum_{j=k}^{K}\frac{\delta_{M,T}(j,n)}{n}\\
\le \int_{k/n}^{(K+1)/n} R_X^*(f(s)) ds + O\Big(\frac{M^4}{n^{1/2}}+\frac{M^3 n}{T}\Big)\\
 \le \int_{k/n}^{K/n} R_X^*(f(s)) ds + O\Big(\frac{M^4}{n^{1/2}}+\frac{M^3 n}{T}\Big).
\end{multline*}
By symmetry we also have the same statement with $Y$ in place of $X$.

Substituting these bounds into \eqref{generalkfinaldetRbd} gives the upper bound in the second part of the lemma. For the first part of the lemma, returning to \eqref{splitintegralRbound} and substituting in our estimates above for the three terms on the right-hand side, we have
\[\int_0^\theta R(Tg(s)) ds \le \int_{k/n}^{K/n} R^*_X(f(s))ds + O\Big(\frac{M^3}{T^{1/3}} + \frac{M^4}{n^{1/2}} + \frac{1}{T^{1/3}n^{1/2}} + \frac{M^3n}{T}\Big).\]
Since $R^*_X(f(s))\ge0$ for all $s$, the result follows.
\end{proof}

\begin{proof}[Proof of Lemma \ref{finaldetRbd}: lower bound]
Note that, again writing $K=\lfloor \theta n\rfloor$ but now with any $k$ satisfying $\lceil \sqrt n\rceil\le k \le K$,
\[\int_{k/n}^\theta R(Tg(s)) ds \ge \int_{k/n}^{K/n} R(Tg(s))ds.\]
Since $g\in\Gamma_{M,T}(f,n)$, by definition of $R_X^-$ and $R_Y^-$ we have
\begin{align*}
\int_{k/n}^\theta R(Tg(s))ds &= \int_{k/n}^\theta R_X(Tg(s))ds + \int_{k/n}^\theta R_Y(Tg(s))ds\\
&\ge \sum_{j=k}^{K-1} \int_{\frac{j}{n}}^{\frac{j+1}{n}} R_X^-(I_j,\Gamma_{M,T}(f,n),T) ds + \sum_{j=k}^{K-1} \int_{\frac{j}{n}}^{\frac{j+1}{n}} R_Y^-(I_j,\Gamma_{M,T}(f,n),T) ds\\
&= \sum_{j=k}^{K-1} \frac{R_X^-(j)}{n} + \sum_{j=k}^{K-1} \frac{R_Y^-(j)}{n}.
\end{align*}
By the upper bound in \eqref{detRbdeq}, for any $s\in[\frac{j}{n},\frac{j+1}{n}]$, we have $R_X^-(j) \ge R_X^*(f(s)) - \delta_{M,T}(j,n)$, so using \eqref{sumdeltaeq} and the fact that $f$ is $M$-good,
\begin{align*}
\sum_{j=k}^{K-1} \frac{R_X^-(j)}{n} \ge \int_{k/n}^{K/n} R_X^*(f(s)) ds - \sum_{j=k}^{K-1}\frac{\delta_{M,T}(j,n)}{n} &\ge \int_{k/n}^{K/n} R_X^*(f(s)) ds - O\Big(\frac{M^4}{n^{1/2}} + \frac{M^3n}{T}\Big).
\end{align*}
By symmetry we also have
\[\sum_{j=k}^{K-1} \frac{R_Y^-(j)}{n} \ge \int_{k/n}^{K/n} R_Y^*(f(s)) ds - O\Big(\frac{M^4}{n^{1/2}} + \frac{M^3n}{T}\Big).\]
Combining these bounds gives the result.
\end{proof}

\subsection{Proof of Lemma \ref{Esum2mlb}}\label{det_bds_rate_2}

The main difference between Lemma \ref{Esum2mlb} and our previous deterministic bounds on the rate function is that it requires us to consider more general time intervals than those of the form $[j/n, (j+1)/n]$. Lemma \ref{E+2mbd} will do most of the work required, and uses the uniform structure of $\Lambda_{M,T}(f,n)$ to get better bounds than are possible for $\Gamma_{M,T}(f,n)$.

\begin{lem}\label{E+2mbd}
Suppose that $M,T>1$, $n\ge 2M$ and $f\in PL_n^2 \cap G_M^2$. Then for any $j\in\{\lceil\sqrt n\rceil,\ldots,n-1\}$ and $u,v$ such that $\frac{j}{n}\le u<v\le\frac{j+1}{n}$,
\begin{multline*}
\int_u^v \Big(\sqrt{2R^*_X(f(s))} - \sqrt{f_X'(s)}\Big)^2 ds\\
\le \mathcal E_X^+\big([u,v],\Lambda_{M,T}(f,n),T\big) + \frac{6\delta_{M,T}(j,n)}{n} + 2\sqrt{\frac{2\delta_{M,T}(j,n)}{n}\big(f_X(\sfrac{j+1}{n})-f_X(\sfrac{j}{n})\big)} + \frac{14M}{n^{3/2}}.
\end{multline*}
\end{lem}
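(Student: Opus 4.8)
The plan is to mirror the structure of the proof of Lemma \ref{detcrosstermbound}, but restricted to the sub-interval $[u,v]\subseteq I_j$ and using the uniform ball $\Lambda_{M,T}(f,n)$ rather than $\Gamma_{M,T}(f,n)$. First I would expand the quadratic: writing $a_+=\max\{a,0\}$,
\[\int_u^v \Big(\sqrt{2R_X^*(f(s))}-\sqrt{f_X'(s)}\Big)^2 ds = 2\int_u^v R_X^*(f(s))\,ds - 2\sqrt2\int_u^v \sqrt{R_X^*(f(s))f_X'(s)}\,ds + \int_u^v f_X'(s)\,ds,\]
and then bound each of the three terms. The key observation exploited throughout is that on $\Lambda_{M,T}(f,n)$ every $g$ satisfies $\|g(s)-f(s)\|\le 1/n^2$ for \emph{all} $s$, so $x^-(u,\Lambda_{M,T}(f,n))\ge f_X(u)-1/n^2$, $x^+(v,\Lambda_{M,T}(f,n))\le f_X(v)+1/n^2$, and similarly at the endpoints of $[u,v]$; this gives much tighter control of $x^\pm$ in terms of $f_X$ than the $\Gamma$ version, where the $\Delta_n$-ball only pins down values at multiples of $1/n$.

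The middle (cross) term is handled exactly as in Lemma \ref{detcrosstermbound}: since $f\in\PL_n^2$, on $I_j$ we have $f_X'(s)=n(f_X(\tfrac{j+1}{n})-f_X(\tfrac{j}{n}))$, a constant, and $f_X'(s)\ge n\big(x^+(v,\Lambda)-x^-(u,\Lambda)-2/n^2\big)_+$; using $\sqrt{(a-b)_+}\ge\sqrt a-\sqrt b$ twice and the lower bound on $R_X^*(f(s))$ from \eqref{Rtrap2}, namely $R_X^*(f(s))\le R_X^-([u,v],\Lambda_{M,T}(f,n),T)+\delta_{M,T}(j,n)$ and the reverse with $R_X^+$, together with $R_X^*\le M^2$ on $G_M^2$, one gets
\[\int_u^v \sqrt{R_X^*(f(s))f_X'(s)}\,ds \ge (v-u)^{1/2}\Big(R_X^+([u,v],\Lambda)(v-u) \Big)^{1/2}\cdot\text{(endpoint correction)} - \text{error},\]
where the error is of order $\sqrt{\tfrac{\delta_{M,T}(j,n)}{n}\,(f_X(\tfrac{j+1}{n})-f_X(\tfrac{j}{n}))}+M n^{-3/2}$ after using $v-u\le 1/n$ and absorbing the $1/n^2$ shifts. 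The first term is bounded using the upper half of \eqref{Rtrap2}: $\int_u^v R_X^*(f(s))\,ds\le R_X^-([u,v],\Lambda)(v-u)+\delta_{M,T}(j,n)(v-u)\le R_X^-([u,v],\Lambda)(v-u)+\delta_{M,T}(j,n)/n$. The last term is immediate: $\int_u^v f_X'(s)\,ds = f_X(v)-f_X(u)\le x^-(v,\Lambda)-x^+(u,\Lambda)+2/n^2$.

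Assembling the three bounds, I would then argue as in the proof of Proposition \ref{detboundratefn} that the resulting expression
\[\frac{2R_X^-([u,v],\Lambda)(v-u)}{} - 2\sqrt{2R_X^+([u,v],\Lambda)(v-u)\big(x^+(v,\Lambda)-x^-(u,\Lambda)\big)} + x^-(v,\Lambda)-x^+(u,\Lambda)\]
is at most $\mathcal E_X^+([u,v],\Lambda_{M,T}(f,n),T)$ plus an $O(1/n^2)$ slack: in the $X-$ and $X+$ cases this is essentially the definition of $\mathcal E_X^+$ (after checking the $\pm 1/n^2$ shifts only cost $O(1/n^2)$, and, in the $X+$ case, swapping $R^-$ for $R^+$ costs at most $2\delta_{M,T}(j,n)(v-u)\le 2\delta_{M,T}(j,n)/n$); in the remaining case one uses $\sqrt{(a-b)\vee 0}\ge\sqrt a-\sqrt b$ to complete a square and bound everything by $O(1/n^2)\le \mathcal E_X^++O(1/n^2)$. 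Collecting the error terms — $\delta_{M,T}(j,n)/n$ from the first term and from the case analysis, $\sqrt{2\delta_{M,T}(j,n)(f_X(\tfrac{j+1}{n})-f_X(\tfrac{j}{n}))/n}$ from the cross term, and $O(M/n^{3/2})+O(1/n^2)$ from the piecewise-linear and endpoint approximations — yields the stated bound with constants $6$, $2$ and $14M$ (the numerical constants being generous rather than optimal, exactly as elsewhere in the paper). The main obstacle is bookkeeping the endpoint shifts: because $[u,v]$ is a genuine sub-interval and not of the form $[j/n,(j+1)/n]$, one cannot telescope, so every $1/n^2$ discrepancy between $f_X$ values and $x^\pm$ values must be tracked individually and shown to contribute only at order $1/n^{3/2}$ or better after being multiplied by factors like $\sqrt{R_X^*}\le M$; the one slightly delicate point is keeping the coefficient of $\sqrt{\delta_{M,T}(j,n)/n}\cdot\sqrt{f_X(\tfrac{j+1}{n})-f_X(\tfrac{j}{n})}$ down to $2$, which forces us to apply $\sqrt{(a-b)_+}\ge\sqrt a-\sqrt b$ in the right order and to use $v-u\le 1/n$ before, not after, taking square roots.
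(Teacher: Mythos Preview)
Your proposal is correct and follows essentially the same route as the paper: expand the quadratic, bound the three pieces using \eqref{Rtrap2} and the uniform $1/n^2$ control that $\Lambda_{M,T}(f,n)$ gives on $x^\pm$, and then invoke the \eqref{simplifyE+}-style case analysis to recognise the resulting expression as $\mathcal E_X^+([u,v],\Lambda_{M,T}(f,n),T)$ up to $O(1/n^2)$. The only cosmetic difference is that the paper bounds $\sqrt{R_X^+([u,v])}\le\sqrt{M^2+\delta_{M,T}(j,n)}$ explicitly when controlling the $2/n^2$ correction inside the cross term, but your bookkeeping amounts to the same thing.
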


\begin{proof}
As in the proof of Lemma \ref{qlblem}, for $I\subset[0,1]$ we write $\hat R^-_X(I)$ as shorthand for the quantity $R^-_X(I,\Lambda_{M,T}(f,n),T)$, and similarly for $\hat R^+_X(I)$, $\hat R^-_Y(I)$ and $\hat R^+_Y(I)$. We also write, for $s\in[0,1]$,
\[x^-(s) = x^-(s,\Lambda_{M,T}(f,n)) = \inf\{g_X(s) : g\in\Lambda_{M,T}(f,n)\}\]
and similarly for $x^+(s)$, $y^-(s)$ and $y^+(s)$.

By \eqref{Rtrap2} and the fact that $f$ is linear on $I_j$ (and therefore on $[u,v]$), we have
\begin{align}
&\int_u^v \Big(\sqrt{2R^*_X(f(s))}-\sqrt{f'_X(s)}\Big)^2 ds\nonumber\\
&= 2\int_u^v R^*_X(f(s)) ds + \int_u^v f_X'(s) ds - 2\int_u^v \sqrt{2R^*_X(f(s))f_X'(s)} ds\nonumber\\
&\le 2\hat R_X^-(I_j)(v-u) + 2\delta_{M,T}(j,n)(v-u)  + f_X(v)-f_X(u) - 2\int_u^v \sqrt{2R^*_X(f(s))\frac{f_X(v)-f_X(u)}{v-u}}ds.\label{detintbdeq1}
\end{align}
Applying \eqref{Rtrap2} and using the elementary inequality $\sqrt{(a-b)\vee 0} \ge \sqrt a -\sqrt b$, valid for all $a,b\ge 0$, for any $s\in I_j$ we have
\begin{multline*}
\sqrt{R^*_X(f(s))}\ge \sqrt{\big(\hat R_X^+(I_j)-\delta_{M,T}(j,n)\big)\vee 0}\\
\ge \sqrt{\hat R_X^+(I_j)} - \sqrt{\delta_{M,T}(j,n)} \ge \sqrt{\hat R_X^+([u,v])} - \sqrt{\delta_{M,T}(j,n)}
\end{multline*}
so we have
\begin{align}\label{crosslb}
&\int_u^v \sqrt{2R^*_X(f(s))\frac{f_X(v)-f_X(u)}{v-u}}ds \nonumber \\
&\ge \int_u^v \Big(\sqrt{2\hat R_X^+([u,v])} - \sqrt{2\delta_{M,T}(j,n)}\Big)\sqrt{\frac{f_X(v)-f_X(u)}{v-u}}ds \nonumber \\
&\ge \sqrt{2\hat R_X^+([u,v]) (f_X(v)-f_X(u))(v-u)}- \sqrt{ \frac{2\delta_{M,T}(j,n)}{n} \big(f_X(\sfrac{j+1}{n})-f_X(\sfrac{j}{n})\big)}.
\end{align}
Using again that $\sqrt{(a-b)\vee 0} \ge \sqrt a -\sqrt b$ we have
\[\sqrt{f_X(v)-f_X(u)} \ge \sqrt{(x^+(v) - 1/n^2 - (x^-(u)+1/n^2))\vee0} \ge \sqrt{x^+(v)-x^-(u)} - \sqrt{2/n^2},\]
and since $f$ is $M$-good, and therefore by \eqref{Rtrap2} $R^+_X([u,v]) \le M^2+\delta_{M,T}(j,n)$, we deduce that
\[ \sqrt{2\hat R_X^+([u,v]) (f_X(v)-f_X(u))(v-u)} \ge \sqrt{2\hat R_X^+([u,v]) (x^+(v)-x^-(u))(v-u)} - \sfrac{2 \sqrt{M^2+\delta_{M,T}(j,n)}}{n^{3/2}}.\]
Substituting this into \eqref{crosslb}, and using that
\[\sqrt{M^2+\delta_{M,T}(j,n)} \le \sqrt{M^2} + \sqrt{\delta_{M,T}(j,n)} \le M + \delta_{M,T}(j,n) + 1\]
gives that
\begin{align*} 
&\int_u^v \sqrt{2R^*_X(f(s))\frac{f_X(v)-f_X(u)}{v-u}}ds \\
&\ge \sqrt{2\hat R_X^+([u,v])(v-u)(x^+(v)-x^-(u))} - \sqrt{ \sfrac{2\delta_{M,T}(j,n)}{n} \big(f_X(\sfrac{j+1}{n})-f_X(\sfrac{j}{n})\big)} - \sfrac{2(M+\delta_{M,T}(j,n)+1)}{n^{3/2}}.
\end{align*}
Substituting this bound into \eqref{detintbdeq1} and using that $\hat R_X^-(I_j)\le \hat R_X^-([u,v])$ and $v-u\le 1/n$, we obtain
\begin{align*}
&\int_u^v \Big(\sqrt{2R^*_X(f(s))}-\sqrt{f'_X(s)}\Big)^2 ds\\
&\le 2\hat R_X^-([u,v])(v-u) + x^-(v) - x^+(u) - 2\sqrt{2\hat R_X^+([u,v])(v-u)(x^+(v)-x^-(u))}\\
&\hspace{15mm} + \frac{6\delta_{M,T}(j,n)}{n} + \frac{2}{n^2} + 2\sqrt{\frac{2\delta_{M,T}(j,n)}{n}\big(f_X(\sfrac{j+1}{n})-f_X(\sfrac{j}{n})\big)} + \frac{8M}{n^{3/2}}.
\end{align*}
It then remains to note that, following exactly the same argument as \eqref{simplifyE+},
\begin{multline*}
\mathcal E_X^+\big([u,v],\Lambda_{M,T}(f,n),T\big)\\
\ge 2\hat R_X^-([u,v])(v-u) + x^-(v) - x^+(u) - 2\sqrt{2\hat R_X^+([u,v])(v-u)(x^+(v)-x^-(u))} - 4/n^2.
\end{multline*}
Combining error terms gives the result.
\end{proof}

It is now a relatively simple task to apply Lemma \ref{E+2mbd} to complete the proof of Lemma \ref{Esum2mlb}.

\begin{proof}[Proof of Lemma \ref{Esum2mlb}]
By symmetry it suffices to show that
\[\sum_{j=\lfloor an\rfloor}^{\lceil bn\rceil -1} \mathcal E_X^+(I_j\cap[a,b],\Lambda_{M,T}(f,n),T) \ge \int_a^b \Big(\sqrt{2R^*_X(f(s))} - \sqrt{f'_X(s)}\Big)^2 ds - O\Big(\frac{M^4}{n^{1/4}}+\frac{M^3 n}{T^{1/2}}\Big).\]
By Lemma \ref{E+2mbd},
\begin{align*}
&\sum_{j=\lfloor an\rfloor}^{\lceil bn\rceil -1} \mathcal E_X^+(I_j\cap[a,b],\Lambda_{M,T}(f,n),T)\\
&\hspace{20mm}\ge \sum_{j=\lfloor an\rfloor}^{\lceil bn\rceil -1} \int_{I_j\cap[a,b]} \Big(\sqrt{2R^*_X(f(s))} - \sqrt{f'_X(s)}\Big)^2 ds\\
&\hspace{40mm} - \sum_{j=\lfloor an\rfloor}^{\lceil bn\rceil -1}\Bigg(\frac{6\delta_{M,T}(j,n)}{n} + 2\sqrt{\frac{2\delta_{M,T}(j,n)}{n}\big(f_X(\sfrac{j+1}{n})-f_X(\sfrac{j}{n})\big)} + \frac{14M}{n^{3/2}}\Bigg).
\end{align*}
Note that 
\[\sum_{j=\lfloor an\rfloor}^{\lceil bn\rceil -1} \int_{I_j\cap[a,b]} \Big(\sqrt{2R^*_X(f(s))} - \sqrt{f'_X(s)}\Big)^2 ds = \int_a^b \Big(\sqrt{2R^*_X(f(s))} - \sqrt{f'_X(s)}\Big)^2 ds,\]
and by \eqref{sumdeltaeq}
\[\sum_{j=\lfloor an\rfloor}^{\lceil bn\rceil -1} \Big(\frac{6\delta_{M,T}(j,n)}{n} + \frac{14M}{n^{3/2}}\Big) = O\Big(\frac{M^4}{n^{1/2}} + \frac{M^3n}{T}\Big).\]
Finally, by Cauchy-Schwarz,
\[\sum_{j=\lfloor an\rfloor}^{\lceil bn\rceil -1} \sqrt{\frac{2\delta_{M,T}(j,n)}{n}\big(f_X(\sfrac{j+1}{n})-f_X(\sfrac{j}{n})\big)} \le \bigg(\sum_{i=\lfloor an\rfloor}^{\lceil bn\rceil -1} \frac{2\delta_{M,T}(i,n)}{n} \sum_{j=\lfloor an\rfloor}^{\lceil bn\rceil -1} \big(f_X(\sfrac{j+1}{n})-f_X(\sfrac{j}{n})\big)\bigg)^{1/2},\]
and using \eqref{sumdeltaeq} and the fact that $f\in G_M^2$, we see that
\[\sum_{j=\lfloor an\rfloor}^{\lceil bn\rceil -1} \sqrt{\frac{2\delta_{M,T}(j,n)}{n}\big(f_X(\sfrac{j+1}{n})-f_X(\sfrac{j}{n})\big)} = O\Big(\frac{M^{5/2}}{n^{1/4}}+\frac{M^2 n^{1/2}}{T^{1/2}}\Big).\]
Combining these estimates completes the proof.
\end{proof}

\section{Elementary bounds on compound Poisson processes: proof of Lemma \ref{CPPfollowtubecor}}\label{CPP_append}

Before proving Lemma \ref{CPPfollowtubecor}, the bulk of the work is done by the following intermediate result.

\begin{lem}\label{CPPfollowtubelem}
Suppose that $\delta,t,A>0$ and $R\ge 1/2$. Let $(X(s),s\ge0)$ be a compound Poisson process of rate $RT$ whose jumps are exponentially distributed with parameter $T$. Then for any $T$,
\[\P(|X(s)-As|<\delta \,\, \forall s\le t) \le \exp\Big(-tT(\sqrt R - \sqrt A)^2 + \delta \big|1-\sqrt{R/A}\big| T\Big),\]
and for $T>\frac{2A^{3/2}(4t+\delta)}{R^{1/2}\delta^{2}(A\wedge 1)^2}$,
\[\P(|X(s)-As|<\delta \,\, \forall s\le t)\ge \frac{1}{2}\exp\Big(-tT(\sqrt R - \sqrt A)^2 - \delta(A\wedge1) \big|1-\sqrt{R/A}\big| T\Big).\]
\end{lem}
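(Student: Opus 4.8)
The plan is to prove Lemma \ref{CPPfollowtubelem} by an exponential change-of-measure argument, which is standard for compound Poisson processes. First I would set up notation: let $(X(s),s\ge 0)$ be the given compound Poisson process, which has Laplace transform $\mathbb{E}[e^{-\lambda X(s)}] = \exp(sRT \cdot (\frac{T}{T+\lambda} - 1)) = \exp(-sRT\frac{\lambda}{T+\lambda})$ for $\lambda > -T$, and similarly $\mathbb{E}[e^{\mu X(s)}] = \exp(sRT\frac{\mu}{T-\mu})$ for $\mu < T$. For the upper bound, I would choose an exponential tilt that makes the drift of $X$ equal to $A$ (its ``most likely'' behaviour under the constraint), i.e.\ find the tilt parameter $\theta$ so that under the tilted measure $X(s) - As$ is a mean-zero martingale, then use the fact that the event $\{|X(s)-As|<\delta\ \forall s\le t\}$ forces the exponential martingale to be bounded in a suitable range, so that $\mathbb{P}(\cdot) \le (\text{bound on martingale}) \times e^{-(\text{rate function})\cdot t T}$. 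The rate function for a compound Poisson process with exponential jumps to have drift $A$ instead of $R$ (both being ``rate times mean jump size'' with mean jump $1/T$, so drift $R$ for the un-tilted process... wait, drift is $RT \cdot (1/T) = R$) works out to $T(\sqrt R - \sqrt A)^2$ after optimising, which matches the statement; the residual $e^{\delta|1-\sqrt{R/A}|T}$ term comes from the boundary slack $\delta$ in the stopping-time/martingale estimate.

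For the lower bound I would again work under the tilted measure $\tilde{\mathbb{P}}$ under which $X(s)-As$ has zero drift. Under $\tilde{\mathbb{P}}$, the process $X(s) - As$ is a martingale with controllable variance $\mathrm{Var}_{\tilde{\mathbb{P}}}(X(t)) = O(t A^{3/2}/(R^{1/2}T))$ (the second moment of the jumps under the tilt scales like $1/T^2$ times a constant, and the jump rate like $T$). By Kolmogorov's (or Doob's) maximal inequality, $\tilde{\mathbb{P}}(\sup_{s\le t}|X(s)-As|\ge \delta) \le \mathrm{Var}_{\tilde{\mathbb{P}}}(X(t))/\delta^2$, and the hypothesis $T > \frac{2A^{3/2}(4t+\delta)}{R^{1/2}\delta^2(A\wedge 1)^2}$ is precisely engineered so that this probability is at most $1/2$, giving $\tilde{\mathbb{P}}(|X(s)-As|<\delta\ \forall s\le t) \ge 1/2$. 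Then I would change back to the original measure: $\mathbb{P}(|X(s)-As|<\delta\ \forall s\le t) = \tilde{\mathbb{E}}[\frac{d\mathbb{P}}{d\tilde{\mathbb{P}}} \mathbbm{1}_{\{|X(s)-As|<\delta\ \forall s\le t\}}]$, and on that event the Radon--Nikodym derivative is bounded below by $e^{-tT(\sqrt R-\sqrt A)^2} \times e^{-\delta(A\wedge 1)|1-\sqrt{R/A}|T}$ (the leading exponential is the value of $\frac{d\mathbb{P}}{d\tilde{\mathbb{P}}}$ along the ``typical'' path $X(s)=As$, and the extra factor accounts for the worst-case displacement by $\delta$ within the tube), yielding the claimed bound.

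I would organise the two cases $A \ge R$ and $A < R$ (equivalently $\sqrt{R/A} \le 1$ or $> 1$) only if needed to pin down the sign in $|1-\sqrt{R/A}|$; the change of measure and optimisation are symmetric enough that a single computation with absolute values should suffice, but I would keep the direction of the boundary-slack inequality straight. The main obstacle I expect is the bookkeeping in the lower bound: getting the variance estimate under the tilted measure with explicit constants (so that the hypothesis on $T$ genuinely makes the maximal inequality give $1/2$), and carefully bounding the Radon--Nikodym derivative from below on the whole tube rather than just at the centre line — this is where the factor $(A\wedge 1)$ and the precise form $\delta(A\wedge 1)|1-\sqrt{R/A}|T$ enter, and where sign/monotonicity of the exponent as a function of the displacement must be checked. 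Once Lemma \ref{CPPfollowtubelem} is established, Lemma \ref{CPPfollowtubecor} follows by an elementary conditioning on the endpoint: write $\mathbb{P}(|a+X(s)-As|<\delta\ \forall s\le t,\ |a+X(t)-At|<\delta/2)$, shift by $a$ (using $|a|\le\delta/2$ so the tube of radius $\delta$ around $A s$ starting from $a$ contains the tube of radius $\delta/2$ around $As - a$... ), and apply the strong Markov property at a suitable intermediate time together with the lower bound of Lemma \ref{CPPfollowtubelem}, absorbing the endpoint constraint into the $\delta$-slack — the extra term $\delta(1+\sqrt R(\sqrt{2t/\delta}+1/2))T$ in the corollary versus the cleaner exponent in the lemma is exactly what this splitting costs.
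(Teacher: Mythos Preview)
Your approach is essentially the same as the paper's: an exponential tilt with parameter $a=T(1-\sqrt{R/A})$ so that $X(s)-As$ is a mean-zero martingale under the new measure $\mu$, then sandwich the Radon--Nikodym derivative on the tube event to get both bounds, and for the lower bound use a maximal inequality under $\mu$ to show the tube probability is at least $1/2$.

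Two small points of divergence are worth flagging. First, the paper uses Doob's inequality applied to the exponential submartingale $e^{\nu(X(s)-As)}$ with $\nu=\pm 2/(\delta(A\wedge1))$, rather than the $L^2$/Kolmogorov maximal inequality you propose; either works, though the stated threshold on $T$ is calibrated to the exponential version (your variance bound $\mathrm{Var}_\mu(X(t))=2tA^{3/2}/(R^{1/2}T)$ actually needs a slightly weaker condition, so the given hypothesis is more than enough). Second, and more importantly, your explanation of where the factor $(A\wedge1)$ comes from is not quite right: it does \emph{not} arise from bounding the Radon--Nikodym derivative on the tube of radius $\delta$. On $\{|X(t)-At|<\delta\}$ the derivative is only bounded below by $\exp(-tT(\sqrt R-\sqrt A)^2-\delta|1-\sqrt{R/A}|T)$, which for $A<1$ is strictly weaker than the stated lower bound. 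The paper instead proves the $\mu$-probability bound on the \emph{smaller} tube of radius $\delta'=\delta(A\wedge1)$, and it is this shrinkage that produces the improved exponent; since $\{|X(s)-As|<\delta'\ \forall s\le t\}\subset\{|X(s)-As|<\delta\ \forall s\le t\}$, the $\P$-lower bound for the larger event follows. You should make this tube-shrinking step explicit.
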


\begin{proof}
For any $q<T$ and $s\ge0$, we have
\[\E[e^{qX(s)}] = \exp\Big(\frac{Rqs}{1-q/T}\Big).\]
Fix $a= T(1-\sqrt{R/A})$; then elementary calculations show that
\[\frac{\E[X(s)e^{aX(s)}]}{\E[e^{aX(s)}]} = As.\]
Let $(\sigma_s)_{s\ge0}$ be the natural filtration of $X$, and define a new probability measure $\mu$ by setting
\[\frac{d\mu}{d\P}\Big|_{\sigma_s} = \frac{e^{aX(s)}}{\E[e^{aX(s)}]} = \exp\Big(aX(s)-\frac{Ras}{1-a/T}\Big).\]
Then, by the definition of $\mu$, for any $\delta'>0$ we have
\[\P(|X(s)-As|<\delta'\,\,\,\,\forall s\le t) = \mu\Big[\exp\Big(-aX(t)+\frac{Rat}{1-a/T}\Big)\ind_{\{|X(s)-As|<\delta'\,\,\,\,\forall s\le t\}}\Big]\]
and using the bound $|X(t)-At|<\delta'$ and simplifying we obtain
\begin{multline}\label{muinitial}
\exp\Big(-tT(\sqrt R - \sqrt A)^2 - |a|\delta')\Big)\mu(|X(s)-As|<\delta'\,\,\,\,\forall s\le t)\\
\hspace{-35mm}\le \P(|X(s)-As|<\delta'\,\,\,\,\forall s\le t)\\
\le \exp\Big(-tT(\sqrt R - \sqrt A)^2 + |a|\delta')\Big)\mu(|X(s)-As|<\delta'\,\,\,\,\forall s\le t).
\end{multline}
The required upper bound follows immediately by taking $\delta'=\delta$. For the lower bound we take $\delta' = \delta(A\wedge1)$, and then it remains to bound $\mu(|X(s)-As|<\delta(A\wedge1)\,\,\,\,\forall s\le t)$ from below.

One may easily check that $(X(s)-As,s\ge0)$ is a martingale under $\mu$, and therefore by Jensen's inequality, $(e^{\nu(X(s)-As)},s\ge0)$ is a submartingale under $\mu$ for any $\nu<T-a$ (the upper bound on $\nu$ is required to ensure that the expectation is finite). By Doob's submartingale inequality, for any $\nu\in(0,T-a)$, we have
\begin{equation}\label{nuupper}
\mu(\exists s\le t : X(s)-As\ge\delta(A\wedge1)) = \mu\Big(\sup_{s\le t}e^{\nu(X(s)-As)} \ge e^{\nu\delta(A\wedge1)}\Big) \le \mu[e^{\nu(X(t)-At)}]e^{-\nu\delta(A\wedge1)}
\end{equation}
and for any $\nu<0$ we have
\begin{equation}\label{nulower}
\mu(\exists s\le t : X(s)-As\le-\delta(A\wedge1)) = \mu\Big(\sup_{s\le t}e^{\nu(X(s)-As)} \ge e^{-\nu\delta(A\wedge1)}\Big) \le \mu[e^{\nu(X(t)-At)}]e^{\nu\delta(A\wedge1)}.
\end{equation}
Now, for any $\nu<T-a$,
\[\mu[e^{\nu(X(t)-At)}] = \E[e^{(a+\nu)X(t)}]e^{-Rat/(1-a/T)-A\nu t} = \exp\Big(\frac{R(a+\nu)t}{1-(a+\nu)/T} - \frac{Rat}{1-a/T} - A\nu t\Big),\]
and simplifying we obtain
\[\mu[e^{\nu(X(t)-At)}] = \exp\Big(\frac{A\nu t}{1-\frac{\nu}{T}(A/R)^{1/2}}-A\nu t\Big) = \exp\Big(\frac{A\nu^2 t}{(R/A)^{1/2}T-\nu}\Big).\]
It is then easy to check that for $T>\frac{2A^{3/2}(4t+\delta)}{R^{1/2}\delta^{2}(A\wedge 1)^2}$, each of the probabilities in \eqref{nuupper} and \eqref{nulower} can be made smaller than $e^{-3/2}<1/4$ by choosing $\nu=\pm\frac{2}{\delta(A\wedge 1)}$. Thus we have
\[\mu(|X(s)-As|<\delta(A\wedge1)\,\,\,\,\forall s\le t) \ge 1/2\]
for such $T$. Substituting this into the lower bound in \eqref{muinitial}, using $\delta'=\delta(A\wedge1)$, gives the result.
\end{proof}

\noindent
Lemma \ref{CPPfollowtubecor} now requires us to deal with the position of our process at the endpoints of the intervals $I_j$.

\begin{proof}[Proof of Lemma \ref{CPPfollowtubecor}]
Let $A_t(a) = A-a/t$. Note that if $|X(s)-A_t(a)s|<\delta/2$ for all $s\le t$, then $|a+X(s)-As|<\delta$ for all $s\le t$ and $|a+X(t)-At|<\delta/2$. Thus
\[\P(|a+X(s)-As|<\delta \,\, \forall s\le t, \,\, |a+X(t)-At|<\delta/2) \ge \P(|X(s)-A_t(a)s|<\delta/2 \,\, \forall s\le t).\]
Lemma \ref{CPPfollowtubelem} tells us that the latter probability is at least
\begin{equation}\label{CPPfollowtubeeq}
\frac{1}{2}\exp\bigg(-tT(\sqrt{R} - \sqrt{A_t(a)})^2 - \frac{\delta(A_t(a)\wedge1)}{2} \Big|1-\Big(\frac{R}{A_t(a)}\Big)^{1/2}\Big| T\bigg).
\end{equation}
Using the fact that $(1-x)^{1/2}\ge 1-x^{1/2}$ for $x\in[0,1]$, we have
\begin{align*}
t(\sqrt{R} - \sqrt{A_t(a)})^2 &\le \Big(R + A+\frac{\delta}{2t}-2\sqrt{AR}\Big(1-\frac{\delta}{2tA}\Big)^{1/2}\Big)t\\
&\le (\sqrt R - \sqrt A)^2 t + \frac{\delta}{2} + \sqrt{2\delta Rt}.
\end{align*}
and
\[\frac{\delta (A_t(a)\wedge 1)}{2}\Big|1-\Big(\frac{R}{A_t(a)}\Big)^{1/2}\Big| \le \frac{\delta}{2} \Big(1+\Big(\frac{R}{A_t(a)}\Big)^{1/2}\Big) \le \frac{\delta}{2}(1+\sqrt{R}).\]
Substituting these estimates into \eqref{CPPfollowtubeeq} gives
\[\frac{1}{2}\exp\bigg(-tT(\sqrt R - \sqrt A)^2 - \delta \Big(1+\sqrt{R}\big(\sqrt{2t/\delta}+1/2\big)\Big) T\bigg),\]
as required.
\end{proof}

\section{Proofs of compactness and semicontinuity}\label{techsec}

\subsection{Compactness of $G_{M,T}^2$: proof of Lemma \ref{coverGMT}}\label{rel_cpct_sec}

The proof of Lemma \ref{coverGMT}, which says that for any $F\subset E^2$ we can cover $F\cap G_{M,T}^2$ in a nice way with small balls around piecewise linear functions, is straightforward. We directly construct piecewise linear approximations to an arbitrary function within $F\cap G_{M,T}^2$.

\begin{proof}[Proof of Lemma \ref{coverGMT}]
Suppose that $T\ge (4Mn)^{3/2}$ and take $h\in F\cap G_{M,T}^2$. Then define a function $g\in \PL_{n}^2$ by interpolating linearly between the values
\[g(j/n) = \lfloor n^2 h(j/n)\rfloor /n^2, \hspace{5mm} j=0,1,\ldots,n.\]
Then clearly
\[\Delta_n(g,h) < 1/n^2.\]
We claim that $d(g,h)\le 1/n$. To see this, take $s\in[0,1]$, and then fix $j\in\{0,1,\ldots,n-1\}$ such that $s\in[j/n,(j+1)/n]$. Then
\[g(s)\le g(\sfrac{j+1}{n}) \le h(\sfrac{j+1}{n})\]
and
\[g(s) \ge g(\sfrac{j}{n}) \ge h(\sfrac{j}{n})- 1/n^2\]
which, by the definition \eqref{metricdefn} of $d$, establishes the claim.

Next we claim that $g\in G_{4M}^2$. Since $h\in G_{M,T}^2$ we know that for any $j=1,2,\ldots,n$,
\[\frac{j}{Mn} - 2T^{-2/3} \le h(j/n) \le M\Big(\frac{j}{n}+2T^{-2/3}\Big)\]
and since $T\ge (4Mn)^{3/2}$ we obtain
\[\frac{j}{2Mn} \le \frac{j-1/2}{Mn} \le h(j/n) \le \frac{Mj+1/2}{n} \le \frac{2Mj}{n}.\]
But $h(j/n)-1/n^2\le g(j/n)\le h(j/n)$ so
\[\frac{j}{2Mn} - \frac{1}{n^2} \le g(j/n) \le \frac{2Mj}{n}\]
and since $n\ge 4M$, $j/(2Mn)-1/n^2 \ge j/(4Mn)$, which, given that $g$ interpolates linearly between these values, proves the claim.

Since the functions $g$ created in this way can take only finitely many values (namely integer multiples of $1/n^2$ with a maximum of at most $2M$) at the times $0,1/n,2/n,\ldots,1$, and interpolate linearly between these values, there are only finitely many possible such functions, and therefore the proof is complete.
\end{proof}

\subsection[Partial lower semi-continuity of $K$: proof of Proposition \ref{lowersemicts}]{Partial lower semi-continuity of $\tilde K$: proof of Proposition \ref{lowersemicts}}\label{lsc_sec}

To complete the proof of the lower bound in Section \ref{lower_bd_sec}, we need to prove a partial semi-continuity result about $\tilde K$, which was stated in Proposition \ref{lowersemicts}. We begin with a useful lemma which states that given continuity, convergence under $d$ implies convergence pointwise.

\begin{lem}\label{pointwiselem}
If $f\in E$ is continuous at $s$ and $d(f_n,f)\to 0$, then $f_n(s)\to f(s)$. Moreover, if $d(f_n,f)\to 0$, then $f_n(1)\to f(1)$ (regardless of whether $f$ is continuous at $1$).
\end{lem}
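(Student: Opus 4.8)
\textbf{Proof plan for Lemma \ref{pointwiselem}.}

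The plan is to work directly from the definition of the L\'evy metric $d$ in \eqref{metricdefn}. Suppose $d(f_n,f)\to 0$; write $r_n = d(f_n,f)$, so $r_n\to 0$. By \eqref{metricdefn}, for each $n$ and every $x\in[-r_n,1+r_n]$ we have $f(x-r_n)-r_n < f_n(x) < f(x+r_n)+r_n$, with the usual convention that $f$ is extended by $f(0)$ to the left of $0$ and by $f(1)$ to the right of $1$.

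First I would prove the pointwise statement at a continuity point $s\in[0,1]$. Fix $\varepsilon>0$. Since $f$ is continuous at $s$, choose $\eta>0$ such that $|f(x)-f(s)|<\varepsilon$ whenever $|x-s|\le\eta$ and $x\in[0,1]$; if $s=0$ or $s=1$ the one-sided version suffices, and the convention extending $f$ as a constant outside $[0,1]$ means the estimate $|f(x)-f(s)|<\varepsilon$ in fact holds for all $x$ with $|x-s|\le\eta$ (not just those in $[0,1]$), because to the left of $0$ (resp.\ right of $1$) $f$ is constant equal to $f(0)$ (resp.\ $f(1)$) and $f$ is continuous there from inside. Now take $N$ large enough that $r_n\le\min\{\eta,\varepsilon\}$ for all $n\ge N$; then $s\in[-r_n,1+r_n]$, so applying the displayed inequality with $x=s$ gives
\[
f(s-r_n)-r_n < f_n(s) < f(s+r_n)+r_n.
\]
Since $|s\pm r_n - s| = r_n\le\eta$, both $f(s-r_n)$ and $f(s+r_n)$ lie within $\varepsilon$ of $f(s)$, and $r_n\le\varepsilon$, so $|f_n(s)-f(s)|<2\varepsilon$ for all $n\ge N$. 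As $\varepsilon>0$ was arbitrary, $f_n(s)\to f(s)$.

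For the moreover statement at $s=1$, the only issue is that $f$ need not be continuous at $1$; but $f$ is c\`adl\`ag, and $1$ is the right endpoint of $[0,1]$, so $f$ is automatically left-continuous at $1$ only if $1$ is not a jump point — which we cannot assume. However, by the extension convention $f(x)=f(1)$ for all $x>1$, so $f$ \emph{is} right-continuous at $1$ in the extended sense, and being c\`adl\`ag it has a left limit $f(1-)$ at $1$. The point is that taking $x=1+r_n\in[-r_n,1+r_n]$ in the displayed inequality gives $f_n(1+r_n) < f(1+2r_n)+r_n = f(1)+r_n$; but $f_n$ is also extended as a constant past $1$, so $f_n(1+r_n)=f_n(1)$, giving $f_n(1)<f(1)+r_n$. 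For the lower bound, taking $x=1+r_n$ gives $f_n(1)=f_n(1+r_n) > f(1)-r_n$ as well (here $f(1+r_n-r_n)=f(1)$). Hence $|f_n(1)-f(1)|<r_n\to 0$, which proves $f_n(1)\to f(1)$ with no continuity hypothesis. The same argument at $x=-r_n$ shows $f_n(0)\to f(0)$, consistent with the first part since every $f\in E$ is continuous at $0$.

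I do not expect any real obstacle here; the only mildly delicate point is bookkeeping with the constant extensions of $f$ and $f_n$ outside $[0,1]$, which is exactly what makes the endpoint case $s=1$ go through despite a possible jump of $f$ at $1$.
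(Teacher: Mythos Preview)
Your proof is correct and follows essentially the same approach as the paper: apply the definition of $d$ directly, using continuity of $f$ at $s$ for the first part and the constant-extension convention at the right endpoint for the second part. One very minor technical point: setting $r_n=d(f_n,f)$ and asserting the inequality $f(x-r_n)-r_n<f_n(x)<f(x+r_n)+r_n$ implicitly assumes the infimum in \eqref{metricdefn} is attained; to be safe, work with any $r>d(f_n,f)$ (or, as the paper does, choose $N$ so that $d(f_n,f)<(\varepsilon/2)\wedge\delta$ and apply the inequality for some $r$ strictly between $d(f_n,f)$ and that bound).
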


\begin{proof}
Fix $\eps>0$ and $s\in[0,1]$ such that $f$ is continuous at $s$. Then we can find $\delta>0$ such that $|f(u)-f(s)|<\eps/2$ for any $u\in[s-\delta,s+\delta]\cap[0,1]$. Choose $N$ such that $d(f_n,f)<(\eps/2)\wedge\delta$ for all $n\ge N$. By the definition \eqref{metricdefn} of $d$, this means that
\[f((s-\delta)\vee0) - \eps/2 \le f_n(s) \le f((s+\delta)\wedge1) + \eps/2.\]
Then we have
\[f(s)-\eps \le f((s-\delta)\vee0) - \eps/2 \le f_n(s) \le f((s+\delta)\wedge1) + \eps/2 \le f(s)+\eps\]
and since $\eps>0$ was arbitrary, we have shown that $f_n(s)\to f(s)$.

For the second part of the lemma, simply note that by the definition of $d$, if $d(f_n,f)<\eps$ then $|f_n(1)-f(1)|<\eps$.
\end{proof}

We now show that when $f_n$ is the piecewise linear interpolation to $f$, the cross-terms that appear when multiplying out the quadratic terms in $\tilde K$ satisfy a semicontinuity property.

\begin{lem}\label{liminfsqrt}
Suppose that $0\le a<b\le 1$ and that $f\in G_M^2$ for some $M$. Let $f_n$ be the function in $\PL_n$ constructed by setting $f_n(j/n)=f(j/n)$ for each $j=0,\ldots,n$ and interpolating linearly. Then
\[\liminf_{n\to\infty} \int_a^b \sqrt{R^*_X(f_n(s)) f_{n,X}'(s)} \d s \ge \int_a^b \sqrt{R^*_X(f(s))f_X'(s)}\d s\]
where we write $f_n(s) = (f_{n,X}(s),f_{n,Y}(s))$.
\end{lem}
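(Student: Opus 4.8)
The plan is to establish the lower bound on the liminf of the cross-term by reducing everything to pointwise convergence of the integrands together with an application of Fatou's lemma. First I would observe that since $f\in G_M^2$, we have $s/M\le f_X(s)\le Ms$ and similarly for $f_Y$, so in particular $f_X$ and $f_Y$ are continuous and bounded away from $0$ on $[a,b]$ whenever $a>0$; the case $a=0$ needs a tiny bit of extra care but the contribution near $0$ is $O(M^{5/2}/n^{1/2})$-type small and can be handled separately, or one simply notes $f$ is continuous at $0$ with $f(0)=0$ and the integrand is bounded. Because $f_n$ agrees with $f$ at the points $j/n$ and interpolates linearly, $f_n\to f$ uniformly on $[0,1]$ (the modulus of continuity of $f$ controls the error), and hence $f_n(s)\to f(s)$ for every $s$; consequently $R^*_X(f_n(s))\to R^*_X(f(s))$ at every $s$ where $f$ is differentiable and $R^*_X$ is continuous at $f(s)$, which is almost every $s$ (the only bad points are where $f_X(s)=f_Y(s)$ could cause the indicator in $R^*_X$ to jump, but the set of such $s$ combined with the non-differentiability set still has measure zero, or more carefully one argues $R^*_X$ restricted to the relevant region is continuous).

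The more delicate ingredient is the derivative term $f_{n,X}'(s)$. Here $f_{n,X}'(s) = n(f_X(\tfrac{j+1}{n}) - f_X(\tfrac{j}{n}))$ for $s\in(j/n,(j+1)/n)$, which is exactly the average of $f_X'$ over that interval (since $f_X$ is absolutely continuous — indeed monotone, but we are working with $\tilde f_X$; actually for this lemma $f\in G_M^2$ need not be absolutely continuous, so $f_{n,X}'$ is the average of the absolutely continuous part plus a contribution from the singular part). The key point is that $\int_a^b f_{n,X}'(s)\,ds \to \hat{}$-free: more precisely $\int_{j/n}^{(j+1)/n} f_{n,X}'(s) ds = f_X(\tfrac{j+1}{n})-f_X(\tfrac{j}{n})$, so $f_{n,X}'$ converges to $f_X'$ weakly in the sense that its indefinite integral converges, but it does \emph{not} converge pointwise in general. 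To get around this I would use the concavity of $t\mapsto\sqrt{t}$ together with Jensen's inequality on each subinterval: writing $c_j = \sqrt{R^*_X(f(s))}$-type lower bounds that are nearly constant on $I_j$, one gets $\int_{I_j}\sqrt{R^*_X(f_n(s))f_{n,X}'(s)}ds \gtrsim (\inf_{I_j}\sqrt{R^*_X(f_n)})\cdot\sqrt{(f_X(\tfrac{j+1}{n})-f_X(\tfrac{j}{n}))/n}\cdot$ — no wait, Jensen goes the wrong way for a lower bound. Instead, the right tool is that $f_{n,X}'$ restricted to the absolutely continuous part converges to $f_X'$ in $L^1$ (by the Lebesgue differentiation theorem / martingale convergence applied to the dyadic-type averaging), while the singular part only \emph{adds} mass, so $\liminf \int_a^b h(s)\sqrt{f_{n,X}'(s)}ds \ge \int_a^b h(s)\sqrt{\tilde f_X'(s)}ds$ for any continuous $h\ge 0$ — but the lemma as stated uses $f_X'$ which by the paper's convention is $+\infty$ where $f$ is not differentiable, so $\sqrt{R^*_X(f(s))f_X'(s)}$ integrates the same as $\sqrt{R^*_X(f(s))\tilde f_X'(s)}$ on the absolutely continuous part and contributes nothing extra from the singular part (the integrand is finite a.e. only where $\tilde f_X' = f_X' <\infty$; on the singular support $f_X'=\infty$ but that set has measure zero so contributes $0$ to the integral). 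So the target integral is really $\int_a^b\sqrt{R^*_X(f(s))\tilde f_X'(s)}ds$.

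So the clean line of argument is: (1) reduce the right-hand side to $\int_a^b \sqrt{R^*_X(f(s))\,\tilde f_X'(s)}\,ds$ using the measure-zero convention; (2) on each dyadic-type interval $I_j$, pull out $\inf_{s\in I_j} R^*_X(f_n(s))$ (or a value very close to $R^*_X(f(j/n))$, using uniform convergence $f_n\to f$ and the explicit form of $R^*_X$ away from the diagonal) and use $\sqrt{ab}\ge$ a telescoping-friendly bound; (3) combine with the fact that $n(f_X(\tfrac{j+1}{n})-f_X(\tfrac{j}{n})) \ge n(\tilde f_X(\tfrac{j+1}{n})-\tilde f_X(\tfrac{j}{n}))$ and that the step function $s\mapsto n(\tilde f_X(\tfrac{j+1}{n})-\tilde f_X(\tfrac{j}{n}))$ converges to $\tilde f_X'$ in $L^1([a,b])$ (martingale convergence in $L^1$ for the conditional expectations of $\tilde f_X'\in L^1$ with respect to the dyadic-in-$n$ filtration, or a standard approximation argument), so $\sqrt{\cdot}$ of it converges in $L^1$ too by dominated/uniform integrability after splitting into the region where the average is bounded and a small-measure tail; (4) pass to the liminf using Fatou along a subsequence realizing the liminf, handling the diagonal set $\{f_X=f_Y\}$ and the singular support as null sets where we just bound below by $0$. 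The main obstacle is step (3): controlling $\sqrt{f_{n,X}'}$ rather than $f_{n,X}'$ itself, since the square root is not linear, and the averaging operation combined with a possible singular component means $f_{n,X}'$ can blow up on small sets; the fix is to note $x\mapsto\sqrt x$ is continuous and concave, so $L^1$ convergence of the averages plus the reverse-Fatou obstruction is avoided by working with the liminf and Fatou in the correct direction, and by absorbing the singular part's nonnegative contribution harmlessly. This is essentially the standard lower-semicontinuity argument for integral functionals of the form $\int \phi(u, f')$ with $\phi$ concave in the gradient variable — deferred to Appendix \ref{lsc_sec} as stated.
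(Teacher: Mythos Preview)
Your proposal contains a genuine error that derails the argument. You claim that $f_{n,X}'(s) = n(f_X(\tfrac{j+1}{n}) - f_X(\tfrac{j}{n}))$ ``does \emph{not} converge pointwise in general'', and this sends you into an unnecessary and incomplete detour through $L^1$/martingale convergence of averages. In fact it \emph{does} converge pointwise almost everywhere: since $f_X$ is monotone, by Lebesgue's theorem it is differentiable at almost every $s$, and at any such $s$ the difference quotient over the interval containing $s$ converges to $f_X'(s)$. This is the key observation you are missing.

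A second error: you assert $f_n\to f$ uniformly because ``the modulus of continuity of $f$ controls the error''. But $f\in G_M^2$ is only c\`adl\`ag and can have jumps, so uniform convergence fails in general. What you actually need, and what the paper uses via Lemma~\ref{pointwiselem}, is that $f_n(s)\to f(s)$ at every continuity point of $f$, hence almost everywhere; combined with continuity of $R^*_X$ away from $0$ (and the fact that $f,f_n\in G_M^2$ keeps the argument bounded away from the origin for $s>0$) this gives $R^*_X(f_n(s))\to R^*_X(f(s))$ a.e.

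Once you have pointwise a.e.\ convergence of both factors, the proof is short. The paper lower-bounds the integrand by pulling out $\inf_{u\in I_j}\sqrt{R^*_X(f_n(u))}$ (your step (2) is right), observes that the resulting integrand converges a.e.\ to $\sqrt{R^*_X(f(s))f_X'(s)}$, and then applies the generalised dominated convergence theorem with the dominating sequence $F_n(s)=\sum_i \ind_{I_i}(s)\, M\big(n(f_X(i/n)-f_X((i-1)/n))+1\big)$, whose integrals all equal $M(f_X(1)+1)$. Your instinct to use Fatou is actually a legitimate simplification here, since we only need the $\liminf$ lower bound and the integrands are nonnegative; but you never reach it because of the mistaken non-convergence claim.
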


\begin{proof}
We carry out the proof when $a=0$ and $b=1$; the general case follows by including $\ind_{\{s\in[a,b]\}}$ throughout.

Note that
\begin{align*}
\int_0^1 \sqrt{R^*_X(f_n(s)) f_{n,X}'(s)} \d s &= \int_0^1 \sum_{i=1}^n \ind_{\{s\in[\frac{i-1}{n},\frac{i}{n})\}} \sqrt{R^*_X(f_n(s))} \sqrt{n(f_{n,X}(\sfrac{i}{n})-f_{n,X}(\sfrac{i-1}{n}))} \d s\\
&= \int_0^1 \sum_{i=1}^n \ind_{\{s\in[\frac{i-1}{n},\frac{i}{n})\}} \sqrt{R^*_X(f_n(s))} \sqrt{n(f_X(\sfrac{i}{n})-f_X(\sfrac{i-1}{n}))} \d s\\
&\ge \int_0^1 \sum_{i=1}^n \ind_{\{s\in[\frac{i-1}{n},\frac{i}{n})\}}\inf_{u\in[\frac{i-1}{n},\frac{i}{n}]} \sqrt{R^*_X(f_n(u))}\sqrt{n(f_X(\sfrac{i}{n})-f_X(\sfrac{i-1}{n}))}\,ds.
\end{align*}
Since $f$ is continuous almost everywhere, by Lemma \ref{pointwiselem}, $f_n(u)\to f(u)$ almost everywhere. Since $f$ is $M$-good, and $R^*_X$ is continuous away from $0$, $R^*_X(f_n(u))\to R^*_X(f(u))$ for almost every $u\in[0,1]$. Since $f$ is differentiable almost everywhere, we deduce that the integrand above converges to $\sqrt{R^*_X(f(s))f_X'(s)}$ for almost every $s\in[0,1]$. It is also bounded above by
\[F_n(s) = \sum_{i=1}^n \ind_{\{s\in[\frac{i-1}{n},\frac{i}{n})\}} M \bigg(n\Big(f_X\Big(\frac{i}{n}\Big) - f_X\Big(\frac{i-1}{n}\Big)\Big) + 1\bigg)\]
which is integrable and whose integral equals $M(f_X(1)+1)$ for each $n$, which is also the integral of $\lim_{n\to\infty} F_n(s)$. Therefore, by the generalised dominated convergence theorem, the integral converges to
\[\int_0^1 \sqrt{R^*_X(f(s))f_X'(s)}\, ds\]
and the proof is complete.
\end{proof}

It is then a simple task to prove Proposition \ref{lowersemicts}, which shows that $\tilde K(f,0,t)$ can be bounded above by taking piecewise linear approximations to $f$.

\begin{proof}[Proof of Proposition \ref{lowersemicts}]
By \eqref{Kalt}, for any $f\in E^2$,
\begin{multline*}
\tilde K(f,0,t) = -\int_0^t R^*(f(s))ds + 2\sqrt2\int_0^t \sqrt{R^*_X(f(s))f'_X(s)}ds - f_X(t)\\
+ 2\sqrt2\int_0^t \sqrt{R^*_Y(f(s))f'_Y(s)}ds - f_Y(t).
\end{multline*}
It therefore suffices, by symmetry, to show that
\[\limsup_{n\to\infty}\int_0^t R^*(f_n(s))ds \le \int_0^t R^*(f(s))ds,\]
\[\liminf_{n\to\infty}\int_0^t \sqrt{R^*_X(f_n(s))f'_{n,X}(s)}ds \ge \int_0^t \sqrt{R^*_X(f(s))f'_X(s)}ds\]
and
\[\limsup_{n\to\infty} f_{n,X}(t) \le f_X(t).\]
The first of these statements follows from Lemma \ref{pointwiselem} and the continuity and boundedness of $R^*$ away from $0$, using the fact that $f$, and therefore $f_n$, is good. The second follows from Lemma \ref{liminfsqrt}. For the third, we observe that since $f$ is increasing and right-continuous,
\[f_{n,X}(t) \le f_X\big(\sfrac{\lceil nt\rceil}{n}\big) \to f_X(t),\]
which completes the proof.
\end{proof}

\subsection[Upper semi-continuity of $K$: proofs of Proposition \ref{uppersemicts} and Corollary \ref{uppersemictscor}]{Upper semi-continuity of $\tilde K$: proofs of Proposition \ref{uppersemicts} and Corollary \ref{uppersemictscor}}\label{usc_sec}





The following consequence of the Cauchy-Schwarz inequality is the key to proving Proposition \ref{uppersemicts}.

\begin{lem}\label{limsupsqrt}
Suppose that $0\le a<b\le 1$ and $f,f_n\in G_M^2$ for all $n$. If $f$ is differentiable on $[a,b]$, and $d(f_n,f)\to 0$, then
\[\limsup_{n\to\infty} \int_a^b \sqrt{R^*_X(f_n(s)) f_{n,X}'(s)} \d s \le \int_a^b \sqrt{R^*_X(f(s))f_X'(s)}\d s\]
where we write $f_{n,X}$ for the $x$-component of $f_n$.
\end{lem}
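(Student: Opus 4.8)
The plan is to reduce the claim to a statement about Riemann-type partition sums and then send the mesh to zero, with the Cauchy--Schwarz inequality doing the essential work at two points. Write $\phi(s)=R^*_X(f(s))$ and $\psi(s)=f'_X(s)$. Since $f\in G_M^2$ we have $0\le\phi\le M^2$ on $[a,b]$; since $f_X$ is non-decreasing, $\psi\ge0$ and $\int_a^b\psi\le f_X(b)-f_X(a)<\infty$, so $\psi\in L^1[a,b]$; and since $f_X$ is non-decreasing and differentiable at every point of $[a,b]$ it is absolutely continuous there (an everywhere-differentiable function with integrable derivative satisfies the fundamental theorem of calculus), whence $f_X(v)-f_X(u)=\int_u^v\psi$ for all $[u,v]\subseteq[a,b]$. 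Note also that $f$, being differentiable on $[a,b]$, is continuous on $[a,b]$.

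\textbf{Step 1 (from $f_n$ to a fixed partition).} Fix any partition $a=t_0<t_1<\dots<t_N=b$ and set $I_i=[t_{i-1},t_i]$. Applying Cauchy--Schwarz on each $I_i$ and then using $\int_{I_i}f'_{n,X}\le f_{n,X}(t_i)-f_{n,X}(t_{i-1})$ (valid as $f_{n,X}$ is non-decreasing),
\[\int_a^b\sqrt{R^*_X(f_n(s))\,f'_{n,X}(s)}\,ds\le\sum_{i=1}^N\Big(\int_{I_i}R^*_X(f_n(s))\,ds\Big)^{1/2}\big(f_{n,X}(t_i)-f_{n,X}(t_{i-1})\big)^{1/2}.\]
Now let $n\to\infty$. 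By Lemma \ref{pointwiselem}, $f_n(s)\to f(s)$ at every continuity point of $f$, hence for a.e.\ $s$; since $f$ is $M$-good, $f(s)\ne(0,0)$ for $s>0$, and $R^*_X$ is continuous away from the origin and bounded by $M^2$, so dominated convergence gives $\int_{I_i}R^*_X(f_n)\to\int_{I_i}\phi$. Also $f_{n,X}(t_i)\to f_X(t_i)$ for every $i$ (continuity of $f$), and $f_X(t_i)-f_X(t_{i-1})=\int_{I_i}\psi$ by absolute continuity. Hence, for every partition,
\[\limsup_{n\to\infty}\int_a^b\sqrt{R^*_X(f_n)\,f'_{n,X}}\,ds\le\sum_{i=1}^N\Big(\int_{I_i}\phi\Big)^{1/2}\Big(\int_{I_i}\psi\Big)^{1/2}.\]

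\textbf{Step 2 (partition sums converge down to the integral).} It remains to show $\inf_P\sum_{I\in P}(\int_I\phi)^{1/2}(\int_I\psi)^{1/2}=\int_a^b\sqrt{\phi\psi}$, where the infimum is over partitions $P$ of $[a,b]$; combined with Step 1 this proves the lemma. The inequality $\ge$ is immediate from Cauchy--Schwarz (and in fact $(u,v)\mapsto\sqrt{uv}$ is superadditive, so the sums decrease under refinement). For the reverse inequality, fix $\varepsilon>0$ and choose continuous $g,\tilde\psi$ on $[a,b]$ with $0\le g\le M^2$, $\tilde\psi\ge0$, $\|g-\phi\|_1<\varepsilon$ and $\|\tilde\psi-\psi\|_1<\varepsilon$. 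Writing $\int_I\phi\le\int_Ig+\int_I|\phi-g|$ and similarly for $\psi$, and using $\sqrt{u+v}\le\sqrt u+\sqrt v$, the two cross-error sums are bounded (by a single global application of Cauchy--Schwarz) by $\sqrt{\varepsilon\|\psi\|_1}+\sqrt{M^2(b-a)\varepsilon}$. For the main term $\sum_I(\int_Ig)^{1/2}(\int_I\tilde\psi)^{1/2}$, bound $\int_Ig\le|I|\max_Ig$, $\int_I\tilde\psi\le|I|\max_I\tilde\psi$, use uniform continuity of $g$ and $\tilde\psi$ to replace $\max_I$ by $\min_I$ up to the moduli of continuity, recognise $|I|\sqrt{(\min_Ig)(\min_I\tilde\psi)}\le\int_I\sqrt{g\tilde\psi}$, and absorb the remaining errors via $\sum_I|I|\sqrt{\min_Ig}\le\big((b-a)\int_a^bg\big)^{1/2}$ and its $\tilde\psi$-analogue; this gives $\limsup_{\mathrm{mesh}\to0}\sum_I(\int_Ig)^{1/2}(\int_I\tilde\psi)^{1/2}\le\int_a^b\sqrt{g\tilde\psi}$. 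Finally $\int_a^b\sqrt{g\tilde\psi}\le\int_a^b\sqrt{\phi\psi}+(\sqrt{M^2(b-a)}+\sqrt{\|\psi\|_1})\sqrt\varepsilon$ by Cauchy--Schwarz together with $|\sqrt{uv}-\sqrt{u'v'}|\le\sqrt u\,\sqrt{|v-v'|}+\sqrt{v'}\,\sqrt{|u-u'|}$. Letting the mesh and then $\varepsilon$ tend to $0$ yields $\inf_P\sum_{I\in P}(\int_I\phi)^{1/2}(\int_I\psi)^{1/2}\le\int_a^b\sqrt{\phi\psi}$, and the proof is complete.

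\textbf{The main obstacle} is Step 2: showing the partition sums converge down to $\int_a^b\sqrt{\phi\psi}$ rather than to something strictly larger. A naive estimate fails because $\psi=f'_X$ may oscillate arbitrarily on any subinterval (only $\int_I\psi$ is controlled, not $\psi$ pointwise), so one cannot replace $\int_I\psi$ by $|I|$ times a local value of $\psi$ — indeed $\sqrt{(\int_I\phi)(\int_I\psi)}$ can far exceed $\int_I\sqrt{\phi\psi}$ when $\psi$ concentrates. The remedy is to approximate \emph{both} $\phi$ and $\psi$ in $L^1$ by continuous functions, which do have small oscillation on small intervals, and to absorb the approximation errors using the global $L^1$ bounds together with the uniform bound $\phi\le M^2$. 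A smaller but genuine point to handle carefully is the use of absolute continuity of $f_X$ (valid here precisely because $f_X$ is everywhere differentiable) to pass from the increments $f_X(t_i)-f_X(t_{i-1})$ to $\int_{I_i}f'_X$ in the direction needed.
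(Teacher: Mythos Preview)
Your proof is correct and shares the same Step~1 as the paper: apply Cauchy--Schwarz on each cell of a partition, then pass to the limit $n\to\infty$ using Lemma~\ref{pointwiselem} and bounded convergence to obtain $\limsup_n\int_a^b\sqrt{R^*_X(f_n)f'_{n,X}}\le\sum_i\big(\int_{I_i}\phi\big)^{1/2}\big(\int_{I_i}\psi\big)^{1/2}$ for every partition.

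The difference is in Step~2. You prove $\inf_P\sum_{I\in P}\big(\int_I\phi\big)^{1/2}\big(\int_I\psi\big)^{1/2}=\int_a^b\sqrt{\phi\psi}$ by an $L^1$ approximation of $\phi$ and $\psi$ by continuous functions, with several Cauchy--Schwarz error estimates. The paper instead takes the uniform partition of mesh $1/m$, bounds $\int_{I_i}\phi\le|I_i|\sup_{I_i}\phi$, and writes the resulting sum as $\int_a^b G_m(s)\,ds$ where $G_m(s)=\sum_i\ind_{I_i}(s)\big(\sup_{I_i}\phi\big)^{1/2}\big(m(f_X(\tfrac{i}{m})-f_X(\tfrac{i-1}{m}))\big)^{1/2}$. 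Since $f$ is everywhere differentiable on $[a,b]$ and $R^*_X\circ f$ is continuous there, $G_m(s)\to\sqrt{\phi(s)\psi(s)}$ for a.e.\ $s$, and $G_m$ is dominated by $F_m(s)=M\big(m(f_X(\tfrac{i}{m})-f_X(\tfrac{i-1}{m}))+1\big)$ with $\int F_m\equiv M(f_X(b)-f_X(a)+b-a)=\int\lim_m F_m$; the generalised dominated convergence theorem then gives $\int G_m\to\int\sqrt{\phi\psi}$ directly. This is shorter and avoids the continuous-approximation bookkeeping; your route, on the other hand, is more elementary in that it does not invoke the generalised DCT and makes the superadditivity under refinement explicit. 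Both arguments use in an essential way that $f_X$ is absolutely continuous on $[a,b]$, which you correctly justify from everywhere differentiability with integrable derivative.
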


\begin{proof}
We carry out the proof when $a=0$ and $b=1$; the general case follows by including $\ind_{\{s\in[a,b]\}}$ throughout. By the Cauchy-Schwarz inequality, for any $m\in\N$,
\begin{align*}
\int_0^1 \sqrt{R^*_X(f_n(s)) f_{n,X}'(s)} \d s &= \sum_{i=1}^m \int_{(i-1)/m}^{i/m} \sqrt{R^*_X(f_n(s)) f_{n,X}'(s)} \d s\\
&\le \sum_{i=1}^m \bigg(\int_{(i-1)/m}^{i/m} R^*_X(f_n(s))\d s\bigg)^{1/2}\bigg(\int_{(i-1)/m}^{i/m} f'_{n,X}(s)\d s\bigg)^{1/2}\\
&\le \sum_{i=1}^m \bigg(\int_{(i-1)/m}^{i/m} R^*_X(f_n(s))\d s\bigg)^{1/2}\big(f_{n,X}\big(\sfrac{i}{m}\big) - f_{n,X}\big(\sfrac{i-1}{m}\big)\big)^{1/2}
\end{align*}
where the last inequality is not an equality since we do not know whether $f_{n,X}$ is absolutely continuous.
Since $f$ is continuous, by Lemma \ref{pointwiselem} we know that $f_n(s)\to f(s)$ for every $s$. Thus, using that $f,f_n\in G_M^2$ and $R^*_X$ is continuous away from $0$, by bounded convergence the right-hand side above converges to
\[\sum_{i=1}^m \bigg(\int_{(i-1)/m}^{i/m} R^*_X(f(s))\d s\bigg)^{1/2}\Big(f_{X}\Big(\frac{i}{m}\Big) - f_{X}\Big(\frac{i-1}{m}\Big)\Big)^{1/2}\]
%
which is at most
\begin{equation}\label{approxinvolvingm}
\sum_{i=1}^m \frac{1}{m} \bigg(\sup_{u\in[\frac{i-1}{m},\frac{i}{m}]} R^*_X(f(u))\bigg)^{1/2} \bigg( m\Big(f_X\Big(\frac{i}{m}\Big) - f_X\Big(\frac{i-1}{m}\Big)\Big)\bigg)^{1/2}.
\end{equation}
We claim that \eqref{approxinvolvingm} converges, as $m\to\infty$, to $\int_0^1 \sqrt{R^*_X(f(s))f'_X(s)}\d s$. To prove this we can follow almost exactly the same argument as in the proof of Lemma \ref{liminfsqrt}, writing \eqref{approxinvolvingm} in the form
\[\int_0^1 \sum_{i=1}^m \ind_{\{s\in[\frac{i-1}{m},\frac{i}{m})\}} \bigg(\sup_{u\in[\frac{i-1}{m},\frac{i}{m}]} R^*_X(f(u))\bigg)^{1/2} \bigg( m\Big(f_X\Big(\frac{i}{m}\Big) - f_X\Big(\frac{i-1}{m}\Big)\Big)\bigg)^{1/2} \d s\]
and applying the generalised dominated convergence theorem since the integrand evaluated at $s$ converges as $m\to\infty$ to $R^*_X(f(s))f'_X(s)$ for almost every $s\in[0,1]$, and can be bounded above by 
\[F_m(s) = \sum_{i=1}^m \ind_{\{s\in[\frac{i-1}{m},\frac{i}{m})\}} M \bigg(m\Big(f_X\Big(\frac{i}{m}\Big) - f_X\Big(\frac{i-1}{m}\Big)\Big) + 1\bigg).\]
This completes the proof.
%
\end{proof}

The next step is to extend the previous lemma to functions that are not necessarily continuous.

\begin{lem}\label{limsupsqrt2}
Suppose that $0\le a<b\le 1$ and $f,f_n\in G_M^2$ for all $n$. If $d(f_n,f)\to 0$, then
\[\limsup_{n\to\infty} \int_a^b \sqrt{R^*_X(f_n(s)) f_{n,X}'(s)} \d s \le \int_a^b \sqrt{R^*_X(f(s))f_X'(s)}\d s\]
where we write $f_{n,X}$ for the $x$-component of $f_n$.
\end{lem}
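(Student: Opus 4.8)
The plan is to reimitate the Cauchy--Schwarz argument from the proof of Lemma \ref{limsupsqrt}, with two changes forced by the fact that $f$ is now only assumed to lie in $G_M^2$: we work with an \emph{irregular} partition so as to put its endpoints at continuity points of $f$, and we split $f_X$ into its absolutely continuous part $\tilde f_X$ and its singular part $\hat f_X$, since $f$ need no longer be differentiable. As usual it suffices to treat $a=0$, $b=1$. First I would fix, for each large $m$, a partition $0=t_0<t_1<\dots<t_m=1$ of mesh at most $1/m$ with every $t_i$ a continuity point of both $f_X$ and $f_Y$; this is possible since $f$ has only countably many discontinuities. Cauchy--Schwarz on each $[t_{i-1},t_i]$, together with $\int_{t_{i-1}}^{t_i}f'_{n,X}\le f_{n,X}(t_i)-f_{n,X}(t_{i-1})$ for the increasing function $f_{n,X}$, gives
\[\int_0^1\sqrt{R^*_X(f_n(s))f'_{n,X}(s)}\,\d s\le \sum_{i=1}^m\Big(\int_{t_{i-1}}^{t_i}R^*_X(f_n(s))\,\d s\Big)^{1/2}\big(f_{n,X}(t_i)-f_{n,X}(t_{i-1})\big)^{1/2}.\]
Now let $n\to\infty$. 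By Lemma \ref{pointwiselem}, $f_n(s)\to f(s)$ for a.e.\ $s$ and $f_{n,X}(t_i)\to f_X(t_i)$ at every partition point; since $R^*_X$ is continuous except at the origin (and $f(s)\ne(0,0)$ for $s>0$) and bounded by $M^2$ on $G_M^2$, bounded convergence yields $\int_{t_{i-1}}^{t_i}R^*_X(f_n)\to\int_{t_{i-1}}^{t_i}R^*_X(f)$. Hence
\[\limsup_{n\to\infty}\int_0^1\sqrt{R^*_X(f_n(s))f'_{n,X}(s)}\,\d s\le S(\mathcal P):=\sum_{i=1}^m\Big(\int_{t_{i-1}}^{t_i}R^*_X(f(s))\,\d s\Big)^{1/2}\big(f_X(t_i)-f_X(t_{i-1})\big)^{1/2}.\]
Since the left side does not depend on $\mathcal P$, it suffices to show $S(\mathcal P)\le\int_0^1\sqrt{R^*_X(f(s))f'_X(s)}\,\d s+o(1)$ as the mesh shrinks.

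For that final step I would write $f_X=\tilde f_X+\hat f_X$ and use $\sqrt{a+b}\le\sqrt a+\sqrt b$ to split $S(\mathcal P)$ into an absolutely continuous contribution and a singular contribution. The absolutely continuous contribution is $\sum_i\big(\int_{t_{i-1}}^{t_i}R^*_X(f)\big)^{1/2}\big(\int_{t_{i-1}}^{t_i}f'_X\big)^{1/2}$; writing it as $\int_0^1\psi_m$ with $\psi_m$ piecewise constant, one has $\psi_m\to\sqrt{R^*_X(f)f'_X}$ a.e.\ (Lebesgue differentiation for the regular family of partition intervals), $\psi_m\le M\big(1+(t_i-t_{i-1})^{-1}\int_{t_{i-1}}^{t_i}f'_X\big)$, and the dominating functions have integral $M\big(1+\int_0^1 f'_X\big)$, which matches the integral of their a.e.\ limit \emph{because $\tilde f_X$ is absolutely continuous}. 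So the generalised dominated convergence theorem applies exactly as in the proof of Lemma \ref{limsupsqrt} (only with coefficient $R^*_X(f)\le M^2$ rather than $R^*_X$ of the limit), and this contribution converges to $\int_0^1\sqrt{R^*_X(f(s))f'_X(s)}\,\d s$.

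The remaining, singular, contribution is $\sum_i\big(\int_{t_{i-1}}^{t_i}R^*_X(f)\big)^{1/2}\big(\hat f_X(t_i)-\hat f_X(t_{i-1})\big)^{1/2}\le M\sum_i(t_i-t_{i-1})^{1/2}\big(\hat f_X(t_i)-\hat f_X(t_{i-1})\big)^{1/2}$, and here is the point where the argument genuinely departs from Lemma \ref{limsupsqrt}. Since $\d\hat f_X$ is singular with respect to Lebesgue measure it is carried by a Lebesgue-null set, so for any $\eta>0$ there is a finite union $U$ of open intervals with $|U|<\eta$ and $\d\hat f_X([0,1]\setminus U)<\eta$. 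Splitting the partition intervals according to whether they meet $U$ or not and applying Cauchy--Schwarz within each group separately bounds this contribution by $M\big(|U|+o_m(1)\big)^{1/2}(\hat f_X(1))^{1/2}+M(\eta)^{1/2}=O_M(\eta^{1/2})+o_m(1)$, using that the $t_i$ are continuity points (so there are no boundary jumps) and $\hat f_X(1)\le f_X(1)\le M$. Letting first $m\to\infty$ and then $\eta\to 0$ shows this contribution is negligible, which completes the proof of the lemma. The main obstacle is exactly this treatment of the singular (and jump) part of $f$: the naive dominated-convergence bound from Lemma \ref{limsupsqrt} breaks because $\int_0^1 f'_X<f_X(1)-f_X(0)$ when $f_X$ is not absolutely continuous, and one must instead exploit that $\d\hat f_X$ concentrates on a set of arbitrarily small Lebesgue measure.
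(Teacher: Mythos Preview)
Your proof is correct, but it takes a genuinely different route from the paper's.

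The paper's argument is considerably shorter. It covers the (Lebesgue-null) set $S$ of non-differentiability points of $f$ by a finite union $S'$ of open intervals of total length at most $\eps^2/M^3$. On each closed interval of $[a,b]\setminus S'$, the function $f$ is differentiable everywhere, so Lemma~\ref{limsupsqrt} applies directly as a black box. The crucial difference is on the bad set: rather than passing to the limit and then handling the singular part of $f$, the paper bounds the contribution on $S'$ \emph{before} taking $n\to\infty$, uniformly in $n$, via Cauchy--Schwarz (Jensen):
\[
\int_{[a,b]\cap S'}\sqrt{R^*_X(f_n(s))f'_{n,X}(s)}\,\d s \le M\sqrt{|S'|}\,\big(f_{n,X}(b)-f_{n,X}(a)\big)^{1/2}\le M^{3/2}\sqrt{|S'|}<\eps,
\]
using only that $f_n\in G_M^2$. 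No Lebesgue decomposition, no irregular partition, no separate treatment of the singular measure is needed.

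Your approach instead pushes the limit through first (this is where you need the irregular partition with endpoints at continuity points of $f$) and then must show that the resulting Riemann-type sum $S(\mathcal P)$ for $f$ converges correctly as the mesh shrinks. Because $f$ may fail to be absolutely continuous, you are forced to split off $\hat f_X$ and argue separately that its contribution vanishes, exploiting that $\d\hat f_X$ is carried on a set of arbitrarily small Lebesgue measure. This works, but it essentially reproves a refinement of Lemma~\ref{limsupsqrt} from scratch. The paper sidesteps all of this by observing that the ``bad set'' contribution can be controlled on the $f_n$ side rather than the $f$ side, where the uniform membership in $G_M^2$ makes the estimate immediate.
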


\begin{proof}
Fix $\eps\in(0,6M)$. Let $S\subset(0,1)$ be the set of points (in $(0,1)$) at which $f$ is not differentiable. Since $f$ is increasing, $S$ has zero Lebesgue measure, and can therefore be covered by a finite collection $(s_1^-,s_1^+), \ldots, (s_N^-, s_N^+)$ of open intervals whose total length is at most $\eps^2/M^3$. Let $S' = \bigcup_{i=1}^N (s_i^-,s_i^+)$. Then by Lemma \ref{limsupsqrt}, since $[a,b]\setminus S'$ is a finite union of closed intervals on which $f$ is absolutely continuous, we have
\[\limsup_{n\to\infty} \int_{[a,b]\setminus S'} \sqrt{R^*_X(f_n(s)) f_{n,X}'(s)} \d s \le \int_{[a,b]\setminus S'} \sqrt{R^*_X(f(s))f_X'(s)}\d s \le \int_a^b \sqrt{R^*_X(f(s))f_X'(s)}\d s.\]
It therefore suffices to show that
\begin{equation}\label{limsupsqrt2eq}
\limsup_{n\to\infty} \int_{[a,b]\cap S'} \sqrt{R^*_X(f_n(s)) f_{n,X}'(s)} \d s \le \eps.
\end{equation}
However, since $f_n\in G_M^2$, we have 
\[\int_{[a,b]\cap S'} \sqrt{R^*_X(f_n(s)) f_{n,X}'(s)} \d s \le \int_{[a,b]\cap S'} M\sqrt{f_{n,X}'(s)} \d s,\]
and by Jensen's inequality, this is at most
\[M\sqrt{\big|[a,b]\cap S'\big|} \bigg(\int_{[a,b]\cap S'} f_{n,X}'(s) \d s\bigg)^{1/2} \le M\sqrt{|S'|} (f_{n,X}(b)-f_{n,X}(a))^{1/2}\]
where $|S'|$ denotes the Lebesgue measure of $S'$. Since $f_n\in G_M^2$, this is at most $M^{3/2}\sqrt{|S'|}$, which is smaller than $\eps$ by construction. Thus \eqref{limsupsqrt2eq} holds and the proof is complete.
\end{proof}

The proof of Proposition \ref{uppersemicts} is now a simple consequence of the results above.

\begin{proof}[Proof of Proposition \ref{uppersemicts}]
We use the alternative form of $\tilde K$ mentioned in \eqref{Kalt}, i.e.
\begin{multline}
\tilde K(f,0,\theta) = -\int_0^\theta R^*(f(s)) ds + 2\sqrt2\int_0^\theta \sqrt{R^*_X(f(s))f'_X(s)} ds + 2\sqrt2\int_0^\theta \sqrt{R^*_Y(f(s))f'_Y(s)} ds\\
- f_X(\theta) - f_Y(\theta).
\end{multline}
Since either $f$ is continuous at $\theta$, or $\theta=1$, by Lemma \ref{pointwiselem} we have
\[f_{n,X}(\theta) + f_{n,Y}(\theta) \to  f_X(\theta) + f_Y(\theta).\]
Since $f$ is continuous almost everywhere, by Lemma \ref{pointwiselem} and the continuity of $R^*$ away from $0$ (using the fact that $f_n,f\in G_M^2$), we have
\[\int_0^\theta R^*(f_n(s)) ds \to \int_0^\theta R^*(f(s)) ds .\]
The result then follows from Lemma \ref{limsupsqrt2} and the symmetry between the $X$ and $Y$ components.
\end{proof}

Corollary \ref{uppersemictscor} follows easily from Proposition \ref{uppersemicts}.

\begin{proof}[Proof of Corollary \ref{uppersemictscor}]
For each $n\in\N$, take $f_n\in B_d(F,1/n)\cap G_M^2$ such that
\[\tilde K(f_n,0,1)\ge \sup_{f\in B_d(F,1/n)\cap G_M^2} \tilde K(f,0,1) - 1/n.\]
By Lemma \ref{coverGMT} we know that $G_{M,T}^2$ is totally bounded, and since $G_M^2\subset G_{M,T}^2$ and is closed, and $(E^2,d)$ is complete, we deduce that $G_M^2$ is compact under $d$. Therefore there exists a subsequence $(f_{n_j})_{j\ge1}$ such that $d(f_{n_j},f_\infty)\to 0$ as $j\to\infty$ for some $f_\infty\in G_M^2$. Since $d(f_{n_j},f_\infty)\to 0$, and $F$ is closed, we also have $f_\infty\in F$. By Proposition \ref{uppersemicts}
\[\limsup_{j\to\infty} \tilde K(f_{n_j},0,1)\le \tilde K(f_\infty,0,1).\]
Then by our choice of $f_n$,
\[\limsup_{j\to\infty} \hspace{-1mm} \sup_{f\in B_d(F,1/n_j)\cap G_M^2} \hspace{-2mm} \tilde K(f,0,1) \le \limsup_{j\to\infty} (\tilde K(f_{n_j},0,1) + 1/n_j) \le \tilde K(f_\infty,0,1) \le \sup_{f\in F \cap G_M^2} \hspace{-2mm} \tilde K(f,0,1)\]
which completes the proof.
\end{proof}

\bibliographystyle{plain}\def\cprime{$'$}

\end{document}